\numberwithin{equation}{section}
\newcommand{\ds}{\displaystyle}
\def\<{\langle}
\def\>{\rangle}
\newcommand{\bgamma}{{\boldsymbol\gamma}}
\newcommand{\blambda}{{\boldsymbol\lambda}}
\newcommand{\bLambda}{{\boldsymbol\Lambda}}
\newcommand{\bbeta}{{\boldsymbol\eta}}
\newcommand{\bsi}{{\boldsymbol\sigma}}
\newcommand{\bphi}{{\boldsymbol\phi}}
\newcommand{\bvarphi}{{\boldsymbol\varphi}}
\newcommand{\bpsi}{{\boldsymbol\psi}}
\newcommand{\btau}{{\boldsymbol\tau}}
\newcommand{\bzeta}{{\boldsymbol\zeta}}
\newcommand{\bchi}{{\boldsymbol\chi}}
\newcommand{\btheta}{{\boldsymbol\theta}}
\newcommand{\brho}{{\boldsymbol\rho}}
\newcommand{\bv}{{\mathbf{v}}}
\newcommand{\bw}{{\mathbf{w}}}
\newcommand{\f}{\mathbf{f}}
\newcommand{\bi}{\mathbf{i}}
\newcommand{\bp}{\mathbf{p}}
\newcommand{\bq}{\mathbf{q}}
\newcommand{\br}{\mathbf{r}}
\newcommand{\bu}{\mathbf{u}}
\newcommand{\bt}{{\mathbf{t}}}
\newcommand{\bn}{{\mathbf{n}}}
\newcommand{\be}{{\mathbf{e}}}
\def\bs{\mathbf{s}}
\newcommand{\0}{{\mathbf{0}}}
\def\bF{\mathbf{F}}
\def\bG{\mathbf{G}}
\def\bK{\mathbf{K}}
\def\bI{\mathbf{I}}
\def\bV{\mathbf{V}}
\def\bW{\mathbf{W}}
\def\bT{\mathbf{T}}
\def\bP{\mathbf{P}}
\def\bQ{\mathbf{Q}}
\def\bS{\mathbf{S}}
\def\bx{\mathbf{x}}
\newcommand{\bL}{\mathbf{L}}
\newcommand\bH{\mathbf{H}}
\newcommand\bbN{\mathbb{N}}
\newcommand\bbM{\mathbb{M}}
\newcommand\bbP{\mathbb{P}}
\newcommand\bbQ{\mathbb{Q}}
\newcommand\bbS{\mathbb{S}}
\newcommand\bbH{\mathbb{H}}
\newcommand\bbX{\mathbb{X}}
\newcommand\bbL{\mathbb{L}}
\newcommand{\bR}{\mathcal{\mathbf{R}}}
\newcommand{\cA}{\mathcal{A}}
\newcommand{\cB}{\mathcal{B}}
\newcommand{\cE}{\mathcal{E}}
\newcommand{\cT}{\mathcal{T}}
\newcommand{\cJ}{\mathcal{J}}
\newcommand{\cK}{\mathcal{K}}
\newcommand{\cL}{\mathcal{L}}
\newcommand{\cM}{\mathcal{M}}
\newcommand{\cN}{\mathcal{N}}
\newcommand{\cD}{\mathcal{D}}
\newcommand{\cO}{\mathcal{O}}
\newcommand{\cR}{\mathcal{R}}
\def\H{\mathrm{H}}
\def\L{\mathrm{L}}
\def\R{\mathrm{R}}
\def\U{\mathrm{U}}
\def\V{\mathrm{V}}
\def\X{\mathrm{X}}
\def\rd{\mathrm{d}}
\def\rD{\mathrm{D}}
\def\rN{\mathrm{N}}
\def\rP{\mathrm{P}}
\def\W{\mathrm{W}}
\def\esssup{\mathrm{ess\,sup}}
\def\bBDM{\mathbf{BDM}}
\def\bbBDM{\mathbb{BDM}}
\def\rt{\mathrm{t}}
\def\BJS{\mathtt{BJS}}
\def\dc{\mathrm{dc}}
\def\bdiv{\mathbf{div}}
\def\tr{\mathrm{tr}}
\def\div{\mathrm{div}}
\def\dist{\mathrm{dist}\,}
\def\pil{\left<}
\def\pir{\right>}
\def\sk{\mathrm{sk}}
\def\qin{{\quad\hbox{in}\quad}}
\def\qon{{\quad\hbox{on}\quad}}
\def\qan{{\quad\hbox{and}\quad}}
\def\wt{\widetilde}
\def\wh{\widehat}
\def\ov{\overline}
\def\dt{\partial_t}
\def\dts{\partial_t^s}
\newcommand{\CKo}{C_{\rm Ko}}
\newtheorem{thm}{Theorem}[section]
\newtheorem{rem}{Remark}[section]
\newtheorem{lem}[thm]{Lemma}
\newenvironment{proof}{\noindent{\it Proof.}}{\hfill$\square$}
\numberwithin{equation}{section}
\numberwithin{figure}{section}
\numberwithin{table}{section}
\title{An augmented fully-mixed formulation for the quasistatic Navier--Stokes--Biot model}
\author{
{\sc Tongtong Li}\thanks{Department of Mathematics, University of Pittsburgh, Pittsburgh, PA 15260, USA, email: {\tt tol24@pitt.edu}. Supported in part by NSF grants DMS 1818775 and DMS 2111129.}	
\quad
{\sc Sergio Caucao}\thanks{Departamento de Matem\'atica y F\'isica Aplicadas, 
Universidad Cat\'olica de la Sant\'isima Concepci\'on, Casilla 297, Concepci\'on, Chile,
and Grupo de Investigaci\'on en An\'alisis Num\'erico y C\'alculo Cient\'ifico, GIANuC$^2$, Concepci\'on, Chile, email: {\tt scaucao@ucsc.cl}. Supported in part by ANID-Chile through the project {\sc Centro de Modelamiento Mate\-m\'ati\-co} (FB210005) and Fondecyt 11220393.}
\quad
{\sc Ivan Yotov}\thanks{Department of Mathematics, University of Pittsburgh, Pittsburgh, PA 15260, USA, email: {\tt yotov@math.pitt.edu}. Supported in part by NSF grants DMS 1818775 and DMS 2111129.}}
\date{\today}
\begin{document}
	
\maketitle
	
\begin{abstract}
\noindent 
We introduce and analyze a partially augmented fully-mixed formulation and a mixed finite element method for the coupled problem arising in the interaction between a free fluid and a poroelastic medium. The flows in the free fluid and poroelastic regions are governed by the Navier--Stokes and Biot equations, respectively, and the transmission conditions are given by mass conservation, balance of fluid force, conservation of momentum, and the Beavers--Joseph--Saffman condition. We apply dual-mixed formulations in both domains, where the symmetry of the Navier--Stokes and poroelastic stress tensors is imposed in an ultra-weak and weak sense. In turn, since the transmission conditions are essential in the fully mixed formulation, they are imposed weakly by introducing the traces of the structure velocity and the poroelastic medium pressure on the interface as the associated Lagrange multipliers. Furthermore, since the fluid convective term requires the velocity to live in a smaller space than usual, we augment the variational formulation with suitable Galerkin type terms. Existence and uniqueness of a solution are established for the continuous weak formulation, as well as a semidiscrete continuous-in-time formulation with non-matching grids, together with the corresponding stability bounds and error analysis with rates of convergence. Several numerical experiments are presented to verify the theoretical results and illustrate the performance of the method for applications to arterial flow and flow through a filter.
\end{abstract}
		
\maketitle
	
	
\section{Introduction}
	
The interaction between free fluid and flow in adjacent deformable poroelastic medium, referred to as fluid--poroelastic structure interaction (FPSI), is motivated by a variety of applications, such as modeling of blood flow, design of industrial filters, and cleanup of groundwater flow in aquifers, to name a few. The free fluid flow is typically modeled by the Stokes or the Navier--Stokes equations, with the Navier--Stokes equations being more suitable for fast flows. The fluid flow within the poroelastic medium is modeled by the Biot system, which takes into account the effect of the deformation of the medium on the flow and vice versa. The two regions are coupled across the interface through dynamic and kinematic transmission conditions. The FPSI problem exhibits features of coupled Stokes--Darcy flows and fluid--structure interaction (FSI). 

One of the first works on the analysis of the Stokes--Biot problem is \cite{s2005}, where the coupled system is resolved by semigroup methods for a suitable variational formulation. Numerical studies for the coupled Navier--Stokes and Biot system are presented in \cite{bqq2009}, where both monolithic solvers and heterogeneous domain decomposition strategies are considered. In \cite{byz2015} a non-iterative operator splitting scheme for a Navier--Stokes--Biot model with non-mixed Darcy formulation is developed. The approach is extended in \cite{Bukac-JCP} to coupling between fluid, elastic structure, and poroelastic material. Mixed Darcy formulations, where the continuity of flux condition is of essential type, are considered in \cite{bukavc2015}, using the Nitsche's interior penalty method, and in \cite{akyz2018}, using a Lagrange multiplier method. Well-posedness for the fully dynamic Navier--Stokes--Biot system with a non-mixed Darcy formulation is established in \cite{cesm2017}.
A nonlinear Stokes--Biot system for non-Newtonian fluids is analyzed in \cite{aeny2019} by means of a reduced parabolic-type system for the pressure and stress in the poroelastic region and classical results on nonlinear monotone operators in Sobolev space setting. A numerical scheme for the Stokes--Biot model with inf-sup stable Stokes elements for the Biot displacement--pressure pair is developed in \cite{Cesm-Chid}. A Stokes--Biot model with a total pressure formulation is studied in \cite{Stokes-Biot-eye}. Well-posedness for a Stokes--Biot system with a multilayered porous medium using Rothe's method is obtained in \cite{Bociu-etal-2021}. A Lagrange multiplier method for a fully dynamic Navier--Stokes--Biot system with a mixed Darcy formulation is developed in \cite{Wang-Yotov}. Additional works include optimization-based decoupling method \cite{Cesm-etal-optim}, a second order in time split scheme \cite{Kunwar-etal}, dimensionally reduced model for flow through fractures
\cite{Buk-Yot-Zun-fracture}, coupling with transport \cite{fpsi-transport}, and porohyperelastic media \cite{Seboldt-etal-2021}. All of the above-mentioned works utilize displacement formulations for the elasticity equation. In a recent work \cite{fpsi-mixed-elast}, the first mathematical and numerical analysis of a stress--displacement mixed elasticity formulation for the Stokes--Biot model is presented. More recently, a fully mixed formulation of the quasistatic Stokes--Biot model based on dual mixed formulations for Darcy, elasticity, and Stokes is developed in \cite{fpsi-msfmfe}. The resulting three-field dual mixed Stokes formulation and five-field dual mixed Biot formulation lead to the development of a multipoint stress--flux mixed finite element method that can be reduced to a positive definite cell-centered pressure--velocities--traces system. This approach is extended numerically to the Navier--Stokes--Biot system in \cite{fpsi-fvca}.

In this paper we consider the quasistatic Navier--Stokes--Biot model. The model is better suitable than the Stokes--Biot model for fast flows that may occur in many applications, including blood flow, flows through industrial filters, and coupling of surface and subsurface flows. The problem is much harder from mathematical point of view, due to the nonlinear convective term in the Navier--Stokes equations. Only two of the above mentioned works, \cite{cesm2017} and \cite{Wang-Yotov}, deal with the analysis of the weak formulation or the numerical approximation of the Navier--Stokes--Biot model. Both consider the fully dynamic problem and utilize a velocity--pressure Navier--Stokes formulation and a displacement-based elasticity formulation. In this paper, we combine techniques developed in \cite{fpsi-mixed-elast}, \cite{fpsi-msfmfe}, \cite{cot2016}, and \cite{cort2017} to study a fully-mixed formulation of the quasistatic Navier--Stokes--Biot model, which is based on dual mixed formulations for all three components -- Navier--Stokes, Darcy, and elasticity. To deal with the nonlinearity, we consider a pseudostress-based formulation for the Navier--Stokes equations. Such formulations allow for a unified analysis for Newtonian and non-Newtonian flows \cite{cgot2016,cgos2017}. Here, similarly to \cite{cot2016}, we introduce a nonlinear pseudostress tensor combining the fluid stress tensor with the convective term. Together with the fluid velocity, it yields a pseudostress--velocity Navier--Stokes formulation. 
Furthermore, in order to control the fluid variables in their natural norms, i.e., the norms associated with the differential operators in the strong form of the equations, and avoid the need for inf-sup stable finite elements, we augment the mixed formulation with some redundant Galerkin-type terms arising from the equilibrium and constitutive equations. In particular, the fluid stress is in H(div) and the fluid velocity is in H$^1$, resulting in smooth and accurate finite element approximations of both variables. We further note that the computational overhead due to adding the stabilization terms is minimal, since they do not involve additional variables. For the Biot system we employ a five-field dual mixed formulation based on the model developed in \cite{lee2016}, and studied in \cite{fpsi-mixed-elast} and \cite{fpsi-msfmfe} for the Stokes--Biot model. In particular, we use a velocity--pressure Darcy formulation and a weakly symmetric stress--displacement--rotation elasticity formulation. While we focus on weakly symmetric elasticity, which in certain cases allows for stress and rotation elimination and a reduction to an efficient cell-centered displacement system \cite{akny2018-a,msfmfe-Biot,fpsi-msfmfe}, our methodology also applies to the strongly symmetric stress--displacement elasticity formulation and the resulting four-field mixed Biot formulation \cite{Yi-Biot-mixed}. In turn, the transmission conditions consisting of mass conservation, balance of fluid force, conservation of momentum, and the Beavers--Joseph--Saffman slip with friction condition are imposed weakly through the introduction of two Lagrange multipliers: the traces of the structure velocity and the Darcy pressure on the interface. The advantages of the resulting fully-mixed formulation for the Navier--Stokes--Biot model include local mass conservation for the Darcy fluid, local momentum conservation for the poroelastic stress, accurate approximations for the Darcy velocity, the poroelastic stress, and the fluid pseudostress with continuous normal components across element edges or faces, locking-free behavior, and robustness with respect to the physical parameters. We emphasize that accurate and locally conservative stress computations are important in many applications, including flows in fractured subsurface formations and blood flow, which is one of the numerical examples we present in Section~6.

The main contributions of this paper are as follows. Since the proposed augmented fully-mixed formulation is new, we first study its well-posedness. Because the model is quasistatic, it is not possible to utilize the theory of ordinary differential equations for the semi-discrete Galerkin approximation, in contrast to \cite{cesm2017} and \cite{Wang-Yotov} where the fully dynamic problem is considered. Instead, we rewrite the system as a parabolic problem for the poroelastic stress and Darcy pressure and employ the classical semigroup theory for differential equations with monotone operators \cite{Showalter}, combined with a fixed point approach for the solvability of the resolvent system. We then present a semidiscrete continuous-in-time formulation based on employing stable mixed finite element spaces for the Navier--Stokes, Darcy, and elasticity equations with possibly non-matching grids along the interface, together with suitable choices for the Lagrange multiplier finite element spaces. Well-posedness and stability analysis results are established using a similar argument to the continuous case. We then develop error analysis and establish rates of convergence for all variables. We further present a fully discrete finite element method based on the backward Euler time discretization and give a roadmap for its analysis. Finally, we present several numerical experiments to verify the theoretical rates of convergence and illustrate the behavior of the method for modeling blood flow in an arterial bifurcation as well as air flow through a filter.

The rest of the paper is organized as follows. The remainder of this section describes standard notation and functional spaces to be employed throughout the paper. In Section 2 we introduce the mathematical model, whereas in Section 3 we derive the continuous weak formulation and establish some stability properties for the associated operators. Section 4 is devoted to the well-posedness of the continuous weak formulation, where, a suitable fixed point approach is applied to establish existence, uniqueness and stability of the solution. The semidiscrete continuous-in-time approximation is introduced and analyzed in Section 5, including its well-posedness, stability and error analysis. The fully discrete scheme is presented at the end of the section. Numerical experiments are presented in Section~6, followed by conclusions in Section~7. 

We end this section by introducing some definitions and fixing some notation. 
Let $\bbM$, $\bbS$ and $\bbN$ denote the sets of $n\times n$ matrices, 
$n\times n$ symmetric matrices and $n\times n$ skew-symmetric matrices, respectively. 
For a bounded domain $\cO \subset \R^n$, $n\in\{2,3\}$, standard notation is adopted for  Lebesgue spaces  $\mathrm L^p(\cO)$, Hilbert spaces $\mathrm H^k(\cO)$, and Sobolev spaces $\mathrm W^{k,p}(\cO)$. By $\mathbf{Z}$ and $\mathbb{Z}$ we denote the corresponding vectorial and tensorial counterparts of a generic scalar functional space $\mathrm Z$. 
The $\L^2(\cO)$ inner product for scalar, vector, or tensor valued functions is denoted by $(\cdot,\cdot)_{\cO}$. For a section of the boundary $\Gamma$, the $\L^2(\Gamma)$ inner product or duality pairing is denoted by $\pil\cdot,\cdot\pir_\Gamma$. For a Banach space $\V$, we denote its dual space by $\V'$. For an operator $\cA:\V \to \U'$, its adjoint operator is denoted by $\cA':\U \to \V'$. For any vector fields $\bv=(v_i)_{i=1,\ldots,n}$ and $\bw=(w_i)_{i=1,\ldots,n}$, we set the gradient, symmetric part of the gradient, divergence, and tensor product operators, as
\begin{equation*}
\nabla\bv:=\left(\frac{\partial v_i}{\partial x_j}\right)_{i,j=1,\ldots,n},\quad 
\be(\bv) := \frac{1}{2} \big( \nabla\bv + (\nabla\bv)^\rt \big),\quad
\div (\bv):=\sum_{j=1}^n \frac{\partial v_j}{\partial x_j},\qan 
\bv\otimes\bw:=(v_i w_j)_{i,j=1,\ldots,n}.
\end{equation*}
Furthermore, for any tensor field $\btau:=(\tau_{ij})_{i,j=1,\ldots,n}$ and 
$\bzeta:=(\zeta_{ij})_{i,j=1,\ldots,n}$, we define the transpose,
the trace, the tensor inner product, and the deviatoric tensor, respectively, as	
\begin{equation}\label{trace-dev-etc}
\btau^\rt := (\tau_{ji})_{i,j=1,\ldots,n},\quad \tr(\btau):=\sum_{i=1}^n \tau_{ii},\quad \btau:\bzeta:=\sum_{i,j=1}^n \tau_{ij}\zeta_{ij},\qan \btau^\rd:=\btau-\frac{1}{n}\tr(\btau)\bI\,,
\end{equation}
where $\bI$ is the identity matrix in $\R^{n\times n}$.
In addition, we recall the Hilbert space
\begin{equation*}
\bH(\div;\cO):=\Big\{ \bv\in\bL^2(\cO) :\quad \div(\bv)\in \L^2(\cO) \Big\},
\end{equation*}
equipped with the norm $\|\bv\|^2_{\bH(\div;\cO)} :=
\|\bv\|^2_{\bL^2(\cO)} + \|\div(\bv)\|^2_{\L^2(\cO)}$.
The space of matrix
valued functions whose rows belong to $\bH(\div;\cO)$ is denoted
by $\bbH(\bdiv;\cO)$ and endowed with the norm
$\|\btau\|^2_{\bbH(\bdiv;\cO)} := \|\btau\|^2_{\bbL^2(\cO)} +
\|\bdiv(\btau)\|^2_{\bL^2(\cO)}$.  Finally, given a separable Banach
space $\V$ endowed with the norm $\| \cdot \|_{\V}$, we introduce
the Bochner spaces $\L^2(0,T;\V)$, $\H^s(0,T;\V)$, with integer $s \ge 1$,
$\L^{\infty}(0,T;\V)$, and $\W^{1,\infty}(0,T;\V)$, endowed with the norms
\begin{equation*}
\begin{array}{c}
\ds \|f\|^{2}_{\L^{2}(0,T;\V)} \,:=\, \int^T_0 \|f(t)\|^{2}_{\V} \,dt\,,\quad
\|f\|^2_{\H^s(0,T;\V)} \,:=\, \int^T_0 \sum^{s}_{i=0} \|\partial^{i}_t f(t)\|^2_{\V}\,dt\,, \\[3ex]
\ds \|f\|_{\L^\infty(0,T;\V)} \,:=\, \mathop{\esssup}\limits_{t\in [0,T]} \|f(t)\|_{\V}\,,\quad
\|f\|_{\W^{1,\infty}(0,T;\V)} \,:=\, \mathop{\esssup}\limits_{t\in [0,T]} \left\{ \|f(t)\|_{\V} + \|\partial_t f(t)\|_{\V}  \right\}\,.
\end{array}
\end{equation*}

\section{The model problem}

Let $\Omega\subset \R^n$, $n\in\{2,3\}$ be a Lipschitz domain with polytopal boundary, which is 
subdivided into two non-overlapping and possibly non-connected regions: 
a fluid region $\Omega_f$ and a poroelastic region $\Omega_p$.
Let $\Gamma_{fp} = \partial\Omega_f\cap\partial\Omega_p$ denote the (nonempty) 
interface between these regions and let $\Gamma_f = \partial\Omega_f\setminus\Gamma_{fp}$ 
and $\Gamma_p = \partial\Omega_p\setminus \Gamma_{fp}$ denote 
the external parts of the boundary $\partial\Omega$.
We denote by $\bn_f$ and $\bn_p$ the unit normal vectors which point outward 
from $\partial\Omega_f$ and $\partial\Omega_p$, respectively, 
noting that $\bn_f = - \bn_p$ on $\Gamma_{fp}$.
Let $(\bu_\star,p_\star)$ be the velocity--pressure pair in $\Omega_\star$ 
with $\star\in\{f,p\}$, and let $\bbeta_p$ be the displacement in $\Omega_p$.
Let $\mu>0$ be the fluid viscosity, let $\rho$ be the density, let $\f_\star$ be 
the body force terms, which do not depend on time, and let $q_p$ be external source or sink term. The flow in $\Omega_f$ is governed by the Navier--Stokes equations:
\begin{subequations}\label{eq:Navier-Stokes-1}
\begin{gather}
\ds \rho\,(\nabla\bu_f)\,\bu_f - \bdiv(\bsi_f) = \f_f,\quad
\div(\bu_f) = 0 \qin \Omega_f\times (0,T] \,, \label{eq:Navier-Stokes-1-a}\\[1ex]
\ds (\bsi_f - \rho\,(\bu_f\otimes\bu_f))\,\bn_f=\0  \qon \Gamma_f^\rN\times (0,T], \quad
\bu_f = \0 \qon \Gamma_f^\rD\times (0,T] \,, \label{eq:Navier-Stokes-1-b}
\end{gather}
\end{subequations}
where $\bsi_f := -\,p_f\,\bI + 2\,\mu\,\be(\bu_f)$ denotes the stress tensor 
and $\Gamma_f=\Gamma_f^\rN \cup \Gamma_f^\rD$.
While the standard Navier--Stokes equations are presented above to describe 
the behavior of the fluid in $\Omega_f$, in this work we make use of 
an equivalent version of \eqref{eq:Navier-Stokes-1} based on the introduction 
of a pseudostress tensor combining the stress tensor $\bsi_f$ with the convective term.
More precisely, analogously to \cite{cort2017,cgot2016,cot2016,cgos2017}, and \cite{gov2020}, 
we introduce the nonlinear-pseudostress tensor
\begin{equation}\label{eq:nonlinear-stress-Tf}
\bT_f := \bsi_f - \rho\,(\bu_f\otimes\bu_f) =  -\,p_f\,\bI + 2\,\mu\,\be(\bu_f) - \rho\,(\bu_f\otimes\bu_f) \qin \Omega_f\times (0,T] \,.
\end{equation}
In this way, applying the matrix trace to the tensor $\bT_f$, and utilizing 
the incompressibility condition $\div(\bu_f) = 0$ in $\Omega_f\times (0,T]$, one arrives at 
\begin{equation}\label{eq:pressure-pf}
p_f = -\frac{1}{n}\,\left( \tr(\bT_f)  + \rho\,\tr(\bu_f\otimes\bu_f) \right) \qin \Omega_f\times (0,T] \,.
\end{equation}
Hence, replacing back \eqref{eq:pressure-pf} into \eqref{eq:nonlinear-stress-Tf}, and using the definition of the deviatoric operator \eqref{trace-dev-etc}, we obtain $\bT_f^\rd = 2\,\mu\,\be(\bu_f) - \rho\,(\bu_f\otimes\bu_f)^\rd$. Therefore
\eqref{eq:Navier-Stokes-1} can be rewritten, equivalently, 
as the set of equations with unknowns $\bT_f$ and $\bu_f$, given by
\begin{subequations}\label{eq:Navier-Stokes-2}
\begin{gather}
\ds \frac{1}{2\,\mu}\,\bT_f^\rd = \nabla\bu_f - \bgamma_f(\bu_f) - \frac{\rho}{2\,\mu}\,(\bu_f\otimes\bu_f)^\rd,\quad
-\,\bdiv(\bT_f) = \f_f,\quad \bT_f = \bT^\rt_f \qin \Omega_f\times (0,T] \,, \label{eq:Navier-Stokes-2-a} \\[1ex]
\ds \bT_f \bn_f = \0 \qon \Gamma_f^\rN \times (0,T], \quad 
\bu_f = \0 \qon \Gamma_f^\rD\times (0,T] \,, \label{eq:Navier-Stokes-2-c}
\end{gather}
\end{subequations}
where $\bgamma_f(\bu_f) := \dfrac{1}{2}\,\big( \nabla\bu_f - (\nabla\bu_f)^\rt \big)$ 
is the vorticity (or the skew-symmetric part of the velocity gradient tensor $\nabla \bu_f$). 
Notice that, as suggested by \eqref{eq:pressure-pf}, $p_f$ is eliminated from the present formulation and can be computed afterwards in terms of $\bT_f$ and $\bu_f$.
In addition, the fluid stress $\bsi_f$ can be recovered from \eqref{eq:nonlinear-stress-Tf}. For simplicity we assume that $|\Gamma^\rN_f| > 0$, which will allow us to control $\bT_f$ by $\bT_f^\rd$, cf. \eqref{eq:tau-H0div-Xf-inequality}.
The case $|\Gamma^\rN_f| = 0$ can be handled as in \cite{gmor2014,gos2011,gov2020} by introducing an additional variable corresponding to the mean value of $\tr(\bT_f)$. We further note that it is also possible to consider the boundary condition $\bsi_f\bn_f = \0$ on $\Gamma_f^\rN$, leading to the Robin-type boundary condition $\bT_f \bn_f + \rho \, (\bu_f\otimes\bu_f)\bn_f = \0$ on $\Gamma_f^\rN$. In this case the space for $\bT_f$ is unrestricted on $\Gamma_f^\rN$ and the third and fourth terms in \eqref{eq:continuous-weak-formulation-1j} below become $\pil\bT_f\bn_f,\bv_f\pir_{\Gamma_{fp}\cup \Gamma^\rN_f} + \rho\,\pil\bu_f\cdot\bn_f,\bu_f\cdot\bv_f\pir_{\Gamma_{fp}\cup \Gamma^\rN_f}$, which can be handled in the same way. In addition, the control of $\bT_f$ by $\bT_f^\rd$ can be achieved similarly to the case $|\Gamma^\rN_f| = 0$.

In turn, let $\bsi_e$ and $\bsi_p$ be the elastic and poroelastic stress tensors, respectively:
\begin{equation}\label{def:bsie-bsip}
A(\bsi_e) \,= \be(\bbeta_p) \qan
\bsi_p \,:=\, \bsi_e - \alpha_p\,p_p\,\bI \qin \Omega_p\times (0,T] \,,
\end{equation}
where $0 < \alpha_p \leq 1$ is the Biot--Willis constant, and $A: \bbS \rightarrow \bbM$ 
is the symmetric and positive definite compliance tensor,
satisfying, for some $0 < a_{\min} \le a_{\max} < \infty$,
\begin{equation}\label{eq:A-bounds}
\forall\, \btau\in\R^{n\times n}, \quad a_{\min}
\, \btau : \btau \, \leq \, A(\btau):\btau \, \leq \, a_{\max} \,
\btau : \btau \quad \forall\, \bx\in\Omega_p.
\end{equation}
In the isotropic case $A$ has the form, for all symmetric tensors $\btau$,
\begin{equation}\label{def:operator-A}
A(\btau) := \frac{1}{2\,\mu_p}\,\left(\btau - \frac{\lambda_p}{2\,\mu_p + n\,\lambda_p}\,\tr(\btau)\,\bI\right),\quad\mbox{with}\quad
A^{-1}(\btau) = 2\,\mu_p\,\btau + \lambda_p\,\tr(\btau)\,\bI,
\end{equation}
where
$\ds 0<\lambda_{\min}\leq \lambda_p(\bx)\leq \lambda_{\max}$ and 
$\ds 0<\mu_{\min} \leq\mu_p(\bx) \leq\mu_{\max}$ are the Lam\'e parameters.
In this case, 
$\bsi_e \,:=\, \lambda_p\,\div(\bbeta_p)\,\bI + 2\,\mu_p\,\be(\bbeta_p)$,
$\ds a_{\min}=\frac{1}{2 \mu_{\max} + n \, \lambda_{\max}}$, and 
$\ds a_{\max}=\frac{1}{2 \mu_{\min}}$. As in \cite{lee2016}, we extend the definition of $A$ on $\bbM$ such that it is a positive constant multiple of the identity map on $\bbN$. 
The poroelasticity region $\Omega_p$ is governed by 
the quasistatic Biot system \cite{b1941}:
\begin{subequations}\label{eq:Biot-model}
\begin{gather}
\ds -\,\bdiv(\bsi_p) = \f_p,\quad 
\mu\,\bK^{-1}\,\bu_p + \nabla\,p_p = \0 \qin \Omega_p\times(0,T], \label{eq:Biot-model-a} \\
\ds \frac{\partial}{\partial t}\left( s_0\,p_p + \alpha_p\,\div(\bbeta_p) \right) + \div(\bu_p) = q_p \qin \Omega_p\times(0,T], \label{eq:Biot-model-b} \\[1ex]
\ds \bu_p\cdot\bn_p = 0 \qon \Gamma^\rN_p\times (0,T],\quad 
  p_p = 0 \qon \Gamma^\rD_p\times (0,T], \label{eq:Biot-model-c} \\[1ex]
    \ds \bsi_p\bn_p = \0 \qon \tilde\Gamma^\rN_p \times (0,T], \quad
    \bbeta_p = \0 \qon \tilde\Gamma^\rD_p\times (0,T], \label{eq:Biot-model-d}
\end{gather}
\end{subequations}
where $\Gamma_p = \Gamma^\rN_p\cup \Gamma^\rD_p = \tilde\Gamma^\rN_p\cup \tilde\Gamma^\rD_p$,
$s_0 > 0$ is a constant storage coefficient 
and $\bK$ the symmetric and uniformly positive definite rock permeability tensor, 
satisfying, for some constants $0< k_{\min}\leq k_{\max}$,
\begin{equation}\label{eq:K-bounds}
\forall\, \bw\in\R^n \quad k_{\min}\,\bw \cdot \bw \leq (\bK\bw)\cdot \bw \leq k_{\max}\,\bw\cdot \bw \quad \forall\, \bx\in\Omega_p.
\end{equation}
We consider a range $0 < s_{0,\min} \le s_0 \le s_{0,\max}$ for the storage coefficient. Since locking in poroelasticity may occur for small values of $s_0$, in the analysis we explicitly track the dependence of the constants on $s_{0,\min}$ and note that they may depend on $s_{0,\max}$. To avoid the issue with restricting the mean value 
of the pressure, we assume that $|\Gamma^\rD_p| > 0$.
We also assume that $\Gamma^\rD_p$ and $\tilde\Gamma^\rD_p$ are not adjacent 
to the interface $\Gamma_{fp}$, i.e., $\exists\,s>0$ such that 
$\dist(\Gamma^\rD_p,\Gamma_{fp}) \geq s >0$ and $\dist(\tilde\Gamma^\rD_p,\Gamma_{fp}) \geq s >0$.
This assumption is used to simplify the characterization 
of the normal trace spaces on $\Gamma_{fp}$.

Next, we introduce the transmission conditions on the interface $\Gamma_{fp}$:
\begin{subequations}\label{eq:interface-conditions-1}
\begin{gather}
\ds \bu_f\cdot\bn_f + \left(\frac{\partial\bbeta_p}{\partial t} + \bu_p\right)\cdot\bn_p = 0, \quad 
\bsi_f\bn_f + \bsi_p\bn_p = \0 \qon \Gamma_{fp}\times (0,T], \label{eq:interface-conditions-1-a}\\
  \ds \bsi_f\bn_f + \mu \,\alpha_{\BJS}\sum^{n-1}_{j=1}\,\sqrt{\bK^{-1}_j}
  \left(\left(\bu_f - \frac{\partial\bbeta_p}{\partial t}\right)\cdot\bt_{f,j}\right)\,\bt_{f,j} = -\,p_p\bn_f \qon \Gamma_{fp}\times (0,T], \label{eq:interface-conditions-1-b}
\end{gather}
\end{subequations}
where $\bt_{f,j}$, $1\leq j\leq n-1$, is an orthogonal system of unit tangent 
vectors on $\Gamma_{fp}$, $\bK_j = (\bK\,\bt_{f,j})\cdot\bt_{f,j}$, and $\alpha_{\BJS} \geq 0$ is an experimentally determined friction coefficient.
The equations in \eqref{eq:interface-conditions-1-a} correspond
to mass conservation and conservation of momentum on $\Gamma_{fp}$, respectively, 
whereas \eqref{eq:interface-conditions-1-b} can be decomposed into its normal 
and tangential components, as follows:
\begin{equation*}\label{eq:interface-conditions-2}
(\bsi_f\bn_f)\cdot\bn_f = -\,p_p,\quad
(\bsi_f\bn_f)\cdot\bt_{f,j} = -\mu\,\alpha_{\BJS}\,\sqrt{\bK^{-1}_j}\left(\bu_f - \frac{\partial\bbeta_p}{\partial t}\right)\cdot\bt_{f,j} \qon \Gamma_{fp}\times (0,T],
\end{equation*}
representing balance of force and the Beavers--Joseph--Saffman (BJS) slip with friction condition, respectively.
The second equation in \eqref{eq:interface-conditions-1-a} and 
\eqref{eq:interface-conditions-1-b} can be rewritten in terms of tensor $\bT_f$ as follows:
\begin{subequations}\label{eq:interface-conditions-3}
\begin{gather}
\ds \bT_f\bn_f + \rho (\bu_f\otimes\bu_f)\bn_f + \bsi_p\bn_p = \0 \qon \Gamma_{fp}\times (0,T], \label{eq:interface-conditions-3a} \\[1ex]
\ds \bT_f\bn_f + \rho (\bu_f\otimes\bu_f)\bn_f + \mu\,\alpha_{BJS}\sum^{n-1}_{j=1} \sqrt{\bK^{-1}_j}\left(\left(\bu_f - \frac{\partial\bbeta_p}{\partial t}\right)\cdot\bt_{f,j}\right) \bt_{f,j} = - p_p\bn_f \,\,\,\mbox{ on }\,\, \Gamma_{fp}\times (0,T] . \label{eq:interface-conditions-3b} 
\end{gather}
\end{subequations}

Finally, the above system of equations is complemented by the initial condition
$p_p(\bx,0) = p_{p,0}(\bx)$ in $\Omega_p$.
In Lemma~\ref{lem:initial-condition} below we will construct compatible initial data for the rest of the variables from $p_{p,0}$ in a way that all equations in the system \eqref{eq:Navier-Stokes-2}--\eqref{eq:interface-conditions-3}, except for the unsteady conservation of mass equation in \eqref{eq:Biot-model-b}, hold at $t=0$.

\section{The weak formulation}

In this section we proceed analogously to \cite[Section~3]{aeny2019} 
(see also \cite{cgos2017,gmor2014}) and derive a weak formulation of 
the coupled problem given by \eqref{eq:Navier-Stokes-2}, \eqref{eq:Biot-model}, 
\eqref{eq:interface-conditions-1} and \eqref{eq:interface-conditions-3}.

\subsection{Preliminaries}\label{sec:preliminaries}

We first introduce further notation and definitions. Given $\star\in\big\{ f, p \big\}$, we set
\begin{equation*}
(p,w)_{\Omega_\star} := \int_{\Omega_\star} p\,w,\quad 
(\bu,\bv)_{\Omega_\star} := \int_{\Omega_\star} \bu\cdot\bv \qan
(\bT,\bR)_{\Omega_\star} := \int_{\Omega_\star} \bT:\bR.
\end{equation*}
In addition, similarly to \cite{cgot2016,cgos2017}, in the sequel we will employ 
the following Hilbert spaces to deal with the nonlinear pseudostress tensor and 
velocity of the Navier--Stokes equation, respectively:
\begin{equation*}
\bbX_f := \Big\{ \bR_f\in \bbH(\bdiv; \Omega_f) :\quad \bR_f\bn_f=\0 \qon \Gamma_f^\rN  \Big\},\quad 
\bV_f := \Big\{ \bv_f\in \bH^1(\Omega_f) :\quad \bv_f = \0 \qon \Gamma_f^\rD \Big\},
\end{equation*}
endowed with the corresponding norms
\begin{equation*}
\|\bR_f\|_{\bbX_f} := \|\bR_f\|_{\bbH(\bdiv;\Omega_f)},\quad
\|\bv_f\|_{\bV_f} := \|\bv_f\|_{\bH^1(\Omega_f)}\,.
\end{equation*}
For the unknowns in the Biot region we introduce the following Hilbert spaces:
\begin{equation*}
\begin{array}{c}
\bbX_p := \Big\{\btau_p \in \bbH(\bdiv;\Omega_p) : \ \btau_p\bn_p = \0 \ \ \hbox{on} \ \ \tilde\Gamma^\rN_p \Big\},\quad \bV_s:=\bL^2(\Omega_p),\quad
\bbQ_p := \Big\{ \bchi_p\in \bbL^2(\Omega_p) : \ \bchi^\rt_p = -\,\bchi_p \Big\}, \\[2ex]
\bV_p := \Big\{ \bv_p\in \bH(\div;\Omega_p) : \ \bv_p\cdot\bn_p = 0 \ \ \hbox{on} \ \ \Gamma^\rN_p \Big\},\quad
\W_p := \L^2(\Omega_p),
\end{array}
\end{equation*}
endowed with the standard norms
\begin{equation*}
\begin{array}{c}
\ds \|\btau_p\|_{\bbX_p} := \|\btau_p\|_{\bbH(\bdiv;\Omega_p)},\quad
\|\bv_s\|_{\bV_s} := \|\bv_s\|_{\bL^2(\Omega_p)},\quad
\|\bchi_p\|_{\bbQ_p} := \|\bchi_p\|_{\bbL^2(\Omega_p)}, \\ [2ex]
\ds \|\bv_p\|_{\bV_p} := \|\bv_p\|_{\bH(\div;\Omega_p)},\quad
\|w_p\|_{\W_p} := \|w_p\|_{\L^2(\Omega_p)}.
\end{array}
\end{equation*}

Finally, we need to introduce the spaces of traces $\Lambda_p := (\bV_p \cdot \bn_p|_{\Gamma_{fp}})'$ and $\bLambda_s := (\bbX_p \, \bn_p|_{\Gamma_{fp}})'$. According to the normal trace theorem, since $\bv_p\in \bV_p\subset \bH(\div;\Omega_p)$, then
$\bv_p\cdot\bn_p\in \H^{-1/2}(\partial \Omega_p)$.  It is shown in
\cite{galvis2007} that, if $\bv_p\cdot\bn_p = 0$ on
$\partial\,\Omega_p\setminus \Gamma_{fp}$, then $\bv_p\cdot\bn_p \in
\H^{-1/2}(\Gamma_{fp})$. This argument has been modified in \cite{akyz2018}
for the case $\bv_p\cdot\bn_p = 0$ on $\Gamma^\rN_p$ and 
$\dist(\Gamma^\rD_p,\Gamma_{fp}) \geq s > 0$. In particular, it holds that
\begin{equation}\label{eq:trace-inequality-1}
\pil \bv_p \cdot \bn_p, \xi \pir_{\Gamma_{fp}} 
\,\leq\, C\,\| \bv_p \|_{\bH(\div; \Omega_p)} \| \xi \|_{\H^{1/2}(\Gamma_{fp})} \quad \forall\,\bv_p \in \bV_p, \, \xi \in \H^{1/2}(\Gamma_{fp}).
\end{equation}
Similarly,
\begin{equation}\label{trace-sigma}
  \langle \btau_p \, \bn_p,\bphi \rangle_{\Gamma_{fp}}
  \le C \|\btau_p\|_{\bbH(\bdiv;\Omega_p)}\|\bphi\|_{\bH^{1/2}(\Gamma_{fp})}
  \quad \forall \, \btau_p \in \bbX_p, \,
  \bphi \in \bH^{1/2}(\Gamma_{fp}).
\end{equation}
Therefore we can take 
\begin{equation}\label{eq:trace-spaces}
\Lambda_p := \H^{1/2}(\Gamma_{fp}) \qan \bLambda_s := \bH^{1/2}(\Gamma_{fp}) 
\end{equation}
endowed with the norms
$\|\xi\|_{\Lambda_p} := \|\xi\|_{\H^{1/2}(\Gamma_{fp})}$ and 
$\|\bphi\|_{\bLambda_s} := \|\bphi\|_{\bH^{1/2}(\Gamma_{fp})}$.

\subsection{Lagrange multiplier weak formulation}

We now proceed with the derivation of the Lagrange multiplier weak
formulation for the coupling of the Navier--Stokes and Biot problems.
To this end, and inspired by \cite{aeny2019}, we begin by introducing 
the structure velocity $\bu_s := \partial_t\,\bbeta_p\in \bV_s$ and two Lagrange multipliers 
that represent the traces of the structure velocity and the Darcy pressure on the interface, respectively:
\begin{equation*}
\btheta := \bu_s|_{\Gamma_{fp}}\in \bLambda_s \qan
\lambda := p_p|_{\Gamma_{fp}}\in \Lambda_p,
\end{equation*}
where we use the notation $\ds \partial_t :=\frac{\partial}{\partial t}$.
In order to impose the symmetry of $\bsi_p$ in a weak sense, we introduce 
the rotation operator $\ds \brho_p:=\frac{1}{2}\big( \nabla\bbeta_p - (\nabla\bbeta_p)^\rt \big)$. In the weak formulation we will use its time derivative, that is, 
the structure rotation velocity
\begin{equation*}
\bgamma_p := \partial_t \brho_p
= \frac{1}{2}\big(\nabla\bu_s - (\nabla\bu_s)^\rt \big) \in \bbQ_p\,.
\end{equation*}
From the definition of 
the elastic and poroelastic stress tensors $\bsi_e$, $\bsi_p$ (cf. \eqref{def:bsie-bsip}) 
and recalling that $\bsi_e$ is connected to the displacement $\bbeta_p$ through 
the relation $A(\bsi_e)=\be(\bbeta_p)$, we deduce the identities
\begin{equation}\label{eq:div-bbeta_p}
\div(\bbeta_p) \,=\, \tr (\be(\bbeta_p)) \,=\, \tr(A(\bsi_e)) \,=\, \tr(A(\bsi_p + \alpha_p\,p_p\,\bI)) \,,
\end{equation}
\begin{equation}\label{eq:partial-t-A-identity}
\qan \partial_t\,A(\bsi_p + \alpha_p\,p_p\,\bI) 
\,=\, \nabla\,\bu_s - \bgamma_p\,.
\end{equation}
Then we test the first equation in \eqref{eq:Navier-Stokes-2-a}, the second equation of \eqref{eq:Biot-model-a}, and \eqref{eq:partial-t-A-identity} with arbitrary $\bR_f\in \bbX_f, \bv_p\in \bV_p$, and $\btau_p \in \bbX_p$, respectively, integrate by parts, and utilize the fact that $\bT^\rd_f:\bR_f = \bT^\rd_f:\bR^\rd_f$. We further test \eqref{eq:Biot-model-b} with $w_p\in \W_p$ employing \eqref{eq:div-bbeta_p} and impose the remaining equations weakly, as well as the symmetry of $\bsi_p$ and the transmission conditions in the first 
equation of \eqref{eq:interface-conditions-1-a} and \eqref{eq:interface-conditions-3} 
to obtain the following variational problem. Given
$\f_f\in \bL^2(\Omega_f)$, $\f_p\in \bL^2(\Omega_p)$, and 
$q_p:[0,T] \to \L^2(\Omega_p)$, find $(\bT_f, \bu_f, \bsi_p, \bu_s, \bgamma_p, \bu_p, p_p, \lambda, \btheta):[0,T] \rightarrow \bbX_{f} \times \bV_f \times \bbX_p \times \bV_s \times \bbQ_p \times \bV_p \times \W_p \times \Lambda_p \times \bLambda_s$, such that for all
$\bR_f\in \bbX_{f}, \bv_f \in \bV_f, \btau_p\in \bbX_p, \bv_s\in \bV_s, \bchi_p\in \bbQ_p, \bv_p\in \bV_p, w_p\in \W_p, \xi\in \Lambda_p, \bphi\in \bLambda_s$, and for a.e. $t \in (0,T)$,
\begin{subequations}\label{eq:continuous-weak-formulation-1}
  \begin{align}
&\ds \frac{1}{2\mu} (\bT^\rd_f,\bR^\rd_f)_{\Omega_f} + (\bu_f,\bdiv(\bR_f))_{\Omega_f}
+ (\bgamma_f(\bu_f),\bR_f)_{\Omega_f} 
- \pil\bR_f\bn_f,\bu_f \pir_{\Gamma_{fp}}
+ \frac{\rho}{2\mu}((\bu_f\otimes\bu_f)^\rd,\bR_f)_{\Omega_f} =   0, \label{eq:continuous-weak-formulation-1a}\\[1ex]
& 
\ds - (\bv_f,\bdiv(\bT_f))_{\Omega_f}
- (\bT_f,\bgamma_f(\bv_f))_{\Omega_f}
+ \pil\bT_f\bn_f,\bv_f\pir_{\Gamma_{fp}}
+ \rho\,\pil\bu_f\cdot\bn_f,\bu_f\cdot\bv_f\pir_{\Gamma_{fp}}
\nonumber \\
& \ds \qquad
+ \mu\,\alpha_{BJS}\,\sum_{j=1}^{n-1}\pil\sqrt{\bK_j^{-1}}
\left( \bu_f - \btheta \right)\cdot\bt_{f,j},\bv_f\cdot\bt_{f,j} \pir_{\Gamma_{fp}}
+ \pil\bv_f\cdot\bn_f,\lambda\pir_{\Gamma_{fp}} 
= \ds (\f_f,\bv_f)_{\Omega_f},
\label{eq:continuous-weak-formulation-1j} \\[1ex]
&(\partial_t\,A(\bsi_p+\alpha_p\,p_p\,\bI), \btau_p)_{\Omega_p} 
+ (\bu_s, \bdiv(\btau_p))_{\Omega_p} 
+ (\bgamma_p,\btau_p)_{\Omega_p} 
- \pil\btau_p\bn_p,\btheta\pir_{\Gamma_{fp}} = 0, \label{eq:continuous-weak-formulation-1d} \\[1ex]
&-(\bv_s,\bdiv(\bsi_p))_{\Omega_p}  =  (\f_p, \bv_s)_{\Omega_p}, \label{eq:continuous-weak-formulation-1e}\\[1ex]
&-(\bsi_p,\bchi_p)_{\Omega_p} = 0, \label{eq:continuous-weak-formulation-1f}\\[1ex]
&\ds \mu (\bK^{-1}\bu_p,\bv_p)_{\Omega_p} - (p_p,\div(\bv_p))_{\Omega_p} + \pil\bv_p\cdot\bn_p,\lambda\pir_{\Gamma_{fp}}  =  0, \label{eq:continuous-weak-formulation-1g}\\[1ex] 
&\ds s_0\,(\partial_t\,p_p, w_p)_{\Omega_p} + (\partial_t\,A(\bsi_p+\alpha_p\,p_p\,\bI), \alpha_p\,w_p\,\bI)_{\Omega_p}+ (w_p,\div(\bu_p))_{\Omega_p} = (q_p,w_p)_{\Omega_p},  \label{eq:continuous-weak-formulation-1h}\\[1ex]
&\ds -\pil\bu_f\cdot\bn_f + \left(\btheta + \bu_p\right)\cdot\bn_p,\xi\pir_{\Gamma_{fp}} = 0, \label{eq:continuous-weak-formulation-1i}\\
&\ds \pil\bsi_p\bn_p,\bphi\pir_{\Gamma_{fp}}
- \mu\,\alpha_{BJS}\,\sum_{j=1}^{n-1} \pil\sqrt{\bK_j^{-1}}\left( \bu_f - \btheta \right)\cdot\bt_{f,j},\bphi\cdot\bt_{f,j} \pir_{\Gamma_{fp}}
+ \pil\bphi\cdot\bn_p,\lambda\pir_{\Gamma_{fp}} = 0 . \label{eq:continuous-weak-formulation-1k} 
\end{align}
\end{subequations}
Note that \eqref{eq:continuous-weak-formulation-1a}--\eqref{eq:continuous-weak-formulation-1j} 
correspond to the Navier--Stokes equations, 
\eqref{eq:continuous-weak-formulation-1d}--\eqref{eq:continuous-weak-formulation-1f} 
are the elasticity equations, and \eqref{eq:continuous-weak-formulation-1g}--\eqref{eq:continuous-weak-formulation-1h} 
are the Darcy equations, whereas 
\eqref{eq:continuous-weak-formulation-1i}--\eqref{eq:continuous-weak-formulation-1k}, together with the interface terms in \eqref{eq:continuous-weak-formulation-1j},
enforce weakly the interface conditions. 
We will discuss the construction of initial conditions for the problem \eqref{eq:continuous-weak-formulation-1} later on in Lemma~\ref{lem:initial-condition}.

\begin{rem}\label{rem: time-diff}
The time differentiated equation \eqref{eq:continuous-weak-formulation-1d} allows us to
eliminate the displacement variable $\bbeta_p$ and obtain a
formulation that uses only $\bu_s$. As part of the analysis we will
construct suitable initial data such that, by integrating
\eqref{eq:continuous-weak-formulation-1d} in time, we can recover the original equation
\begin{equation}\label{non-diff-eq}
(A(\bsi_p+\alpha  p_p\bI),\btau_p)_{\Omega_p}
+(\bbeta_p, \bdiv(\btau_p))_{\Omega_p}
+(\brho_p,\btau_p)_{\Omega_p}
-\langle \btau_p \bn_p, \bpsi \rangle_{\Gamma_{fp}} = 0\,,
\end{equation}
where $\bpsi:= \bbeta_p|_{\Gamma_{fp}}$.
\end{rem}

We observe that, similarly to \cite[eq. (3.5)]{aggr2018} 
(see also \cite{gov2020} for an alternative approach) and since 
$\big\{ \bgamma_f(\bv_f) \,: \ \bv_f\in \bH^1(\Omega_f) \big\}$ is a proper-subspace 
of the skew-symmetric tensor space, the term $(\bT_f,\bgamma_f(\bv_f))_{\Omega_f}$ in 
\eqref{eq:continuous-weak-formulation-1j} 
imposes the symmetry of $\bT_f$ in an ultra-weak sense.
Notice also that the terms $((\bu_f\otimes\bu_f)^\rd,\bR_f)_{\Omega_f}$ in \eqref{eq:continuous-weak-formulation-1a} and
$\langle \bu_f\cdot\bn_f, \bu_f\cdot\bv_f \rangle_{\Gamma_{fp}}$ in 
\eqref{eq:continuous-weak-formulation-1j} require $\bu_f$ 
to live in a smaller space than $\bL^2(\Omega_f)$.
In fact, by applying the Cauchy--Schwarz and H\"older inequalities, 
the continuous injections $\bi_c$ of $\bH^1(\Omega_f)$ into $\bL^4(\Omega_f)$ 
and $\bi_\Gamma$ of $\bH^{1/2}(\partial \Omega_f)$ into $\bL^4(\partial \Omega_f)$, 
and the continuous trace operator $\gamma_0: \bH^1(\Omega_f)\to \bL^2(\partial \Omega_f)$, 
there hold
\begin{equation}\label{eq:injection-H1-into-L4}
  \big|((\bu_f\otimes\bw_f)^\rd,\bR_f)_{\Omega_f} \big| \leq \|\bi_c\|^2 \|\bw_f\|_{\bH^1(\Omega_f)} \|\bu_f\|_{\bH^1(\Omega_f)} \|\bR_f\|_{\bbL^2(\Omega_f)}
    \end{equation}
and
\begin{equation}\label{eq:injection-H1-into-L4-interface}
\big|\langle \bw_f\cdot\bn_f, \bu_f\cdot\bv_f \rangle_{\Gamma_{fp}}\big|
\leq\ \|\bi_{\Gamma}\|^2 \|\gamma_0\| \|\bw_f\|_{\bH^1(\Omega_f)} \|\bu_f\|_{\bH^1(\Omega_f)} \|\bv_f\|_{\bH^1(\Omega_f)},
\end{equation}
for all $\bu_f, \bv_f, \bw_f\in \bH^1(\Omega_f)$ and $\bR_f \in \bbL^2(\Omega_f)$.
Accordingly, we look for $\bu_f$ in $\bV_f$. We also have 
\begin{equation}\label{Rfnfvf}
  \big|\<\bR_f\bn_f,\bv_f\>_{\Gamma_{fp}}\big| \leq C \|\bR_f\|_{\bbH(\bdiv;\Omega_f)}\|\bv_f\|_{\bH^1(\Omega_f)} \quad
  \forall \, \bR_f \in \bbX_f, \ \bv_f \in \bV_f.
\end{equation}
In the case of $\Gamma_f^\rN$ adjacent to $\Gamma_{fp}$, \eqref{Rfnfvf} follows similarly to \eqref{trace-sigma}. In the case of $\Gamma_f^\rD$ adjacent to $\Gamma_{fp}$, it follows from
$\<\bR_f\bn_f,\bv_f\>_{\Gamma_{fp}} = \<\bR_f\bn_f,\tilde\bv_f\>_{\partial\Omega_f}$, where
$\tilde\bv_f \in \bH^{1/2}(\partial\Omega_f)$ is the extension by zero of $\bv_f|_{\Gamma_{fp}}$.
In addition, it holds that
\begin{align}
  &  \big|\<\bv_f\cdot\bt_{f,j},\bphi\cdot\bt_{f,j}\>_{\Gamma_{fp}} \big|\le \|\gamma_0\|\|\bv_f\|_{\bH^1(\Omega_f)} \|\bphi\|_{\bL^2(\Gamma_{fp})} \quad \forall \, \bv_f \in \bV_f, \ \bphi \in \bLambda_s, \label{bjs-cont} \\
  & \big|\<\bv_f\cdot\bn_f,\xi\>_{\Gamma_{fp}} \big|\le \|\gamma_0\|\|\bv_f\|_{\bH^1(\Omega_f)} \|\xi\|_{\L^2(\Gamma_{fp})} \quad \forall \, \bv_f \in \bV_f, \ \xi \in \Lambda_p. \label{bgamma-cont}
  \end{align}
Finally, in order to obtain control on $\bT_f$ in the $\bbH(\bdiv;\Omega_f)$-norm and on $\bu_f$ in the $\bH^1(\Omega_f)$-norm, we augment the system with the following redundant Galerkin-type terms:
\begin{subequations}\label{eq:redundant-galerkin-terms}
\begin{gather}
\ds \kappa_1\,\big(\bdiv(\bT_f) + \f_f,\bdiv(\bR_f)\big)_{\Omega_f} 
= 0 \quad \forall\,\bR_f\in \bbX_{f}\,, \label{eq:redundant-galerkin-terms-a} \\[1ex]
\ds \kappa_2\left(\be(\bu_f) - \frac{\rho}{2\,\mu}\,(\bu_f\otimes\bu_f)^\rd - \frac{1}{2\,\mu}\,\bT^\rd_f, \be(\bv_f)\right)_{\Omega_f} 
= 0 \quad \forall\,\bv_f\in \bV_f\,, \label{eq:redundant-galerkin-terms-b}
\end{gather}
\end{subequations}
where $\kappa_1$ and $\kappa_2$ are positive parameters to be specified later.
Notice that the above terms are consistent expressions arising from the equilibrium and constitutive equations. It is easy to see that each solution of the original system is also a solution of the augmented one, and hence by solving the latter we find all solutions of the former. We emphasize that without the augmented terms, it is not possible to control $\bT_f$ in the $\bbH(\bdiv;\Omega_f)$-norm and $\bu_f$ in the $\bH^1(\Omega_f)$-norm, so they are needed to obtain a well-posed formulation with the current choice of functional spaces.

There are many different ways of ordering the equations in \eqref{eq:continuous-weak-formulation-1}. For the sake of the subsequent analysis, we proceed as in \cite{aeny2019} and \cite{gov2020}, and adopt one leading to an evolution problem in a mixed form, by grouping the spaces, unknowns and test functions as follows:
\begin{equation*}
\begin{array}{c}
\ds \bQ := \bbX_{p}\times \W_p\times \bV_p\times \bbX_f \times \bV_f\times \bLambda_s,\quad
\bS := \Lambda_p \times \bV_s\times \bbQ_p \,, \\[1.5ex]
\ds \bp := (\bsi_p,  p_p, \bu_p, \bT_f, \bu_f, \btheta )\in \bQ,\quad 
\br := (\lambda, \bu_s, \bgamma_p )\in \bS \,, \\[1ex]
\ds \bq := (\btau_p,  w_p, \bv_p, \bR_f, \bv_f, \bphi )\in \bQ,\quad 
\bs := (\xi, \bv_s, \bchi_p )\in \bS \,,
\end{array}
\end{equation*}
where the spaces $\bQ$ and $\bS$ are respectively endowed with the norms
\begin{align*}
\|\bq\|_{\bQ}^2 & = \|\btau_p\|_{\bbX_p}^2 + \|w_p\|_{\W_p}^2 + \|\bv_p\|_{\bV_p}^2 + \|\bR_f\|_{\bbX_f}^2 + \|\bv_f\|_{\bV_f}^2 + \|\bphi\|_{\bLambda_s}^2, \\[1ex]
\|\bs\|_{\bS}^2 & = \|\xi\|_{\Lambda_p}^2 + \|\bv_s\|_{\bV_s}^2 + \|\bchi_p\|_{\bbQ_p}^2.
\end{align*}
Furthermore, given $\bw_f\in \bV_f$, we set the bilinear forms
\begin{subequations}\label{bilinear-forms}
\begin{align}
\ds & a_e(\bsi_p, p_p; \btau_p, w_p) :=(A(\bsi_p+\alpha_p p_p \bI), \btau_p + \alpha_p w_p \bI)_{\Omega_p},\quad
a_p(\bu_p,\bv_p) := \mu\,(\bK^{-1}\bu_p,\bv_p)_{\Omega_p}\,, \label{eq:bilinear-form-ae-ap} \\[0.5ex]
\ds & a_f(\bT_f,\bu_f;\bR_f,\bv_f) := \frac{1}{2\,\mu}\,(\bT^\rd_f,\bR^\rd_f)_{\Omega_f} + \kappa_1\,(\bdiv(\bT_f),\bdiv(\bR_f))_{\Omega_f} 
+ \kappa_2\,(\be(\bu_f), \be(\bv_f) )_{\Omega_f} \nonumber \\
\ds & \quad -\,\, \frac{\kappa_2}{2\,\mu}\left(\bT^\rd_f, \be(\bv_f)\right)_{\Omega_f}
+ (\bu_f,\bdiv(\bR_f))_{\Omega_f} - (\bv_f,\bdiv(\bT_f))_{\Omega_f} \nonumber \\
\ds & \quad +\,\, (\bgamma_f(\bu_f),\bR_f)_{\Omega_f} - (\bT_f,\bgamma_f(\bv_f))_{\Omega_f}
+ \pil\bT_f\bn_f,\bv_f\pir_{\Gamma_{fp}} - \pil\bR_f\bn_f,\bu_f\pir_{\Gamma_{fp}}\,, \label{eq:bilinear-form-af} \\[1ex]
\ds & \kappa_{\bw_f}(\bT_f,\bu_f;\bR_f,\bv_f) 
:= \frac{\rho}{2\,\mu} ((\bu_f\otimes\bw_f)^\rd, \bR_f - \kappa_2\,\be(\bv_f))_{\Omega_f} 
+ \rho \pil\bw_f\cdot\bn_f,\bu_f\cdot\bv_f\pir_{\Gamma_{fp}}\,, \label{eq:bilinear-form-k-wf} \\[1ex]
\ds & b_{\bn_p}(\btau_p,\bphi) := \,\pil\btau_p\bn_p,\bphi\pir_{\Gamma_{fp}},\quad
b_\sk(\bchi_p,\btau_p) := (\bchi_p,\btau_p)_{\Omega_p}, \label{eq:bilinear-form-bnp-bsk} \\[1ex] 
\ds & b_p(w_p, \bv_p) := -\,(w_p,\div(\bv_p))_{\Omega_p},\quad
b_s(\bv_s,\btau_p) := (\bv_s,\bdiv(\btau_p))_{\Omega_p}\,, \label{eq:bilinear-form-bp-bs}
\end{align}
\end{subequations}
and the interface terms
\begin{subequations}\label{int-bilinear-forms}
\begin{align}
\ds & a_{\BJS}(\bu_f,\btheta;\bv_f,\bphi) := \mu\,\alpha_{BJS}\,\sum^{n-1}_{j=1} \pil\sqrt{\bK_j^{-1}}(\bu_f - \btheta)\cdot\bt_{f,j},(\bv_f - \bphi)\cdot\bt_{f,j}\pir_{\Gamma_{fp}}\,, \label{eq:bilinear-form-aBJS} \\
\ds & b_\Gamma(\bv_p,\bv_f,\bphi;\xi) := \pil\bv_f\cdot\bn_f + (\bphi + \bv_p)\cdot\bn_p, \xi\pir_{\Gamma_{fp}}\,. \label{eq:bilinear-form-bGamma}
\end{align}
\end{subequations}
Hence, the Lagrange variational formulation for the system \eqref{eq:continuous-weak-formulation-1} and \eqref{eq:redundant-galerkin-terms}, results in
\begin{equation}\label{eq:continuous-weak-formulation-2}
\begin{array}{l}
\ds s_0 (\partial_t p_p,w_p)_{\Omega_p} 
+ a_e(\partial_t \bsi_p, \partial_t p_p; \btau_p, w_p) 
+ a_p(\bu_p,\bv_p) + a_f(\bT_f,\bu_f;\bR_f,\bv_f)
+ \kappa_{\bu_f}(\bT_f,\bu_f;\bR_f,\bv_f) \\[2ex]
\ds\quad +\,\, a_{\BJS}(\bu_f,\btheta;\bv_f,\bphi) 
+ b_p(p_p,\bv_p) - b_p(w_p,\bu_p)
+ b_{\bn_p}(\bsi_p,\bphi) - b_{\bn_p}(\btau_p,\btheta) \\[2ex]
\ds\quad +\,\, b_s(\bu_s,\btau_p) + b_\sk(\bgamma_p,\btau_p) + b_\Gamma(\bv_p,\bv_f,\bphi;\lambda) 
\,=\, (q_p,w_p)_{\Omega_p} + (\f_f,\bv_f - \kappa_1\,\bdiv(\bR_f))_{\Omega_f} \,, \\[2ex]
\ds -\,b_s(\bv_s,\bsi_p) - b_\sk(\bchi_p,\bsi_p) -  b_\Gamma(\bu_p,\bu_f,\btheta;\xi) 
\, = \,(\f_p,\bv_s)_{\Omega_p}\,.
\end{array}
\end{equation}
We can write \eqref{eq:continuous-weak-formulation-2} in an operator notation as a degenerate evolution problem in a mixed form:
\begin{equation}\label{eq:continuous-weak-formulation-3}
\begin{array}{rcll}
\ds \frac{\partial}{\partial t}\,\cE(\bp(t)) + (\cA + \cK_{\bu_f(t)})(\bp(t)) + \cB'(\br(t)) & = & \bF(t) & \mbox{ in }\, \bQ', \\[2ex]
\ds - \cB(\bp(t)) & = & \bG & \mbox{ in }\, \bS',
\end{array}
\end{equation}
where, given $\bw_f\in \bV_f$, the operators $\cE : \bQ \to \bQ'$, $\cA : \bQ\to \bQ'$, 
$\cK_{\bw_f} : \bQ\to \bQ'$, $\cB : \bQ\to \bS'$, and the functionals 
$\bF\in \bQ'$, $\bG\in \bS'$ are defined as follows:
  \begin{subequations}\label{operators}
\begin{align}
  & \cE(\bp)(\bq) := s_0 (p_p,w_p)_{\Omega_p} 
+ a_e(\bsi_p, p_p; \btau_p, w_p), \label{defn-E}\\
  & \cA(\bp)(\bq) :=  a_p(\bu_p,\bv_p) + a_f(\bT_f,\bu_f;\bR_f,\bv_f) + a_{\BJS}(\bu_f,\btheta;\bv_f,\bphi) \nonumber \\
& \qquad\qquad\quad  + b_p(p_p,\bv_p) - b_p(w_p,\bu_p) 
  + b_{\bn_p}(\bsi_p,\bphi) - b_{\bn_p}(\btau_p,\btheta), \\
  & \cK_{\bw_f}(\bp)(\bq)  := \kappa_{\bw_f}(\bT_f,\bu_f;\bR_f,\bv_f), \\
  & \cB(\bq)(\bs) := b_s(\bv_s,\btau_p) + b_\sk(\bchi_p,\btau_p)
  +  b_\Gamma(\bv_p,\bv_f,\bphi;\xi), \\
  & \bF(\bq) := (q_p,w_p)_{\Omega_p} + (\f_f,\bv_f - \kappa_1\,\bdiv(\bR_f))_{\Omega_f}, \\
  & \bG(\bs) := (\f_p,\bv_s)_{\Omega_p}.
\end{align}
\end{subequations}

\subsection{Stability properties}

Let us now discuss the continuity properties of the operators and functionals in \eqref{operators}.

\begin{lem}\label{lem:cont}
  The operators $\cE$, $\cA$, and $\cB$ are linear and continuous:
\begin{equation}\label{eq:continuity-bounds}
\big|\cE(\bp)(\bq)\big| \,\leq\, C_{\cE}\,\|\bp\|_{\bQ} \|\bq\|_{\bQ},\quad
\big|\cA(\bp)(\bq)\big| \,\leq\, C_{\cA}\,\|\bp\|_{\bQ} \|\bq\|_{\bQ},\quad
\big|\cB(\bq)(\bs)\big| \,\leq\, C_{\cB}\,\|\bq\|_{\bQ} \|\bs\|_{\bS},
\end{equation}
where the constant $C_{\cE}>0$ depends on $s_0, \alpha_p$, and $a_{\max}$,
whereas $C_{\cA}$ and $C_{\cB}$ are positive constants depending on $\mu$, $\bK$, 
$\rho$, $\alpha_{\BJS}$, $\kappa_1$ and $\kappa_2$. The operator
$\cK_{\bw_f}$ is linear and continuous:
\begin{equation}\label{eq:continuity-of-Kwf}
\big| \cK_{\bw_f}(\bp)(\bq) \big|
\,\leq\, C_{\cK}\,\|\bw_f\|_{\bV_f} \|\bp\|_{\bQ} \|\bq\|_{\bQ},
\end{equation}
where 
\begin{equation}\label{eq:C-K-constant}
C_{\cK} := \rho\,\bigg( \frac{1+\kappa_2}{2\,\mu}\|\bi_c\|^2 + \|\bi_{\Gamma}\|^2 \|\gamma_0\| \bigg).
\end{equation}
The linear functionals $\bF\in \bQ'$ and $\bG\in \bS'$ are continuous:
\begin{equation}\label{cont-F-G}  
  \big|\bF(\bq)\big| \le C_{\bF} \|\bq\|_{\bQ}, \quad \big|\bG(\bs)\big| \le C_{\bG} \|\bs\|_{\bS},
\end{equation}
with $C_{\bF} = \left(\|q_p\|_{\L^2(\Omega_p)}^2 + (1+\kappa_1^2)\|\f_f\|_{\bL^2(\Omega_f)}^2 \right)^{1/2}$ and $C_{\bG} = \|\f_p\|_{\bL^2(\Omega_p)}$.
\end{lem}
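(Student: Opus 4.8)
The plan is to verify each of the stated continuity bounds by expanding the operator definitions in \eqref{operators} into their constituent bilinear and interface forms from \eqref{bilinear-forms}--\eqref{int-bilinear-forms}, and bounding each term using Cauchy--Schwarz together with the trace and injection estimates already established in the excerpt. First I would treat $\cE$: it consists of $s_0(p_p,w_p)_{\Omega_p}$, which is trivially bounded by $s_{0,\max}\|p_p\|_{\W_p}\|w_p\|_{\W_p}$, and $a_e(\bsi_p,p_p;\btau_p,w_p)=(A(\bsi_p+\alpha_p p_p\bI),\btau_p+\alpha_p w_p\bI)_{\Omega_p}$, for which the upper bound in \eqref{eq:A-bounds} gives $|a_e(\bsi_p,p_p;\btau_p,w_p)|\le a_{\max}\|\bsi_p+\alpha_p p_p\bI\|_{\bbL^2(\Omega_p)}\|\btau_p+\alpha_p w_p\bI\|_{\bbL^2(\Omega_p)}$, and then $\|\bsi_p+\alpha_p p_p\bI\|_{\bbL^2(\Omega_p)}\le\|\bsi_p\|_{\bbX_p}+\alpha_p\sqrt{n}\,\|p_p\|_{\W_p}$ (using $0<\alpha_p\le1$) finishes the bound with $C_{\cE}$ depending on $s_0,\alpha_p,a_{\max}$ as claimed. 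Linearity of all three operators is immediate from the bilinearity of each constituent form.

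Next I would handle $\cA$. Each summand is estimated separately: $a_p(\bu_p,\bv_p)=\mu(\bK^{-1}\bu_p,\bv_p)_{\Omega_p}$ is bounded using \eqref{eq:K-bounds} by $\mu k_{\min}^{-1}\|\bu_p\|_{\bV_p}\|\bv_p\|_{\bV_p}$; the volume terms in $a_f$ are controlled by Cauchy--Schwarz noting $\|\bT_f^\rd\|_{\bbL^2}\le\|\bT_f\|_{\bbL^2}$, $\|\be(\bv_f)\|_{\bbL^2}\le\|\bv_f\|_{\bH^1}$, $\|\bgamma_f(\bv_f)\|_{\bbL^2}\le\|\bv_f\|_{\bH^1}$, and the divergence terms directly by the $\bbH(\bdiv)$-norms; the two interface terms $\pm\pil\bT_f\bn_f,\bv_f\pir_{\Gamma_{fp}}$, $\pil\bR_f\bn_f,\bu_f\pir_{\Gamma_{fp}}$ are bounded via \eqref{Rfnfvf}. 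For $b_p$, $b_{\bn_p}$, $b_s$, $b_\sk$ use respectively Cauchy--Schwarz in $\bbH(\div)/\bL^2$, the trace estimate \eqref{trace-sigma}, and Cauchy--Schwarz in $\bbL^2$; for $a_{\BJS}$ and $b_\Gamma$ (the latter appearing in $\cB$) use \eqref{bjs-cont} and \eqref{bgamma-cont} together with \eqref{eq:trace-inequality-1} for the Darcy normal trace piece. Collecting constants yields $C_{\cA}$ depending on $\mu,\bK,\rho,\alpha_{\BJS},\kappa_1,\kappa_2$ and $C_{\cB}$ on the same (in fact $C_{\cB}$ depends only on the trace constants, but stating the weaker dependence is harmless).

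For $\cK_{\bw_f}$, which equals $\kappa_{\bw_f}(\bT_f,\bu_f;\bR_f,\bv_f)=\frac{\rho}{2\mu}((\bu_f\otimes\bw_f)^\rd,\bR_f-\kappa_2\be(\bv_f))_{\Omega_f}+\rho\pil\bw_f\cdot\bn_f,\bu_f\cdot\bv_f\pir_{\Gamma_{fp}}$, the volume term is split as $\frac{\rho}{2\mu}((\bu_f\otimes\bw_f)^\rd,\bR_f)_{\Omega_f}-\frac{\rho\kappa_2}{2\mu}((\bu_f\otimes\bw_f)^\rd,\be(\bv_f))_{\Omega_f}$; applying \eqref{eq:injection-H1-into-L4} with the roles of the arguments as written and using $\|\be(\bv_f)\|_{\bbL^2(\Omega_f)}\le\|\bv_f\|_{\bH^1(\Omega_f)}$ gives $\frac{\rho}{2\mu}\|\bi_c\|^2\|\bw_f\|_{\bH^1}\|\bu_f\|_{\bH^1}(\|\bR_f\|_{\bbL^2}+\kappa_2\|\bv_f\|_{\bH^1})$, while the interface term is bounded by \eqref{eq:injection-H1-into-L4-interface} as $\rho\|\bi_\Gamma\|^2\|\gamma_0\|\|\bw_f\|_{\bH^1}\|\bu_f\|_{\bH^1}\|\bv_f\|_{\bH^1}$. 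Combining and using $\|\bR_f\|_{\bbL^2(\Omega_f)}\le\|\bR_f\|_{\bbX_f}$ and $\|\bu_f\|_{\bH^1(\Omega_f)},\|\bv_f\|_{\bH^1(\Omega_f)}\le\|\bq\|_{\bQ}$ etc. produces exactly \eqref{eq:continuity-of-Kwf} with $C_{\cK}$ as in \eqref{eq:C-K-constant}. Finally $\bF$ and $\bG$: $|\bF(\bq)|\le\|q_p\|_{\L^2(\Omega_p)}\|w_p\|_{\W_p}+\|\f_f\|_{\bL^2(\Omega_f)}(\|\bv_f\|_{\bV_f}+\kappa_1\|\bdiv(\bR_f)\|_{\bL^2(\Omega_f)})$, and a discrete Cauchy--Schwarz over the three scalar factors, bounding $\|\bdiv(\bR_f)\|_{\bL^2}\le\|\bR_f\|_{\bbX_f}$, gives $C_{\bF}=(\|q_p\|_{\L^2(\Omega_p)}^2+(1+\kappa_1^2)\|\f_f\|_{\bL^2(\Omega_f)}^2)^{1/2}$; $|\bG(\bs)|\le\|\f_p\|_{\bL^2(\Omega_p)}\|\bv_s\|_{\bV_s}$ gives $C_{\bG}=\|\f_p\|_{\bL^2(\Omega_p)}$. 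This argument is entirely routine; the only place requiring care is the bookkeeping in $\cK_{\bw_f}$, namely keeping the three $\bH^1(\Omega_f)$-factors assigned to $\bu_f,\bw_f$ and the test function correctly so that the linear dependence on $\|\bw_f\|_{\bV_f}$ is genuinely linear, and matching the constant to \eqref{eq:C-K-constant}.
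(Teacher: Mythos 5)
Your proposal is correct and follows essentially the same route as the paper, which likewise establishes \eqref{eq:continuity-bounds}--\eqref{cont-F-G} by expanding the operators into the bilinear forms \eqref{bilinear-forms}--\eqref{int-bilinear-forms} and invoking \eqref{eq:A-bounds}, \eqref{eq:K-bounds}, \eqref{eq:bound-for-evf-gammafvf}, \eqref{Rfnfvf}, \eqref{bjs-cont}, \eqref{bgamma-cont}, \eqref{trace-sigma}, \eqref{eq:trace-inequality-1}, and the injection estimates \eqref{eq:injection-H1-into-L4}--\eqref{eq:injection-H1-into-L4-interface}, with the same bookkeeping for $C_{\cK}$ in \eqref{eq:C-K-constant}. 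No gaps.
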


\begin{proof}
We first note that
\begin{equation}\label{eq:bound-for-evf-gammafvf}
\|\be(\bv_f)\|_{\bbL^2(\Omega_f)} 
\,\leq\, \|\bv_f\|_{\bV_f} \qan
\|\bgamma_f(\bv_f)\|_{\bbL^2(\Omega_f)} 
\,\leq\, \|\bv_f\|_{\bV_f} \quad \forall\,\bv_f \in \bH^1(\Omega_f).
\end{equation}
We recall that the operators and functionals are defined in \eqref{operators}, with the associated bilinear forms defined in \eqref{bilinear-forms} and \eqref{int-bilinear-forms}. 
The continuity of $\cE$ follows from \eqref{eq:A-bounds}. The continuity of $\cA$ follows from \eqref{eq:K-bounds}, \eqref{eq:bound-for-evf-gammafvf}, \eqref{Rfnfvf}, \eqref{bjs-cont}, and \eqref{trace-sigma}. The continuity of $\cB$ follows from \eqref{bgamma-cont} and \eqref{eq:trace-inequality-1}. For the continuity of $\cK_{\bw_f}$ for a given $\bw_f\in \bV_f$, using \eqref{eq:injection-H1-into-L4}--\eqref{eq:injection-H1-into-L4-interface} and \eqref{eq:bound-for-evf-gammafvf}, we deduce that
\begin{equation*}
\big| \cK_{\bw_f}(\bp)(\bq) \big|
\,\leq\, C_{\cK}\,\|\bw_f\|_{\bV_f} \|\bu_f\|_{\bV_f} \|(\bR_f,\bv_f)\|
\,\leq\, C_{\cK}\,\|\bw_f\|_{\bV_f} \|\bp\|_{\bQ} \|\bq\|_{\bQ}\,,
\end{equation*}
with $C_{\cK}$ defined in \eqref{eq:C-K-constant}, where
$$
\|(\bR_f, \bv_f)\|^2 := \|\bR_f\|_{\bbX_f}^2 + \|\bv_f\|_{\bV_f}^2.
$$
Finally, the continuity of $\bF$ and $\bG$ \eqref{cont-F-G} follows easily from their definitions.
\end{proof}

In the sequel, we make use of the Korn inequality: there exists a positive constant 
$\CKo$ such that
\begin{equation}\label{eq:bound-for-e(vf)}
\CKo\,\|\bv_f\|^2_{\bV_f} 
\,\leq\, \|\be(\bv_f)\|^2_{\bbL^2(\Omega_f)} \quad \forall \, \bv_f \in \bH^1(\Omega_f),
\end{equation}
as well as the following well-known estimates: there exist positive constants $c_1(\Omega_f)$ and $c_2(\Omega_f)$, such that (see, \cite[Proposition~IV.3.1]{Brezzi-Fortin} and \cite[Lemma~2.5]{Gatica}, respectively)
\begin{equation}\label{eq:tau-d-H0div-inequality}
c_1(\Omega_f)\,\|\bR_{f,0}\|^2_{\bbL^2(\Omega_f)} 
\,\leq\, \|\bR^\rd_f\|^2_{\bbL^2(\Omega_f)} + \|\bdiv(\bR_f)\|^2_{\bL^2(\Omega_f)} 
\quad \forall\,\bR_f = \bR_{f,0} + \ell\,\bI\in \bbH(\bdiv;\Omega_f)
\end{equation}
and
\begin{equation}\label{eq:tau-H0div-Xf-inequality}
c_2(\Omega_f)\,\|\bR_f\|^2_{\bbX_f} 
\,\leq\, \|\bR_{f,0}\|^2_{\bbX_f} 
\quad \forall\,\bR_f = \bR_{f,0} + \ell\,\bI\in \bbX_f\,,
\end{equation}
where $\bR_{f,0}\in \bbH_0(\bdiv;\Omega_f) := \Big\{ \bR_{f}\in \bbH(\bdiv;\Omega_f) :\quad (\tr(\bR_f),1)_{\Omega_f} = 0 \Big\}$ and $\ell\in \R$. 
We emphasize that \eqref{eq:tau-H0div-Xf-inequality}
holds since each $\bR_f \in \bbX_f$ satisfies the boundary condition
$\ds \bR_f\bn_f = \0$ on $\Gamma^\rN_f$ with $|\Gamma^\rN_f| > 0$. 

Next, we present a lemma that establishes positivity bounds for 
the operators $\cA+\cK_{\bw_f}$ and $\cE$. For any $r > 0$, let $\bW_r$ be the closed ball defined by
\begin{equation}\label{eq:W_r-definition}
\bW_r := \Big\{ \bw_f \in \bV_f: \quad \|\bw_f\|_{\bV_f} \leq r \Big\},
\end{equation}

\begin{lem}\label{lem:monotone}
Assume that $\ds \kappa_1\in (0,+\infty)$ and $\kappa_2 \in (0, 4\mu)$, 
and let $\bw_f \in \bW_r$ with $r\in (0,r_0)$ and
\begin{equation}\label{eq:r0-definition}
r_0:= \frac{\alpha_f}{2\,C_{\cK}},
\end{equation}
where $C_{\cK}$ is defined in \eqref{eq:C-K-constant} and $\alpha_f$ is defined in \eqref{constants} below. Then, $\cE$ and $\cA+\cK_{\bw_f}$ are monotone.
Moreover, there exists a constant $\alpha_{\cA \cK}>0$
depending on $\mu$, $\bK, \alpha_{\BJS}, \CKo, c_1(\Omega_f)$, and $c_2(\Omega_f)$, such that
\begin{equation}\label{eq:positive-bound-E}
\ds \cE(\bq)(\bq)\,=\,
s_0 \|w_p\|^2_{\L^2(\Omega_p)} + \|A^{1/2}(\btau_p + \alpha_p\,w_p\,\bI)\|^2_{\bbL^2(\Omega_p)} \quad \forall\,\bq\in \bQ \,,
\end{equation}
and
\begin{equation}\label{eq:positive-bound-A+Kwf}
(\cA + \cK_{\bw_f})(\bq)(\bq)
\,\geq\, \alpha_{\cA \cK} \left( \|\bv_p\|^2_{\bL^2(\Omega_p)} 
+ \|(\bR_f,\bv_f)\|^2 + |\bv_f - \bphi|^2_{\BJS} \right)
\quad \forall\,\bq\in \bQ\,.
\end{equation}
\end{lem}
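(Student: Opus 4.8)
The plan is to test both operators against the diagonal $\bq=\bp$ and exploit the skew-adjoint structure of the coupling terms so that the off-diagonal contributions cancel. Since $\cE$, $\cA$ are linear and, for fixed $\bw_f$, $\cK_{\bw_f}$ is linear (Lemma~\ref{lem:cont}), the operators $\cE$ and $\cA+\cK_{\bw_f}$ are linear, so monotonicity is equivalent to nonnegativity of the quadratic forms $\bq\mapsto\cE(\bq)(\bq)$ and $\bq\mapsto(\cA+\cK_{\bw_f})(\bq)(\bq)$. Thus it suffices to establish \eqref{eq:positive-bound-E} and \eqref{eq:positive-bound-A+Kwf}, from which monotonicity follows at once.

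For $\cE$, I would substitute $\bp=\bq$ in \eqref{defn-E} and use the definition of $a_e$: this gives $\cE(\bq)(\bq)=s_0\|w_p\|^2_{\L^2(\Omega_p)}+(A(\btau_p+\alpha_p w_p\bI),\btau_p+\alpha_p w_p\bI)_{\Omega_p}$, and since $A$ is symmetric and positive definite on $\R^{n\times n}$ by \eqref{eq:A-bounds} (recall $A$ has been extended to $\bbM$), the last term equals $\|A^{1/2}(\btau_p+\alpha_p w_p\bI)\|^2_{\bbL^2(\Omega_p)}$. This is precisely \eqref{eq:positive-bound-E}; it is nonnegative, hence $\cE$ is monotone. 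This step is immediate.

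The substantive part is $\cA+\cK_{\bw_f}$. Setting $\bp=\bq$ in the definition of $\cA$, the pairs $b_p(w_p,\bv_p)-b_p(w_p,\bv_p)$ and $b_{\bn_p}(\btau_p,\bphi)-b_{\bn_p}(\btau_p,\bphi)$ cancel, and in $a_f(\bR_f,\bv_f;\bR_f,\bv_f)$, cf.\ \eqref{eq:bilinear-form-af}, the divergence pair $(\bv_f,\bdiv(\bR_f))_{\Omega_f}-(\bv_f,\bdiv(\bR_f))_{\Omega_f}$, the vorticity pair $(\bgamma_f(\bv_f),\bR_f)_{\Omega_f}-(\bR_f,\bgamma_f(\bv_f))_{\Omega_f}$ and the interface pair $\pil\bR_f\bn_f,\bv_f\pir_{\Gamma_{fp}}-\pil\bR_f\bn_f,\bv_f\pir_{\Gamma_{fp}}$ all vanish, leaving $\cA(\bq)(\bq)=\mu(\bK^{-1}\bv_p,\bv_p)_{\Omega_p}+\frac{1}{2\mu}\|\bR_f^\rd\|^2_{\bbL^2(\Omega_f)}+\kappa_1\|\bdiv(\bR_f)\|^2_{\bL^2(\Omega_f)}+\kappa_2\|\be(\bv_f)\|^2_{\bbL^2(\Omega_f)}-\frac{\kappa_2}{2\mu}(\bR_f^\rd,\be(\bv_f))_{\Omega_f}+|\bv_f-\bphi|^2_{\BJS}$, where $|\bv_f-\bphi|^2_{\BJS}:=a_{\BJS}(\bv_f,\bphi;\bv_f,\bphi)\ge 0$. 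I would bound $\mu(\bK^{-1}\bv_p,\bv_p)_{\Omega_p}\ge \mu\,k_{\max}^{-1}\|\bv_p\|^2_{\bL^2(\Omega_p)}$ from \eqref{eq:K-bounds}, and then absorb the cross term by Young's inequality in the form $\frac{\kappa_2}{2\mu}|(\bR_f^\rd,\be(\bv_f))_{\Omega_f}|\le \frac{1}{4\mu}\|\bR_f^\rd\|^2_{\bbL^2(\Omega_f)}+\frac{\kappa_2^2}{4\mu}\|\be(\bv_f)\|^2_{\bbL^2(\Omega_f)}$, which leaves $\frac{1}{4\mu}\|\bR_f^\rd\|^2_{\bbL^2(\Omega_f)}+\kappa_1\|\bdiv(\bR_f)\|^2_{\bL^2(\Omega_f)}+\kappa_2\big(1-\frac{\kappa_2}{4\mu}\big)\|\be(\bv_f)\|^2_{\bbL^2(\Omega_f)}$; the last coefficient is positive precisely because $\kappa_2\in(0,4\mu)$. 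Korn's inequality \eqref{eq:bound-for-e(vf)} then turns $\|\be(\bv_f)\|^2_{\bbL^2(\Omega_f)}$ into $\CKo\|\bv_f\|^2_{\bV_f}$, while writing $\bR_f=\bR_{f,0}+\ell\,\bI\in\bbX_f$ (so $\bR_{f,0}^\rd=\bR_f^\rd$ and $\bdiv(\bR_{f,0})=\bdiv(\bR_f)$) and combining \eqref{eq:tau-d-H0div-inequality}--\eqref{eq:tau-H0div-Xf-inequality} upgrades $\|\bR_f^\rd\|^2_{\bbL^2(\Omega_f)}+\|\bdiv(\bR_f)\|^2_{\bL^2(\Omega_f)}$ to a positive multiple of $\|\bR_f\|^2_{\bbX_f}$; here is where $|\Gamma_f^\rN|>0$ is used. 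Collecting constants gives $\cA(\bq)(\bq)\ge \alpha_f\big(\|\bv_p\|^2_{\bL^2(\Omega_p)}+\|(\bR_f,\bv_f)\|^2+|\bv_f-\bphi|^2_{\BJS}\big)$ with $\alpha_f>0$ as in \eqref{constants}, depending only on $\mu$, $\bK$, $\alpha_{\BJS}$, $\CKo$, $c_1(\Omega_f)$, $c_2(\Omega_f)$.

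Finally, I would absorb $\cK_{\bw_f}$ into this bound. From the continuity estimate of Lemma~\ref{lem:cont}, taking $\bp=\bq$ and using $\|\bv_f\|_{\bV_f}\le\|(\bR_f,\bv_f)\|$, one gets $|\cK_{\bw_f}(\bq)(\bq)|\le C_{\cK}\|\bw_f\|_{\bV_f}\|(\bR_f,\bv_f)\|^2\le C_{\cK}\,r\,\|(\bR_f,\bv_f)\|^2$ whenever $\bw_f\in\bW_r$. Since $r<r_0=\alpha_f/(2C_{\cK})$ by \eqref{eq:r0-definition}, we have $C_{\cK}\,r<\alpha_f/2$, so $(\cA+\cK_{\bw_f})(\bq)(\bq)\ge \alpha_f\|\bv_p\|^2_{\bL^2(\Omega_p)}+(\alpha_f-C_{\cK}r)\|(\bR_f,\bv_f)\|^2+\alpha_f|\bv_f-\bphi|^2_{\BJS}\ge \frac{\alpha_f}{2}\big(\|\bv_p\|^2_{\bL^2(\Omega_p)}+\|(\bR_f,\bv_f)\|^2+|\bv_f-\bphi|^2_{\BJS}\big)$, which is \eqref{eq:positive-bound-A+Kwf} with $\alpha_{\cA \cK}=\alpha_f/2$; in particular the form is nonnegative, so $\cA+\cK_{\bw_f}$ is monotone. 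I expect the main obstacle to be the fluid block in the third step: keeping the Young constants sharp enough that the condition reads exactly $\kappa_2<4\mu$, and simultaneously recovering full $\bbH(\bdiv;\Omega_f)$-control of $\bR_f$ from only its deviatoric part and its divergence via \eqref{eq:tau-d-H0div-inequality}--\eqref{eq:tau-H0div-Xf-inequality}.
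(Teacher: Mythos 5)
Your proof is correct and follows essentially the same route as the paper: test on the diagonal, use Young's inequality together with \eqref{eq:tau-d-H0div-inequality}--\eqref{eq:tau-H0div-Xf-inequality} and Korn's inequality \eqref{eq:bound-for-e(vf)} to get coercivity of $a_f$, absorb $\cK_{\bw_f}$ via its continuity and $r<r_0$, bound $a_p$ and $a_{\BJS}$ from below, and use linearity to turn nonnegativity of the quadratic forms into monotonicity. The only cosmetic caveat is that the constant in front of $\|\bv_p\|^2_{\bL^2(\Omega_p)}$ and of the tangential seminorm $|\bv_f-\bphi|^2_{\BJS}$ is not $\alpha_f$ itself but rather $\min\{\mu\,k_{\max}^{-1},\,\alpha_f/2,\,c_{\BJS}\}$, which is precisely the paper's $\alpha_{\cA\cK}$ with the stated dependence on $\mu$, $\bK$, and $\alpha_{\BJS}$.
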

\begin{proof}
First, \eqref{eq:positive-bound-E} follows in a straightforward way from the definition of the operator $\cE$ (cf. \eqref{defn-E}, \eqref{eq:bilinear-form-ae-ap}).
In addition, using \eqref{eq:positive-bound-E} and the fact that $\cE$ is linear, the monotonicity property is obtained.
In turn, from the definition of $a_f$ (cf. \eqref{eq:bilinear-form-af}), using 
Young's inequality and \eqref{eq:bound-for-evf-gammafvf}, 
and simple algebraic computations, we find that
\begin{equation*}
a_f(\bR_f,\bv_f;\bR_f,\bv_f) 
\,\geq\, \frac{1}{2\,\mu}\left( 1 - \frac{\kappa_2}{4\,\mu} \right) \|\bR_f^\rd\|^2_{\bL^2(\Omega_f)}
+ \kappa_1\,\|\bdiv(\bR_f)\|^2_{\bL^2(\Omega_f)}
+ \frac{\kappa_2}{2}\,\|\be(\bv_f)\|^2_{\bbL^2(\Omega_f)} 
\end{equation*}
for all $(\bR_f,\bv_f)\in \bbX_f\times \bV_f$.
Then, assuming the stipulated ranges on $\kappa_1$ and $\kappa_2$, and applying 
inequalities \eqref{eq:tau-d-H0div-inequality} and \eqref{eq:tau-H0div-Xf-inequality}, 
we can define the positive constants
\begin{equation} \label{constants}
\alpha_0 := \min\left\{ \frac{1}{2\,\mu}\left( 1 - \frac{\kappa_2}{4\,\mu} \right), \frac{\kappa_1}{2} \right\}\,,\quad
\alpha_1 := c_2(\Omega_f)\min\left\{ c_1(\Omega_f)\,\alpha_0, \frac{\kappa_1}{2} \right\},
\quad \alpha_f := \min \big\{ \alpha_1, \frac{\kappa_2}{2}\,\CKo \big\},
\end{equation}
which, together with the Korn inequality \eqref{eq:bound-for-e(vf)}, allows us to conclude
\begin{equation}\label{eq:coercivity-af}
a_f(\bR_f,\bv_f;\bR_f,\bv_f) 
\,\geq\, \alpha_1\,\|\bR_f\|^2_{\bbX_f}
+ \frac{\kappa_2}{2}\,\CKo\,\|\bv_f\|^2_{\bV_f}
\,\geq\, \alpha_f\,\|(\bR_f,\bv_f)\|^2.
\end{equation}
Next, combining \eqref{eq:coercivity-af} with \eqref{eq:continuity-of-Kwf} and 
the assumption $\|\bw_f\|_{\bV_f} \leq r$, with $r\in (0,r_0)$ defined by \eqref{eq:r0-definition}, 
we deduce that 
\begin{equation}\label{eq:monotone-01}
a_f(\bR_f,\bv_f;\bR_f,\bv_f) + \kappa_{\bw_f}(\bR_f,\bv_f;\bR_f,\bv_f)
\,\geq \left( \alpha_f \,-\, C_{\cK}\,\|\bw_f\|_{\bV_f}\right) \|(\bR_f,\bv_f)\|^2
\geq\, \frac{\alpha_f}{2}\,\|(\bR_f,\bv_f)\|^2\,. 
\end{equation}

Finally, from the definition of the bilinear forms $a_p$ and $a_{\BJS}$ (cf. \eqref{eq:bilinear-form-ae-ap}, \eqref{eq:bilinear-form-aBJS}), the estimate \eqref{eq:K-bounds} and simple computations, we obtain
\begin{equation}\label{eq:monotone-02}
\begin{array}{c}
\ds a_p(\bv_p,\bv_p)
\,\geq\, \mu \, k_{\max}^{-1} \|\bv_p\|^2_{\bL^2(\Omega_p)} \,,\qan  \\[2ex]
\ds a_{\BJS}(\bv_f,\bphi;\bv_f,\bphi)
\,=\, \mu\,\alpha_{BJS}\sum^{n-1}_{j=1} \pil\sqrt{\bK_j^{-1}}(\bv_f-\bphi)\cdot\bt_{f,j},(\bv_f-\bphi)\cdot\bt_{f,j}\pir_{\Gamma_{fp}}
\,\geq\, c_{\BJS}\,|\bv_f - \bphi|^2_{\BJS}\,,
\end{array}
\end{equation}
where, $|\bv_f - \bphi|^2_{\BJS} 
\,:=\, \sum^{n-1}_{j=1} \|(\bv_f - \bphi)\cdot\bt_{f,j}\|^2_{\L^2(\Gamma_{fp})}$ 
for all $(\bv_f,\bphi) \in \bV_f\times \bLambda_s$, 
and $c_{\BJS}$ is a positive constant that only depends on $\mu$, $\alpha_{\BJS}$ and $\bK$.
The monotonicity of $\cA + \cK_{\bw_f}$ and \eqref{eq:positive-bound-A+Kwf} follow from the fact that the forms 
$a_f, \kappa_{\bw_f}, a_p$, and $a_{\BJS}$ are linear, and the 
estimates \eqref{eq:monotone-01} and \eqref{eq:monotone-02}.
\end{proof}

\begin{rem}\label{rem:kappa}
  In the computations we choose a value of $\kappa_2$ in the middle of its admissible range $(0,4\mu)$: $\kappa_2 = 2\mu$, which results in all constants defined in \eqref{constants} being bounded strictly away from zero. We further set $\kappa_1 = \frac{1}{2\mu}$, which maximizes $\alpha_0$ and gives $\alpha_1 = \mathcal{O}(\frac{1}{\mu})$, providing strong control on $\|\bT_f\|_{\bbX_f}$ in the regime of small viscosity.
\end{rem}

Next, we provide inf-sup conditions for some operators involved 
in \eqref{eq:continuous-weak-formulation-2}, which will be used later on 
to derive stability bounds for the solution of \eqref{eq:continuous-weak-formulation-2}.
\begin{lem}\label{lem:inf-sup}
There exist constants $\beta_1, \beta_2>0$ such that for all $(\bv_s,\bchi_p,\bphi) \in \bV_s\times \bbQ_p\times \bLambda_s$,
\begin{equation}\label{eq:continuous-inf-sup-1}
\beta_1\,\left( \|\bv_s\|_{\bV_s} + \|\bchi_p\|_{\bbQ_p} + \|\bphi\|_{\bLambda_s} \right)
\,\leq\, 
\underset{\0 \neq \btau_p\in \bbX_p}{\sup}
\frac{b_s(\btau_p,\bv_s) + b_{sk}(\btau_p,\bchi_p) + b_{\bn_p}(\btau_p, \bphi)}{\|\btau_p\|_{\bbX_p}}\,,
\end{equation}
and for all $(w_p, \xi)\in \W_p\times \Lambda_p$,
\begin{equation}\label{eq:continuous-inf-sup-2}
\beta_2\,\left( \|w_p\|_{\W_p} + \|\xi\|_{\Lambda_p} \right)
\,\leq\, 
\underset{\0 \neq \bv_p \in \bV_p}{\sup}
\frac{b_p(\bv_p,w_p) + b_{\Gamma}(\bv_p,\0,\0;\xi)}{\|\bv_p\|_{\bV_p}} \,. 
\end{equation}
\end{lem}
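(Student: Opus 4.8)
The plan is to establish each inf-sup condition separately by constructing, for a given tuple in the "multiplier" space, an explicit test function in the corresponding $\bH(\bdiv)$-type space that realizes the supremum up to a constant. Both estimates are of the classical saddle-point type, so the natural tool is to solve auxiliary boundary-value problems and use the normal-trace lifting together with the continuity estimates \eqref{eq:trace-inequality-1} and \eqref{trace-sigma}.

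For \eqref{eq:continuous-inf-sup-1}: given $(\bv_s,\bchi_p,\bphi)\in \bV_s\times\bbQ_p\times\bLambda_s$, I would build $\btau_p\in\bbX_p$ in two stages. First, use the fact that $\bphi\in\bH^{1/2}(\Gamma_{fp})$ can be extended to a function on $\partial\Omega_p$ that vanishes on $\tilde\Gamma^\rN_p$ (this uses the separation assumption $\dist(\tilde\Gamma^\rD_p,\Gamma_{fp})\ge s>0$ from Section~2, which keeps the trace characterization clean), and solve an elasticity-type problem $\bdiv(\wt\btau_p) = \0$ in $\Omega_p$ with $\wt\btau_p\bn_p$ prescribed on $\partial\Omega_p$, giving $\langle\wt\btau_p\bn_p,\bphi\rangle_{\Gamma_{fp}}\gtrsim \|\bphi\|_{\bLambda_s}^2$ and $\|\wt\btau_p\|_{\bbX_p}\lesssim\|\bphi\|_{\bLambda_s}$. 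Second, for the interior pair $(\bv_s,\bchi_p)$ use the standard surjectivity of $\btau_p\mapsto(\bdiv(\btau_p),\text{asym}(\btau_p))$ onto $\bL^2(\Omega_p)\times\bbQ_p$ with homogeneous normal trace on all of $\partial\Omega_p$; this is the classical inf-sup condition underlying weakly symmetric mixed elasticity (e.g. Arnold--Falk--Winther type elements / Brezzi--Fortin theory), and it is exactly the condition used in \cite{lee2016,fpsi-mixed-elast,fpsi-msfmfe}. Adding the two contributions and controlling the cross terms via \eqref{trace-sigma} yields \eqref{eq:continuous-inf-sup-1} after a standard "sum of two inf-sup conditions" argument.

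For \eqref{eq:continuous-inf-sup-2}: given $(w_p,\xi)\in\W_p\times\Lambda_p$, I would solve a mixed Poisson (Darcy) problem: find $\bv_p\in\bH(\div;\Omega_p)$ with $\div(\bv_p) = w_p$ in $\Omega_p$ (adjusted by a constant if needed, but since $|\Gamma^\rD_p|>0$ the pure Neumann obstruction is absent), $\bv_p\cdot\bn_p = 0$ on $\Gamma^\rN_p$, and $\bv_p\cdot\bn_p$ prescribed on $\Gamma_{fp}$ so that $\langle\bv_p\cdot\bn_p,\xi\rangle_{\Gamma_{fp}}\gtrsim\|\xi\|_{\Lambda_p}^2$; this is feasible because $\xi\in\H^{1/2}(\Gamma_{fp})$ and, by the separation of $\Gamma^\rD_p$ from $\Gamma_{fp}$, it extends by zero to $\H^{1/2}$ on the relevant portion of $\partial\Omega_p$ (cf. the discussion around \eqref{eq:trace-inequality-1}). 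The resulting $\bv_p$ satisfies $\|\bv_p\|_{\bV_p}\lesssim\|w_p\|_{\W_p}+\|\xi\|_{\Lambda_p}$ by elliptic regularity / the bounded right inverse of $\div$, and then $b_p(\bv_p,w_p) + b_\Gamma(\bv_p,\0,\0;\xi) = \|w_p\|_{\W_p}^2 + \|\xi\|_{\Lambda_p}^2$ up to the right-inverse constant, using \eqref{eq:trace-inequality-1} to absorb the coupling. Dividing by $\|\bv_p\|_{\bV_p}$ gives \eqref{eq:continuous-inf-sup-2}.

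The main obstacle is the construction in \eqref{eq:continuous-inf-sup-1}, specifically combining the surjectivity onto $\bL^2\times\bbQ_p$ (which requires $\btau_p\bn_p = \0$ on \emph{all} of $\partial\Omega_p$) with the separate realization of the boundary pairing against $\bphi$ without the two pieces interfering: one must verify that the boundary-trace piece $\wt\btau_p$ does not spoil the interior surjectivity and, conversely, that the correction chosen for $(\bv_s,\bchi_p)$ can be taken with vanishing normal trace on $\Gamma_{fp}$ so that it is "invisible" to $b_{\bn_p}$. This is handled by the usual decomposition $\btau_p = \wt\btau_p + \btau_p^0$ with $\btau_p^0$ built after subtracting $\bdiv(\wt\btau_p)$ and $\text{asym}(\wt\btau_p)$ from the data, a bookkeeping step that is routine but where the constants' dependence (and the use of the domain-separation hypotheses) must be tracked carefully; everything else reduces to the cited classical mixed-formulation results and the trace bounds already recorded in Section~\ref{sec:preliminaries}.
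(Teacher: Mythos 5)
Your treatment of \eqref{eq:continuous-inf-sup-2} is essentially the argument the paper invokes (a slight modification of \cite[Lemmas 3.1 and 3.2]{ervin2009}, with $|\Gamma^\rD_p|>0$ removing the mean-value compatibility constraint on $\div$), and it goes through, provided the pairing with $\xi$ is realized through a Dirichlet-to-Neumann type auxiliary problem (as with $\psi(\lambda)$ in Section 4) rather than literally by ``extension by zero'' of $\xi$, which need not remain in $\H^{1/2}$. The same remark applies to your first-stage lifting of $\bphi$ in \eqref{eq:continuous-inf-sup-1}: one cannot prescribe an arbitrary normal trace and simultaneously guarantee $\langle\wt\btau_p\bn_p,\bphi\rangle_{\Gamma_{fp}}\gtrsim\|\bphi\|^2_{\bLambda_s}$; the trace must be taken as the Neumann data of an extension of $\bphi$ (as with $\bvarphi(\btheta)$ in Section 4). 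These are repairable imprecisions.

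The genuine gap is in your second stage for \eqref{eq:continuous-inf-sup-1}. The claimed surjectivity of $\btau_p\mapsto(\bdiv(\btau_p),\mathrm{asym}(\btau_p))$ onto $\bL^2(\Omega_p)\times\bbQ_p$ for tensors with $\btau_p\bn_p=\0$ on \emph{all} of $\partial\Omega_p$ is false: for any such $\btau_p$ and any infinitesimal rigid motion $\bv_s(\bx)=\ba+\bB\bx$ with $\bB$ skew, integration by parts gives $(\bv_s,\bdiv(\btau_p))_{\Omega_p}+(\bB,\btau_p)_{\Omega_p}=\langle\btau_p\bn_p,\bv_s\rangle_{\partial\Omega_p}=0$, so the pair $(\bv_s,\bB)$ annihilates the entire range (already the constant $\bv_s=\ba$, $\bchi_p=\0$ shows each component of $\bdiv(\btau_p)$ must have zero mean). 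Hence the interior correction cannot be built with vanishing normal trace on the whole boundary, and the ``sum of two inf-sup conditions'' step collapses. The repair is exactly the mixed-boundary-condition version of the elasticity inf-sup that the paper cites, \cite[eq. (2.59), Section 2.4.3.2]{Gatica}: take the correction $\btau_p^0$ with $\btau_p^0\bn_p=\0$ only on $\tilde\Gamma^\rN_p\cup\Gamma_{fp}$, leaving its normal trace free on $\tilde\Gamma^\rD_p$; since a rigid motion cannot vanish on a boundary portion of positive measure, the adjoint kernel is then trivial and the needed surjectivity holds, while the correction remains invisible to $b_{\bn_p}$. Note this is precisely where $|\tilde\Gamma^\rD_p|>0$ enters: if $\tilde\Gamma^\rN_p\cup\Gamma_{fp}$ were all of $\partial\Omega_p$, the triple $(\bv_s,\nabla\bv_s,-\bv_s|_{\Gamma_{fp}})$ with $\bv_s$ rigid would make the numerator of \eqref{eq:continuous-inf-sup-1} vanish identically and the estimate itself would fail. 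With this modification your two-stage construction becomes a legitimate direct proof, whereas the paper itself disposes of both estimates by citation to \cite{Gatica} and \cite{ervin2009}.
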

\begin{proof}
The proof of \eqref{eq:continuous-inf-sup-1} follows from similar arguments to \cite[eq. (2.59), Section 2.4.3.2]{Gatica} for the elasticity problem with mixed boundary conditions,
whereas \eqref{eq:continuous-inf-sup-2} follows from a slight modification of \cite[Lemmas 3.1 and 3.2]{ervin2009} to account for $|\Gamma_p^\rD| > 0$. 
\end{proof}

\section{Well-posedness of the model}\label{sec:well-posedness-model}

In this section, we establish the well-posedness of \eqref{eq:continuous-weak-formulation-3} (equivalently \eqref{eq:continuous-weak-formulation-2}).

\subsection{Preliminaries}\label{section: well posedness prelim}

We begin by recalling the following key result to establish the existence of a solution to \eqref{eq:continuous-weak-formulation-3} (see \cite[Theorem~IV.6.1(b)]{Showalter} for details). In what follows, an operator $A$ from a real vector space $E$ to its algebraic dual $E'$ is symmetric and monotone if, respectively,
\begin{equation*}
A(x)(y) = A(y)(x) \qan
\big(A(x) - A(y)\big)(x - y) \geq 0 \quad \forall\,x, y\in E\,.
\end{equation*}
In addition,  $Rg(A)$ denotes the range of $A$.
\begin{thm}\label{thm:solvability-parabolic-problem}
Let the linear, symmetric and monotone operator $\cN$ be given from the real vector space $E$ to its algebraic dual $E'$, and let $E'_b$ be the Hilbert space which is the dual of $E$ with the seminorm
\begin{equation*}
|x|_b = \big(\cN(x)(x)\big)^{1/2} \quad x\in E.
\end{equation*}
Let $\cM\subset E\times E'_b$ be a relation with domain $\cD = \Big\{ x\in E \,:\, \cM(x) \neq \emptyset \Big\}$.
	
Assume $\cM$ is monotone and $Rg(\cN + \cM) = E'_b$.
Then, for each $u_0\in \cD$ and for each $f\in \W^{1,1}(0,T;E'_b)$, there is a solution $u$ of
\begin{equation}\label{eq: Showalter}
\frac{d}{dt}\big(\cN(u(t))\big) + \cM\big(u(t)\big) \ni f(t) \quad a.e. \, \ 0 < t < T,
\end{equation}
with
\begin{equation*}
\cN(u)\in \W^{1,\infty}(0,T;E'_b),\quad u(t)\in \cD,\quad \mbox{ for all }\, 0\leq t\leq T,\qan \cN(u(0)) = \cN(u_0)\,.
\end{equation*}
\end{thm}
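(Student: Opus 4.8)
The plan is to reduce the degenerate inclusion \eqref{eq: Showalter} to a classical Cauchy problem governed by a single maximal monotone operator on a Hilbert space, and then invoke the standard theory of evolution equations governed by maximal monotone operators. First I would make the space $E'_b$ precise. Since $\cN$ is linear, symmetric and monotone, $\langle x,y\rangle_b:=\cN(x)(y)$ is a symmetric positive semidefinite inner product on $E$; quotienting by its kernel and completing yields a Hilbert space $E_b$, and $\cN$ extends to the Riesz isomorphism $\cN:E_b\to (E_b)'$, where $(E_b)'$ is identified isometrically with $E'_b$ as in the statement. In particular $\cN^{-1}:E'_b\to E_b$ is a linear isometric isomorphism, and for $f,g\in E'_b$ one has $(f,g)_{E'_b}=f\big(\cN^{-1}g\big)=g\big(\cN^{-1}f\big)$; moreover, whenever $w=\cN(x)$ with $x\in E$, the element $\cN^{-1}w\in E_b$ is represented by $x$.

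Next I would introduce on $E'_b$ the relation $\cG:=\cM\circ\cN^{-1}$, namely $g\in\cG(w)$ iff there exists $x\in\cD$ with $\cN(x)=w$ and $g\in\cM(x)$, whose domain is $D(\cG)=\cN(\cD)$. Two properties must be verified. Monotonicity: given $g_i\in\cG(w_i)$, choose $x_i\in\cD$ with $\cN(x_i)=w_i$ and $g_i\in\cM(x_i)$; then $(g_1-g_2,w_1-w_2)_{E'_b}=(g_1-g_2)\big(\cN^{-1}(w_1-w_2)\big)=(g_1-g_2)(x_1-x_2)\ge0$ by monotonicity of $\cM$. Maximality: the hypothesis $Rg(\cN+\cM)=E'_b$ states exactly that for every $h\in E'_b$ there are $x\in\cD$ and $g\in\cM(x)$ with $\cN(x)+g=h$; setting $w=\cN(x)$ gives $w+g=h$ with $g\in\cG(w)$, i.e. $Rg(I+\cG)=E'_b$. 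A monotone operator on a Hilbert space whose resolvent $I+\cG$ is surjective is maximal monotone (Minty's theorem).

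Then I would rewrite \eqref{eq: Showalter} via the substitution $w(t):=\cN(u(t))$: the inclusion $\tfrac{d}{dt}\cN(u(t))+\cM(u(t))\ni f(t)$ is equivalent to $w'(t)+\cG(w(t))\ni f(t)$ in $E'_b$ for a.e.\ $t$, the implication ``$\Leftarrow$'' being the point where, given $g(t)\in\cG(w(t))$ with $w'(t)+g(t)=f(t)$, one selects (pointwise in $t$) some $u(t)\in\cD$ with $\cN(u(t))=w(t)$ and $g(t)\in\cM(u(t))$. Since $u_0\in\cD$, the datum $w_0:=\cN(u_0)$ lies in $D(\cG)$, and since $f\in\W^{1,1}(0,T;E'_b)$, the classical theory of Cauchy problems for maximal monotone operators in Hilbert space produces a unique $w\in\W^{1,\infty}(0,T;E'_b)$ with $w(0)=w_0$, $w(t)\in D(\cG)$ for all $t\in[0,T]$, and $w'(t)+\cG(w(t))\ni f(t)$ a.e. Declaring $\cN(u):=w$ and choosing for each $t$ an admissible $u(t)\in\cD$ as above gives a solution $u$ of \eqref{eq: Showalter} with $\cN(u)\in\W^{1,\infty}(0,T;E'_b)$, $u(t)\in\cD$ for all $t$, and $\cN(u(0))=\cN(u_0)$.

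The main obstacle is the opening functional-analytic step: one must check carefully that $\cN$, a priori only valued in the algebraic dual $E'$, genuinely extends to an isomorphism onto the completion $E'_b$ and that the identification $E'_b\cong(E_b)'$ is legitimate and compatible with the original pairing; once this is in place, the monotonicity-and-range bookkeeping for $\cG$ and the appeal to the maximal-monotone Cauchy theory are routine. A second, milder point is that $u$ itself is in general neither unique nor measurable, which is precisely why the conclusions are phrased only in terms of $\cN(u)$; this causes no difficulty, since only a pointwise selection $u(t)$ is needed.
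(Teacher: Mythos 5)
The paper does not prove this statement at all: it is quoted verbatim from Showalter \cite{Showalter} (Theorem~IV.6.1(b)), and your argument is correct and essentially reproduces the standard proof behind that result—pass to $w=\cN(u)$, show that $\cM\circ\cN^{-1}$ is a maximal monotone relation on the Hilbert space $E'_b$ via the range condition and Minty's theorem, solve the resulting Cauchy problem with the classical theory for $f\in\W^{1,1}(0,T;E'_b)$ and $w_0=\cN(u_0)\in D(\cM\circ\cN^{-1})$, and recover $u$ by a pointwise (non-measurable, but harmless) selection. Your handling of the two delicate points—that $\cN(x)\in E'_b$ by Cauchy--Schwarz for the semi-inner product so that the Riesz identification $E'_b\cong(E_b)'$ is legitimate, and that only $\cN(u)$, not $u$ itself, needs regularity—is exactly right.
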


\begin{rem}
The problem \eqref{eq:continuous-weak-formulation-3} is a degenerate 
evolution problem in a mixed form, which fits the structure of the problem 
\eqref{eq: Showalter} in Theorem \ref{thm:solvability-parabolic-problem}. 
However, $f$ is restricted to the space $\W^{1,1}(0,T;E'_b)$ arising from $\cN$. 
If we would like $u(t)$ in the theorem to represent all the variables 
in our case, we will have to restrict the data as $\f_f=\0$ and $\f_p=\0$. 
To avoid this restriction, we will reformulate the problem as 
a parabolic problem for $u=(\bsi_p,p_p)$ as in \cite{aeny2019}.
\end{rem}

Let $E: = \bbX_p \times \W_p$ and let $\cN: E \to E'$ be defined as, (cf. \eqref{defn-E}),
\begin{equation}\label{defn-N}
\cN(\bsi_p, p_p)(\btau_p, w_p) := s_0 (p_p,w_p)_{\Omega_p} + a_e(\bsi_p, p_p; \btau_p, w_p).
\end{equation}
From the definition of $a_e$ (cf. \eqref{eq:bilinear-form-ae-ap})
and the bounds on the operator $A$ (cf. \eqref{eq:A-bounds}), as well as
the fact that $s_0>0$, it follows that the norm induced by $\cN$ 
is equivalent to the $\L^2$ norm
$\big(\|\btau_p\|^2_{\bbL^2(\Omega_p)} + \|w_p\|^2_{\W_p}\big)^{1/2}$,
which implies that $E'_b = \bbL^2(\Omega_p)\times \L^2(\Omega_p) \subset \bbX'_p\times \W'_p$.
Now, let us set $\bQ_2' := \bbL^2(\Omega_p)\times \L^2(\Omega_p)\times \{\0\}\times \bL^2(\Omega_f)\times \bL^2(\Omega_f)\times \{\0\} \subset \bQ'$. 
Next, similarly to \cite[Section~4.1]{aeny2019}, we consider the domain associated with the resolvent system of \eqref{eq:continuous-weak-formulation-3} (cf. \eqref{eq:continuous-weak-formulation-2}). For $r\in (0,r_0)$ with $r_0$ given in \eqref{eq:r0-definition}, define

\medskip

$\cD :=\, \ds \Big\{  (\bsi_p, p_p)\in \bbX_p\times \W_p :$ \ \  for given  $(\f_f, \f_p)\in \bL^2(\Omega_f)\times \bL^2(\Omega_p)$, \\
\indent there exist $((\bu_p, \bT_f, \bu_f, \btheta),(\lambda,\bu_s, \bgamma_p))\in (\bV_p \times \bbX_f \times \bV_f \times \bLambda_s) \times \bS$ with $\bu_f \in \bW_r$, such that
\begin{align}
& \ds s_0\,(p_p,w_p)_{\Omega_p} + a_e( \bsi_p,  p_p; \btau_p, w_p) + a_p(\bu_p,\bv_p) + a_f(\bT_f,\bu_f;\bR_f,\bv_f)
+ \kappa_{\bu_f}(\bT_f,\bu_f;\bR_f,\bv_f) \nonumber \\[1ex]
& \quad\ds +\, a_{\BJS}(\bu_f,\btheta;\bv_f,\bphi) 
+ b_p(p_p,\bv_p) - b_p(w_p,\bu_p)
+ b_{\bn_p}(\bsi_p,\bphi) - b_{\bn_p}(\btau_p,\btheta) \label{eq:domain-D} \\[1ex]
& \quad\ds +\, b_s(\bu_s,\btau_p)+
b_\sk(\bgamma_p,\btau_p) +  b_\Gamma(\bv_p,\bv_f,\bphi;\lambda) 
= (\wh{\f}_p, \btau_p)_{\Omega_p} + (\wh{q}_p, w_p)_{\Omega_p} 
+ (\f_f,\bv_f - \kappa_1\,\bdiv(\bR_f))_{\Omega_f}, \nonumber \\[1ex]
& \ds -\,b_s(\bv_s,\bsi_p) - b_\sk(\bchi_p,\bsi_p) -  b_\Gamma(\bu_p,\bu_f,\btheta;\xi) 
= (\f_p,\bv_s)_{\Omega_p} \,, \nonumber
\end{align}
\indent for all $(\bq, \bs)\in \bQ\times \bS$ and for some $(\wh{\f}_p,\wh{q}_p)\in E_b'$ satisfying 
\begin{equation}\label{eq:domain-D-hat}
\ds \|\wh{\f}_p\|_{\bbL^2(\Omega_p)} + \|\wh{q}_p\|_{\L^2(\Omega_p)}
\,\leq\, \wh{C}_{ep}\,\left( \|\f_f\|_{\bL^2(\Omega_f)} + \|\f_p\|_{\bL^2(\Omega_p)}\right)
\end{equation}
\indent with $\wh{C}_{ep}$ a fixed positive constant \hspace{-7pt} $\Big\}$.

\medskip
\noindent The constant $\wh{C}_{ep}$ is determined in the construction of the initial data, which is required to be in the domain $\cD$, cf. \eqref{eq:initial-data-bound} and \eqref{eq:initial-hat-data-bound-again} below.

Note that the resolvent system \eqref{eq:domain-D} can be written in an operator form as
\begin{equation}\label{eq:resolvent-weak-formulation}
\arraycolsep=1.7pt
\begin{array}{rcll}
\ds (\cE + \cA + \cK_{\bu_f})(\bp) + \cB'(\br) & = & \wh{\bF} & \mbox{ in }\, \bQ_2', \\ [2ex]
\ds - \cB(\bp) & = & \bG & \mbox{ in }\, \bS',
\end{array}
\end{equation}
where $\wh{\bF} \in \bQ'_2\subset \bQ'$ is the functional on the right hand side of \eqref{eq:domain-D}, that is, 
\begin{equation}\label{def:hat-functions}
\wh{\bF}(\bq) = (\wh{\f}_p, \btau_p)_{\Omega_p} 
+ (\wh{q}_p, w_p)_{\Omega_p} 
+ (\f_f, \bv_f - \kappa_1\,\bdiv(\bR_f))_{\Omega_f} \quad \forall\,\bq\in \bQ \,,
\end{equation}
which, thanks to \eqref{eq:domain-D-hat}, is bounded by
\begin{equation}\label{eq:acotamiento-F-hat}
\big|\wh{\bF}(\bq)\big| 
\,\leq\, \left( (1 + \kappa_1^2 + 2\wh{C}_{ep}^2)\|\f_f\|_{\bL^2(\Omega_f)}^2
  + 2\wh{C}_{ep}^2 \|\f_p\|_{\bL^2(\Omega_p)}^2 \right)^{1/2} \|\bq\|_{\bQ}.
\end{equation}
Note that there may be more than one $(\wh{\f}_p, \wh{q}_p) \in E_b'$ that generate the same $(\bsi_p, p_p) \in \cD$. In view of this, we introduce the multivalued operator $\cM(\cdot)$ with domain $\cD$ defined by
\begin{equation}\label{eq:operator-M}
  \cM(\bsi_p,p_p) \,:=\, \Big\{ (\wh{\f}_p,\wh{q}_p)-\cN(\bsi_p, p_p) : \ (\bsi_p,p_p) \,\mbox{ satisfy }\, \eqref{eq:domain-D} \,\mbox{ for }\, (\wh{\f}_p,\wh{q}_p)\in E'_b
  \mbox{ satisfying \eqref{eq:domain-D-hat}}\Big\}\,,
\end{equation}
where $\cN$ is the operator defined in \eqref{defn-N}. 
We observe that the relation $\cM \subset E \times E_b'$ is associated with the domain $\cD$ in the sense that $[\bv,\f]\in \cM$ if $\bv\in \cD$ and $\f\in \cM(\bv)$.

Next, we establish a connection between  \eqref{eq:continuous-weak-formulation-2} and the following parabolic problem: Given 
$$(h_{\bsi_p}, h_{p_p})\in \W^{1,1}(0,T;\bbL^2(\Omega_p))\times \W^{1,1}(0,T;\linebreak\L^2(\Omega_p)),$$ 
find $(\bsi_p,p_p):[0,T]\to\cD$ satisfying
\begin{equation}\label{eq:parabolic-problem}
\frac{d}{dt} \, \cN \left(\begin{array}{c}
\bsi_p(t) \\ p_p(t)
\end{array}\right)
+
\cM\left(\begin{array}{c}
\bsi_p(t) \\ p_p(t)
\end{array}\right) 
\ni \left(\begin{array}{c}
h_{\bsi_p}(t) \\ h_{p_p}(t)
\end{array}\right)\quad a.e. \,\, t\in (0,T)\,.
\end{equation}

\begin{lem}\label{lem:well-posedness-2}
If $(\bsi_p, p_p):[0,T]\to\cD$ solves \eqref{eq:parabolic-problem} for 
$(h_{\bsi_p}, h_{p_p})=(\0, q_p)$ with $q_p\in \W^{1,1}(0,T;\L^2(\Omega_p))$,
then the associated solution to \eqref{eq:domain-D} also solves \eqref{eq:continuous-weak-formulation-2}.
\end{lem}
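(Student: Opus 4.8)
The plan is to unpack the inclusion \eqref{eq:parabolic-problem} at a fixed time and then to observe that subtracting \eqref{eq:continuous-weak-formulation-2} from the resolvent system \eqref{eq:domain-D} leaves precisely that unpacked inclusion, tested against the elasticity--pressure test functions. Fix $t\in(0,T)$ at which \eqref{eq:parabolic-problem} holds with $(h_{\bsi_p},h_{p_p})=(\0,q_p)$. Since $(\bsi_p(t),p_p(t))\in\cD$, the definition \eqref{eq:operator-M} of the multivalued operator $\cM$ furnishes a selection $(\wh\f_p(t),\wh q_p(t))\in E'_b$ obeying \eqref{eq:domain-D-hat}, together with associated variables $((\bu_p,\bT_f,\bu_f,\btheta)(t),(\lambda,\bu_s,\bgamma_p)(t))$, with $\bu_f(t)\in\bW_r$, that solve \eqref{eq:domain-D}, in such a way that the chosen element of $\cM(\bsi_p(t),p_p(t))$ equals $(\wh\f_p(t),\wh q_p(t))-\cN(\bsi_p(t),p_p(t))$; hence \eqref{eq:parabolic-problem} becomes the pointwise identity
\[
\frac{d}{dt}\cN(\bsi_p(t),p_p(t)) + (\wh\f_p(t),\wh q_p(t)) - \cN(\bsi_p(t),p_p(t)) = (\0,q_p(t))\quad\text{in}\quad E'_b.
\]
Since $(\bsi_p,p_p)$ solves \eqref{eq:parabolic-problem}, $\cN(\bsi_p,p_p)\in\W^{1,\infty}(0,T;E'_b)$, and because $\cN$ induces a bounded linear isomorphism of $E'_b$ (its induced norm being equivalent to the $\bbL^2(\Omega_p)\times\L^2(\Omega_p)$ norm, cf.\ the remarks after \eqref{defn-N}), also $(\bsi_p,p_p)\in\W^{1,\infty}(0,T;E'_b)$, so $\partial_t\bsi_p$ and $\partial_t p_p$ exist a.e.; by bilinearity and continuity of $s_0(\cdot,\cdot)_{\Omega_p}$ and $a_e$ one then has, for every fixed $(\btau_p,w_p)\in\bbX_p\times\W_p$,
\[
\frac{d}{dt}\cN(\bsi_p(t),p_p(t))(\btau_p,w_p) = s_0(\partial_t p_p,w_p)_{\Omega_p} + a_e(\partial_t\bsi_p,\partial_t p_p;\btau_p,w_p).
\]

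Next I would compare, term by term, the first equation of \eqref{eq:continuous-weak-formulation-2} with the first equation of \eqref{eq:domain-D}, which the associated variables already satisfy. Every bilinear-form contribution — including the nonlinear one $\kappa_{\bu_f}(\bT_f,\bu_f;\bR_f,\bv_f)$ — and the datum $(\f_f,\bv_f-\kappa_1\,\bdiv(\bR_f))_{\Omega_f}$ occur identically in the two; the only discrepancies are that the leading term $\cN(\bsi_p,p_p)(\btau_p,w_p)=s_0(p_p,w_p)_{\Omega_p}+a_e(\bsi_p,p_p;\btau_p,w_p)$ of \eqref{eq:domain-D} is replaced in \eqref{eq:continuous-weak-formulation-2} by $s_0(\partial_t p_p,w_p)_{\Omega_p}+a_e(\partial_t\bsi_p,\partial_t p_p;\btau_p,w_p)$, and the datum $(\wh\f_p,\btau_p)_{\Omega_p}+(\wh q_p,w_p)_{\Omega_p}$ of \eqref{eq:domain-D} is replaced by $(q_p,w_p)_{\Omega_p}$. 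Consequently, for the associated variables, the first equation of \eqref{eq:continuous-weak-formulation-2} holds for all $\bq\in\bQ$ if and only if, for all $(\btau_p,w_p)$,
\[
\frac{d}{dt}\cN(\bsi_p(t),p_p(t))(\btau_p,w_p) - \cN(\bsi_p(t),p_p(t))(\btau_p,w_p) = (q_p,w_p)_{\Omega_p} - (\wh\f_p,\btau_p)_{\Omega_p} - (\wh q_p,w_p)_{\Omega_p},
\]
and, in view of the second displayed identity above, this is exactly the first displayed identity paired with $(\btau_p,w_p)$. The second equation of \eqref{eq:continuous-weak-formulation-2} is literally the second equation of \eqref{eq:domain-D}. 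Since everything above is valid for a.e.\ $t\in(0,T)$ with the associated variables, $(\bT_f,\bu_f,\bsi_p,\bu_s,\bgamma_p,\bu_p,p_p,\lambda,\btheta)$ solves \eqref{eq:continuous-weak-formulation-2}.

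I do not anticipate a genuine obstacle: the argument is purely algebraic, amounting to subtracting the two weak formulations and recognizing the remainder as the unpacked parabolic inclusion. The one point deserving care is that the $-\cN(\bsi_p,p_p)$ piece built into the definition \eqref{eq:operator-M} of $\cM$ cancels against the $\cE$-part of the resolvent system \eqref{eq:domain-D} (equivalently, of \eqref{eq:resolvent-weak-formulation}) — this cancellation is exactly what converts the static storage/compliance term $s_0(p_p,w_p)_{\Omega_p}+a_e(\bsi_p,p_p;\btau_p,w_p)$ into the time-differentiated terms $s_0(\partial_t p_p,w_p)_{\Omega_p}+a_e(\partial_t\bsi_p,\partial_t p_p;\btau_p,w_p)$ of \eqref{eq:continuous-weak-formulation-2} — together with the short regularity remark that lets one write $\frac{d}{dt}\cN(\bsi_p(t),p_p(t))$ componentwise in that form.
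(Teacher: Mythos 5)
Your proposal is correct and follows essentially the same argument as the paper: both unpack the inclusion \eqref{eq:parabolic-problem} via a selection $(\wh\f_p,\wh q_p)$ from \eqref{eq:operator-M} and use the resolvent system \eqref{eq:domain-D} to convert the term $(\wh\f_p,\wh q_p)-\cN(\bsi_p,p_p)$ into the time-differentiated $\cE$-terms of \eqref{eq:continuous-weak-formulation-2}. The only difference is presentational (you subtract the two full first equations, while the paper splits the test functions into $(\btau_p,w_p,\0,\dots,\0)$ and the remaining components), plus your slightly more explicit regularity remark justifying $\frac{d}{dt}\cN(\bsi_p,p_p)(\btau_p,w_p)=s_0(\partial_t p_p,w_p)_{\Omega_p}+a_e(\partial_t\bsi_p,\partial_t p_p;\btau_p,w_p)$, which is consistent with the paper.
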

\begin{proof}
Let $(\bsi_p(t), p_p(t))\in \cD$ solve \eqref{eq:parabolic-problem} 
for $(h_{\bsi_p}, h_{p_p})=(\0, q_p)$. Note that the resolvent system \eqref{eq:domain-D} 
from the definition of the domain $\cD$ directly implies \eqref{eq:continuous-weak-formulation-2} 
when is tested with $\bq=(\0, 0, \bv_p, \bR_f, \bv_f, \linebreak \bphi)$ and 
$\bs=(\xi, \bv_s, \bchi_p)$. Thus it remains to show \eqref{eq:continuous-weak-formulation-2} 
with $\bq=(\btau_p, w_p, \0, \0, \0, \0)$ and $\bs=\0$.

Since $(\bsi_p(t), p_p(t))$ solves \eqref{eq:parabolic-problem} for $(h_{\bsi_p}, h_{p_p})=(\0, q_p)$, there exists $(\wh{\f}_p,\wh{q}_p)\in \bbL^2(\Omega_p)\times \L^2(\Omega_p)$ such that $(\wh{\f}_p, \wh{q}_p)-\cN(\bsi_p,p_p)\in \cM(\bsi_p,p_p)$ satisfies
\begin{equation*}\label{eq:parabolic-problem-specific}
\frac{d}{dt} \, \cN\left(\begin{array}{c}
\bsi_p \\ p_p
\end{array}\right)
+
\left(\begin{array}{c}
\wh{\f}_p \\ \wh{q}_p 
\end{array}\right) 
-\cN \left(\begin{array}{c}
\bsi_p \\ p_p
\end{array}\right) 
\,=\, \left(\begin{array}{c}
\0 \\ q_p
\end{array}\right)\,, 
\end{equation*}
which implies that for all $(\btau_p,w_p)\in \bbX_p\times \W_p$, there holds
\begin{equation}\label{eq:aux-parabolic-problem}
\ds \frac{d}{dt} \, \cN \left(\begin{array}{c}
\bsi_p \\ p_p
\end{array}\right)
\left(\begin{array}{c}
\btau_p \\ w_p
\end{array}\right)
+
\left(\left(\begin{array}{c}
\wh{\f}_p\\ \wh{q}_p 
\end{array}\right) - \cN \left(\begin{array}{c}
\bsi_p \\  p_p
\end{array}\right)\right)
\left(\begin{array}{c}
\btau_p \\ w_p
\end{array}\right)
\,=\, (q_p,w_p)_{\Omega_p}\,.
\end{equation}
In turn, using the definition of $\cN$ (cf. \eqref{defn-N}) and testing the first equation of \eqref{eq:domain-D} with $\bq = (\btau_p,w_p,\0,\0,\newline \0,\0)\in \bQ$, we deduce that
\begin{align*}
\ds \left(\left(\begin{array}{c}
\wh{\f}_p \\ \wh{q}_p 
\end{array}\right) 
-\cN \left(\begin{array}{c}
\bsi_p \\ p_p
\end{array}\right)\right)
\left(\begin{array}{c}
\btau_p \\ w_p
\end{array}\right)
&= (\wh{\f}_p, \btau_p)_{\Omega_p} + (\wh{q}_p,w_p)_{\Omega_p} - a_e(\bsi_p,p_p;\btau_p,w_p) - (s_0\,p_p,w_p)_{\Omega_p} \\[1ex]
&=\, -\,b_p(\bu_p,w_p) - b_{\bn_p}(\btau_p,\btheta) + b_s(\btau_p,\bu_s) + b_\sk(\bgamma_p, \btau_p),
\end{align*}
which, combined with \eqref{eq:aux-parabolic-problem}, yields
\begin{equation*}
\begin{array}{l}
\ds 
a_e(\partial_t \, \bsi_p, \partial_t\,p_p; \btau_p, w_p) + (s_0\,\partial_t\,p_p,w_p)_{\Omega_p} \\ [2ex]
\ds\quad -\,\,b_p(\bu_p,w_p) - b_{\bn_p}(\btau_p,\btheta) + b_s(\btau_p,\bu_s) + b_\sk(\bgamma_p, \btau_p) \,=\, (q_p,w_p)_{\Omega_p} \quad \forall\,(\btau_p,w_p)\in \bbX_p\times \W_p.
\end{array}
\end{equation*}
Therefore the first equation of \eqref{eq:continuous-weak-formulation-2} tested with $\bq=(\btau_p, w_p, \0, \0, \0, \0)$ holds, completing the proof.
\end{proof}

\subsection{Existence and uniqueness of a solution of the reduced parabolic problem}

We will utilize Theorem \ref{thm:solvability-parabolic-problem} to show that the problem \eqref{eq:parabolic-problem} has a solution, which will be used later on to prove the well-posedness of problem \eqref{eq:continuous-weak-formulation-3}. We proceed as follows.
\medskip

\noindent \textbf{Step 1.} 
Introduce a fixed-point operator $\cJ$ associated to problem \eqref{eq:resolvent-weak-formulation} and derive a continuity bound.

\noindent \textbf{Step 2.} 
Prove that $\cJ$ is a contraction mapping and conclude that the domain $\cD$ (cf. \eqref{eq:domain-D}) is nonempty.

\noindent \textbf{Step 3.} 
Show the solvability of the parabolic problem \eqref{eq:parabolic-problem}.

\subsubsection{Step 1: A fixed-point approach}

We begin the solvability analysis of \eqref{eq:resolvent-weak-formulation} 
or equivalently that the domain $\cD$ (cf. \eqref{eq:domain-D}) is nonempty 
by defining the operator $\cJ: \bV_f \rightarrow \bV_f$ as
\begin{equation}\label{eq:def-T}
\cJ(\bw_f):= \bu_f \quad \forall \, \bw_f \in \bV_f,
\end{equation}
where $\bp:=(\bsi_p, p_p, \bu_p, \bT_f, \bu_f, \btheta) \in \bQ$ is the first component of the unique solution (to be confirmed below) of the problem: Find $(\bp, \br)\in \bQ\times \bS$, such that
\begin{equation}\label{eq:NS-Biot-formulation-3}
\begin{array}{rcl}
\ds (\cE + \cA + \cK_{\bw_f})(\bp) + \cB'(\br) & = & \wh{\bF} \qin \bQ_2'\,, \\[1ex]
\ds - \cB(\bp) & = & \bG \qin \bS'\,.
\end{array}
\end{equation}
Thus, $(\bp, \br)\in \bQ\times \bS$ is a solution of (\ref{eq:domain-D}) if and only if $\bu_f\in \bV_f$ is a fixed-point of $\cJ$, that is,
\begin{equation}\label{eq:fixed-point-problem}
\cJ(\bu_f)=\bu_f.
\end{equation}
In what follows we focus on proving that $\cJ$ possesses a unique fixed-point. We remark in advance that the definition of $\cJ$ will make sense only in the ball $\bW_r$, cf. \eqref{eq:W_r-definition}.

The solvability of \eqref{eq:NS-Biot-formulation-3} is established using a suitable regularization, adding some extra terms to \eqref{eq:NS-Biot-formulation-3} multiplied by an arbitrary $\epsilon>0$ that provide coercivity for all unknowns, which yields a well-posed problem. Then, taking $\epsilon \to 0$, we recover \eqref{eq:NS-Biot-formulation-3}. More precisely, let
$R_{\bsi_p}: \bbX_p \to \bbX_p'$, $R_{p_p}: \W_p \to \W_p'$, $R_{\bu_p}: \bV_p \to \bV_p'$, $L_{\bu_s}: \bV_s \to \bV_s'$, and $L_{\bgamma_p}: \bbQ_p \to \bbQ_p'$ be defined by:
\begin{align*}
&\ds R_{\bsi_p}(\bsi_p)(\btau_p) = r_{\bsi_p}(\bsi_p, \btau_p) 
:= (\bsi_p, \btau_p)_{\Omega_p} + (\bdiv(\bsi_p), \bdiv(\btau_p))_{\Omega_p}, \\[1.5ex]
&\ds R_{p_p}(p_p)(w_p) = r_{p_p}(p_p,w_p) 
:= (p_p, w_p)_{\Omega_p},\quad
R_{\bu_p}(\bu_p)(\bv_p) = r_{\bu_p}(\bu_p, \bv_p) 
:= (\div(\bu_p), \div(\bv_p))_{\Omega_p}, \\[1.5ex]
&\ds L_{\bu_s}(\bu_s)(\bv_s) = l_{\bu_s}(\bu_s, \bv_s)
:= (\bu_s, \bv_s)_{\Omega_p}, \quad 
L_{\bgamma_p}(\bgamma_p)(\bchi_p) = l_{\bgamma_p}(\bgamma_p, \bchi_p) 
:= (\bgamma_p, \bchi_p)_{\Omega_p}\,. 
\end{align*}
The following operator properties follow immediately from the above definitions.
\begin{lem}\label{lem:R-operators1}
The operators $R_{\bsi_p}$,  $R_{p_p}$, $L_{\bu_s}$, and $L_{\bgamma_p}$ are bounded, continuous, and coercive.
In addition, $R_{\bu_p}$ is bounded, continuous, and monotone.
\end{lem}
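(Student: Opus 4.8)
The plan is to read off all three properties for each operator directly from its defining bilinear form, since every one of $r_{\bsi_p}$, $r_{p_p}$, $r_{\bu_p}$, $l_{\bu_s}$, $l_{\bgamma_p}$ is a symmetric bilinear form that is manifestly bounded on the corresponding product space. First I would observe that $r_{\bsi_p}$ is precisely the inner product of $\bbH(\bdiv;\Omega_p)$ restricted to $\bbX_p$ (recall $\|\cdot\|_{\bbX_p} = \|\cdot\|_{\bbH(\bdiv;\Omega_p)}$), and likewise $r_{p_p}$, $l_{\bu_s}$, and $l_{\bgamma_p}$ are the inner products of $\W_p = \L^2(\Omega_p)$, $\bV_s = \bL^2(\Omega_p)$, and $\bbQ_p \subset \bbL^2(\Omega_p)$, respectively. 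Hence for each of these four operators the Cauchy--Schwarz inequality gives $|R(u)(v)| \le \|u\|\,\|v\|$, i.e. boundedness; linearity of $R$ then yields continuity; and evaluating on the diagonal gives $R(v)(v) = \|v\|^2$, i.e. coercivity with constant $1$.

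For $R_{\bu_p}$ I would argue in the same way but noting that $r_{\bu_p}(\bu_p,\bv_p) = (\div(\bu_p),\div(\bv_p))_{\Omega_p}$ is only a semi-inner product on $\bV_p$. Cauchy--Schwarz combined with $\|\div(\bv_p)\|_{\L^2(\Omega_p)} \le \|\bv_p\|_{\bV_p}$ yields $|R_{\bu_p}(\bu_p)(\bv_p)| \le \|\bu_p\|_{\bV_p}\|\bv_p\|_{\bV_p}$, hence boundedness and (by linearity) continuity, while $R_{\bu_p}(\bv_p)(\bv_p) = \|\div(\bv_p)\|_{\L^2(\Omega_p)}^2 \ge 0$ gives monotonicity.

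I expect essentially no obstacle here; the only point worth flagging is the reason $R_{\bu_p}$ is asserted to be merely monotone and not coercive, namely that $r_{\bu_p}$ controls only the divergence and not the full $\bH(\div;\Omega_p)$-norm of $\bu_p$. This is exactly why, in the $\epsilon$-regularization of \eqref{eq:NS-Biot-formulation-3} to be introduced below, control of $\bu_p$ is recovered not from $R_{\bu_p}$ alone but through the coupling with the remaining equations via the inf-sup conditions of Lemma~\ref{lem:inf-sup}.
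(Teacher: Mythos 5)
Your proof is correct and follows essentially the same route as the paper, which simply notes that these properties follow immediately from the definitions: Cauchy--Schwarz gives boundedness, linearity gives continuity, and evaluation on the diagonal yields coercivity for the four inner-product operators and nonnegativity (hence monotonicity) for $R_{\bu_p}$. Your closing remark correctly identifies why $R_{\bu_p}$ is only monotone, namely that $r_{\bu_p}$ controls only $\|\div(\bv_p)\|_{\L^2(\Omega_p)}$ and not the full $\bH(\div;\Omega_p)$-norm.
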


On the other hand, recalling from \eqref{eq:trace-spaces} the trace spaces $\Lambda_p$ and $\bLambda_s$, we define $L_\lambda: \Lambda_p \to \Lambda_p'$ as
\begin{equation*}
L_\lambda(\lambda)(\xi) = l_{\lambda}(\lambda,\xi):=(\nabla \psi(\lambda), \nabla \psi(\xi))_{\Omega_p},
\end{equation*}
where $\psi(\lambda)\in \H^1(\Omega_p)$ is the weak solution of the auxiliary problem
\begin{gather*}
- \div(\nabla \psi(\lambda))  = 0 \qin \Omega_p, \\
\psi(\lambda)  = \lambda \qon \Gamma_{fp}, \quad
\nabla \psi(\lambda) \cdot \bn_p  = 0 \qon \Gamma_p^\rN, \quad \psi(\lambda) = 0 \qon \Gamma_p^\rD.
\end{gather*}
It is shown in \cite{aeny2019} using elliptic regularity and the trace inequality that there exist positive constants $c_1$ and $c_2$ such that
\begin{equation}\label{equiv-psi}
c_1\,\|\psi(\lambda)\|_{\H^1(\Omega_p)} 
\,\leq\, \|\lambda\|_{\Lambda_p} 
\,\leq\, c_2\,\|\psi(\lambda)\|_{\H^1(\Omega_p)}.
\end{equation}
Similarly, we define $R_{\btheta}: \bLambda_s \to \bLambda_s'$ as 
\begin{align*}
R_\btheta(\btheta)(\bphi) = r_{\btheta}(\btheta, \bphi)
:= (\nabla \bvarphi(\btheta), \nabla \bvarphi(\bphi))_{\Omega_p},
\end{align*}
where $\bvarphi(\btheta)\in \bH^1(\Omega_p)$ is the weak solution of
\begin{gather*}
- \bdiv\,(\nabla\bvarphi(\btheta))  = \0 \qin \Omega_p, \\
\bvarphi(\btheta)  = \btheta \qon \Gamma_{fp}, \quad
\nabla \bvarphi(\btheta) \cdot \bn_p  = 0 \qon \tilde\Gamma_p^\rN, \quad
\bvarphi(\btheta)  = \0 \qon \tilde\Gamma_{p}^\rD. 
\end{gather*}
Similarly to \eqref{equiv-psi}, there exist positive constants $\tilde c_1$ and $\tilde c_2$ such that
\begin{equation}\label{equiv-varphi}
\tilde c_1\,\|\bvarphi(\btheta)\|_{\bH^1(\Omega_p)} 
\,\leq\, \|\btheta\|_{\bLambda_s} 
\,\leq\, \tilde c_2\,\|\bvarphi(\btheta)\|_{\bH^1(\Omega_p)}.
\end{equation}

\begin{lem}\label{lem:R-operators2}
The operators $L_\lambda$ and $R_\btheta$ are bounded, continuous, coercive and monotone. 
\end{lem}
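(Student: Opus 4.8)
The plan is to handle both operators simultaneously, exploiting the linearity of the harmonic extension maps $\lambda\mapsto\psi(\lambda)$ and $\btheta\mapsto\bvarphi(\bphi)$ together with the norm equivalences \eqref{equiv-psi} and \eqref{equiv-varphi}. First I would note that $L_\lambda$ and $R_\btheta$ are linear: the auxiliary boundary value problems defining $\psi(\lambda)$ and $\bvarphi(\btheta)$ are linear in the boundary data, so the extension maps are linear, and hence so are $L_\lambda(\cdot)(\cdot)$ and $R_\btheta(\cdot)(\cdot)$ as bilinear forms. Because of this, boundedness, continuity, and monotonicity will all follow from one short estimate.

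For boundedness (and therefore continuity, by linearity), I would apply Cauchy--Schwarz to obtain $|L_\lambda(\lambda)(\xi)| = |(\nabla\psi(\lambda),\nabla\psi(\xi))_{\Omega_p}| \le \|\psi(\lambda)\|_{\H^1(\Omega_p)}\|\psi(\xi)\|_{\H^1(\Omega_p)}$, and then use the left-hand inequality in \eqref{equiv-psi}, namely $\|\psi(\lambda)\|_{\H^1(\Omega_p)} \le c_1^{-1}\|\lambda\|_{\Lambda_p}$, to conclude $|L_\lambda(\lambda)(\xi)| \le c_1^{-2}\|\lambda\|_{\Lambda_p}\|\xi\|_{\Lambda_p}$. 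The identical argument with \eqref{equiv-varphi} gives boundedness of $R_\btheta$ with constant $\tilde c_1^{-2}$.

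For coercivity, observe that $L_\lambda(\lambda)(\lambda) = \|\nabla\psi(\lambda)\|^2_{\bL^2(\Omega_p)}$. Since $\psi(\lambda)$ vanishes on $\Gamma_p^\rD$ and $|\Gamma_p^\rD| > 0$, the Poincar\'e inequality yields $\|\psi(\lambda)\|_{\H^1(\Omega_p)} \le C_P\|\nabla\psi(\lambda)\|_{\bL^2(\Omega_p)}$; combining this with the right-hand inequality in \eqref{equiv-psi}, $\|\lambda\|_{\Lambda_p} \le c_2\|\psi(\lambda)\|_{\H^1(\Omega_p)}$, we obtain $\|\lambda\|_{\Lambda_p}^2 \le (c_2 C_P)^2\,L_\lambda(\lambda)(\lambda)$, which is the desired coercivity bound with constant $(c_2 C_P)^{-2}$. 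For $R_\btheta$ the same reasoning applies, now using $|\tilde\Gamma_p^\rD| > 0$ (so that the Poincar\'e inequality holds for $\bvarphi(\btheta)$, which vanishes on $\tilde\Gamma_p^\rD$) together with \eqref{equiv-varphi}. Finally, monotonicity is immediate from linearity: $\big(L_\lambda(\lambda) - L_\lambda(\xi)\big)(\lambda - \xi) = L_\lambda(\lambda-\xi)(\lambda-\xi) = \|\nabla\psi(\lambda-\xi)\|^2_{\bL^2(\Omega_p)} \ge 0$, and analogously for $R_\btheta$.

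There is no genuinely hard step here; the only place requiring a little care is the coercivity, where one must invoke the Poincar\'e inequality and hence rely on the standing assumptions $|\Gamma_p^\rD| > 0$ and $|\tilde\Gamma_p^\rD| > 0$, as well as on the fact that the extensions $\psi(\lambda)$ and $\bvarphi(\btheta)$ carry homogeneous Dirichlet data precisely on those portions of $\partial\Omega_p$. (Well-posedness of the auxiliary problems themselves follows from the same Poincar\'e inequality via Lax--Milgram and is already implicit in \eqref{equiv-psi}--\eqref{equiv-varphi}.)
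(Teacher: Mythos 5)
Your proof is correct and follows essentially the same route as the paper, which simply asserts that continuity and coercivity of $L_\lambda$ and $R_\btheta$ follow from their definitions together with the norm equivalences \eqref{equiv-psi} and \eqref{equiv-varphi}. The only added detail is your explicit use of the Poincar\'e inequality (valid since $\psi(\lambda)$ and $\bvarphi(\btheta)$ vanish on $\Gamma_p^\rD$ and $\tilde\Gamma_p^\rD$, respectively) to pass from $\|\nabla\psi(\lambda)\|_{\bL^2(\Omega_p)}$ to the full $\H^1$-norm in the coercivity bound, a step the paper leaves implicit.
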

\begin{proof}
The assertion follows from the definition of the operators $L_\lambda, R_\btheta$ and the equivalence of norms statements \eqref{equiv-psi} and \eqref{equiv-varphi}.
In particular, there exist positive constants $c_\Gamma$, $C_\Gamma$, $\tilde c_\Gamma$, and $\tilde C_\Gamma$ such that
\begin{align*}
& L_{\lambda}(\lambda)(\xi) \,\leq\, C_\Gamma\,\|\lambda\|_{\Lambda_p} \|\xi\|_{\Lambda_p}\,,\quad L_{\lambda}(\lambda)(\lambda) \,\geq\, c_\Gamma\,\|\lambda\|^2_{\Lambda_p}, \\[1ex]
& R_{\btheta}(\btheta)(\bphi) \,\leq\, \tilde C_\Gamma\,\|\btheta\|_{\bLambda_s} \|\bphi\|_{\bLambda_s}\,,\quad R_{\btheta}(\btheta)(\bphi) \,\geq\, \tilde c_\Gamma\,\|\btheta\|^2_{\bLambda_s},
\end{align*}
for all $\lambda, \xi\in \Lambda_p$ and for all $\btheta, \bphi\in \bLambda_s$.
\end{proof}

According to the above, we define the operators 
$\cR: \bQ \to \bQ'$ and $\cL: \bS \to \bS'$ as
\begin{equation*}
\begin{array}{l}
\cR(\bp)(\bq) := R_{\bsi_p}(\bsi_p)(\btau_p) + R_{p_p}(p_p)(w_p) + R_{\bu_p}(\bu_p)(\bv_p) + R_{\btheta}(\btheta)(\bphi)\,, \\[2ex]
\cL(\br)(\bs) := L_\lambda(\lambda)(\xi) + L_{\bu_s}(\bu_s)(\bv_s) + L_{\bgamma_p}(\bgamma_p)(\bchi_p)\,.
\end{array}
\end{equation*}

\begin{thm}\label{thm:well-posedness-1}
Let $r\in (0, r_0)$, with $r_0$ given by \eqref{eq:r0-definition}, and let $\f_f\in\bL^2(\Omega_f)$ and $\f_p\in \bL^2(\Omega_p)$. Assume that the conditions in Lemma \ref{lem:monotone} are satisfied. Then for each $\bw_f\in \bW_r$ and for each $(\wh{\f}_p, \wh{q}_p)$ 
satisfying \eqref{eq:domain-D-hat}, there exists a unique solution of 
the resolvent system \eqref{eq:NS-Biot-formulation-3}. Moreover, there exists 
a constant $C_{\cJ}>0$, independent of $s_{0,\min}, \bw_f$, and the data $\f_f$ and $\f_p$ such that 
\begin{equation}\label{eq:C_T-bound}
\|\cJ(\bw_f)\|_{\bV_f} \,\leq\, \|(\bp, \br )\|_{\bQ \times \bS} 
\,\leq\, C_{\cJ}\,\left( \|\f_f\|_{\bL^2(\Omega_f)} + 
\|\f_p\|_{\bL^2(\Omega_p)} \right).
\end{equation}
\end{thm}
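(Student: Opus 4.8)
The plan is to prove Theorem~\ref{thm:well-posedness-1} by establishing the well-posedness of the resolvent system \eqref{eq:NS-Biot-formulation-3} via a regularization argument, and then deriving the a priori bound \eqref{eq:C_T-bound}. First I would introduce the $\epsilon$-regularized problem: for $\epsilon > 0$, seek $(\bp_\epsilon, \br_\epsilon) \in \bQ \times \bS$ such that
\begin{equation*}
\begin{array}{rcl}
(\cE + \epsilon\,\cR + \cA + \cK_{\bw_f})(\bp_\epsilon) + \cB'(\br_\epsilon) & = & \wh{\bF} \qin \bQ', \\[1ex]
- \cB(\bp_\epsilon) + \epsilon\,\cL(\br_\epsilon) & = & \bG \qin \bS'.
\end{array}
\end{equation*}
The point of adding $\epsilon\,\cR$ and $\epsilon\,\cL$ is that, by Lemmas~\ref{lem:R-operators1} and \ref{lem:R-operators2} together with Lemma~\ref{lem:monotone}, the combined operator $\cE + \epsilon\,\cR + \cA + \cK_{\bw_f}$ becomes coercive on all of $\bQ$ (the parts of $\bp$ not controlled by $\cE + \cA + \cK_{\bw_f}$, namely the components of $\bsi_p$, $p_p$, $\bu_p$, $\btheta$ not appearing in \eqref{eq:positive-bound-A+Kwf}, get full control from $\epsilon\,\cR$), while $\epsilon\,\cL$ provides coercivity in $\bS$. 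Monotonicity of $\cK_{\bw_f}$ for $\bw_f \in \bW_r$ with $r < r_0$ is exactly what Lemma~\ref{lem:monotone} gives. Hence the regularized problem fits the classical theory for mixed variational problems with a coercive (actually strongly monotone, since all the nonlinear pieces are linear except $\cK_{\bw_f}$ which is controlled) operator in the first block and a coercive operator in the second block; existence and uniqueness of $(\bp_\epsilon, \br_\epsilon)$ follow from a standard result such as \cite[Theorem~2.1 or the Babu\v{s}ka--Brezzi framework]{Gatica} or directly from Browder--Minty applied to the Schur-complement operator.

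Next I would derive $\epsilon$-uniform a priori bounds. Testing the regularized system with $(\bp_\epsilon, \br_\epsilon)$ itself, the $\cB'$ and $\cB$ terms cancel, and using \eqref{eq:positive-bound-E}, \eqref{eq:positive-bound-A+Kwf}, the coercivity of $\cR$ on the remaining components, and the coercivity of $\cL$, together with the bound \eqref{eq:acotamiento-F-hat} on $\wh{\bF}$ and \eqref{cont-F-G} on $\bG$, one gets control of $\|\bp_\epsilon\|$ in the ``energy seminorm'' plus $\sqrt{\epsilon}\,\|\bp_\epsilon\|_{\bQ}$, $\sqrt{\epsilon}\,\|\br_\epsilon\|_{\bS}$. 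To upgrade the energy-seminorm control of $\bp_\epsilon$ to a full $\bQ$-norm bound independent of $\epsilon$, and to bound $\br_\epsilon$ in $\bS$, I would invoke the two inf-sup conditions of Lemma~\ref{lem:inf-sup}: \eqref{eq:continuous-inf-sup-1} recovers $\|\bu_s\|_{\bV_s} + \|\bgamma_p\|_{\bbQ_p} + \|\btheta\|_{\bLambda_s}$ from the first equation tested with $\btau_p$ (after moving the already-controlled terms to the right), and similarly recovers $\|\bsi_p\|_{\bbX_p}$ and $\|p_p\|_{\W_p}$; \eqref{eq:continuous-inf-sup-2} recovers $\|p_p\|_{\W_p} + \|\lambda\|_{\Lambda_p}$ and then $\|\bu_p\|_{\bV_p}$ from the Darcy block. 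One has to be careful that the $\epsilon\,\cR$, $\epsilon\,\cL$ contributions to these inf-sup manipulations are $O(\sqrt{\epsilon})$ or $O(\epsilon)$ times already-bounded quantities, hence harmless. This yields $\|(\bp_\epsilon, \br_\epsilon)\|_{\bQ\times\bS} \le C(\|\f_f\| + \|\f_p\|)$ with $C$ independent of $\epsilon$, $\bw_f$, and $s_{0,\min}$ — the latter because the $s_0$ term only helps and the inf-sup constants do not involve $s_0$.

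Then I would pass to the limit $\epsilon \to 0$: the uniform bound gives a weakly convergent subsequence $(\bp_\epsilon, \br_\epsilon) \rightharpoonup (\bp, \br)$ in $\bQ \times \bS$; since $\cE$, $\cA$, $\cK_{\bw_f}$ (for fixed $\bw_f$, it is linear in $\bp$), $\cB$, $\cB'$ are all linear and bounded, and $\epsilon\,\cR(\bp_\epsilon) \to 0$, $\epsilon\,\cL(\br_\epsilon) \to 0$ strongly in the dual spaces, the limit $(\bp, \br)$ solves \eqref{eq:NS-Biot-formulation-3}, and it inherits the bound \eqref{eq:C_T-bound}. Uniqueness of the solution of \eqref{eq:NS-Biot-formulation-3} follows because the difference of two solutions, tested against itself, gives zero energy seminorm and then zero full norm by the same inf-sup argument (here the fact that $r < r_0$ guarantees $\cA + \cK_{\bw_f}$ is strictly monotone in the controlled components). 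Finally, since $\wh{\bF} \in \bQ_2'$ (it vanishes on the $\bv_p$ and $\bphi$ components and on the interior of the $\bu_p$, $\btheta$ parts in the appropriate sense), the limit system holds in $\bQ_2'$ as claimed, and $\cJ(\bw_f) := \bu_f$ is well-defined with $\|\cJ(\bw_f)\|_{\bV_f} \le \|(\bp,\br)\|_{\bQ\times\bS} \le C_{\cJ}(\|\f_f\| + \|\f_p\|)$.

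The main obstacle I anticipate is the $\epsilon$-uniform a priori estimate — specifically, ensuring that the inf-sup arguments of Lemma~\ref{lem:inf-sup} can be applied cleanly in the presence of the regularization terms, and that the resulting constant is genuinely independent of $\epsilon$, $\bw_f \in \bW_r$, and $s_{0,\min}$. The $\bw_f$-independence relies crucially on the monotonicity margin $\alpha_f/2$ in \eqref{eq:monotone-01}, which holds uniformly for all $\bw_f$ in the ball $\bW_r$ with $r < r_0$; the $s_{0,\min}$-independence requires checking that no estimate divides by $s_0$. A secondary technical point is verifying that the regularized problem is genuinely well-posed, i.e., that the Brezzi-type conditions (coercivity on the kernel, plus the $\epsilon$-perturbed inf-sup) are met — but for $\epsilon > 0$ this is routine since $\epsilon\,\cR$ supplies full coercivity on $\bQ$ and $\epsilon\,\cL$ supplies full coercivity on $\bS$, so one does not even need an inf-sup condition at the regularized level.
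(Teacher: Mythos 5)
Your proposal follows essentially the same route as the paper's proof: the same $\epsilon\,\cR$/$\epsilon\,\cL$ regularization, coercivity of the perturbed operator (the paper invokes Lax--Milgram), $\epsilon$-uniform energy plus inf-sup estimates, weak-limit passage as $\epsilon\to 0$, and uniqueness via the same estimates applied to zero data. The only imprecision is attributing the recovery of $\|\bsi_p\|_{\bbX_p}$ and $\|\bu_p\|_{\bV_p}$ to the inf-sup conditions: in the paper these come from the $\bbL^2$ estimate \eqref{eq:estimate-sigmap-A} together with the special test functions $\bv_s=\bdiv(\bsi_{p,\epsilon})$ and $w_p=\div(\bu_{p,\epsilon})$, while Lemma \ref{lem:inf-sup} only yields the block $(\bu_s,\bgamma_p,\btheta)$ and $(p_p,\lambda)$.
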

\begin{proof}
Given $\bw_f\in \bW_r$ with $r\in (0,r_0)$ (cf. \eqref{eq:r0-definition}), for each $0 < \epsilon \le 1$, consider a regularization of \eqref{eq:NS-Biot-formulation-3}: Find $\bp_\epsilon = (\bsi_{p,\epsilon}, p_{p,\epsilon}, \bu_{p,\epsilon}, \bT_{f, \epsilon}, \bu_{f,\epsilon},  \btheta_\epsilon) \in \bQ$ and $\br_\epsilon=(\lambda_\epsilon, \bu_{s,\epsilon}, \bgamma_{p,\epsilon}) \in \bS$,
such that
\begin{equation}\label{eq:NS-Biot-formulation-4}
\begin{array}{rll}
\ds (\epsilon \cR + \cE + \cA + \cK_{\bw_f})(\bp_\epsilon) + \cB'(\br_\epsilon) & = & \wh{\bF} \qin \bQ_2'\,, \\[2ex]
-\,\cB(\bp_\epsilon) + \epsilon \cL(\br_\epsilon) & = & \bG \qin \bS'\,.
\end{array}
\end{equation}
Let $\Psi : \bQ \times \bS \to \bQ' \times \bS'$ be the operator induced by \eqref{eq:NS-Biot-formulation-4}:
\begin{equation*}
\Psi 
\left(\begin{array}{c}
\bq\\ 
\bs
\end{array}\right)
= \left(\begin{array}{cc}
\epsilon \cR + \cE + \cA + \cK_{\bw_f} & \ \cB' \\ 
- \cB  & \  \epsilon \cL 
\end{array}\right) 
\left(\begin{array}{c}
\bq \\
\bs
\end{array}\right).
\end{equation*}
The continuity bounds in Lemmas \ref{lem:cont}, \ref{lem:R-operators1}, and \ref{lem:R-operators2} imply that $\Psi$ is bounded and continuous. In turn, we note that
\begin{equation*}
\Psi
\left(\begin{array}{c}
\bp \\ \br
\end{array}\right)
\left(\begin{array}{c}
\bq \\ \bs
\end{array}\right)
= \left(\epsilon \cR + \cE + \cA + \cK_{\bw_f}\right)(\bp)(\bq)
+ \cB'(\br)(\bq) - \cB(\bp)(\bs) + \epsilon \cL(\br)(\bs)\,.
\end{equation*}
The positivity bounds in Lemmas \ref{lem:monotone}, \ref{lem:R-operators1}, and \ref{lem:R-operators2}, imply
\begin{align}\label{eq:O-coercivity}
&\Psi 
\left(\begin{array}{c}
\bq\\ 
\bs
\end{array}\right)
\left(\begin{array}{c}
\bq\\ 
\bs
\end{array}\right)
= \left(\epsilon \cR + \cE + \cA + \cK_{\bw_f}\right)(\bq)(\bq) + \epsilon \cL(\bs)(\bs) \nonumber \\[1ex]
& =\epsilon r_{\bsi_p}(\btau_p, \btau_p)
+ \epsilon r_{p_p}(w_p, w_p)
+ \epsilon r_{\bu_p}(\bv_p, \bv_p)
+ \epsilon r_{\btheta}(\bphi, \bphi)
+ (s_0 w_p, w_p)
+ a_e(\btau_p, w_p; \btau_p, w_p)
 \nonumber \\[1ex]
& \quad
+ a_p(\bv_p, \bv_p) 
+ a_f(\bR_f, \bv_f; \bR_f, \bv_f)
+ \kappa_{\bw_f}(\bR_f, \bv_f; \bR_f, \bv_f)
+ a_{\BJS}(\bv_f, \bphi;\bv_f, \bphi)  \nonumber \\[1ex]
& \quad
+ \epsilon l_\lambda(\xi, \xi)
+ \epsilon l_{\bu_s}(\bv_s, \bv_s)
+ \epsilon l_{\bgamma_p}(\bchi_p, \bchi_p) \\[1ex]
&\geq C\,\left( \epsilon \|\btau_p\|^2_{\bbX_p}
+ \epsilon \|w_p\|^2_{\W_p}
+ \epsilon \|\div(\bv_p)\|^2_{\L^2(\Omega_p)}
+ \epsilon \|\bphi\|^2_{\bLambda_s}
+ s_0 \|w_p\|^2_{\W_p} + \|A^{1/2}(\btau_p + \alpha_p w_p \bI)\|^2_{\bbL^2(\Omega_p)} \right.
\nonumber \\[1ex]
&\quad
\left.
+ \|\bv_p\|^2_{\bL^2(\Omega_p)}
+ \|\bR_f\|^2_{\bbX_f} + \|\bv_f\|^2_{\bV_f} 
+ |\bv_f - \bphi|^2_{\BJS}
+ \epsilon \|\xi\|^2_{\Lambda_p}
+ \epsilon \|\bv_s\|^2_{\bV_s} 
+ \epsilon \|\bchi_p\|^2_{\bbQ_p}
\right),  \nonumber
\end{align}
which implies that $\Psi$ is coercive. 
Thus, the Lax--Milgram theorem implies the existence of a unique solution $(\bp_\epsilon,\br_\epsilon)\in \bQ\times \bS$ of \eqref{eq:NS-Biot-formulation-4}. 

On the other hand, using \eqref{eq:NS-Biot-formulation-4} and \eqref{eq:O-coercivity}, 
and the Cauchy--Schwarz inequality, we deduce that
\begin{align}\label{eq:epsilon-bound-1}
& \epsilon \|\bsi_{p,\epsilon}\|^2_{\bbX_p}
+ \epsilon \|p_{p,\epsilon}\|^2_{\W_p}
+ \epsilon \|\div(\bu_{p,\epsilon})\|^2_{\L^2(\Omega_p)}
+ \epsilon \|\btheta_{\epsilon}\|^2_{\bLambda_s}
+ s_0 \|p_{p,\epsilon}\|^2_{\W_p} 
+ \|A^{1/2}(\bsi_{p,\epsilon} + \alpha_p p_{p,\epsilon} \bI) \|^2_{\bbL^2(\Omega_p)}
\nonumber \\[1ex]
& \quad
+ \|\bu_{p,\epsilon}\|^2_{\bL^2(\Omega_p)}
+ \|\bT_{f,\epsilon}\|^2_{\bbX_f} + \|\bu_{f,\epsilon}\|^2_{\bV_f} 
+ |\bu_{f,\epsilon} - \btheta_{\epsilon}|^2_{\BJS}
+ \epsilon \|\lambda_{\epsilon}\|^2_{\Lambda_p}
+ \epsilon \|\bu_{s,\epsilon}\|^2_{\bV_s} 
+ \epsilon \|\bgamma_{p,\epsilon}\|^2_{\bbQ_p} \\[1ex]
& \leq C\,\left(
\|\f_f\|_{\bL^2(\Omega_f)}\|(\bT_{f,\epsilon},\bu_{f,\epsilon})\| 
+ \|\wh{\f}_p\|_{\bL^2(\Omega_p)} \|\bsi_{p,\epsilon}\|_{\L^2(\Omega_p)} 
+ \|\wh{q}_{p}\|_{\L^2(\Omega_p)} \|p_{p,\epsilon}\|_{\W_p} 
+ \|\f_p\|_{\bL^2(\Omega_p)} \|\bu_{s,\epsilon}\|_{\bL^2(\Omega_p)} \right)\,. \nonumber
\end{align} 
In addition, the inf-sup conditions \eqref{eq:continuous-inf-sup-1} 
and \eqref{eq:continuous-inf-sup-2} in Lemma \ref{lem:inf-sup}, in combination with 
the first equation of \eqref{eq:NS-Biot-formulation-4}, yield
\begin{equation}\label{eq:epsilon-bound-2}
\begin{array}{c}
\|\bu_{s,\epsilon}\|_{\bV_s}
+ \|\bgamma_{p,\epsilon}\|_{\bbQ_p} 
+ \|\btheta_{\epsilon}\|_{\bLambda_s} 
\,\leq\, C\,\left( \|A^{1/2}(\bsi_{p,\epsilon} + \alpha_p p_{p,\epsilon} \bI)\|_{\bbL^2(\Omega_p)}
+ \epsilon \|\bsi_{p,\epsilon}\|_{\bbX_p}
+ \|\wh{\f}_p\|_{\bL^2(\Omega_p)} \right),  \\[2ex]
\|p_{p,\epsilon}\|_{\W_p} 
+ \|\lambda_\epsilon\|_{\Lambda_p}
\,\leq\, C\,\left( \|\bu_{p,\epsilon}\|_{\bL^2(\Omega_p)}
+ \epsilon \|\div(\bu_{p, \epsilon})\|_{\bL^2(\Omega_p)} \right)\,.
\end{array}
\end{equation}
In turn, taking $\bv_s = \bdiv(\bsi_{p,\epsilon})$
and $w_p =\div( \bu_{p,\epsilon})$ in \eqref{eq:NS-Biot-formulation-4}, 
we deduce that
\begin{equation}\label{eq:epsilon-bound-final}
\begin{array}{c}
\|\bdiv(\bsi_{p, \epsilon})\|_{\bL^2(\Omega_p)} 
\,\leq\, \epsilon \|\bu_{s,\epsilon}\|_{\bV_s}
+ \|\f_p\|_{\bL^2(\Omega_p)}, \\[2ex]
\|\div(\bu_{p,\epsilon})\|_{\L^2(\Omega_p)} 
\,\leq\, C\,\left( \|A^{1/2}(\bsi_{p,\epsilon} + \alpha_p p_{p,\epsilon} \bI) \|_{\bbL^2(\Omega_p)}
+ (s_0 + \epsilon)\|p_{p,\epsilon}\|_{\W_p}
+ \|\wh{q}_{p}\|_{\L^2(\Omega_p)} \right). 
\end{array}
\end{equation}
Next, combining \eqref{eq:epsilon-bound-1} with \eqref{eq:epsilon-bound-2} and \eqref{eq:epsilon-bound-final}, using Young's inequality, 
some algebraic computations, and the estimate
\begin{equation}\label{eq:estimate-sigmap-A}
\|\bsi_{p,\epsilon}\|_{\bbL^2(\Omega_p)} \,\leq\, C\,\left( \|A^{1/2}(\bsi_{p,\epsilon} + \alpha_p\,p_{p,\epsilon}\,\bI)\|_{\bbL^2(\Omega_p)} + \|p_{p,\epsilon}\|_{\W_p} \right),
\end{equation}
which follows from \eqref{eq:A-bounds} and the triangle inequality, we deduce that
\begin{align}\label{eq:epsilon-bound-3}
& \|\bsi_{p,\epsilon}\|^2_{\bbX_p}
+ \|p_{p,\epsilon}\|^2_{\W_p}
+ \|\bu_{p,\epsilon}\|^2_{\bV_p}
+ \|\bT_{f,\epsilon}\|^2_{\bbX_f} + \|\bu_{f,\epsilon}\|^2_{\bV_f} 
+ |\bu_{f,\epsilon} - \btheta_{\epsilon}|^2_{\BJS}
+ \|\btheta_{\epsilon}\|^2_{\bLambda_s} \nonumber \\[1ex]
& \quad +\, \|\lambda_\epsilon\|_{\Lambda_p}
+ \|\bu_{s,\epsilon}\|^2_{\bV_s}
+ \|\bgamma_{p,\epsilon}\|^2_{\bbQ_p}
\,\leq\, C\,\left(
\|\f_f\|^2_{\bL^2(\Omega_f)}  
+ \|\f_p\|^2_{\bL^2(\Omega_p)}
+ \|\wh{\f}_p\|^2_{\bL^2(\Omega_p)} 
+ \|\wh{q}_{p}\|^2_{\L^2(\Omega_p)} 
\right)\,, 
\end{align}
with $C>0$ independent of $s_{0,\min}$ and $\epsilon$. 
Thus, from \eqref{eq:epsilon-bound-3} and the assumption \eqref{eq:domain-D-hat}, 
we deduce that the solution of \eqref{eq:NS-Biot-formulation-4} is bounded independently of $\epsilon$.
More precisely, there exists $\wt{C}_\cJ>0$ 
independent of $s_{0,\min}, \epsilon$ and $\bw_f$, such that
\begin{equation}\label{eq:C_T-bound-2}
\|(\bp_\epsilon, \br_\epsilon )\|_{\bQ \times \bS} 
\,\leq\, \wt{C}_{\cJ}\,\left( \|\f_f\|_{\bL^2(\Omega_f)} + \|\f_p\|_{\bL^2(\Omega_p)} \right).
\end{equation}

Now, we take $\epsilon \to 0$ in \eqref{eq:NS-Biot-formulation-4}. Similarly to \cite[Theorem 3.2]{s2010}, and since $\bQ$ and $\bS$ are reflexive Banach spaces, 
we can extract weakly convergent subsequences $\{\bp_{\epsilon,n}\}_{n=1}^\infty$ 
and $\{\br_{\epsilon,n}\}_{n=1}^\infty$ such that $\bp_{\epsilon,n} \rightharpoonup \bp$ 
in $\bQ$, $\br_{\epsilon,n} \rightharpoonup \br$ in $\bS$, which combined with the fact
that $\cE$, $\cA$, $\cK_{\bw_f}$, $\cB$, $\wh{\bF}$ and $\bG$ are continuous implies that 
$(\bp, \br)$ is a solution to \eqref{eq:NS-Biot-formulation-3}. 
Moreover, proceeding analogously to \eqref{eq:C_T-bound-2} but now considering $\epsilon=0$ in \eqref{eq:epsilon-bound-1}--\eqref{eq:epsilon-bound-final}, 
we are able to derive \eqref{eq:C_T-bound}, with $C_\cJ>0$ 
independent of $s_{0,\min}$ and $\bw_f$.

Finally, we prove that the solution of \eqref{eq:NS-Biot-formulation-3} is unique. 
Since \eqref{eq:NS-Biot-formulation-3} is linear, it is sufficient to prove that the problem with zero data has only the zero solution.
Taking $(\wh{\bF}, \bG) = (\0, \0)$ in \eqref{eq:NS-Biot-formulation-3}, testing it with the solution $(\bp, \br)$, and using Lemma \ref{lem:monotone}, yields
\begin{equation*}
s_0\,\|p_{p}\|^2_{\W_p} 
+ \|A^{1/2}(\bsi_{p} + \alpha_p p_{p} \bI) \|^2_{\bbL^2(\Omega_p)}
+ C_{\cA\cK}\,\left( \|\bu_p\|^2_{\bL^2(\Omega_p)}
+ \|(\bT_f,\bu_f)\|^2
+ |\bu_f - \btheta|^2_{\BJS} \right) \,\leq\, \0,
\end{equation*}
so it follows that $A^{1/2}(\bsi_p + \alpha_p p_{p} \bI) = \0$, $\bu_p = \0$, $\bT_f = \0$, and $\bu_f = \0$. Next, combining the inf-sup conditions \eqref{eq:continuous-inf-sup-1} 
and \eqref{eq:continuous-inf-sup-2} in Lemma \ref{lem:inf-sup} with the first equation of \eqref{eq:NS-Biot-formulation-3}, we deduce that $p_p=0$, $\bsi_p = \0$, $\btheta=\0$, $\lambda=0$, $\bu_s=\0$, and $\bgamma_p=\0$, concluding the proof.
\end{proof}

\subsubsection{Step 2: The domain \texorpdfstring{$\cD$}{Lg} is nonempty}

In this section we proceed  analogously to \cite{cgos2017} and employ 
the Banach fixed-point theorem to show that $\cD$ (cf. \eqref{eq:domain-D}) is nonempty.
\begin{lem}
Let $r\in (0, r_0)$, with $r_0$ given by \eqref{eq:r0-definition} and assume that the conditions in Lemma \ref{lem:monotone} are satisfied. Then, for all $\bw_f$, $\wt{\bw}_f \in \bW_r$ there holds
\begin{equation}\label{eq:contraction-mapping}
\|\cJ(\bw_f) - \cJ(\wt{\bw}_f)\|_{\bV_f} 
\leq \frac{C_{\cJ}}{r_0}\,\left( \|\f_f\|_{\bL^2(\Omega_f)} 
+ \|\f_p\|_{\bL^2(\Omega_p)} \right) \|\bw_f - \wt{\bw}_f\|_{\bV_f}\,,
\end{equation}
where $C_{\cJ}$ is the constant from \eqref{eq:C_T-bound}.
\end{lem}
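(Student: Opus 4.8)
The plan is to leverage the linearity of the resolvent problem \eqref{eq:NS-Biot-formulation-3} in $(\bp,\br)$ together with the fact that $\cK_{\bullet}(\cdot)$ is \emph{jointly bilinear}: it depends linearly on its subscript $\bw_f$ and linearly on its argument (through the component $\bu_f$). Let $(\bp,\br)$ and $(\wt\bp,\wt\br)$ be the solutions of \eqref{eq:NS-Biot-formulation-3} associated with $\bw_f$ and $\wt\bw_f$ (these exist and are unique by Theorem \ref{thm:well-posedness-1}, since $\bw_f,\wt\bw_f\in\bW_r$), so that $\cJ(\bw_f)=\bu_f$ and $\cJ(\wt\bw_f)=\wt\bu_f$ are their respective fluid–velocity components. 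Set $\bp^\delta:=\bp-\wt\bp$ and $\br^\delta:=\br-\wt\br$. First I would subtract the two copies of \eqref{eq:NS-Biot-formulation-3} and use the exact decomposition $\cK_{\bw_f}(\bp)-\cK_{\wt\bw_f}(\wt\bp)=\cK_{\bw_f}(\bp^\delta)+\cK_{\bw_f-\wt\bw_f}(\wt\bp)$ to arrive at
\begin{equation*}
(\cE+\cA+\cK_{\bw_f})(\bp^\delta)+\cB'(\br^\delta)=-\,\cK_{\bw_f-\wt\bw_f}(\wt\bp)\quad\mbox{in }\bQ',\qquad -\,\cB(\bp^\delta)=\0\quad\mbox{in }\bS',
\end{equation*}
that is, the same structure as \eqref{eq:NS-Biot-formulation-3} with the fixed admissible parameter $\bw_f\in\bW_r$ but with a new right-hand side.

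The crucial observation is that the functional $-\cK_{\bw_f-\wt\bw_f}(\wt\bp)$ acts only on the fluid test components $(\bR_f,\bv_f)$, so no inf–sup argument is needed here: it suffices to test with the solution difference itself. Testing the first equation with $\bp^\delta$ and the second with $\br^\delta$ and adding, the cross terms cancel because $\cB'(\br^\delta)(\bp^\delta)=\cB(\bp^\delta)(\br^\delta)=0$, leaving
\begin{equation*}
(\cE+\cA+\cK_{\bw_f})(\bp^\delta)(\bp^\delta)=-\,\cK_{\bw_f-\wt\bw_f}(\wt\bp)(\bp^\delta).
\end{equation*}
For the left-hand side I would discard the nonnegative contributions of $\cE$, $a_p$ and $a_{\BJS}$ (and note that the skew pairs $b_p$, $b_{\bn_p}$ inside $\cA$ cancel on the diagonal) and invoke the sharp fluid bound \eqref{eq:monotone-01} — which applies precisely because $\bw_f\in\bW_r$ with $r<r_0$ — to obtain $(\cE+\cA+\cK_{\bw_f})(\bp^\delta)(\bp^\delta)\ge\tfrac{\alpha_f}{2}\,\|(\bT_f^\delta,\bu_f^\delta)\|^2$. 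For the right-hand side I would use the continuity estimate established in the proof of Lemma \ref{lem:cont} in its sharper form $|\cK_{\bz_f}(\bp')(\bq')|\le C_{\cK}\,\|\bz_f\|_{\bV_f}\,\|\bu'_f\|_{\bV_f}\,\|(\bR'_f,\bv'_f)\|$, which gives $|\cK_{\bw_f-\wt\bw_f}(\wt\bp)(\bp^\delta)|\le C_{\cK}\,\|\bw_f-\wt\bw_f\|_{\bV_f}\,\|\wt\bu_f\|_{\bV_f}\,\|(\bT_f^\delta,\bu_f^\delta)\|$.

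Combining the two bounds and cancelling one factor of $\|(\bT_f^\delta,\bu_f^\delta)\|$ yields
\begin{equation*}
\|(\bT_f^\delta,\bu_f^\delta)\|\,\le\,\frac{2\,C_{\cK}}{\alpha_f}\,\|\bw_f-\wt\bw_f\|_{\bV_f}\,\|\wt\bu_f\|_{\bV_f}\,=\,\frac{1}{r_0}\,\|\bw_f-\wt\bw_f\|_{\bV_f}\,\|\cJ(\wt\bw_f)\|_{\bV_f},
\end{equation*}
where I used $r_0=\alpha_f/(2C_{\cK})$ from \eqref{eq:r0-definition}. Since $\|\cJ(\bw_f)-\cJ(\wt\bw_f)\|_{\bV_f}=\|\bu_f^\delta\|_{\bV_f}\le\|(\bT_f^\delta,\bu_f^\delta)\|$ and $\|\cJ(\wt\bw_f)\|_{\bV_f}\le C_{\cJ}\,(\|\f_f\|_{\bL^2(\Omega_f)}+\|\f_p\|_{\bL^2(\Omega_p)})$ by \eqref{eq:C_T-bound}, the claimed estimate \eqref{eq:contraction-mapping} follows. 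I expect the only genuinely delicate points to be bookkeeping ones: confirming that $\cK_\bullet(\cdot)$ is exactly bilinear so that the difference decomposition is an identity, verifying the cancellation of the $\cB/\cB'$ terms and of the interior skew terms on the diagonal, and being careful to use the sharp constant $\alpha_f/2$ from \eqref{eq:monotone-01} rather than the coarser $\alpha_{\cA\cK}$ of Lemma \ref{lem:monotone}, so that the contraction constant comes out exactly as $C_{\cJ}/r_0$.
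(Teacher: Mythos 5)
Your proposal is correct and follows essentially the same route as the paper: subtract the two resolvent systems using the bilinearity of $\cK_{\bullet}(\cdot)$ to isolate $-\cK_{\bw_f-\wt\bw_f}(\wt\bp)$ as the right-hand side, test with the solution difference so the $\cB/\cB'$ terms cancel, bound the left side below by $\tfrac{\alpha_f}{2}\|(\bT_f^\delta,\bu_f^\delta)\|^2$ via \eqref{eq:monotone-01} and the right side via the sharp continuity of $\cK$, then conclude with \eqref{eq:C_T-bound} and $r_0=\alpha_f/(2C_{\cK})$. All the bookkeeping points you flag (exact bilinearity of $\cK$, diagonal cancellations, use of $\alpha_f/2$ rather than $\alpha_{\cA\cK}$) are handled exactly this way in the paper's proof.
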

\begin{proof}
Given $\bw_f$, $\wt{\bw}_f \in \bW_r$, we let $\bu_f:=\cJ(\bw_f)$ and $\wt{\bu}_f:=\cJ(\wt{\bw}_f)$. According to the definition of $\cJ$, (cf. \eqref{eq:def-T}--\eqref{eq:NS-Biot-formulation-3}) it follows that  
\begin{equation*}
\begin{array}{rll}
(\cE + \cA + \cK_{\bw_f})(\bp - \wt{\bp})(\bq) + \cB'(\br - \wt{\br})(\bq) & =\,  -\cK_{\bw_f - \wt{\bw}_f}(\wt{\bp})(\bq) & \forall\,\bq\in \bQ\,, \\[2ex]
- \cB(\bp - \wt{\bp})(\bs) & =\, 0 & \forall\,\bs\in \bS\,.
\end{array}
\end{equation*}
Taking $\bq = \bp - \wt{\bp}$ and $\bs = \br - \wt{\br}$ in the foregoing equations, we obtain
\begin{equation*}
(\cE + \cA + \cK_{\bw_f})(\bp - \wt{\bp})(\bp - \wt{\bp})=-\cK_{\bw_f-\wt{\bw}_f}(\wt{\bp})(\bp - \wt{\bp}),
\end{equation*}
which together with the continuity of $\cK_{\bw_f}$ with $\bw_f \in \bW_r$ 
(cf. \eqref{eq:continuity-of-Kwf}) and the positivity bounds of $\cE$ and $\cA+\cK_{\bw_f}$ 
(cf. \eqref{eq:positive-bound-E}, \eqref{eq:positive-bound-A+Kwf}, \eqref{eq:monotone-01})
in Lemma \ref{lem:monotone}, implies that
\begin{equation*}
\frac{\alpha_f}{2}\,\|(\bT_f - \wt{\bT}_f, \bu_f - \wt{\bu}_f)\|^2 
\,\leq\, C_{\cK}\,\|\wt{\bu}_f\|_{\bV_f} \|\bw_f - \wt{\bw}_f\|_{\bV_f} \|(\bT_f - \wt{\bT}_f, \bu_f - \wt{\bu}_f)\|\,.
\end{equation*}
Therefore, using the bound of $\|\wt{\bu}_f\|_{\bV_f}$, cf. \eqref{eq:C_T-bound}, we get 
\begin{equation*}
\|\bu_f - \wt{\bu}_f\|_{\bV_f} 
\,\leq\, C_{\cJ}\frac{2\,C_{\cK}}{\alpha_f}\,\left( \|\f_f\|_{\bL^2(\Omega_f)} 
+ \|\f_p\|_{\bL^2(\Omega_p)} \right)\,\|\bw_f - \wt{\bw}_f\|_{\bV_f} \,,
\end{equation*}
which, combined with the definition of $r_0$, cf. \eqref{eq:r0-definition}, 
yields \eqref{eq:contraction-mapping}, concluding the proof.
\end{proof}

We are now in position to establish the main result of this section.

\begin{thm}\label{thm:domain-is-nonempty}
Let $r\in (0, r_0)$, with $r_0$ given by \eqref{eq:r0-definition}, 
and assume that the conditions 
in Lemma \ref{lem:monotone} hold. Furthermore, assume that the data satisfy
\begin{equation}\label{eq:Ct-less-than-r}
C_{\cJ}\left( \|\f_f\|_{\bL^2(\Omega_f)} 
+ \|\f_p\|_{\bL^2(\Omega_p)} \right) \leq r \,.
\end{equation}
Then, for each $(\wh{\f}_p, \wh{q}_p) \in E_b'$ 
satisfying \eqref{eq:domain-D-hat}, the resolvent problem \eqref{eq:resolvent-weak-formulation} has a unique solution 
$(\bp, \br)\in \bQ \times \bS$ with $\bu_f \in \bW_r$, and there holds
\begin{equation}\label{eq:solution-bound-by-data}
\|(\bp, \br)\|_{\bQ \times \bS} 
\,\leq\, C_{\cJ}\,\left( \|\f_f\|_{\bL^2(\Omega_f)} + \|\f_p\|_{\bL^2(\Omega_p)} \right).
\end{equation}
\end{thm}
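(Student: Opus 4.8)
The plan is to apply the Banach fixed-point theorem to the operator $\cJ$ on the closed ball $\bW_r$. By Theorem~\ref{thm:well-posedness-1}, for every $\bw_f \in \bW_r$ and every $(\wh\f_p,\wh q_p)$ satisfying \eqref{eq:domain-D-hat} the resolvent system \eqref{eq:NS-Biot-formulation-3} has a unique solution, so $\cJ:\bW_r\to\bV_f$ is well defined and satisfies the growth bound $\|\cJ(\bw_f)\|_{\bV_f}\le C_\cJ(\|\f_f\|_{\bL^2(\Omega_f)}+\|\f_p\|_{\bL^2(\Omega_p)})$ from \eqref{eq:C_T-bound}; moreover the preceding lemma provides the Lipschitz-type estimate \eqref{eq:contraction-mapping}. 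Since $\bW_r$ is a nonempty, closed, convex subset of the Banach space $\bV_f$, it is a complete metric space with the induced metric, so Banach's theorem applies once the self-mapping and contraction properties are checked.

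First I would verify that $\cJ$ maps $\bW_r$ into itself: combining \eqref{eq:C_T-bound} with the data assumption \eqref{eq:Ct-less-than-r} gives $\|\cJ(\bw_f)\|_{\bV_f}\le C_\cJ(\|\f_f\|_{\bL^2(\Omega_f)}+\|\f_p\|_{\bL^2(\Omega_p)})\le r$ for all $\bw_f\in\bW_r$, i.e. $\cJ(\bW_r)\subseteq\bW_r$. Next I would check that $\cJ$ is a contraction: by \eqref{eq:contraction-mapping} its Lipschitz constant on $\bW_r$ is $\frac{C_\cJ}{r_0}(\|\f_f\|_{\bL^2(\Omega_f)}+\|\f_p\|_{\bL^2(\Omega_p)})$, and using \eqref{eq:Ct-less-than-r} once more together with $r<r_0$ yields $\frac{C_\cJ}{r_0}(\|\f_f\|_{\bL^2(\Omega_f)}+\|\f_p\|_{\bL^2(\Omega_p)})\le r/r_0<1$. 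The strict inequality $r<r_0$ is precisely what forces the contraction constant below $1$.

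By the Banach fixed-point theorem there is then a unique $\bu_f\in\bW_r$ with $\cJ(\bu_f)=\bu_f$, and by construction the corresponding $(\bp,\br)\in\bQ\times\bS$ solves \eqref{eq:resolvent-weak-formulation} with $\bu_f\in\bW_r$, so that $\cD$ is nonempty. For uniqueness among solutions whose velocity component lies in $\bW_r$, I would note that any such solution has $\cJ(\bu_f)=\bu_f$ by the definition \eqref{eq:def-T}, hence $\bu_f$ equals the unique fixed point; with $\bu_f$ fixed, the system \eqref{eq:NS-Biot-formulation-3} with $\bw_f=\bu_f$ is linear and has a unique solution by Theorem~\ref{thm:well-posedness-1}, so $(\bp,\br)$ is uniquely determined. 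Finally, \eqref{eq:solution-bound-by-data} is just \eqref{eq:C_T-bound} applied with $\bw_f=\bu_f$.

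There is no real obstacle remaining: the substantive work — solvability of the linear resolvent problem, its $s_{0,\min}$-independent a priori bound, and the Lipschitz dependence on $\bw_f$ — was already carried out in Theorem~\ref{thm:well-posedness-1} and the lemma preceding this statement. The only points needing care are bookkeeping ones, namely that the smallness condition \eqref{eq:Ct-less-than-r} simultaneously delivers the self-mapping property and, through $r<r_0$, the strict contraction, and that $\bW_r$ is complete so that the fixed-point theorem is applicable.
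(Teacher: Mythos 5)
Your proof is correct and follows essentially the same route as the paper: self-mapping of $\cJ$ on $\bW_r$ from \eqref{eq:C_T-bound} and \eqref{eq:Ct-less-than-r}, the contraction constant $r/r_0<1$ from \eqref{eq:contraction-mapping}, and the Banach fixed-point theorem, with \eqref{eq:solution-bound-by-data} read off from \eqref{eq:C_T-bound}. The extra remarks on completeness of $\bW_r$ and on uniqueness via the linearity of \eqref{eq:NS-Biot-formulation-3} for fixed $\bw_f=\bu_f$ are fine and merely spell out what the paper leaves implicit.
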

\begin{proof}
Let us fix an arbitrary $(\wh{\f}_p, \wh{q}_p) \in E_b'$ 
satisfying \eqref{eq:domain-D-hat}. We note that \eqref{eq:C_T-bound} and \eqref{eq:Ct-less-than-r} imply that 
$\cJ: \bW_r \rightarrow \bW_r$. Combining bound \eqref{eq:contraction-mapping} and 
assumption \eqref{eq:Ct-less-than-r}, we have
\begin{equation*}
\|\cJ(\bw_f) - \cJ(\wt{\bw}_f)\|_{\bV_f} 
\,\leq\, \frac{r}{r_0}\,\|\bw_f - \wt{\bw}_f\|_{\bV_f}\,,
\end{equation*}
which implies that $\cJ$ is a contraction mapping. Therefore by the classical 
Banach fixed-point theorem, we conclude that $\cJ$ has a unique 
fixed-point $\bu_f \in \bW_r$, or equivalently, \eqref{eq:resolvent-weak-formulation} has a unique solution, hence the domain $\cD$ (cf. \eqref{eq:domain-D}) is nonempty. 
In addition, \eqref{eq:solution-bound-by-data} follows directly from \eqref{eq:C_T-bound}.
\end{proof}

\subsubsection{Step 3: Solvability of the parabolic problem}

In this section we establish the existence of a solution to \eqref{eq:parabolic-problem}
as a direct application of Theorem \ref{thm:solvability-parabolic-problem}. 
We begin by showing that $\cM$ defined by \eqref{eq:operator-M} is a monotone operator.
\begin{lem}\label{lem:M-is-monotone}
Let $r \in (0, r_0)$ with $r_0$ defined by \eqref{eq:r0-definition}. 
Assume that the parameters $\kappa_1, \kappa_2$ satisfy the conditions 
in Lemma \ref{lem:monotone}. Furthermore, assume that the data satisfy \eqref{eq:Ct-less-than-r}.
Then, the operator $\cM$ defined by \eqref{eq:operator-M} is monotone.
\end{lem}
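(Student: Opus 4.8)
The plan is to show that for any two pairs $(\bsi_p^{(1)},p_p^{(1)})$ and $(\bsi_p^{(2)},p_p^{(2)})$ in $\cD$, with associated data $(\wh\f_p^{(i)},\wh q_p^{(i)})\in E_b'$ and full solution components $(\bp^{(i)},\br^{(i)})$ of \eqref{eq:domain-D} (whose existence and uniqueness is guaranteed by Theorem~\ref{thm:domain-is-nonempty} under the standing assumption \eqref{eq:Ct-less-than-r}), the difference of the corresponding elements of $\cM$ pairs nonnegatively against the difference of the states. Concretely, writing $\delta\bp := \bp^{(1)}-\bp^{(2)}$, $\delta\br:=\br^{(1)}-\br^{(2)}$, and similarly for the individual components, an element of $\cM(\bsi_p^{(i)},p_p^{(i)})$ is $(\wh\f_p^{(i)},\wh q_p^{(i)})-\cN(\bsi_p^{(i)},p_p^{(i)})$, so what must be shown is
\[
\Big(\big((\wh\f_p^{(1)},\wh q_p^{(1)})-\cN(\bsi_p^{(1)},p_p^{(1)})\big)-\big((\wh\f_p^{(2)},\wh q_p^{(2)})-\cN(\bsi_p^{(2)},p_p^{(2)})\big)\Big)(\delta\bsi_p,\delta p_p)\,\ge\,0.
\]

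First I would rewrite the left-hand side. Since $\wh\f_p^{(i)}$ and $\wh q_p^{(i)}$ enter \eqref{eq:domain-D} only through the functional $(\wh\f_p,\btau_p)_{\Omega_p}+(\wh q_p,w_p)_{\Omega_p}$, subtracting the first equations of \eqref{eq:domain-D} for $i=1$ and $i=2$ and testing with $\bq=(\delta\bsi_p,\delta p_p,\0,\0,\0,\0)\in\bQ$ expresses $(\wh\f_p^{(1)}-\wh\f_p^{(2)},\delta\bsi_p)_{\Omega_p}+(\wh q_p^{(1)}-\wh q_p^{(2)},\delta p_p)_{\Omega_p}$ in terms of the differenced bilinear forms. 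Likewise $\cN(\bsi_p^{(1)},p_p^{(1)})-\cN(\bsi_p^{(2)},p_p^{(2)})$ evaluated at $(\delta\bsi_p,\delta p_p)$ is just $s_0\|\delta p_p\|^2_{\W_p}+\|A^{1/2}(\delta\bsi_p+\alpha_p\delta p_p\bI)\|^2_{\bbL^2(\Omega_p)}$ by \eqref{defn-N}, \eqref{eq:positive-bound-E}. The key structural point, exactly as in the corresponding argument for the Stokes--Biot case, is that the $\cN$-terms appearing from testing \eqref{eq:domain-D} with $(\delta\bsi_p,\delta p_p,\0,\dots)$ cancel with the subtracted $\cN$-term, leaving only the remaining bilinear forms from \eqref{eq:domain-D}: the interface and skew terms $-b_{\bn_p}(\delta\bsi_p,\delta\btheta)+b_s(\delta\bu_s,\delta\bsi_p)+b_\sk(\delta\bgamma_p,\delta\bsi_p)$ plus $-b_p(\delta p_p,\delta\bu_p)$. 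These are then re-expressed, using the other five equations of \eqref{eq:domain-D} differenced and tested with the remaining components of $\delta\bq$ and $\delta\bs$, so that the whole expression collapses to $(\cE+\cA+\cK_{\bu_f^{(1)}})(\delta\bp)(\delta\bp)$ minus the cross term $\cK_{\bu_f^{(1)}-\bu_f^{(2)}}(\bp^{(2)})(\delta\bp)$ coming from the nonlinear operator, after also using $-\cB(\delta\bp)=\0$.

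Then I would estimate. By Lemma~\ref{lem:monotone}, $\cE$ is monotone and $\cA+\cK_{\bw_f}$ is monotone for $\bw_f\in\bW_r$, $r<r_0$; in particular, via \eqref{eq:positive-bound-A+Kwf} and the intermediate bound \eqref{eq:monotone-01}, $(\cA+\cK_{\bu_f^{(1)}})(\delta\bp)(\delta\bp)\ge \frac{\alpha_f}{2}\|(\delta\bT_f,\delta\bu_f)\|^2$ plus nonnegative Darcy and BJS contributions. For the cross term, the continuity bound \eqref{eq:continuity-of-Kwf} gives
\[
\big|\cK_{\bu_f^{(1)}-\bu_f^{(2)}}(\bp^{(2)})(\delta\bp)\big|
\le C_{\cK}\,\|\bu_f^{(1)}-\bu_f^{(2)}\|_{\bV_f}\,\|\bp^{(2)}\|_{\bQ}\,\|\delta\bp\|_{\bQ},
\]
and here the crucial observation is that $\|\bu_f^{(1)}-\bu_f^{(2)}\|_{\bV_f}=\|\delta\bu_f\|_{\bV_f}\le\|(\delta\bT_f,\delta\bu_f)\|$, so this term is itself controlled by the same quadratic quantity, scaled by $C_{\cK}\|\bp^{(2)}\|_{\bQ}$. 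Using the a priori bound \eqref{eq:solution-bound-by-data}, $\|\bp^{(2)}\|_{\bQ}\le C_{\cJ}(\|\f_f\|+\|\f_p\|)\le r<r_0=\alpha_f/(2C_{\cK})$ by \eqref{eq:Ct-less-than-r}, whence $C_{\cK}\|\bp^{(2)}\|_{\bQ}<\alpha_f/2$. Therefore $(\cA+\cK_{\bu_f^{(1)}})(\delta\bp)(\delta\bp)$ dominates the cross term, the total is $\ge 0$, and since $\cE(\delta\bp)(\delta\bp)\ge0$ as well, monotonicity of $\cM$ follows.

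The main obstacle is the bookkeeping in the second paragraph: correctly accounting for which test functions to plug into which of the differenced equations of \eqref{eq:domain-D} so that all the $b$-forms and the $\cB'$/$\cB$ terms telescope, and in particular making sure the sign of the nonlinear cross term is handled correctly (it appears with a minus sign and must be absorbed, not added). Everything else is a direct application of the monotonicity and continuity estimates already proved in Lemmas~\ref{lem:monotone}, \ref{lem:cont} together with the smallness condition \eqref{eq:Ct-less-than-r}.
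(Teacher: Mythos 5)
Your route is essentially the paper's: subtract the two resolvent systems from \eqref{eq:domain-D} associated with $(\bsi_p^1,p_p^1),(\bsi_p^2,p_p^2)\in\cD$, test with the difference of the solutions, use the exact cancellation of the $\cN$-part against the $-\cN$ in the definition of the elements of $\cM$, split the convective operator into a quadratic piece plus a cross piece, and conclude by coercivity of $a_f$ together with smallness of the fluid velocities. One bookkeeping slip is harmless but worth noting: the collapsed identity does \emph{not} retain $\cE(\delta\bp)(\delta\bp)$ (writing $\delta\bp:=\bp^1-\bp^2$); testing the differenced first equation with $\delta\bp$ produces $\cE(\delta\bp)(\delta\bp)$, which cancels exactly with the subtracted $\cN$-term, leaving $(\f^1-\f^2)(\bv^1-\bv^2)=\cA(\delta\bp)(\delta\bp)+\cK_{\bu_f^1}(\delta\bp)(\delta\bp)+\cK_{\bu_f^1-\bu_f^2}(\bp^2)(\delta\bp)$ (and in this splitting the cross term carries a plus sign). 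Since you only invoke $\cE\ge 0$ and your estimates address precisely the remaining terms, the conclusion is unaffected, but the identity as stated is off by that nonnegative term.

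The substantive gap is the absorption of the cross term. From \eqref{eq:continuity-of-Kwf} as you quote it you get $C_{\cK}\,\|\delta\bu_f\|_{\bV_f}\,\|\bp^2\|_{\bQ}\,\|\delta\bp\|_{\bQ}$, and the factor $\|\delta\bp\|_{\bQ}$ contains $\|\delta\bsi_p\|_{\bbX_p}$, $\|\delta p_p\|_{\W_p}$, $\|\delta\bu_p\|_{\bV_p}$, $\|\delta\btheta\|_{\bLambda_s}$, for which the positive terms on the left supply no matching quadratic control; the observation $\|\delta\bu_f\|_{\bV_f}\le\|(\delta\bT_f,\delta\bu_f)\|$ fixes only the subscript factor, not the test-function factor, so the claim that the cross term is ``controlled by the same quadratic quantity'' does not follow from the cited bound. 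What closes the argument --- and what the paper uses --- is the sharper estimate established inside the proof of Lemma~\ref{lem:cont}, namely $|\kappa_{\bw_f}(\bT_f,\bu_f;\bR_f,\bv_f)|\le C_{\cK}\|\bw_f\|_{\bV_f}\|\bu_f\|_{\bV_f}\|(\bR_f,\bv_f)\|$: the form depends only on the $\bu_f$-component of the state and the $(\bR_f,\bv_f)$-components of the test function, so the cross term is bounded by $C_{\cK}\|\bu_f^2\|_{\bV_f}\|(\delta\bT_f,\delta\bu_f)\|^2$ and can be absorbed. Finally, you do not need to route through \eqref{eq:solution-bound-by-data} and \eqref{eq:Ct-less-than-r} to control the solution size: the definition of $\cD$ already forces $\bu_f^1,\bu_f^2\in\bW_r$, which is exactly how the paper arrives at $(\f^1-\f^2)(\bv^1-\bv^2)\ge\big(\alpha_f-C_{\cK}(\|\bu_f^1\|_{\bV_f}+\|\bu_f^2\|_{\bV_f})\big)\|(\delta\bT_f,\delta\bu_f)\|^2\ge 2\,C_{\cK}(r_0-r)\|(\delta\bT_f,\delta\bu_f)\|^2\ge 0$; your detour is admissible but invokes uniqueness from Theorem~\ref{thm:domain-is-nonempty} where the direct membership in $\bW_r$ suffices.
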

\begin{proof}
For each $(\bsi_p^i, p_p^i)\in \cD$, $i\in \{1,2\}$, let $(\wh{\f}_p^i, \wh q_p^i) \in E_b'$ be such that $(\wh{\f}_p^i, \wh q_p^i)-\cN(\bsi_p^i, p_p^i)\in \cM(\bsi_p^i, p_p^i)$, i.e., 
\eqref{eq:domain-D} holds. Then we have
\begin{equation*}
\begin{array}{l}
\ds \big( (\wh{\f}_p^i, \wh{q}_p^i)-\cN(\bsi_p^i, p_p^i)\big)(\btau_p, w_p) \\[2ex]
\ds\quad \ =\, (\wh{\f}_p^i , \btau_p )_{\Omega_p}
+ (\wh{q}_p^i, w_p)_{\Omega_p}
-(A(\bsi_p^i + \alpha_p \, p_p^i \, \bI), \btau_p + \alpha_p \, w_p \, \bI)_{\Omega_p}
- (s_0 \, p_p^i, w_p)_{\Omega_p} \\[2ex]
\ds\quad \ =\, -b_p(w_p, \bu_p^i) - b_{\bn_p}(\btau_p, \btheta^i) + b_s(\bu_s^i, \btau_p) + b_{\sk}(\bgamma_p^i, \btau_p) \quad \forall\,(\btau_p,w_p)\in \bbX_p\times \W_p\,.
\end{array}
\end{equation*}
Then, using the association $\bv^i = (\bsi_p^i, p_p^i)$ and $\f^i = (\wh{\f}_p^i, \wh{q}_p^i )-\cN(\bsi_p^i, p_p^i)$, $i\in \{1,2\}$, we deduce that
\begin{equation}\label{eq:M-is-monotone-0}
\begin{array}{l}
\ds (\f^1- \f^2)(\bv^1 - \bv^2)
= -b_p(p_p^1 - p_p^2, \bu_p^1 - \bu_p^2) - b_{\bn_p}(\bsi_p^1 - \bsi_p^2, \btheta^1 - \btheta^2) + b_s(\bu_s^1 - \bu_s^2, \bsi_p^1 - \bsi_p^2)\\[2ex]
\ds \qquad \qquad \qquad \qquad \quad \ +\, b_{\sk}(\bgamma_p^1 - \bgamma_p^2, \bsi_p^1 - \bsi_p^2)\,.
\end{array}
\end{equation}

In turn, from \eqref{eq:domain-D}, it follows that $((\bu_p^i, \bT_f^i, \bu_f^i, \btheta^i),\lambda^i,\bu_s^i, \bgamma_p^i)$ satisfy
\begin{equation}\label{eq:M-monotone-1}
\begin{array}{l}
\ds s_0\,(p_p^i,w_p)_{\Omega_p} + a_e( \bsi_p^i,  p_p^i; \btau_p, w_p) + a_p(\bu_p^i,\bv_p) + a_f(\bT_f^i,\bu_f^i;\bR_f,\bv_f) 
+\kappa_{\bu_f^i}(\bT_f^i,\bu_f^i;\bR_f,\bv_f)
\\ [2ex]
\ds\quad +\,\,
a_{\BJS}(\bu_f^i,\btheta^i;\bv_f,\bphi) +  b_p(p_p^i,\bv_p) - b_p(w_p,\bu_p^i) +
b_{\bn_p}(\bsi_p^i,\bphi) - b_{\bn_p}(\btau_p,\btheta^i) + b_s(\bu_s^i,\btau_p) \\ [2ex]
\ds\quad
+ \, b_\sk(\bgamma_p^i,\btau_p)
+  b_\Gamma(\bv_p,\bv_f,\bphi;\lambda^i) \,=\, (\wh{\f}_p^i , \btau_p )_{\Omega_p}
+ (\wh{q}_p^i, w_p)_{\Omega_p}
+ (\f_f,\bv_f - \kappa_1\,\bdiv(\bR_f))_{\Omega_f}\,, \\[2ex]
\ds -\,b_s(\bv_s,\bsi_p^i) - b_\sk(\bchi_p,\bsi_p^i) -  b_\Gamma(\bu_p^i,\bu_f^i,\btheta^i;\xi) 
\, = \,(\f_p,\bv_s)_{\Omega_p} \quad \forall\,(\bq,\bs)\in \bQ\times \bS\,.
\end{array}
\end{equation}
Testing \eqref{eq:M-monotone-1} with 
$\bq = (\0,0,\bu_p^1 -\bu_p^2, \bT_f^1 - \bT_f^2, \bu_f^1- \bu_f^2, \btheta^1-\btheta^2)$ 
and $\bs=(\lambda^1 - \lambda^2, \bu_s^1 - \bu_s^2, \bgamma_p^1 - \bgamma_p^2)$, for $i\in \{1, 2\}$, we find that
\begin{equation*}
\begin{array}{l}
\ds -\,b_p(p_p^1 - p_p^2,\bu_p^1 -\bu_p^2)
- b_{\bn_p}(\bsi_p^1 - \bsi_p^2,\btheta^1-\btheta^2)
+ b_s(\bu_s^1 - \bu_s^2,\bsi_p^1 -\bsi_p^2) 
+ b_\sk(\bgamma_p^1 - \bgamma_p^2,\bsi_p^1 -\bsi_p^2)  \\[2ex]
\ds =\, a_p(\bu_p^1 - \bu^2_p,\bu_p^1 -\bu_p^2) + a_f(\bT_f^1 - \bT^2_f,\bu_f^1 - \bu^2_f;\bT_f^1 - \bT_f^2,\bu_f^1- \bu_f^2) 
+ \kappa_{\bu_f^1}(\bT_f^1,\bu_f^1;\bT_f^1 - \bT_f^2,\bu_f^1 - \bu_f^2) 
\\ [2ex]
\ds\quad -\, \kappa_{\bu_f^2}(\bT_f^2,\bu_f^2;\bT_f^1 - \bT_f^2,\bu_f^1 - \bu_f^2) 
+ a_{\BJS}(\bu_f^1 - \bu_f^2,\btheta^1 - \btheta^2;\bu_f^1 - \bu_f^2,\btheta^1 -\btheta^2)\,,
\end{array}
\end{equation*}
which, replaced back into \eqref{eq:M-is-monotone-0} together with 
the stability properties developed for $a_p, a_f, a_{\BJS}$ in \eqref{eq:monotone-01}--\eqref{eq:monotone-02} (cf. Lemma \ref{lem:monotone})
and the continuity of $\kappa_{\bu_f}$, cf. \eqref{eq:continuity-of-Kwf}, yields 
\begin{equation*}
\begin{array}{l}
\ds (\f^1- \f^2)(\bv^1 - \bv^2) 
= a_p(\bu_p^1 - \bu_p^2,\bu_p^1 -\bu_p^2) + a_f(\bT_f^1-\bT_f^2,\bu_f^1-\bu_f^2;\bT_f^1 - \bT_f^2,\bu_f^1- \bu_f^2) \\[2ex]
\ds \quad
+ \, \kappa_{\bu_f^1-\bu_f^2}(\bT_f^1,\bu_f^1;\bT_f^1 - \bT_f^2,\bu_f^1- \bu_f^2)
+\kappa_{\bu_f^2}(\bT_f^1 - \bT_f^2,\bu_f^1- \bu_f^2;\bT_f^1 - \bT_f^2,\bu_f^1- \bu_f^2) \\[2ex]
\ds \quad
+ \, a_{\BJS}(\bu_f^1-\bu_f^2,\btheta^1-\btheta^2;\bu_f^1-\bu_f^2,\btheta^1-\btheta^2) \\[2ex]
\ds \
\geq \left( \alpha_f - C_{\cK}\big(\|\bu_f^1\|_{\bV_f} + \|\bu_f^2\|_{\bV_f}\big)\right)
\|(\bT_f^1-\bT_f^2, \bu_f^1-\bu_f^2)\|^2\,.
\end{array}
\end{equation*}
Finally, recalling from the definition of the domain $\cD$ (cf. \eqref{eq:domain-D}) that both $\|\bu_f^1\|_{\bV_f}$ and $\|\bu^2_f\|_{\bV_f}$ are bounded by $r$, we obtain
\begin{equation*}
(\f^1- \f^2)(\bv^1 - \bv^2) 
\,\geq\, 2\,\,C_{\cK}\,(r_0 - r)\,\|(\bT_f^1 - \bT_f^2,\bu_f^1 - \bu_f^2)\|^2 
\,\geq\, 0\,,
\end{equation*}
which implies the monotonicity of $\cM$.
\end{proof}

Now, we are in position to establish the well-posedness of \eqref{eq:parabolic-problem}.
\begin{lem}\label{lem:well-posedness-of-parabolic-problem}
Under the conditions of Lemma~\ref{lem:M-is-monotone},
for each $(h_{\bsi_p}, h_{p_p}) \in \W^{1,1}(0,T; \bbL^2(\Omega_p)) \times\linebreak \W^{1,1}(0,T;\L^2(\Omega_p))$ and each $(\bsi_{p,0}, p_{p,0}) \in \cD$, there exists a solution $(\bsi_p, p_p):[0,T] \to \cD$ to \eqref{eq:parabolic-problem} with
\begin{equation*}
  (\bsi_p, p_p) \in \W^{1,\infty}(0,T; \bbL^2(\Omega_p)) \times \W^{1,\infty}(0,T; \W_p) \qan
  (\bsi_p(0), p_p(0))=(\bsi_{p,0}, p_{p,0}).
\end{equation*}
\end{lem}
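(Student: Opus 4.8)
The plan is to derive the result as a direct application of Theorem~\ref{thm:solvability-parabolic-problem}, choosing there $E := \bbX_p \times \W_p$, the operator $\cN$ from \eqref{defn-N}, the relation $\cM$ from \eqref{eq:operator-M} with domain $\cD$, the source $f := (h_{\bsi_p}, h_{p_p})$, and $u_0 := (\bsi_{p,0}, p_{p,0})$. Most of the hypotheses have already been established. Indeed, $\cN$ is linear, it is symmetric by the symmetry of $A$ and of the $\L^2$-inner product in \eqref{defn-N}, and it is monotone since, by \eqref{eq:positive-bound-E}, $\cN(\bq)(\bq) = s_0 \|w_p\|^2_{\L^2(\Omega_p)} + \|A^{1/2}(\btau_p + \alpha_p w_p \bI)\|^2_{\bbL^2(\Omega_p)} \geq 0$. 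Combining this identity with $s_0 \geq s_{0,\min} > 0$, the bounds \eqref{eq:A-bounds}, and the triangle inequality (as in \eqref{eq:estimate-sigmap-A}) shows that the seminorm $|\cdot|_b$ induced by $\cN$ is equivalent to the $\L^2$-norm on $E$, so that $E_b' = \bbL^2(\Omega_p) \times \L^2(\Omega_p)$. The monotonicity of $\cM$ is Lemma~\ref{lem:M-is-monotone}, whose hypotheses are exactly those assumed here; $u_0 \in \cD$ is part of the hypotheses; and $(h_{\bsi_p}, h_{p_p}) \in \W^{1,1}(0,T; \bbL^2(\Omega_p)) \times \W^{1,1}(0,T; \L^2(\Omega_p)) = \W^{1,1}(0,T; E_b')$ by assumption.

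The one hypothesis still to be verified is the range condition $Rg(\cN + \cM) = E_b'$, which is where the earlier analysis does the work. Unwinding the definition \eqref{eq:operator-M}, a pair $(\wh{\f}_p, \wh{q}_p) \in E_b'$ lies in $Rg(\cN + \cM)$ precisely when there exists $(\bsi_p, p_p) \in \cD$ solving the resolvent system \eqref{eq:domain-D} with right-hand side $(\wh{\f}_p, \wh{q}_p)$; since the conditions of Lemma~\ref{lem:M-is-monotone} are in force (in particular the smallness condition \eqref{eq:Ct-less-than-r} on $\f_f$ and $\f_p$), Theorem~\ref{thm:domain-is-nonempty} provides exactly such a solution, with $\bu_f \in \bW_r$, so that $Rg(\cN + \cM) = E_b'$. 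I expect the delicate point to be not any single estimate but the threading of constants: the bound \eqref{eq:Ct-less-than-r} involves $C_{\cJ}$, which depends on the constant $\wh{C}_{ep}$ appearing in \eqref{eq:domain-D-hat}, while $\wh{C}_{ep}$ must itself be large enough to accommodate the compatible initial data; one must fix $\wh{C}_{ep}$ so that all of these requirements are simultaneously met. The analytic substance behind the range condition, however, has already been carried out in Theorems~\ref{thm:well-posedness-1} and \ref{thm:domain-is-nonempty} and Lemma~\ref{lem:M-is-monotone}.

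With all hypotheses in place, Theorem~\ref{thm:solvability-parabolic-problem} yields a solution $(\bsi_p, p_p) : [0,T] \to \cD$ of \eqref{eq:parabolic-problem} satisfying $\cN(\bsi_p, p_p) \in \W^{1,\infty}(0,T; E_b')$, $(\bsi_p(t), p_p(t)) \in \cD$ for all $t \in [0,T]$, and $\cN(\bsi_p(0), p_p(0)) = \cN(\bsi_{p,0}, p_{p,0})$. To finish, I would transfer these statements from $\cN(\bsi_p, p_p)$ to $(\bsi_p, p_p)$ itself: identifying $E_b'$ with $\bbL^2(\Omega_p) \times \L^2(\Omega_p)$, one has $\cN(\bsi_p, p_p) = \big( A(\bsi_p + \alpha_p p_p \bI),\, s_0 p_p + \alpha_p \tr(A(\bsi_p + \alpha_p p_p \bI)) \big)$, whence $\bsi_p + \alpha_p p_p \bI$ (via $A^{-1}$), then $p_p$, then $\bsi_p$ are recovered by bounded linear operations, with constants depending only on $s_{0,\min}$, $a_{\min}$, $a_{\max}$, $\alpha_p$ and $n$. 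Consequently $(\bsi_p, p_p) \in \W^{1,\infty}(0,T; \bbL^2(\Omega_p)) \times \W^{1,\infty}(0,T; \W_p)$, and the injectivity of $\cN$ on $\bbL^2(\Omega_p) \times \W_p$ upgrades $\cN(\bsi_p(0), p_p(0)) = \cN(\bsi_{p,0}, p_{p,0})$ to $(\bsi_p(0), p_p(0)) = (\bsi_{p,0}, p_{p,0})$, completing the proof.
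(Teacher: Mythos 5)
Your overall route is the same as the paper's: apply Theorem~\ref{thm:solvability-parabolic-problem} with $E=\bbX_p\times\W_p$, $\cN$ from \eqref{defn-N}, $\cM$ from \eqref{eq:operator-M}, use Lemma~\ref{lem:M-is-monotone} for the monotonicity of $\cM$ and Theorem~\ref{thm:domain-is-nonempty} for the resolvent solvability, and then transfer the regularity and initial condition from $\cN(\bsi_p,p_p)$ back to $(\bsi_p,p_p)$ (your explicit recovery via $A^{-1}$ and injectivity of $\cN$ is a nice spelling-out of what the paper leaves implicit).

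However, your verification of the range condition contains a genuine error. You claim that $(\wh{\f}_p,\wh{q}_p)\in E_b'$ lies in $Rg(\cN+\cM)$ precisely when the resolvent system \eqref{eq:domain-D} with that right-hand side has a solution with $(\bsi_p,p_p)\in\cD$, and you conclude $Rg(\cN+\cM)=E_b'$. This cannot be correct: by the very definition of $\cM$ in \eqref{eq:operator-M} (and of the domain $\cD$), only right-hand sides satisfying the bound \eqref{eq:domain-D-hat} ever occur, so $Rg(\cN+\cM)$ is contained in the bounded set $\tilde E_b':=\{(\wh{\f}_p,\wh{q}_p)\in E_b': \eqref{eq:domain-D-hat}\ \mbox{holds}\}$, which is a proper subset of $E_b'$. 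Moreover, Theorem~\ref{thm:domain-is-nonempty} only provides a solution for data in $\tilde E_b'$, not for arbitrary $(\wh{\f}_p,\wh{q}_p)\in E_b'$, so the full range condition of Theorem~\ref{thm:solvability-parabolic-problem} is not verified as you state. The paper addresses exactly this point: it notes that $Rg(\cN+\cM)$ is a subset of $E_b'$ and argues that it suffices to establish $Rg(\cN+\cM)=\tilde E_b'$, which is what Theorem~\ref{thm:domain-is-nonempty} delivers. Your proof needs this additional step — either the observation that the restricted range condition on $\tilde E_b'$ is enough for the application of the theorem, or an enlargement of the definitions of $\cD$ and $\cM$ — otherwise the hypothesis $Rg(\cN+\cM)=E_b'$ is simply false as asserted. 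Your remark about threading the constant $\wh{C}_{ep}$ through \eqref{eq:Ct-less-than-r} is well taken, but it does not substitute for this missing step.
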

\begin{proof}
We recall that \eqref{eq:parabolic-problem} fits in the framework of 
Theorem \ref{thm:solvability-parabolic-problem} with $E = \bbX_p \times \W_p, E_b'=\bbL^2(\Omega_p)\times \L^2(\Omega_p)$, and $\cN, \cM$ defined in \eqref{defn-N} and \eqref{eq:operator-M}, respectively.
Note that $\cN$ is linear, symmetric and monotone.
In addition, from Lemma \ref{lem:M-is-monotone}, we obtain that $\cM$ is monotone. On the other hand, for the range condition $Rg(\cN+\cM)=E_b'$ in 
Theorem \ref{thm:solvability-parabolic-problem}, we note that in our case $Rg(\cN+\cM)$ is a subset of $E_b'$, see its definition \eqref{eq:operator-M}.
Therefore it is enough to establish the range condition $Rg(\cN+\cM)=\tilde E_b'$, where $\tilde E_b':= \{(\wh{\f}_p, \wh{q}_p) \in E_b': \eqref{eq:domain-D-hat} \hbox{ holds}\}$. This follows from Theorem~\ref{thm:domain-is-nonempty}, where we established that for each $(\wh{\f}_p, \wh{q}_p) \in \tilde E_b'$ there exists $(\wh{\bsi}_p, \wh{p}_p) \in \cD$ a solution to \eqref{eq:resolvent-weak-formulation}.
Therefore, applying Theorem \ref{thm:solvability-parabolic-problem}
in our context, we conclude that there exists a solution 
$(\bsi_p,p_p):[0,T] \to \cD$ to \eqref{eq:parabolic-problem}, 
with $(\bsi_p, p_p) \in \W^{1,\infty}(0,T; \bbL^2(\Omega_p))\times \W^{1,\infty}(0,T;\W_p)$ and $(\bsi_p(0), p_p(0))=(\bsi_{p,0}, p_{p,0})$.
\end{proof}

\subsubsection{Construction of compatible initial data}

We next construct initial data $(\bsi_{p,0}, p_{p,0}) \in \cD$, which is needed in Lemma~\ref{lem:well-posedness-of-parabolic-problem}.

\begin{lem}\label{lem:initial-condition}
Let $(\f_f, \f_p)\in \bL^2(\Omega_f)\times \bL^2(\Omega_p)$.
Assume that the conditions of Lemma~\ref{lem:monotone} are satisfied.
Assume that the initial condition $p_{p,0} \in \H_p$, where
\begin{align}
\H_p  \,:=\, \Big\{ w_p\in \H^1(\Omega_p) :\quad \
 \bK\,\nabla\,w_p\in \bH^1(\Omega_p), \  \bK\,\nabla\,w_p\cdot\bn_p = 0 \,\mbox{ on }\,
\Gamma^{\rN}_p,  \ w_p = 0 \,\mbox{ on }\, \Gamma^{\rD}_p \Big\}. \label{eq:H-definition}
\end{align}
Furthermore, assume that there exists $C_0 > 0$ such that
\begin{equation}\label{eq:extra-pp0-assumption}
\|p_{p,0}\|_{\H^1(\Omega_p)} + \|\bK\nabla p_{p,0}\|_{\bH^1(\Omega_p)} \,\leq\, C_0\,\left( \|\f_f\|_{\bL^2(\Omega_f)} + \|\f_p\|_{\bL^2(\Omega_p)} \right)\,,
\end{equation}
\begin{equation}\label{eq:initial-small-data}
\qan C_{\cJ_0}\,\left(\|\f_f\|_{\bL^2(\Omega_f)} + \|p_{p,0}\|_{\H^1(\Omega_p)}
+ \|\bK\nabla p_{p,0}\|_{\bH^1(\Omega_p)} \right) \,\leq\, r \,, 
\end{equation}
for $r\in (0,r_0)$, where $r_0$ is defined in \eqref{eq:r0-definition} 
and $C_{\cJ_0}>0$ is defined in \eqref{eq:stability-initial-Tf0-uf0} below.
Then, there exists $\bsi_{p,0} \in \bbX_p$ such that
  $(\bsi_{p,0}, p_{p,0}) \in \cD$. In particular, there exist
$\bp_0:=(\bsi_{p,0}, p_{p,0}, \bu_{p,0},\bT_{f,0}, \bu_{f,0}, \btheta_0)\in \bQ$ 
and $\br_0:=(\lambda_0, \bu_{s,0}, \bgamma_{p,0})\in \bS$ with $\bu_{f,0}\in \bW_r$ such that
\begin{equation}\label{eq:initial-data-system}
\begin{array}{rcl}
\ds (\cE + \cA + \cK_{\bu_{f,0}})(\bp_0) + \cB'(\br_0) & = & \wh{\bF}_0 \qin \bQ_2'\,, \\ [2ex]
\ds - \cB(\bp_0) & = & \bG_0 \qin \bS'\,,
\end{array}
\end{equation}
where $\bG_0(\bs) := (\f_p,\bv_s)_{\Omega_p} \ \forall \ \bs \in \bS$ 
and $\wh{\bF}_0(\bq) := (\wh{\f}_{p,0}, \btau_p)_{\Omega_p} 
+ (\wh{q}_{p,0}, w_p)_{\Omega_p}
+ (\f_f, \bv_f - \kappa_1 \bdiv(\bR_f))_{\Omega_f} \ \forall\,\bq\in \bQ$,
with some $(\wh{\f}_{p,0},\wh{q}_{p,0})\in E_b'$ satisfying
\begin{equation}\label{eq:initial-data-bound}
\|\wh{\f}_{p,0}\|_{\bbL^2(\Omega_p)} + \|\wh{q}_{p,0}\|_{\L^2(\Omega_p)} 
\,\leq\, \wh{C}_{ep} \left(\|\f_f\|_{\bL^2(\Omega_f)} + \|\f_p\|_{\bL^2(\Omega_p)} \right),
\end{equation}
where $\wh{C}_{ep}$ is specified in \eqref{eq:initial-hat-data-bound-again} below.
\end{lem}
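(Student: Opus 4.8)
The plan is to build the initial data in two pieces: an explicit construction for the Darcy unknowns directly from $p_{p,0}$, and the solution of a static coupled problem --- of the same type as the resolvent system --- for everything else; the data $\wh\f_{p,0},\wh q_{p,0}$ are then read off from the equations, and the smallness/regularity hypotheses on $p_{p,0}$ are exactly what is needed to place them in $E_b'$ with a data-controlled bound.

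\emph{Step 1 (Darcy initial data).} I would set $\lambda_0 := p_{p,0}|_{\Gamma_{fp}}\in\Lambda_p$ and $\bu_{p,0} := -\tfrac1\mu\,\bK\,\nabla p_{p,0}$. By the definition of $\H_p$ in \eqref{eq:H-definition}, $\bK\nabla p_{p,0}\in\bH^1(\Omega_p)\hookrightarrow\bH(\div;\Omega_p)$ and $\bu_{p,0}\cdot\bn_p=0$ on $\Gamma_p^\rN$, so $\bu_{p,0}\in\bV_p$ with $\div(\bu_{p,0})=-\tfrac1\mu\div(\bK\nabla p_{p,0})\in\L^2(\Omega_p)$ and $\|\bu_{p,0}\|_{\bV_p}\le\tfrac1\mu\|\bK\nabla p_{p,0}\|_{\bH^1(\Omega_p)}$. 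An integration by parts using $p_{p,0}=0$ on $\Gamma_p^\rD$ and $\bv_p\cdot\bn_p=0$ on $\Gamma_p^\rN$ shows that $(\bu_{p,0},p_{p,0},\lambda_0)$ satisfies the Darcy momentum equation \eqref{eq:continuous-weak-formulation-1g}. Assumption \eqref{eq:extra-pp0-assumption} then yields $\|\lambda_0\|_{\Lambda_p}+\|\bu_{p,0}\|_{\bV_p}+\|p_{p,0}\|_{\W_p}\le C(\|\f_f\|_{\bL^2(\Omega_f)}+\|\f_p\|_{\bL^2(\Omega_p)})$.

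\emph{Step 2 (static problem for the remaining variables).} Inserting $(p_p,\bu_p,\lambda)=(p_{p,0},\bu_{p,0},\lambda_0)$ and setting $\wh\f_{p,0}:=\0$, the equations of \eqref{eq:domain-D} that are neither already satisfied (the Darcy momentum equation) nor used below to define $\wh q_{p,0}$ (the $w_p$-test) reduce to a static coupled problem: find $(\bsi_{p,0},\bT_{f,0},\bu_{f,0},\btheta_0,\bu_{s,0},\bgamma_{p,0})\in\bbX_p\times\bbX_f\times\bV_f\times\bLambda_s\times\bV_s\times\bbQ_p$ with $\bu_{f,0}\in\bW_r$, consisting of the pseudostress Navier--Stokes system coupled through the interface conditions to the weakly-symmetric mixed elasticity system, driven by $\f_f$, $\f_p$, and the (now known) boundary and source contributions of $p_{p,0},\bu_{p,0},\lambda_0$. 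This problem carries exactly the operator structure of \eqref{eq:resolvent-weak-formulation} restricted to these unknowns: the positivity of $\cA+\cK_{\bu_{f,0}}$ (Lemma~\ref{lem:monotone}), the coercivity of the compliance term $a_e$ on the stress supplemented by the equilibrium identity $-\bdiv\bsi_{p,0}=\f_p$, the inf-sup condition \eqref{eq:continuous-inf-sup-1}, and the continuity bounds of Lemma~\ref{lem:cont} all remain in force. Hence I would rerun the $\epsilon$-regularization, Lax--Milgram, and weak-limit steps of the proof of Theorem~\ref{thm:well-posedness-1} together with the Banach fixed-point argument of Theorem~\ref{thm:domain-is-nonempty} for the quadratic term $\cK_{\bu_{f,0}}$ --- valid precisely under the smallness hypothesis \eqref{eq:initial-small-data}, with $C_{\cJ_0}$ playing the role of $C_{\cJ}$ --- to get a unique solution with
\begin{equation*}
\|\bsi_{p,0}\|_{\bbX_p}+\|\bT_{f,0}\|_{\bbX_f}+\|\bu_{f,0}\|_{\bV_f}+\|\btheta_0\|_{\bLambda_s}+\|\bu_{s,0}\|_{\bV_s}+\|\bgamma_{p,0}\|_{\bbQ_p}
\le C_{\cJ_0}\big(\|\f_f\|_{\bL^2(\Omega_f)}+\|p_{p,0}\|_{\H^1(\Omega_p)}+\|\bK\nabla p_{p,0}\|_{\bH^1(\Omega_p)}\big),
\end{equation*}
where $C_{\cJ_0}$ is independent of $s_{0,\min}$.

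\emph{Step 3 (recovering \eqref{eq:initial-data-system} and the bound).} Put $\bp_0:=(\bsi_{p,0},p_{p,0},\bu_{p,0},\bT_{f,0},\bu_{f,0},\btheta_0)$, $\br_0:=(\lambda_0,\bu_{s,0},\bgamma_{p,0})$, $\wh\f_{p,0}:=\0$, and
\begin{equation*}
\wh q_{p,0}\;:=\;s_0\,p_{p,0}\;+\;\alpha_p\,\tr\!\big(A(\bsi_{p,0}+\alpha_p\,p_{p,0}\,\bI)\big)\;+\;\div(\bu_{p,0})\;\in\;\L^2(\Omega_p).
\end{equation*}
With these choices every equation of \eqref{eq:initial-data-system} holds: the $w_p$-test is precisely the definition of $\wh q_{p,0}$, the $\bv_p$-test is Darcy's law from Step~1, and the remaining tests constitute the static problem solved in Step~2. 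Finally, using \eqref{eq:A-bounds} and \eqref{eq:estimate-sigmap-A}, the Step~2 bound, $\|\div(\bu_{p,0})\|_{\L^2(\Omega_p)}\le\tfrac1\mu\|\bK\nabla p_{p,0}\|_{\bH^1(\Omega_p)}$, and \eqref{eq:extra-pp0-assumption}, one obtains \eqref{eq:initial-data-bound} with an explicit $\wh{C}_{ep}$ independent of $s_{0,\min}$; hence $(\bsi_{p,0},p_{p,0})\in\cD$.

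\emph{Main obstacle.} The delicate point is Step~2: correctly isolating the residual static system and checking that, once $(p_{p,0},\bu_{p,0},\lambda_0)$ are fixed and $\wh\f_{p,0}=\0$, it is a genuinely well-posed (neither over- nor under-determined) coupled problem whose unique solution, combined with the explicit Darcy data, solves the \emph{full} system \eqref{eq:initial-data-system}. What makes this work is that the Darcy momentum equation is rendered an identity by the construction of $\bu_{p,0},\lambda_0$ (which uses the boundary conditions encoded in $\H_p$), so exactly the right equations remain; and the hypothesis $p_{p,0}\in\H_p$ is precisely what puts $\div(\bu_{p,0})$ --- hence $\wh q_{p,0}$ --- in $\L^2(\Omega_p)$ with a norm controlled by the data. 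The remaining work is the careful, $s_{0,\min}$-robust tracking of constants, which parallels that already carried out in Theorem~\ref{thm:well-posedness-1}.
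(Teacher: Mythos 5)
Your Step~1 (the explicit Darcy construction of $\bu_{p,0}$ and $\lambda_0$) and your formula for $\wh q_{p,0}$ coincide with the paper's, but Step~2 has a genuine gap. Once $(p_{p,0},\bu_{p,0},\lambda_0)$ are frozen and the $w_p$-equation is reserved to define $\wh q_{p,0}$, the ``remaining'' system you propose to solve is not square: the unknowns are $(\bsi_{p,0},\bT_{f,0},\bu_{f,0},\btheta_0,\bu_{s,0},\bgamma_{p,0})$, while the test functions still range over $\bbX_p\times\bbX_f\times\bV_f\times\bLambda_s\times\Lambda_p\times\bV_s\times\bbQ_p$ --- the mass-conservation equation \eqref{eq:continuous-weak-formulation-1i}, tested with $\xi\in\Lambda_p$, no longer has an associated free multiplier because you fixed $\lambda=\lambda_0$ in advance. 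The $\epsilon$-regularization/Lax--Milgram/fixed-point machinery of Theorems~\ref{thm:well-posedness-1} and \ref{thm:domain-is-nonempty} applies to the full saddle-point structure and does not carry over to this over-determined system; and if you instead drop the $\xi$-equation and solve the square subsystem, nothing forces the resulting $\btheta_0$ to satisfy $\btheta_0\cdot\bn_p=-(\bu_{f,0}\cdot\bn_f+\bu_{p,0}\cdot\bn_p)$. The second problem is your insistence on $\wh\f_{p,0}=\0$: this turns the $\btau_p$-equation into an exact constitutive relation linking $\bsi_{p,0}$ to the structure \emph{velocity} $\bu_{s,0}$, which, together with $-\bdiv(\bsi_{p,0})=\f_p$, weak symmetry, and the interface stress condition, already determines $(\bsi_{p,0},\bu_{s,0},\bgamma_{p,0},\btheta_0)$ as a mixed elasticity solution; the leftover kinematic interface constraint is then generically violated. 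There is no physical or mathematical reason the initial poroelastic stress should satisfy such a relation with the initial structure velocity, and the definition of $\cD$ deliberately allows $\wh\f_{p,0}\neq\0$ precisely to absorb this mismatch.

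The paper's proof avoids both issues by a sequential, fully decoupled construction: the Navier--Stokes problem \eqref{eq:sol0-Tf0-uf0} is solved with the BJS term rewritten using the known datum $\bu_{p,0}$ (so $\btheta_0$ does not enter); then $\btheta_0:=\bu_{f,0}-\bu_{p,0}$ on $\Gamma_{fp}$ is \emph{defined} in \eqref{eq:sol0-theta0} so that mass conservation and the BJS rewriting hold by construction; an elasticity solve with mixed interface conditions gives $\bsi_{p,0}$ (and auxiliary $\bbeta_{p,0},\brho_{p,0},\bpsi_0$); a second elasticity solve \eqref{eq:sol0-u_s0} with Dirichlet datum $\btheta_0$ produces $\bu_{s,0},\bgamma_{p,0}$ and an auxiliary stress $\wh\bsi_{p,0}$; and finally $\wh\f_{p,0}$ is taken to be the residual $A(\bsi_{p,0}+\alpha_p p_{p,0}\bI)-A(\wh\bsi_{p,0})$ as in \eqref{eq:hat-functions}, which is generally nonzero but only needs the $\bbL^2$ bound \eqref{eq:initial-data-bound}. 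To repair your argument you would have to either keep $\lambda$ (and hence the Darcy block) as unknowns --- losing the identity $\lambda_0=p_{p,0}|_{\Gamma_{fp}}$ you rely on --- or abandon $\wh\f_{p,0}=\0$ and decouple the solves as the paper does.
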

\begin{proof}
We proceed as in \cite[Lemma~4.15]{aeny2019}.
We solve a sequence of well-defined sub-problems, using the previously obtained solutions as data to guarantee that we obtain a solution of the coupled problem.
We take the following steps.	
	
1. Define $\bu_{p,0} := -\dfrac{1}{\mu}\,\bK\nabla p_{p,0}$, with $p_{p,0}\in \H_p$, cf. \eqref{eq:H-definition}. 
It follows that $\bu_{p,0}\in \bH(\div;\Omega_p)$ and
\begin{equation}\label{eq:sol0-up0-pp0}
\mu\,\bK^{-1}\bu_{p,0} = -\nabla p_{p,0},\quad 
\div(\bu_{p,0}) = -\frac{1}{\mu}\,\div(\bK\nabla p_{p,0}) \qin \Omega_p,\quad
\bu_{p,0}\cdot\bn_p = 0 \qon \Gamma^{\rN}_p.
\end{equation}
Next, defining $\lambda_0 := p_{p,0}|_{\Gamma_{fp}}\in \Lambda_p$, \eqref{eq:sol0-up0-pp0} yields
\begin{equation}\label{eq:sol0-up0-pp0-2}
a_p(\bu_{p,0},\bv_p) + b_p(\bv_p,p_{p,0}) + b_{\Gamma}(\bv_p, \0, \0;\lambda_0) = 0 \quad \forall\,\bv_p\in \bV_p \,. 
\end{equation}
	
2. Define $(\bT_{f,0},\bu_{f,0})\in \bbX_{f}\times \bV_f$ associated to the problem
\begin{equation}\label{eq:sol0-Tf0-uf0}
\begin{array}{l}
\ds a_f(\bT_{f,0},\bu_{f,0}; \bR_f, \bv_f) + \kappa_{\bu_{f,0}}(\bT_{f,0},\bu_{f,0}; \bR_f, \bv_f)  \\[2ex]
\ds = - \mu\,\alpha_{BJS} \sum^{n-1}_{j=1} \pil \sqrt{\bK^{-1}_j}\bu_{p,0}\cdot\bt_{f,j},\bv_f\cdot\bt_{f,j} \pir_{\Gamma_{fp}}  - \langle \bv_f\cdot\bn_f, \lambda_0 \rangle_{\Gamma_{fp}} 
+ (\f_f,\bv_f - \kappa_1\,\bdiv(\bR_f))_{\Omega_f} \,,
\end{array}
\end{equation}
for all $(\bR_f,\bv_f)\in \bbX_f\times \bV_f$.
Notice that \eqref{eq:sol0-Tf0-uf0} is well-posed, since it corresponds to 
the weak solution of the augmented mixed formulation for the Navier--Stokes problem 
with mixed boundary conditions. 
Notice that $\bu_{p,0}$ and $\lambda_0$ are data for this problem.
The well-posedness of \eqref{eq:sol0-Tf0-uf0},
follows from a fixed point approach as in \eqref{eq:def-T} combined with 
the {\it a priori} estimate
\begin{equation}\label{eq:stability-initial-Tf0-uf0}
\|(\bT_{f,0},\bu_{f,0})\| 
\,\leq\, C_{\cJ_0}\,\left( \|\f_f\|_{\bL^2(\Omega_f)} 
+ \|p_{p,0}\|_{\H^1(\Omega_p)}
+ \|\bK\nabla p_{p,0}\|_{\bH^1(\Omega_p)} \right)
\end{equation}
and the data assumption \eqref{eq:initial-small-data}.
We refer to \cite{cot2016} for a similar approach applied to the stationary Navier--Stokes problem. We note that \eqref{eq:initial-small-data} and \eqref{eq:stability-initial-Tf0-uf0} imply that $\bu_{f,0}\in \bW_r$.

3. Define $\ds (\bsi_{p,0}, \bbeta_{p,0}, \brho_{p,0}, \bpsi_0) \in \bbX_p
\times \bV_s \times \bbQ_p \times \bLambda_s $ such that
\begin{equation}\label{eq:sol0-sigma_p0}
\begin{array}{ll}
\ds (A \bsi_{p,0}, \btau_p)_{\Omega_p}
+ b_s(\bbeta_{p,0}, \btau_p)
+ b_{\sk}(\brho_{p,0},\btau_p)
- b_{\bn_p}(\bpsi_0, \btau_p)
= - (A \alpha p_{p,0} \bI,\btau_p)_{\Omega_p}
& \ \forall \, \btau_p \in \bbX_p\,, \\[2ex]
\ds  - b_s ( \bsi_{p,0}, \bv_s ) = (\f_p,\bv_s)_{\Omega_p}  & \ \forall \, \bv_s \in \bV_s, \\[2ex]
\ds  - b_{\sk} ( \bsi_{p,0}, \bchi_p ) = 0 & \ \forall \, \bchi_p \in \bbQ_p \,,\\[2ex]
\ds b_{\bn_p}(\bsi_{p,0}, \bphi) = -\mu\,\alpha_{BJS} \sum^{n-1}_{j=1} \pil \sqrt{\bK^{-1}_j}\bu_{p,0}\cdot\bt_{f,j},\bphi\cdot\bt_{f,j} \pir_{\Gamma_{fp}} 
- \langle \bphi \cdot \bn_p, \lambda_0 \rangle_{\Gamma_{fp}}
& \ \forall \, \bphi \in \bLambda_s, 
\end{array}
\end{equation}
This is a well-posed problem corresponding to the weak solution of
the mixed elasticity system with mixed boundary conditions on $\Gamma_{fp}$.
Note that $p_{p,0}$, $\bu_{p,0}$ and $\lambda_0$ are data for this problem. The following stability bound holds:
\begin{equation}\label{bound-sigmap0}
\|\bsi_{p,0}\|_{\bbX_p} + \|\bbeta_{p,0}\|_{\bV_s} + \|\brho_{p,0}\|_{\bbQ_p} + \|\bpsi_0\|_{\bLambda_s}
\le C \left(\|p_{p,0}\|_{\H^1(\Omega_p)} + \|\bK\nabla p_{p,0}\|_{\bH^1(\Omega_p)} + \|\f_p\|_{\bL^2(\Omega_p)} \right).
\end{equation}
We note that $\bbeta_{p,0}$, $\brho_{p,0}$, and
$\bpsi_0$ are auxiliary variables that are not part of the constructed initial data.
However, they can be used to recover the variables $\bbeta_{p}$, $\brho_{p}$, and
$\bpsi$ that satisfy the non-differentiated equation \eqref{non-diff-eq}. 

4. Define $\btheta_0 \in \bLambda_s$ as
\begin{equation}\label{eq:sol0-theta0}
\btheta_0 =  \bu_{f,0} - \bu_{p,0} 
\qon \Gamma_{fp}, 
\end{equation}
where $\bu_{f,0}$ and $\bu_{p,0}$ are data obtained in the previous steps. It holds that
\begin{equation}\label{bound-theta0}
\|\btheta_0\|_{\bLambda_s} \le C \big(\|\bu_{f,0}\|_{\bH^1(\Omega_f)} + \|\bu_{p,0}\|_{\bH^1(\Omega_p)}\big) 
\le C \big(\|\f_f\|_{\bL^2(\Omega_f)}
+ \|p_{p,0}\|_{\H^1(\Omega_p)}
+ \|\bK\nabla p_{p,0}\|_{\bH^1(\Omega_p)} \big).
\end{equation}
Note that \eqref{eq:sol0-theta0} implies that the BJS terms in \eqref{eq:sol0-Tf0-uf0} and \eqref{eq:sol0-sigma_p0} can be rewritten with $\bu_{p,0}\cdot \bt_{f,j}=(\bu_{f,0}-\btheta_0)\cdot \bt_{f,j}$ and that \eqref{eq:continuous-weak-formulation-1i} holds for the initial data.

5. Finally, define $(\wh{\bsi}_{p,0}, \bu_{s,0}, \bgamma_{p,0}) \in \bbX_p \times \bV_s \times \bbQ_p$,
as the unique solution of the problem
\begin{equation}\label{eq:sol0-u_s0}
\begin{array}{ll}
\ds (A \wh\bsi_{p,0}, \btau_p)_{\Omega_p}  + b_s( \bu_{s,0}, \btau_p)
+ b_{\sk}(\bgamma_{p,0}, \btau_p) = b_{\bn_p}(\btheta_0, \btau_p) 
& \ \forall \, \btau_p \in \bbX_p, \\[2ex]
\ds  -b_s ( \wh\bsi_{p,0}, \bv_s ) = 0 & \ \forall \, \bv_s \in \bV_s, \\[2ex]
\ds  - b_{\sk} ( \wh\bsi_{p,0}, \bchi_p ) = 0 & \ \forall \, \bchi_p \in \bbQ_p.
\end{array}
\end{equation}
This is a well-posed problem, since it corresponds to the weak solution of the
mixed elasticity system with Dirichlet data $\btheta_0$ on $\Gamma_{fp}$.
Using \eqref{bound-theta0}, we have the stability bound
\begin{equation}\label{bound-hat-sigmap0}
\|\wh\bsi_{p,0}\|_{\bbX_p} + \|\bu_{s,0}\|_{\bV_s} + \|\bgamma_{p,0}\|_{\bbQ_p}
\le C \|\btheta_0\|_{\bLambda_s} \le C \big(\|\f_f\|_{\bL^2(\Omega_f)}
+ \|p_{p,0}\|_{\H^1(\Omega_p)}
+ \|\bK\nabla p_{p,0}\|_{\bH^1(\Omega_p)} \big).
\end{equation}
We note that
$\wh\bsi_{p,0}$ is an auxiliary variable not used in the initial data.

Combining \eqref{eq:sol0-up0-pp0}--\eqref{eq:sol0-u_s0}, we obtain
$(\bsi_{p,0}, p_{p,0}, \bu_{p,0}, \bT_{f,0}, \bu_{f,0}, \btheta_0)\in \bQ$ and $(\blambda_0, \bu_{s,0}, \bgamma_{p,0})\in \bS$ satisfying \eqref{eq:initial-data-system} with $\wh{\f}_{p,0}$ and $\wh{q}_{p,0}$ such that
\begin{equation}\label{eq:hat-functions}
\begin{array}{c}
\ds (\wh{\f}_{p,0}, \btau_p)_{\Omega_p}= a_e(\bsi_{p,0}, p_{p,0}; \btau_p, 0)- (A(\wh{\bsi}_{p,0}),\btau_p)_{\Omega_p},\qan \\[2ex]
\ds (\wh{q}_{p,0}, w_p)_{\Omega_p}= (s_0\,p_{p,0},w_p)_{\Omega_p} + a_e(\bsi_{p,0}, p_{p,0}; \0, w_p)- b_p( \bu_{p,0}, w_p).
\end{array}
\end{equation}
Using \eqref{bound-sigmap0}, \eqref{bound-hat-sigmap0}, and \eqref{eq:extra-pp0-assumption}, we obtain
\begin{align}
\|\wh{\f}_{p,0}\|_{\bbL^2(\Omega_p)} 
+ \|\wh{q}_{p,0}\|_{\L^2(\Omega_p)}
\,& \leq\, C\, \big(\|\f_f\|_{\bL^2(\Omega_f)} + \|\f_p\|_{\bL^2(\Omega_p)}
+ \|p_{p,0}\|_{\H^1(\Omega_p)} + \|\bK\nabla p_{p,0}\|_{\bH^1(\Omega_p)} \big) \nonumber \\
& \leq\, \wh{C}_{ep}\, \big(\|\f_f\|_{\bL^2(\Omega_f)} + \|\f_p\|_{\bL^2(\Omega_p)} \big),\label{eq:initial-hat-data-bound-again}
\end{align}
hence $(\wh{\f}_{p,0}, \wh{q}_{p,0})\in E_b'$ and \eqref{eq:initial-data-bound} holds.
\end{proof}

\subsection{Main result}

We establish the existence of a solution to \eqref{eq:continuous-weak-formulation-2} as a direct consequence of Lemma~\ref{lem:well-posedness-of-parabolic-problem} and Lemma~\ref{lem:well-posedness-2}.

\begin{thm}\label{thm:well-posedness-main-result}
Assume that the conditions of Lemma~\ref{lem:monotone} are satisfied. Then, for each
\begin{equation*}
\f_f\in \bL^2(\Omega_f),\quad \f_p\in \bL^2(\Omega_p),\quad q_p\in \W^{1,1}(0,T;\L^2(\Omega_p)), \quad p_{p,0}\in \H_p \ (cf. \, \eqref{eq:H-definition}),
\end{equation*}
under the assumptions of Theorem~\ref{thm:domain-is-nonempty} (cf. \eqref{eq:Ct-less-than-r}) and Lemma~\ref{lem:initial-condition} (cf. \eqref{eq:extra-pp0-assumption} and
\eqref{eq:initial-small-data}),
there exists a unique solution of \eqref{eq:continuous-weak-formulation-2}, $(\bp, \br): [0,T] \rightarrow \bQ \times \bS$ with $\bu_f(t)\in \bW_r$ (cf. \eqref{eq:W_r-definition}), $(\bsi_p, p_p) \in \W^{1,\infty}(0,T; \bbL^2(\Omega_p))\times \W^{1,\infty}(0,T;\W_p)$ and $(\bsi_p(0), p_p(0)) = (\bsi_{p,0}, p_{p,0})$, where $\bsi_{p,0}$ is constructed in Lemma \ref{lem:initial-condition}. In addition, $\bu_p(0) =
\bu_{p,0}$, $\bT_f(0) = \bT_{f,0}$, $\bu_f(0) = \bu_{f,0}$, $\btheta(0) = \btheta_0$ and $\lambda(0) = \lambda_0$.
\end{thm}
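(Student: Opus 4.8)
The plan is to obtain existence by chaining the preparatory results, to establish uniqueness via a Gronwall-free energy identity that exploits the smallness radius $r_0$, and to pin down the initial values of the auxiliary unknowns by a time-continuity argument. For existence, under the hypotheses of the theorem Lemma~\ref{lem:initial-condition} produces $\bsi_{p,0}\in\bbX_p$ with $(\bsi_{p,0},p_{p,0})\in\cD$; since $q_p\in\W^{1,1}(0,T;\L^2(\Omega_p))$ and $\cM$ is monotone (Lemma~\ref{lem:M-is-monotone}), Lemma~\ref{lem:well-posedness-of-parabolic-problem} yields a solution $(\bsi_p,p_p):[0,T]\to\cD$ of \eqref{eq:parabolic-problem} with $(h_{\bsi_p},h_{p_p})=(\0,q_p)$, having the stated $\W^{1,\infty}$ regularity and $(\bsi_p(0),p_p(0))=(\bsi_{p,0},p_{p,0})$; and Lemma~\ref{lem:well-posedness-2} shows that the associated resolvent solution $(\bp,\br)$---which exists and is unique with $\bu_f(t)\in\bW_r$ by Theorem~\ref{thm:domain-is-nonempty}---solves \eqref{eq:continuous-weak-formulation-2}.

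For uniqueness I would take two solutions $(\bp^i,\br^i)$, $i=1,2$, in the stated class with common data, set $(\delta\bp,\delta\br):=(\bp^1-\bp^2,\br^1-\br^2)$, subtract the two instances of \eqref{eq:continuous-weak-formulation-3}, and test with $(\delta\bp,\delta\br)$; the constraint terms cancel since $\cB(\delta\bp)=\0$. Using trilinearity of $\kappa_{\bw_f}$ to write $\big(\cK_{\bu_f^1}(\bp^1)-\cK_{\bu_f^2}(\bp^2)\big)(\delta\bp)=\cK_{\bu_f^1}(\delta\bp)(\delta\bp)+\cK_{\bu_f^1-\bu_f^2}(\bp^2)(\delta\bp)$, grouping the first summand with $a_f$ and applying \eqref{eq:monotone-01} (valid since $\bu_f^1(t)\in\bW_r$), the coercivity bounds \eqref{eq:monotone-02}, the sharp estimate $|\cK_{\bu_f^1-\bu_f^2}(\bp^2)(\delta\bp)|\le C_{\cK}\,r\,\|(\delta\bT_f,\delta\bu_f)\|^2$ from the proof of Lemma~\ref{lem:cont}, and the chain rule $\tfrac{\partial}{\partial t}\cE(\delta\bp)(\delta\bp)=\tfrac12\tfrac{d}{dt}\big(\cE(\delta\bp)(\delta\bp)\big)$ (legitimate because $\delta(\bsi_p,p_p)\in\W^{1,\infty}(0,T;\bbL^2(\Omega_p)\times\L^2(\Omega_p))$ and $\cE$ is a bounded symmetric form there), one arrives at
\begin{equation*}
\tfrac12\,\tfrac{d}{dt}\big(\cE(\delta\bp)(\delta\bp)\big)
+\Big(\tfrac{\alpha_f}{2}-C_{\cK}\,r\Big)\|(\delta\bT_f,\delta\bu_f)\|^2
+\mu\,k_{\max}^{-1}\,\|\delta\bu_p\|^2_{\bL^2(\Omega_p)}
+c_{\BJS}\,|\delta\bu_f-\delta\btheta|^2_{\BJS}\,\le\,0 .
\end{equation*}
Because $r<r_0=\alpha_f/(2C_{\cK})$ (cf.\ \eqref{eq:r0-definition}), all coefficients are nonnegative; integrating in time and invoking $\cE(\delta\bp(0))(\delta\bp(0))=0$ (common initial data for $(\bsi_p,p_p)$) forces $\cE(\delta\bp(t))(\delta\bp(t))\equiv0$ and each integrated term to vanish, so $\delta p_p=\delta\bsi_p=\delta\bu_p=\delta\bT_f=\delta\bu_f=0$ and $|\delta\btheta|_{\BJS}=0$. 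Substituting these into the second equation of \eqref{eq:continuous-weak-formulation-2} gives $\delta\btheta\cdot\bn_p=0$ on $\Gamma_{fp}$, hence $\delta\btheta=0$; applying the inf-sup conditions \eqref{eq:continuous-inf-sup-1}--\eqref{eq:continuous-inf-sup-2} to the surviving terms of \eqref{eq:continuous-weak-formulation-2} then gives $\delta\lambda=\delta\bu_s=\delta\bgamma_p=0$.

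To identify the initial values, Theorem~\ref{thm:solvability-parabolic-problem} gives $\cN(\bsi_p(0),p_p(0))=\cN(\bsi_{p,0},p_{p,0})$; since the seminorm induced by $\cN$ is equivalent to the $\L^2(\Omega_p)$-norm and $\bdiv(\bsi_p(t))\equiv-\f_p\equiv\bdiv(\bsi_{p,0})$ for every $t$ (because $(\bsi_p(t),p_p(t))\in\cD$), this yields $\bsi_p(0)=\bsi_{p,0}$ in $\bbX_p$ and, moreover, $(\bsi_p,p_p)\in C([0,T];\bbX_p\times\W_p)$. Restricting \eqref{eq:continuous-weak-formulation-2} to test functions with $\btau_p=w_p=\0$ shows that, for a.e.\ $t$, $(\bu_p,\bT_f,\bu_f,\btheta,\lambda)(t)$ solves a stationary augmented Navier--Stokes/Darcy interface subsystem with data $(\bsi_p(t),p_p(t),\f_f)$; this subsystem is uniquely solvable with $\bu_f\in\bW_r$ by standard arguments (coercivity on the kernel of $b_\Gamma$ and the inf-sup property, together with a fixed point for the convective term, in the spirit of Theorems~\ref{thm:well-posedness-1} and \ref{thm:domain-is-nonempty}), and its solution depends Lipschitz-continuously on $(\bsi_p,p_p)\in\bbX_p\times\W_p$, so these five variables admit continuous-in-time representatives. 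Their values at $t=0$ then solve the subsystem with data $(\bsi_{p,0},p_{p,0},\f_f)$, which is precisely the portion of \eqref{eq:initial-data-system} satisfied by the tuple constructed in Lemma~\ref{lem:initial-condition}; uniqueness of that subsystem gives $\bu_p(0)=\bu_{p,0}$, $\bT_f(0)=\bT_{f,0}$, $\bu_f(0)=\bu_{f,0}$, $\btheta(0)=\btheta_0$, $\lambda(0)=\lambda_0$.

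The hard part will be the uniqueness estimate. Because the evolution is quasistatic, the natural energy $\cE(\delta\bp)(\delta\bp)$ controls only $(\bsi_p,p_p)$, so one must carefully (i) justify the chain rule for the \emph{degenerate} operator $\cE$, (ii) absorb the trilinear defect $\cK_{\bu_f^1-\bu_f^2}(\bp^2)(\delta\bp)$ into the coercive terms using $r<r_0$---this is exactly what removes the need for a Gronwall argument---and (iii) recover the five remaining unknowns from the two inf-sup conditions, which is only possible once the coercive contributions are known to vanish. A subsidiary technical point, needed for the last step, is upgrading the time continuity of $\bsi_p$ from $\bbL^2(\Omega_p)$ to $\bbX_p$, which follows from the time-independence of $\bdiv(\bsi_p)$.
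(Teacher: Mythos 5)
Your existence argument and your uniqueness argument coincide in substance with the paper's proof: existence is obtained by chaining Lemma~\ref{lem:initial-condition}, Lemma~\ref{lem:well-posedness-of-parabolic-problem} and Lemma~\ref{lem:well-posedness-2}, and uniqueness follows from the same energy identity, the decomposition $\cK_{\bu_f^1}(\bp^1)-\cK_{\bu_f^2}(\bp^2)=\cK_{\bu_f^1}(\delta\bp)+\cK_{\bu_f^1-\bu_f^2}(\bp^2)$, absorption via $r<r_0$, and recovery of the remaining unknowns through \eqref{eq:continuous-inf-sup-1}--\eqref{eq:continuous-inf-sup-2} (your use of $s_0>0$ and of the second equation plus density for $\delta\btheta\cdot\bn_p$ are harmless variants of the paper's inf-sup step).

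The genuine gap is in the identification of the initial values of $(\bu_p,\bT_f,\bu_f,\btheta,\lambda)$. You factor these variables through a ``stationary Navier--Stokes/Darcy interface subsystem'' with data $(\bsi_p(t),p_p(t),\f_f)$ and claim it is uniquely solvable and \emph{Lipschitz-continuous} in $(\bsi_p,p_p)\in\bbX_p\times\W_p$ ``by standard arguments.'' Uniqueness for \emph{fixed} data is indeed fine, but the continuous-dependence claim does not follow from the available structure: once the elasticity test functions $\btau_p$ are removed, the only control of $\btheta\in\bLambda_s=\bH^{1/2}(\Gamma_{fp})$ in the full problem, namely the inf-sup condition \eqref{eq:continuous-inf-sup-1} through $b_{\bn_p}(\btau_p,\bphi)$, is lost; the coercive terms of the subsystem control only the tangential BJS seminorm of $\bu_f-\btheta$ and, via the constraint, the normal trace of $\btheta$ in $\H^{-1/2}$. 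Consequently, when you compare two subsystem solutions with \emph{different} data, the perturbation term $b_{\bn_p}(\delta\bsi_p,\delta\btheta)=\langle\delta\bsi_p\bn_p,\delta\btheta\rangle_{\Gamma_{fp}}$ requires $\|\delta\btheta\|_{\bH^{1/2}(\Gamma_{fp})}$, which nothing on the left-hand side provides, so the Babu\v ska--Brezzi/energy machinery you invoke does not close and the asserted Lipschitz (or even continuous) dependence, and hence the continuous-in-time representatives and their evaluation at $t=0$, are not established. The paper avoids this entirely: since the parabolic theory already gives $\bsi_p(0)=\bsi_{p,0}$ and $p_p(0)=p_{p,0}$, it passes $t\to 0^+$ in \eqref{eq:continuous-weak-formulation-2} and compares the limiting values directly with the data constructed in Lemma~\ref{lem:initial-condition}; in the resulting difference system \eqref{eq:init-0} the elasticity/pressure data coincide, so the problematic term is absent and only same-data uniqueness (energy bound, density of $\H^{1/2}(\Gamma_{fp})$ in $\L^2(\Gamma_{fp})$, and \eqref{eq:continuous-inf-sup-2}) is needed. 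To repair your argument you should either follow that route, or justify the existence of limits of the five auxiliary variables as $t\to0^+$ by a compactness argument using the uniform bound built into the definition of $\cD$ (cf. \eqref{eq:domain-D-hat} and \eqref{eq:solution-bound-by-data}), and then apply only the same-data uniqueness of the subsystem.
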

\begin{proof}
  First, existence of a solution $(\bp, \br):[0,T] \rightarrow \bQ \times \bS$ of \eqref{eq:continuous-weak-formulation-2} with $(\bsi_p, p_p)\in \W^{1,\infty}(0,T;\bbL^2(\Omega_p))\times \W^{1,\infty}(0,T;\W_p)$ and $(\bsi_p(0), p_p(0)) = (\bsi_{p,0}, p_{p,0})$ follows from Lemmas \ref{lem:well-posedness-of-parabolic-problem} and \ref{lem:well-posedness-2}. Moreover, since $(\bsi_p(t), p_p(t))\in \cD$ for each $t \in [0,T]$ (cf. Lemma~\ref{lem:well-posedness-of-parabolic-problem}), it follows from the definition of the domain $\cD$ (cf. \eqref{eq:domain-D}) that 
$\bu_f(t)\in \bW_r$ for $t \in [0,T]$.

We next show that the solution of \eqref{eq:continuous-weak-formulation-2} is unique. 
To that end, let $(\bp, \br)$ and $(\wt{\bp}, \wt{\br})$ be two solutions corresponding to 
the same data and denote $\ov \bp = \bp-\wt{\bp}$ with similar notations for 
the rest of variables. We find that
\begin{equation}\label{eq:uniqueness-2}
\begin{array}{rl}
\partial_t \,\cE \, (\ov\bp) \, (\bq) + \cA \, (\ov\bp) \, (\bq) + \cK_{\bu_f}\,  (\ov\bp) \, (\bq) + 
\cK_{\ov\bu_f}\,  (\wt{\bp}) \, (\bq) + \cB'\,(\ov\br)\, (\bq) & = \0  \quad \forall\, \bq \in \bQ, \\[1ex]
- \cB \, (\ov\bp)\, (\bs) & = \0 \quad \forall\, \bs \in \bS.
\end{array}
\end{equation}
Taking \eqref{eq:uniqueness-2} with $\bq=\ov\bp$ and $\bs=\ov\br$, making use of the continuity of $\cK_{\bw_f}$ in \eqref{eq:continuity-of-Kwf} and the estimates \eqref{eq:positive-bound-E}, \eqref{eq:monotone-01}, and \eqref{eq:monotone-02} in Lemma \ref{lem:monotone} for $\cE$ and $\cA+\cK_{\bw_f}$, we deduce that
\begin{equation}\label{eq:uniqueness-3}
\begin{array}{l}
\ds \frac{1}{2}\,\partial_t\,\left( \|A^{1/2}\,(\ov\bsi_p + \alpha_p\,\ov p_p\,\bI)\|^2_{\bbL^2(\Omega_p)} + s_0\,\|\ov p_p\|^2_{\W_p} \right) \\[2ex]
\ds\quad +\,\left(\alpha_f - C_{\cK}\big(\|\bu_f\|_{\bV_f} + \|\wt{\bu}_f\|_{\bV_f} \big)\right)\|(\ov \bT_f, \ov \bu_f)\|^2 + \mu\, k^{-1}_{\max} \|\ov\bu_p\|^2_{\bL^2(\Omega_p)}
+ c_{\BJS} |\ov\bu_f  - \ov\bphi|^2_{\BJS} \,\leq\, 0\,.
\end{array}
\end{equation}
Integrating in time \eqref{eq:uniqueness-3} from $0$ to $t\in (0,T]$, using $\ov \bsi_p(0)=\0$ and $\ov p_p(0)=0$, we obtain
\begin{equation*}\label{eq:uniqueness-4}
\ds \|A^{1/2}\,(\ov\bsi_p + \alpha_p\,\ov p_p\,\bI)\|^2_{\bbL^2(\Omega_p)} 
+ s_0\,\|\ov p_p \|^2_{\W_p}
+ \int_0^t \|\ov{\bu}_p\|^2_{\bL^2(\Omega_p)}\,ds
+ 2\,C_{\cK}(r_0-r)
\int_0^t \|(\ov\bT_f, \ov\bu_f)\|^2\, ds \leq 0\,,
\end{equation*}
which implies that $A^{1/2}(\ov\bsi_p+\alpha_p \,\ov p_p \,\bI)(t) = \0, \ov\bu_p(t) = \0, \ov \bT_f(t) = \0$, and $\ov\bu_f(t) = \0$ for all $t\in (0,T]$.
In turn, using the inf-sup conditions \eqref{eq:continuous-inf-sup-1} and \eqref{eq:continuous-inf-sup-2} 
in Lemma \ref{lem:inf-sup} for $(\bv_s, \bchi_p, \bphi) = (\ov{\bu}_s, \ov{\bgamma}_p, \ov{\btheta})$ 
and $(w_p, \xi) = (\ov{p}_p, \ov{\lambda})$, respectively, and the first row of \eqref{eq:uniqueness-2}, we get
\begin{equation*}
\begin{array}{l}
\|\ov{\bu}_s\|_{\bV_s} + \|\ov{\bgamma}_p\|_{\bbQ_p} + \|\ov{\btheta}\|_{\bLambda_s}
+ \|\ov{p}_p\|_{\W_p} + \|\ov{\lambda}\|_{\Lambda_p}  \\[2ex]
\ds\quad \,\le\, C \sup_{\0\neq (\btau_p, \bv_p) \in \bbX_p \times \bV_p} \frac{
(A\partial_t(\ov\bsi_p + \alpha_p \, \ov p_p \, \bI), \btau_p)_{\Omega_p} 
+ (\mu \, \bK^{-1} \, \ov\bu_p, \bv_p)_{\Omega_p}
}{\|(\btau_p, \bv_p)\|} = 0\,.
\end{array}
\end{equation*}
Then, $\ov \bu_s(t) = \0$, $\ov \bgamma_p(t) = \0$, $\ov p_p(t) = 0$, $\ov \lambda(t)=0$, and  $\ov\btheta(t)=\0$ for all $t\in (0,T]$, which implies $\ov \bsi_p(t) = \0$ for all $t\in (0,T]$, concluding the proof of uniqueness of the solution to \eqref{eq:continuous-weak-formulation-2}.

Finally, let $\ov\bu_{f,0} := \bu_f(0) - \bu_{f,0}$, with a similar definition and notation for the rest of the variables. 
Due to the fact that $\f_f$, $\f_p$ are independent of time, and the assumed smoothness in time of $q_p$, we can take $t \to 0^+$ in \eqref{eq:continuous-weak-formulation-2}. 
Using that the initial data $(\bp_0, \br_0)$ constructed in Lemma~\ref{lem:initial-condition} satisfies \eqref{eq:domain-D} at
$t=0$, and that $\ov\bsi_{p,0} = \0$ and $\ov p_{p,0} = 0$, we obtain
\begin{subequations}\label{eq:init-0}
\begin{align}
&\ds  a_p(\ov\bu_{p,0},\bv_p) + a_f(\ov\bT_{f,0},\ov\bu_{f,0};\bR_f,\bv_f)
+\kappa_{\ov\bu_{f,0}}(\bT_f(0),\bu_f(0);\bR_f,\bv_f)
+\kappa_{\bu_{f,0}}(\ov\bT_{f,0},\ov\bu_{f,0};\bR_f,\bv_f) \nonumber \\[1ex]
&\ds\quad 
+\ a_{\BJS}(\ov\bu_{f,0},\ov\btheta_0;\bv_f,\bphi)  +  b_\Gamma(\bv_p,\bv_f,\bphi;\ov\lambda_0) =0, \label{init-1} \\[1ex]
&\ds - \ b_\Gamma(\ov\bu_{p,0},\ov\bu_{f,0},\ov\btheta_0;\xi) 
\, = \,0. \label{init-2}
\end{align}
\end{subequations}
Taking $(\bv_p, \bR_f, \bv_f, \bphi, \xi) = (\ov\bu_{p,0}, \ov\bT_{f,0}, \ov\bu_{f,0},\ov\btheta_0, \ov\lambda_0)$ in \eqref{eq:init-0}, using that $\bu_{f,0}\in \bW_r$ (cf. Lemma~\ref{lem:initial-condition}) and $\bu_f(0)\in \bW_r$, and proceeding as in \eqref{eq:uniqueness-3}, we get
\begin{equation*}
\|\ov\bu_{p,0}\|_{\bL^2(\Omega_p)}^2 
+ 2\,C_{\cK}\,(r_0 - r)\,\|(\ov\bT_{f,0}, \ov\bu_{f,0})\|^2 
+ |\ov\bu_{f,0} - \ov\btheta_0|_{\BJS}^2 \,\le\, 0,
\end{equation*}
which implies that
$\ov\bu_{p,0} = \0$, $\ov\bT_{f,0} = \0$, $\ov\bu_{f,0} = \0$ and $\ov\btheta_0\cdot\bt_{f,j}=0$. 
In addition, \eqref{init-2} implies that $\langle \ov\btheta_0\cdot \bn_p, \xi \rangle_{\Gamma_{fp}} =0$ 
for all $\xi\in \H^{1/2}(\Gamma_{fp})$. Since $\H^{1/2}(\Gamma_{fp})$ 
is dense in $\L^2(\Gamma_{fp})$, it follows that $\ov\btheta_0\cdot\bn_p=0$; hence $\ov\btheta_0 = \0$. The inf-sup condition
\eqref{eq:continuous-inf-sup-2}, together with \eqref{init-1},
implies that $\ov\lambda_0 = 0$.
\end{proof}

\begin{rem}\label{rem:non-diff-eq}
As we noted in Remark~\ref{rem: time-diff}, the time differentiated
equation \eqref{eq:continuous-weak-formulation-1d} can be used to recover the
non-differentiated equation \eqref{non-diff-eq}. In particular,
recalling the initial data construction \eqref{eq:sol0-sigma_p0}, let
$$
\forall \, t \in [0,T], \quad  \bbeta_p(t) = \bbeta_{p,0} + \int_0^t \bu_s(s) \, ds,
\quad \brho_p(t) = \brho_{p,0} + \int_0^t \bgamma_p(s) \, ds, \quad
\bpsi(t) = \bpsi_0 + \int_0^t \btheta(s) \, ds.
$$
Then \eqref{non-diff-eq} follows from integrating \eqref{eq:continuous-weak-formulation-1d} from $0$ to 
$t \in (0,T]$ and using the first equation in \eqref{eq:sol0-sigma_p0}.
\end{rem}

Before proving a stability bound for the solution of \eqref{eq:continuous-weak-formulation-2}, we establish a bound at $t = 0$.

\begin{lem}
Under the assumptions of Theorem~\ref{thm:well-posedness-main-result}, there exists a positive constant $C$, independent of $s_{0,\min}$, such that
\begin{align}\label{eq:continuous-stability-initial-1}
\ds \|A^{1/2}(\bsi_p + \alpha_p\,p_p\,\bI)(0)\|_{\bbL^2(\Omega_p)} + \|p_p(0)\|_{\W_p} 
\,\leq\, C\,\left( \|p_{p,0}\|_{\H^1(\Omega_p)} + \|\bK \nabla p_{p,0}\|_{\bH^1(\Omega_p)} 
+ \|\f_p\|_{\bL^2(\Omega_p)} \right).
\end{align}
and
\begin{equation}\label{eq:continuous-stability-initial-2}
\begin{array}{l}
\ds \| \partial_t \,A^{1/2}(\bsi_p + \alpha_p p_p \bI)(0)\|_{\bbL^2(\Omega_p)}
+ \sqrt{s_0} \|\partial_t  p_p(0)\|_{\W_p} \\[2ex]
\ds\quad \,\leq\, C\,\bigg(
\frac{1}{\sqrt{s_0}}\,\|q_p(0)\|_{\L^2(\Omega_p)}
+ \left(1 + \frac{1}{\sqrt{s_0}}\right) \left( \|p_{p,0}\|_{\H^1(\Omega_p)} + \|\bK \nabla p_{p,0}\|_{\bH^1(\Omega_p)} + \|\f_f\|_{\bL^2(\Omega_f)} \right)
\bigg).
\end{array}
\end{equation}
\end{lem}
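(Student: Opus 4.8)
The two bounds concern the initial-time values of the solution of \eqref{eq:continuous-weak-formulation-2}. For \eqref{eq:continuous-stability-initial-1}, recall that by Theorem~\ref{thm:well-posedness-main-result} we have $(\bsi_p(0), p_p(0)) = (\bsi_{p,0}, p_{p,0})$, where $\bsi_{p,0}$ is constructed in Lemma~\ref{lem:initial-condition}, together with the auxiliary variables from Step~3 of that construction. Therefore the plan is to simply read off the bound: $\|A^{1/2}(\bsi_p+\alpha_p p_p\bI)(0)\|_{\bbL^2(\Omega_p)}\le a_{\max}^{1/2}\|\bsi_{p,0}+\alpha_p p_{p,0}\bI\|_{\bbL^2(\Omega_p)}$ by \eqref{eq:A-bounds}, which is controlled via the stability bound \eqref{bound-sigmap0} for $\bsi_{p,0}$ together with $\|p_{p,0}\|_{\W_p}\le\|p_{p,0}\|_{\H^1(\Omega_p)}$; and $\|p_p(0)\|_{\W_p}=\|p_{p,0}\|_{\W_p}\le\|p_{p,0}\|_{\H^1(\Omega_p)}$. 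Combining these gives \eqref{eq:continuous-stability-initial-1}, with the constant depending only on $a_{\max}$, $\alpha_p$, and $|\Omega_p|$, hence independent of $s_{0,\min}$.

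For \eqref{eq:continuous-stability-initial-2}, the plan is to take $t\to 0^+$ in the equations of \eqref{eq:continuous-weak-formulation-2} that involve the time derivatives of $\bsi_p$ and $p_p$, namely the first equation tested appropriately. Since $\f_f$, $\f_p$ are time-independent and $q_p$ is assumed in $\W^{1,1}(0,T;\L^2(\Omega_p))$, the limit is well-defined. Concretely, I would test the first equation of \eqref{eq:continuous-weak-formulation-2} at $t=0$ with $\bq=(\btau_p, w_p, \0, \0, \0, \0)$ and $\bq=(\0,0,\bv_p,\0,\0,\0)$, $\bs=(\xi,\bv_s,\bchi_p)$, i.e.\ the subsystem for the Biot variables, and choose the test functions to isolate $\partial_t A^{1/2}(\bsi_p+\alpha_p p_p\bI)(0)$ and $\partial_t p_p(0)$. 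The natural choice is to use the \emph{time-differentiated} version of the coupled system, which is legitimate since $(\bsi_p,p_p)\in\W^{1,\infty}(0,T;\bbL^2(\Omega_p))\times\W^{1,\infty}(0,T;\W_p)$; after differentiating the constitutive/elasticity/Darcy block in time, test with $(\partial_t\bsi_p, \partial_t p_p, \partial_t\bu_p, \dots)(0)$ — more precisely with $\btau_p = \partial_t\bsi_p(0)$, $w_p = \partial_t p_p(0)$, and the remaining differentiated variables — and use the positivity bounds \eqref{eq:positive-bound-E}, \eqref{eq:monotone-01}, \eqref{eq:monotone-02} from Lemma~\ref{lem:monotone} to obtain
\[
\|\partial_t A^{1/2}(\bsi_p+\alpha_p p_p\bI)(0)\|^2_{\bbL^2(\Omega_p)} + s_0\|\partial_t p_p(0)\|^2_{\W_p}
\le C\Big(\|q_p(0)\|_{\L^2(\Omega_p)}\|\partial_t p_p(0)\|_{\W_p} + \text{(lower-order interface/source terms)}\Big),
\]
where the right-hand side source term is handled with Young's inequality, splitting off $\tfrac{s_0}{2}\|\partial_t p_p(0)\|^2_{\W_p}$ and thus producing the $\tfrac{1}{\sqrt{s_0}}\|q_p(0)\|$ factor. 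The remaining terms on the right — which come from $\partial_t$ applied to terms like $b_\Gamma$, $a_{\BJS}$, and the Navier--Stokes block evaluated at $t=0$ — are controlled by the $t=0$ stability bounds \eqref{eq:stability-initial-Tf0-uf0}, \eqref{bound-theta0}, and the bound \eqref{eq:continuous-stability-initial-1} just proved, all dominated by $\|p_{p,0}\|_{\H^1(\Omega_p)}+\|\bK\nabla p_{p,0}\|_{\bH^1(\Omega_p)}+\|\f_f\|_{\bL^2(\Omega_f)}$.

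The main obstacle I anticipate is the careful bookkeeping of the interface terms when differentiating in time and taking $t\to 0^+$: one must verify that the time derivatives $\partial_t\bu_p(0)$, $\partial_t\btheta(0)$, $\partial_t\lambda(0)$, etc., are well-defined and controlled, which requires either an inf-sup argument applied to the differentiated system (as in the uniqueness proof of Theorem~\ref{thm:well-posedness-main-result}, using Lemma~\ref{lem:inf-sup}) or a direct estimate using that these quantities satisfy a resolvent-type system with data built from $\partial_t q_p(0)$ and the already-bounded initial data. A cleaner route, and the one I would pursue, is to differentiate \eqref{eq:continuous-weak-formulation-2} in time, test with the full time-differentiated solution vector at $t=0$, absorb the degenerate/non-coercive directions using the structure $\cE + \cA + \cK_{\bu_f}$ exactly as in \eqref{eq:uniqueness-3}, and then invoke \eqref{eq:continuous-inf-sup-1}--\eqref{eq:continuous-inf-sup-2} to recover $\|\partial_t p_p(0)\|_{\W_p}$ and the other non-elliptic components; the factor $(1+1/\sqrt{s_0})$ appears precisely because controlling $\|\partial_t p_p(0)\|_{\W_p}$ directly from the storage term costs a $1/\sqrt{s_0}$, while the inf-sup route costs an $\mathcal{O}(1)$ contribution, and one keeps whichever bound the argument naturally yields.
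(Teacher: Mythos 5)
Your treatment of \eqref{eq:continuous-stability-initial-1} is exactly the paper's argument: since $(\bsi_p(0),p_p(0))=(\bsi_{p,0},p_{p,0})$, the bound is read off from the stability estimate \eqref{bound-sigmap0} of the initial-data construction. The gap is in \eqref{eq:continuous-stability-initial-2}. Your first instinct (test the Biot subsystem with $\btau_p=\partial_t\bsi_p(0)$, $w_p=\partial_t p_p(0)$) points in the right direction, but the route you actually commit to --- differentiate the whole coupled system \eqref{eq:continuous-weak-formulation-2} in time and test with the fully time-differentiated solution vector at $t=0$, using the coercivity of $\cA+\cK_{\bu_f}$ and the inf-sup conditions --- cannot be carried out. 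Differentiating \eqref{eq:continuous-weak-formulation-1d} and \eqref{eq:continuous-weak-formulation-1h}, which already contain $\partial_t A(\bsi_p+\alpha_p p_p\bI)$ and $s_0\,\partial_t p_p$, introduces second time derivatives that the $\W^{1,\infty}$-in-time regularity of $(\bsi_p,p_p)$ does not provide; evaluated pointwise at $t=0$, the $\cE$-block of the differentiated system yields $\tfrac12\,\partial_t\big(\|\partial_t A^{1/2}(\bsi_p+\alpha_p p_p\bI)\|^2_{\bbL^2(\Omega_p)}+s_0\|\partial_t p_p\|^2_{\W_p}\big)\big|_{t=0}$, whose sign is unknown, while integrating in time is circular because the quantity to be bounded is precisely the initial value that would serve as data for that integration. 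Moreover, the "lower-order" right-hand-side terms you invoke would involve $\partial_t\bu_p(0)$, $\partial_t\bT_f(0)$, $\partial_t\bu_f(0)$, $\partial_t\btheta(0)$, $\partial_t\lambda(0)$, none of which are controlled by \eqref{eq:stability-initial-Tf0-uf0}, \eqref{bound-theta0}, or \eqref{eq:continuous-stability-initial-1}: those bound the initial values, not their time derivatives.

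The missing device is to stay entirely within the Biot block and to differentiate in time only the two equations that contain no time derivative, namely \eqref{eq:continuous-weak-formulation-1e}--\eqref{eq:continuous-weak-formulation-1f}; since $\f_p$ is independent of $t$, this is legitimate and gives $\partial_t\bdiv(\bsi_p)(0)=\0$ together with weak symmetry of $\partial_t\bsi_p(0)$. Keeping \eqref{eq:continuous-weak-formulation-1d} and \eqref{eq:continuous-weak-formulation-1h} as they stand at $t=0$ and choosing $(\btau_p,w_p,\bv_s,\bchi_p)=(\partial_t\bsi_p(0),\partial_t p_p(0),\bu_s(0),\bgamma_p(0))$, the cross terms drop and one is left with \eqref{eq:continuous-stability-8}: the only right-hand-side terms are $\pil\partial_t\bsi_p(0)\bn_p,\btheta(0)\pir_{\Gamma_{fp}}$, $(\partial_t p_p(0),\div(\bu_p)(0))_{\Omega_p}$, and $(q_p(0),\partial_t p_p(0))_{\Omega_p}$. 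These are handled by the normal trace inequality \eqref{trace-sigma} (which needs exactly the fact $\partial_t\bdiv(\bsi_p)(0)=\0$), the algebraic estimate \eqref{eq:estimate-sigmap-A} to bound $\|\partial_t\bsi_p(0)\|_{\bbL^2(\Omega_p)}$, the identities $\btheta(0)=\btheta_0$ and $\div(\bu_p)(0)=-\mu^{-1}\div(\bK\nabla p_{p,0})$ from the initial-data construction (cf. \eqref{bound-theta0}, \eqref{eq:sol0-up0-pp0}), and Young's inequality; absorbing $\|\partial_t p_p(0)\|_{\W_p}$ into $s_0\|\partial_t p_p(0)\|^2_{\W_p}$ is what produces the $1/\sqrt{s_0}$ and $(1+1/\sqrt{s_0})$ factors. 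No coercivity of $\cA+\cK_{\bu_f}$, no inf-sup condition, and no control of time derivatives of the fluid or interface variables is needed.
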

\begin{proof} 
First, since $(\bsi_{p}(0),p_p(0)) = (\bsi_{p,0},p_{p,0})$, bound \eqref{bound-sigmap0} gives
\eqref{eq:continuous-stability-initial-1}.
On the other hand, using the facts that $(\bsi_p,p_p)\in \W^{1,\infty}(0,T; \bbL^2(\Omega_p))\times \W^{1,\infty}(0,T; \L^2(\Omega_p))$ and $\f_p$ is independent of $t$, we can differentiate in time \eqref{eq:continuous-weak-formulation-1e}--\eqref{eq:continuous-weak-formulation-1f} and combine them with \eqref{eq:continuous-weak-formulation-1d} and \eqref{eq:continuous-weak-formulation-1h} at time $t=0$. Choosing $(\btau_p, w_p, \bv_s, \bchi_p)=(\partial_t \bsi_p(0), \partial_t p_p(0), \bu_s(0), \bgamma_p(0))$, implies
\begin{equation}\label{eq:continuous-stability-8}
\begin{array}{l}
\ds \|\partial_t \,A^{1/2}(\bsi_p + \alpha_p p_p \bI)(0)\|^2_{\bbL^2(\Omega_p)}
+ s_0 \|\partial_t \, p_p(0)\|^2_{\W_p}
\\[2ex]
\ds\quad 
= \, \langle \partial_t \bsi_p(0)\bn_p, \btheta(0)\rangle_{\Gamma_{fp}}
+ (\partial_t p_p(0),\div(\bu_p)(0))_{\Omega_p} +
(q_p(0), \partial_t p_p(0))_{\Omega_p} \,.
\end{array}
\end{equation}
Using the normal trace inequality \eqref{trace-sigma} and estimate \eqref{eq:estimate-sigmap-A} to bound the first term on the right-hand side, as well as the Cauchy--Schwarz and Young's inequalities, we obtain
\begin{align}\label{eq:continuous-stability-9}
&\ds \| \partial_t A^{1/2}(\bsi_p + \alpha_p p_p \bI)(0)\|^2_{\bbL^2(\Omega_p)}
+ s_0 \|\partial_t p_p(0)\|^2_{\W_p} \nonumber \\[1ex]
&\ds \quad  
\,\leq\, C\,\bigg(  \|\partial_t \bdiv(\bsi_p)(0)\|^2_{\bbL^2(\Omega_p)}
+ \frac{1}{s_0} \|q_p(0)\|^2_{\L^2(\Omega_p)}
+ \left( 1 + \frac{1}{s_0} \right)\|\btheta(0)\|^2_{\bLambda_s} 
+ \frac{1}{s_0}\,\|\div (\bu_p) (0)\|^2_{\L^2(\Omega_p)} \bigg) \nonumber \\[1ex]
&\ds \quad +\,
\delta\left( \|\partial_t \,A^{1/2}(\bsi_p + \alpha_p p_p \bI)(0)\|^2_{\bbL^2(\Omega_p)}
+ s_0 \|\partial_t \, p_p(0)\|^2_{\W_p} \right) \,.
\end{align}
In turn, using the fact that $\btheta(0)=\btheta_0$, $\bu_p(0)=\bu_{p,0}$ (cf.
Theorem~\ref{thm:well-posedness-main-result}), bound \eqref{bound-theta0}, and the identity \eqref{eq:sol0-up0-pp0}, we find that
\begin{equation}\label{eq:continuous-stability-10}
\begin{split}
&\|\btheta(0)\|_{\bLambda_s}
\,\leq\, C\, \big( \|\f_f\|_{\bL^2(\Omega_f)}
+ \|p_{p,0}\|_{\H^1(\Omega_p)}
+ \|\bK\nabla p_{p,0}\|_{\bH^1(\Omega_p)} \big), \\
&\|\div(\bu_p)(0)\|_{\L^2(\Omega_p)} 
\,\leq\, C\,\| \div(\bK \nabla p_{p,0})\|_{\L^2(\Omega_p)}.
\end{split}
\end{equation}
In addition, from \eqref{eq:continuous-weak-formulation-1e} and the fact that $\f_p$ does not depend on $t$, we deduce
\begin{equation}\label{eq:continuous-stability-11}
\|\partial_t \bdiv(\bsi_p)(0)\|_{\bbL^2(\Omega_p)} \,=\, 0\,.
\end{equation}
Then, combining \eqref{eq:continuous-stability-9} with   \eqref{eq:continuous-stability-10}--\eqref{eq:continuous-stability-11}, and taking $\delta$ small enough, we obtain \eqref{eq:continuous-stability-initial-2}.
\end{proof}

We end this section with establishing regularity and a stability bound for the solution of \eqref{eq:continuous-weak-formulation-2}.
\begin{thm}\label{thm: continuous stability}
Under the assumptions of Theorem~\ref{thm:well-posedness-main-result}, if 
$q_p \in \H^1(0,T;\L^2(\Omega_p))$, then the solution of \eqref{eq:continuous-weak-formulation-2} has regularity
$\bT_f \in \H^1(0,T;\bbX_f)$, $\bu_f \in \H^1(0,T;\bV_f)$, $\bsi_p \in \W^{1,\infty}(0,T;\bbX_p)$, $\bu_s \in \L^\infty(0,T;\bV_s)$, $\bgamma_p \in \L^\infty(0,T;\bbQ_p)$, $\bu_p \in \L^2(0,T;\bV_p)\cap \H^1(0,T;\bL^2(\Omega_p))$, $p_p \in \W^{1,\infty}(0,T;\W_p)$, $\lambda \in \H^1(0,T;\Lambda_p)$, and $\btheta \in \L^\infty(0,T;\bLambda_s)$.
In addition, there exists a positive constant $C$, independent of $s_{0,\min}$, such that
\begin{align}\label{eq:continuous-stability}
&\ds \|A^{1/2}(\bsi_p + \alpha_p p_p \bI)\|_{\W^{1,\infty}(0,T;\bbL^2(\Omega_p))}
+ \|\bdiv(\bsi_p)\|_{\bbL^2(\Omega_p)}
\nonumber \\[1ex]
& \ds\quad + \, 
\sqrt{s_0} \|p_p\|_{\W^{1,\infty}(0,T;\W_p)} 
+ \|p_p\|_{\H^1(0,T;\W_p)} 
+ \|\bu_p\|_{\L^2(0,T;\bV_p)} 
+ \|\partial_t \bu_p\|_{\L^2(0,T;\bL^2(\Omega_p))}
+ \|\bT_f\|_{\H^1(0,T;\bbX_f)}
\nonumber \\[1ex]
&\ds\quad 
+\, \|\bu_f\|_{\H^1(0,T;\bV_f)} 
+ |\bu_f - \btheta|_{\H^1(0,T;\BJS)}
+ \|\btheta\|_{\L^\infty(0,T;\bLambda_s)}
+ \|\btheta\|_{\L^2(0,T;\bLambda_s)}
+ \|\lambda\|_{\H^1(0,T;\Lambda_p)} 
\nonumber \\[1ex]
&\ds\quad
+ \|\bu_s\|_{\L^\infty(0,T;\bV_s)}
+ \|\bu_s\|_{\L^2(0,T;\bV_s)}
+ \|\bgamma_p\|_{\L^\infty(0,T;\bbQ_p)}
+ \|\bgamma_p\|_{\L^2(0,T;\bbQ_p)} \nonumber \\[1ex]
&\ds \leq\, C\,\left( 
\|\f_f\|_{\bL^2(\Omega_f)} 
+ \|\f_p\|_{\bL^2(\Omega_p)}
+ \|q_p\|_{\H^1(0,T;\L^2(\Omega_p))}
+ \frac{1}{\sqrt{s_0}}\,\|q_p(0)\|_{\L^2(\Omega_p)} \right.
\nonumber
\\[1ex]
&\ds\quad
\left.
+ \, \sqrt{s_0}\,\|p_{p,0}\|_{\W_p}
+ \left(1 + \frac{1}{\sqrt{s_0}}\right)\left( \|p_{p,0}\|_{\H^1(\Omega_p)} + \|\bK \nabla p_{p,0}\|_{\bH^1(\Omega_p)} \right) \right)\,.
\end{align}
\end{thm}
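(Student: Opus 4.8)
The plan is to obtain \eqref{eq:continuous-stability} from two energy estimates — one for the solution of \eqref{eq:continuous-weak-formulation-2} and one for its time derivative — together with the inf-sup conditions of Lemma~\ref{lem:inf-sup} and the algebraic relations \eqref{eq:continuous-weak-formulation-1d}--\eqref{eq:continuous-weak-formulation-1h}, which transfer the control to the remaining unknowns; the stated time regularity will come out of the same computations. Throughout I would use that $\f_f,\f_p$ are independent of $t$, that $q_p\in\H^1(0,T;\L^2(\Omega_p))$, and, crucially, that $\bu_f(t)\in\bW_r$ with $r<r_0$ for every $t$ by Theorem~\ref{thm:well-posedness-main-result}. \emph{First energy estimate.} I would test \eqref{eq:continuous-weak-formulation-2} with $(\bq,\bs)=(\bp(t),\br(t))$. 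The coupling forms $b_p,b_{\bn_p},b_s,b_\sk,b_\Gamma$ cancel in pairs, while $s_0(\partial_t p_p,p_p)_{\Omega_p}+a_e(\partial_t\bsi_p,\partial_t p_p;\bsi_p,p_p)$ is the exact time derivative of $\tfrac12\big(s_0\|p_p\|_{\W_p}^2+\|A^{1/2}(\bsi_p+\alpha_p p_p\bI)\|_{\bbL^2(\Omega_p)}^2\big)$. Using the positivity bounds \eqref{eq:positive-bound-E}, \eqref{eq:positive-bound-A+Kwf}, \eqref{eq:monotone-01}--\eqref{eq:monotone-02} (legitimate since $\|\bu_f(t)\|_{\bV_f}\le r<r_0$) and integrating over $(0,t)$, the left side controls $\|A^{1/2}(\bsi_p+\alpha_p p_p\bI)(t)\|_{\bbL^2(\Omega_p)}^2+s_0\|p_p(t)\|_{\W_p}^2$ and $\int_0^t\big(\|\bu_p\|_{\bL^2(\Omega_p)}^2+\|(\bT_f,\bu_f)\|^2+|\bu_f-\btheta|_{\BJS}^2\big)$. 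On the right, $(\f_f,\bv_f-\kappa_1\bdiv(\bR_f))_{\Omega_f}$ is absorbed by Young's inequality into the time-integrated control of $(\bT_f,\bu_f)$; for $\int_0^t(q_p,p_p)_{\Omega_p}$ I would first use \eqref{eq:continuous-inf-sup-2} applied to \eqref{eq:continuous-weak-formulation-1g} to get $\|p_p(t)\|_{\W_p}+\|\lambda(t)\|_{\Lambda_p}\le C\|\bu_p(t)\|_{\bL^2(\Omega_p)}$ and then Young's inequality; and $\int_0^t(\f_p,\bu_s)_{\Omega_p}=(\f_p,\bbeta_p(t)-\bbeta_{p,0})_{\Omega_p}$, with $\|\bbeta_p(t)\|_{\bV_s}$ bounded through \eqref{eq:continuous-inf-sup-1} applied to the non-differentiated equation \eqref{non-diff-eq} (cf.\ Remark~\ref{rem:non-diff-eq}) and absorbed into $\|A^{1/2}(\bsi_p+\alpha_p p_p\bI)(t)\|^2$. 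This yields the $\L^\infty$-in-time bounds on $A^{1/2}(\bsi_p+\alpha_p p_p\bI)$ and $\sqrt{s_0}\,p_p$, and the $\L^2$-in-time bounds on $\bu_p$, $(\bT_f,\bu_f)$, $\bu_f-\btheta$, $p_p$, and $\lambda$.

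\emph{Second energy estimate.} Since only $q_p$ depends on $t$ and $q_p\in\H^1(0,T;\L^2(\Omega_p))$, I would differentiate \eqref{eq:continuous-weak-formulation-2} in time — justified by the $\W^{1,\infty}$ regularity of $(\bsi_p,p_p)$ and the inheritance of time regularity by the other variables through the inf-sup-stable linear subsystems — and test with $(\partial_t\bp(t),\partial_t\br(t))$. The coupling forms again cancel in pairs and $s_0(\partial_t^2 p_p,\partial_t p_p)_{\Omega_p}+a_e(\partial_t^2\bsi_p,\partial_t^2 p_p;\partial_t\bsi_p,\partial_t p_p)$ is an exact time derivative. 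The new term is $\partial_t\big[\cK_{\bu_f}(\bp)(\bq)\big]=\kappa_{\partial_t\bu_f}(\bT_f,\bu_f;\bR_f,\bv_f)+\kappa_{\bu_f}(\bT_f,\partial_t\bu_f;\bR_f,\bv_f)$; when tested with $(\partial_t\bR_f,\partial_t\bv_f)=(\partial_t\bT_f,\partial_t\bu_f)$, combining the second piece with $a_f(\partial_t\bT_f,\partial_t\bu_f;\partial_t\bT_f,\partial_t\bu_f)$ and bounding the first piece via \eqref{eq:continuity-of-Kwf}, both are controlled by $C_{\cK}\|\bu_f\|_{\bV_f}\|(\partial_t\bT_f,\partial_t\bu_f)\|^2\le C_{\cK}r\|(\partial_t\bT_f,\partial_t\bu_f)\|^2$, so, using \eqref{eq:coercivity-af}, the left side retains a coercive contribution $\big(\alpha_f-2C_{\cK}r\big)\|(\partial_t\bT_f,\partial_t\bu_f)\|^2>0$ precisely because $r<r_0$. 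After integrating over $(0,t)$, using the initial-time bound \eqref{eq:continuous-stability-initial-2} for $\|\partial_t A^{1/2}(\bsi_p+\alpha_p p_p\bI)(0)\|$ and $\sqrt{s_0}\,\|\partial_t p_p(0)\|$, and bounding the right-hand side $\int_0^t(\partial_t q_p,\partial_t p_p)_{\Omega_p}$ (via $\|\partial_t p_p\|_{\W_p}\le C\|\partial_t\bu_p\|_{\bL^2(\Omega_p)}$ from \eqref{eq:continuous-inf-sup-2} applied to the time-differentiated \eqref{eq:continuous-weak-formulation-1g}, then Young's inequality), I obtain the $\L^\infty$-in-time bounds on $\partial_t A^{1/2}(\bsi_p+\alpha_p p_p\bI)$ and $\sqrt{s_0}\,\partial_t p_p$, and the $\L^2$-in-time bounds on $(\partial_t\bT_f,\partial_t\bu_f)$, $\partial_t\bu_p$, and $\partial_t\bu_f-\partial_t\btheta$.

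\emph{Recovering the remaining norms.} From \eqref{eq:continuous-weak-formulation-1e}, $\bdiv(\bsi_p)=-\f_p$, so $\|\bdiv(\bsi_p)\|_{\bbL^2(\Omega_p)}=\|\f_p\|_{\bL^2(\Omega_p)}$ and $\partial_t\bdiv(\bsi_p)=\0$; testing \eqref{eq:continuous-weak-formulation-1h} with $w_p=\div(\bu_p)$ bounds $\|\div(\bu_p)(t)\|_{\L^2(\Omega_p)}$ by $\|q_p(t)\|_{\L^2(\Omega_p)}$, $\sqrt{s_{0,\max}}\,\sqrt{s_0}\|\partial_t p_p(t)\|_{\W_p}$, and $\|\partial_t A^{1/2}(\bsi_p+\alpha_p p_p\bI)(t)\|_{\bbL^2(\Omega_p)}$, hence $\bu_p\in\L^2(0,T;\bV_p)$; \eqref{eq:continuous-inf-sup-1} applied to \eqref{eq:continuous-weak-formulation-1d} gives $\|\bu_s\|_{\bV_s}+\|\bgamma_p\|_{\bbQ_p}+\|\btheta\|_{\bLambda_s}\le C\|\partial_t A^{1/2}(\bsi_p+\alpha_p p_p\bI)\|_{\bbL^2(\Omega_p)}$ in both $\L^\infty$ and $\L^2$; \eqref{eq:continuous-inf-sup-2} applied to the time-differentiated \eqref{eq:continuous-weak-formulation-1g} gives $\|\partial_t p_p\|_{\W_p}+\|\partial_t\lambda\|_{\Lambda_p}\le C\|\partial_t\bu_p\|_{\bL^2(\Omega_p)}$, yielding the $s_0$-robust $\H^1(0,T;\W_p)$ and $\H^1(0,T;\Lambda_p)$ bounds; \eqref{eq:estimate-sigmap-A} together with $\bdiv(\bsi_p)=-\f_p$ controls $\bsi_p$ in $\W^{1,\infty}(0,T;\bbX_p)$; and $|\bu_f-\btheta|_{\H^1(0,T;\BJS)}$ follows from combining the two estimates. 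Collecting all contributions and carefully tracking the powers of $s_0$ gives \eqref{eq:continuous-stability}.

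\emph{Main obstacle.} The delicate step is the second energy estimate: rigorously justifying the time differentiation with only $\W^{1,\infty}$ regularity available, and handling the differentiated convective term so that coercivity survives — which is exactly where the smallness $\|\bu_f(t)\|_{\bV_f}\le r<r_0$ is used — while at the same time extracting $s_0$-robust constants, in particular obtaining $\|p_p\|_{\H^1(0,T;\W_p)}$ and $\|\lambda\|_{\H^1(0,T;\Lambda_p)}$ free of $1/\sqrt{s_0}$ from the inf-sup on the differentiated Darcy equation rather than from the weighted $\sqrt{s_0}\,\partial_t p_p$ bound.
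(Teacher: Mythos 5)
Your plan reproduces the paper's argument almost step for step: a first energy estimate obtained by testing \eqref{eq:continuous-weak-formulation-2} with the solution, the inf-sup conditions of Lemma~\ref{lem:inf-sup} to recover $p_p$, $\lambda$, $\bu_s$, $\bgamma_p$, $\btheta$, the choices $\bv_s=\bdiv(\bsi_p)$, $w_p=\div(\bu_p)$ for the divergence terms, a second energy estimate for the time-differentiated system in which the smallness $\|\bu_f(t)\|_{\bV_f}\le r<r_0$ leaves the coefficient $2C_{\cK}(r_0-r)>0$ in front of $\|(\partial_t\bT_f,\partial_t\bu_f)\|^2$, and the initial-time bounds \eqref{eq:continuous-stability-initial-1}--\eqref{eq:continuous-stability-initial-2}; your treatment of $\int_0^t(\f_p,\bu_s)$ via $\bbeta_p$ and \eqref{non-diff-eq} is a harmless variant of the paper's $\delta$-absorption of $\|\bu_s\|^2$.

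The one genuine gap is exactly the point you flag and leave unresolved: differentiating \eqref{eq:continuous-weak-formulation-2} in time cannot be justified by appealing to ``inheritance of time regularity through the inf-sup-stable linear subsystems,'' because the Navier--Stokes block is nonlinear (the term $\cK_{\bu_f}$ is quadratic in $\bu_f$), so time differentiability of $\bT_f$, $\bu_f$, $\bu_p$, $\lambda$, $\btheta$ is precisely what is being proved, and only $(\bsi_p,p_p)\in\W^{1,\infty}$ is available a priori from Lemma~\ref{lem:well-posedness-of-parabolic-problem}. The paper avoids this circularity by working with the finite difference quotient $\dts\phi=\frac{\phi(t+s)-\phi(t)}{s}$: one applies $\dts$ to the system (which requires no differentiability), tests with the difference quotients of the solution, obtains the analogue of your coercivity estimate with the same splitting $\kappa_{\dts\bu_f}(\bT_f(t),\bu_f(t);\cdot)+\kappa_{\bu_f(t+s)}(\dts\bT_f,\dts\bu_f;\cdot)$, and only then lets $s\to 0$ to arrive at \eqref{eq:continuous-stability-6}; the same device yields the differentiated Darcy inf-sup bound $\|\partial_t p_p\|_{\W_p}+\|\partial_t\lambda\|_{\Lambda_p}\le C\|\partial_t\bu_p\|_{\bL^2(\Omega_p)}$ in \eqref{eq:auxiliar-bound-1}. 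With your second estimate rewritten in this difference-quotient form, the rest of your outline (absorptions, $s_0$-robust constants, recovery of the remaining norms, and the regularity statements) goes through as in the paper.
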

\begin{proof}
Choosing $(\btau_p, w_p, \bv_p, \bR_f, \bv_f, \bphi, \xi, \bv_s, \bchi_p) = (\bsi_p, p_p, \bu_p, \bT_f, \bu_f, \btheta,\lambda, \bu_s, \bgamma_p)$ in \eqref{eq:continuous-weak-formulation-2}, we get
\begin{equation}\label{eq:continuous-stability-1}
\begin{array}{l}
\ds \frac{1}{2}\,\partial_t \big(\|A^{1/2}(\bsi_p + \alpha_p p_p \bI)\|^2_{\bbL^2(\Omega_p)}
+ s_0 \|p_p\|^2_{\W_p} \big) 
+ a_p(\bu_p, \bu_p) +  a_f(\bT_f, \bu_f; \bT_f, \bu_f) \\[2ex]
\ds\,\,\,\, +\, \kappa_{\bu_f}(\bT_f, \bu_f; \bT_f, \bu_f)
+ a_{\BJS}(\bu_f, \btheta; \bu_f, \btheta) 
= (q_p, p_p)_{\Omega_p}
+ (\f_f,\bu_f - \kappa_1 \bdiv(\bT_f))_{\Omega_f} 
+ (\f_p, \bu_s)_{\Omega_p}.
\end{array}
\end{equation}
We integrate \eqref{eq:continuous-stability-1} from $0$ to $t \in (0,T]$, 
use the fact that $\bu_f: [0,T] \rightarrow \bW_r$ (cf. \eqref{eq:W_r-definition}), the stability bounds \eqref{eq:positive-bound-E} and \eqref{eq:positive-bound-A+Kwf} 
in Lemma \ref{lem:monotone}, and the Cauchy-Schwarz and Young's inequalities, to deduce
\begin{align}
& \ds \|A^{1/2}(\bsi_p + \alpha_p p_p \bI)(t)\|^2_{\bbL^2(\Omega_p)} 
+ s_0 \|p_p (t)\|^2_{\W_p}
+ \int_0^t \left( \|\bu_p\|^2_{\bL^2(\Omega_p)} 
+ \|(\bT_f, \bu_f)\|^2
+ |\bu_f - \btheta|^2_{\BJS} \right) ds 
\nonumber \\[1ex]
& \ds \leq C\,\bigg( \|\f_f\|^2_{\bL^2(\Omega_f)} 
+ \|\f_p\|^2_{\bL^2(\Omega_p)} + \int_0^t \|q_p\|^2_{\L^2(\Omega_p)} \, ds 
+ \|A^{1/2}(\bsi_p + \alpha_p p_p \bI)(0)\|^2_{\bbL^2(\Omega_p)} 
+ s_0 \|p_p (0)\|^2_{\W_p}
\bigg) \nonumber \\[1ex]
& \ds \quad  +\, \delta \int_0^t \left( \|(\bT_f, \bu_f)\|^2 + \|p_p\|^2_{\W_p} + \|\bu_s\|^2_{\bV_s} \right)ds\,. \label{eq:continuous-stability-2}
\end{align}
In turn, taking $(\btau_p, w_p, \bv_p, \bR_f, \bv_f, \bphi) =(\btau_p,0,\bv_p,\0,\0,\0)$ in the first equation of \eqref{eq:continuous-weak-formulation-2}, we obtain
	\begin{equation*}
	b_s(\bu_s,\btau_p) + b_{\sk}(\bgamma_p,\btau_p) - b_{\bn_p}(\btau_p,\btheta)
	+ b_p(p_p,\bv_p) + b_\Gamma(\0,\bv_p,\0; \lambda)
	= -a_e(\partial_t\,\bsi_p, \partial_t\, p_p;\btau_p, 0)
	- a_p(\bu_p,\bv_p)\,,
	\end{equation*}
	which combined with the inf-sup conditions \eqref{eq:continuous-inf-sup-1}--\eqref{eq:continuous-inf-sup-2} 
	in Lemma \ref{lem:inf-sup}, yields
	\begin{equation}\label{eq:continuous-stability-inf-sup-1}
	\|\bu_s\|_{\bV_s} + \|\bgamma_p\|_{\bbQ_p} + \|\btheta\|_{\bLambda_s}  
	+ \|p_p\|_{\W_p} + \|\lambda\|_{\Lambda_p}
	\,\leq\, C\,\left( \|\partial_t\,A^{1/2}(\bsi_p+\alpha_p\,p_p\,\bI)\|_{\bbL^2(\Omega_p)}
	+ \|\bu_p\|_{\bL^2(\Omega_p)}
	\right) .
	\end{equation}
	On the other hand, choosing $\bv_s = \bdiv (\bsi_p)$ and $w_p=\div(\bu_p)$ 
	in \eqref{eq:continuous-weak-formulation-2}, and applying 
	the Cauchy--Schwarz inequality, we deduce that 
	\begin{equation}\label{eq:continuous-stability-5}
	\begin{array}{c}
	\ds \|\bdiv(\bsi_p)\|_{\bbL^2(\Omega_p)}
	\,\leq\, \|\f_p\|_{\bL^2(\Omega_p)}\,,\qan \\[2ex]
	\ds \|\div(\bu_p)\|_{\L^2(\Omega_p)} 
	\,\leq\, C\,\left( \|q_p\|_{\L^2(\Omega_p)} 
	+ \|\partial_t\,A^{1/2}(\bsi_p + \alpha_p p_p \bI) \|_{\bbL^2(\Omega_p)} 
	+ s_0\,\|\partial_t p_p\|_{\W_p}
	\right)\,.
	\end{array}
	\end{equation}
Then, combining \eqref{eq:continuous-stability-2} with \eqref{eq:continuous-stability-inf-sup-1} and \eqref{eq:continuous-stability-5}, and choosing $\delta$ small enough, we obtain
\begin{align}\label{eq:prelim-continuous-stability-1}
&\ds \|A^{1/2}(\bsi_p + \alpha_p p_p \bI)(t)\|^2_{\bbL^2(\Omega_p)}
+ s_0 \|p_p(t)\|^2_{\W_p} 
+ \|\bdiv(\bsi_p)\|^2_{\bbL^2(\Omega_p)}
\nonumber \\[1ex]
&\ds \quad +\, \int_0^t \left( \|p_p\|^2_{\W_p} + \|\bu_p\|^2_{\bV_p} 
+ \|(\bT_f, \bu_f)\|^2
+ |\bu_f - \btheta|^2_{\BJS} + \|\btheta\|^2_{\bLambda_s} 
+ \|\lambda\|^2_{\Lambda_p} + \|\bu_s\|^2_{\bV_s} + \|\bgamma_p\|^2_{\bbQ_p} \right) ds \nonumber \\[1ex]
&\ds \leq C\,\bigg( \|\f_f\|^2_{\bL^2(\Omega_f)} 
+ \|\f_p\|^2_{\bL^2(\Omega_p)}
+ \int_0^t \|q_p\|^2_{\L^2(\Omega_p)}\,ds 
+ \|A^{1/2}(\bsi_p + \alpha_p\,p_p\,\bI)(0)\|^2_{\bbL^2(\Omega_p)} 
\nonumber \\[1ex]
&\ds \quad  + \, s_0 \|p_p(0)\|^2_{\W_p}
+ \int_0^t \left( \|\partial_t\,A^{1/2}(\bsi_p + \alpha_p p_p \bI)\|^2_{\bbL^2(\Omega_p)}
+ s^2_0\,\|\partial_t p_p\|^2_{\W_p} \right) ds \bigg)\,.
\end{align}

Next, in order to bound the last two terms in \eqref{eq:prelim-continuous-stability-1}, we take a finite difference in time of the whole system \eqref{eq:continuous-weak-formulation-2}. In particular, given $t \in [0,T)$ and $s > 0$ with $t+s \le T$, let $\dts\phi := \frac{\phi(t+s) - \phi(t)}{s}$. Applying this operator to \eqref{eq:continuous-weak-formulation-2}, 
noting that $\partial^s_t\f_f = \0$ and $\partial^s_t\f_p=\0$ since both $\f_f$ and $\f_p$ are independent of $t$, and testing with $(\btau_p, w_p, \bv_p, \bR_f, \bv_f, \bphi, \xi, \bv_s, \bchi_p) = (\partial^s_t\bsi_p, \partial^s_t p_p, \partial^s_t\bu_p,\partial^s_t\bT_f, \partial^s_t\bu_f, \partial^s_t\btheta, \partial^s_t\lambda, \partial^s_t\bu_s, \partial^s_t\bgamma_p)$, 
similarly to \eqref{eq:continuous-stability-1}, we get
\begin{equation*}\label{eq:continuous-stability-4}
\begin{array}{l}
\ds \frac{1}{2}\,\partial_t \big(\|\partial^s_t\,A^{1/2}(\bsi_p + \alpha_p p_p \bI)\|^2_{\bbL^2(\Omega_p)}
+ s_0 \|\partial^s_t\,p_p\|^2_{\W_p} \big) 
+ a_p(\partial^s_t\bu_p, \partial^s_t\bu_p) +  a_f(\partial^s_t\bT_f, \partial^s_t\bu_f; \partial^s_t\bT_f, \partial^s_t\bu_f) \\[2ex]
\ds\quad +\, \kappa_{\partial^s_t\bu_f}(\bT_f(t), \bu_f(t); \partial^s_t\bT_f, \partial^s_t\bu_f)
+ \kappa_{\bu_f(t+s)}(\partial^s_t\bT_f, \partial^s_t\bu_f; \partial^s_t\bT_f, \partial^s_t\bu_f)
+ a_{\BJS}(\partial^s_t\bu_f, \partial^s_t\btheta; \partial^s_t\bu_f, \partial^s_t\btheta) \\[2ex]
\ds\quad =\, (\partial^s_t q_p, \partial^s_t p_p)_{\Omega_p} \,,
\end{array}
\end{equation*}
We integrate from 0 to $t \in (0,T)$, use that $\bu_f:[0,T]\to \bW_r$, the continuity of $\kappa_{\bw_f}$ (cf. \eqref{eq:continuity-of-Kwf}) and the positivity bounds of $a_p$, $a_f$, and $a_{\BJS}$ in Lemma \ref{lem:monotone} (cf. \eqref{eq:monotone-01} and \eqref{eq:monotone-02}), and take $s \to 0$, obtaining
\begin{align}
& \ds \|\partial_t\,A^{1/2}(\bsi_p + \alpha_p p_p \bI)(t)\|^2_{\bbL^2(\Omega_p)} 
+ s_0 \|\partial_t\,p_p (t)\|^2_{\W_p} \nonumber \\
& \ds\quad + \int_0^t \left( \|\partial_t\bu_p\|^2_{\bL^2(\Omega_p)} 
+ 2\,C_{\cK}\,(r_0 - r)\|(\partial_t\bT_f, \partial_t\bu_f)\|^2 
+ |\partial_t \bu_f - \partial_t \btheta|^2_{\BJS} \right) ds \label{eq:continuous-stability-6} \\
& \ds \leq C \bigg( \int_0^t \|\partial_t q_p\|^2_{\L^2(\Omega_p)} \, ds 
+ \|\partial_t\,A^{1/2}(\bsi_p + \alpha_p p_p \bI)(0)\|^2_{\bbL^2(\Omega_p)} + s_0\,\|\partial_t\,p_p (0)\|^2_{\W_p} \bigg) 
+ \delta \int_0^t 
\|\partial_t\,p_p\|^2_{\W_p} \, ds . \nonumber 
\end{align}
In turn, using the inf-sup conditions \eqref{eq:continuous-inf-sup-1}--\eqref{eq:continuous-inf-sup-2} 
in Lemma \ref{lem:inf-sup}, we find that for a.e. $t \in (0,T)$
\begin{equation}\label{eq:auxiliar-bound-1}
\begin{array}{c}
\|\bu_s(t)\|_{\bV_s} + \|\bgamma_p(t)\|_{\bbQ_p} + \|\btheta(t)\|_{\bLambda_s}  
\,\leq\, C\,\|\partial_t\,A^{1/2}(\bsi_p+\alpha_p\,p_p\,\bI)(t)\|_{\bbL^2(\Omega_p)}\,, \\[1ex]
\qan \|\partial_t\,p_p(t)\|_{\W_p} + \|\partial_t\,\lambda(t)\|_{\Lambda_p}
\,\leq\, C\,\|\partial_t\,\bu_p(t)\|_{\bL^2(\Omega_p)} \,,
\end{array}
\end{equation}
where the second bound is obtained by applying the operator $\dts$ and taking $s \to 0$. Then, combining \eqref{eq:continuous-stability-6} with \eqref{eq:auxiliar-bound-1}, and taking $\delta$ small enough, yields
\begin{align}\label{eq:prelim-continuous-stability-2}
	& \ds \|\partial_t\,A^{1/2}(\bsi_p + \alpha_p p_p \bI)(t)\|^2_{\bbL^2(\Omega_p)} 
+ s_0 \|\partial_t\,p_p (t)\|^2_{\W_p}
+ \| \bu_s(t)\|^2_{\bV_s}
+ \| \bgamma_p(t) \|^2_{\bbQ_p}
+ \| \btheta(t) \|^2_{\bLambda_s}
\nonumber \\
& \ds\quad + \int_0^t \left(
\| \partial_t\, p_p\|^2_{\W_p}+
\|\partial_t\bu_p\|^2_{\bL^2(\Omega_p)} 
+ \|(\partial_t\bT_f, \partial_t\bu_f)\|^2 
+ |\partial_t \bu_f - \partial_t \btheta|^2_{\BJS}
+ \|\partial_t \lambda\|^2_{\Lambda_p} \right) ds \nonumber \\
& \ds \leq C\,\left( \int_0^t \|\partial_t q_p\|^2_{\L^2(\Omega_p)} \,ds
+ \|\partial_t\,A^{1/2}(\bsi_p + \alpha_p p_p \bI)(0)\|^2_{\bbL^2(\Omega_p)}
+ s_0\,\|\partial_t\,p_p (0)\|^2_{\W_p} \right) \,.
\end{align}
Bound \eqref{eq:continuous-stability} follows by combining \eqref{eq:prelim-continuous-stability-1} and \eqref{eq:prelim-continuous-stability-2} with the bounds at $t = 0$ \eqref{eq:continuous-stability-initial-1}--\eqref{eq:continuous-stability-initial-2}. The bound implies the stated solution regularity.
\end{proof}

\section{Semidiscrete continuous-in-time approximation}

In this section we introduce and analyze the semidiscrete continuous-in-time approximation 
of \eqref{eq:continuous-weak-formulation-3}. 
We analyze its solvability by employing the strategy developed in Section \ref{sec:well-posedness-model}.
In addition, we derive error estimates with rates of convergence. At the end of the section we introduce the fully discrete scheme based on the backward Euler time discretization and give a short outline of its analysis.

Let $\cT_h^f$ and $\cT_h^p$ be shape-regular and quasi-uniform \cite{ciarlet1978} affine finite element partitions of $\Omega_f$ and $\Omega_p$, respectively, where $h$ is the maximum element diameter. The two partitions may be non-matching along the interface $\Gamma_{fp}$. For the discretization, we consider the following conforming finite element spaces:
\begin{equation*}
\bbX_{fh}\times \bV_{fh}\subset \bbX_f \times \bV_f, \quad
\bbX_{ph}\times \bV_{sh} \times \bbQ_{ph} \subset \bbX_p \times \bV_s \times \bbQ_p, \quad 
\bV_{ph} \times \W_{ph} \subset \bV_p \times \W_p.
\end{equation*}
We choose $(\bbX_{ph},\bV_{sh},\bbQ_{ph})$ to be any stable triple for mixed elasticity with weakly imposed stress symmetry, such as the
Amara--Thomas \cite{at1979}, PEERS \cite{abd1984}, Stenberg
\cite{stenberg1988}, Arnold--Falk--Winther \cite{afw2007,awanou2013},
or Cockburn--Gopalakrishnan--Guzman \cite{cgg2010} families of spaces. 
We take $(\bV_{ph},\W_{ph})$ to be any stable mixed finite element Darcy spaces, such as the Raviart--Thomas (RT) or Brezzi--Douglas--Marini (BDM) spaces \cite{Brezzi-Fortin}. We note that these spaces satisfy
\begin{equation}\label{eq: div-prop}
\bdiv(\bbX_{ph})=\bV_{sh}, \qquad 
\div(\bV_{ph})=\W_{ph}.
\end{equation}
Since $\bV_{sh}$ and $\W_{ph}$ contain discontinuous piecewise polynomials, the method exhibits local poroelastic momentum conservation (cf. \eqref{eq:continuous-weak-formulation-1e}) and local mass conservation for the Darcy fluid (cf. \eqref{eq:continuous-weak-formulation-1h}). We further note that an inf-sup condition is not required for the pair $(\bbX_{fh},\bV_{fh})$. Therefore we can take any $\bbH(\bdiv; \Omega_f)$-conforming space for $\bbX_{fh}$, such as the RT or BDM spaces, combined with continuous piecewise polynomials for $\bV_{fh}$. For the Lagrange multipliers, we choose the non-conforming approximations
\begin{equation}\label{defn-Lambda-h}
\Lambda_{ph} := \bV_{ph} \cdot \bn_p \vert_{\Gamma_{fp}}, \quad 
\bLambda_{sh} := \bbX_{ph} \bn_p \vert_{\Gamma_{fp}},
\end{equation}
which consist of discontinuous piecewise polynomials and are equipped with $\L^2$-norms. 
\begin{rem}
We note that, since $\H^{1/2}(\Gamma_{fp})$ is dense in $\L^2(\Gamma_{fp})$, 
\eqref{eq:continuous-weak-formulation-1i} and \eqref{eq:continuous-weak-formulation-1k}
in the continuous weak formulation hold for test functions in $\L^2(\Gamma_{fp})$, 
assuming that the solution is smooth enough so that the traces are well-defined in $\L^2(\Gamma_{fp})$; e.g., $\bu_p \in \bH^{1/2+\epsilon}(\Omega_p)$ for some $\epsilon > 0$. 
In particular, these equations hold for $\xi_h \in \Lambda_{ph}$ and $\bphi_h \in \bLambda_{sh}$, respectively.
\end{rem}

Now, we group the spaces, unknowns and test functions similarly to the continuous case:
\begin{equation*}
\begin{array}{c}
\ds \bQ_h := \bbX_{ph}\times \W_{ph} \times \bV_{ph} \times \bbX_{fh} \times \bV_{fh} \times \bLambda_{sh},\quad
\bS_h := \Lambda_{ph} \times \bV_{sh}\times \bbQ_{ph} , \\ [2ex]
\ds \bp_h := (\bsi_{ph},  p_{ph}, \bu_{ph}, \bT_{fh}, \bu_{fh}, \btheta_h )\in \bQ_h,\quad 
\br_h := (\lambda_h, \bu_{sh}, \bgamma_{ph} )\in \bS_h, \\[1ex]
\ds \bq_h := (\btau_{ph},  w_{ph}, \bv_{ph}, \bR_{fh}, \bv_{fh}, \bphi_h )\in \bQ_h,\quad 
\bs_h := (\xi_h, \bv_{sh}, \bchi_{ph} )\in \bS_h,
\end{array}
\end{equation*}
where the spaces $\bQ$ and $\bS$ are respectively endowed with the norms
\begin{align*}
\|\bq_h\|_{\bQ_h}^2 & = \|\btau_{ph}\|_{\bbX_{p}}^2 + \|w_{ph}\|_{\W_{p}}^2 + \|\bv_{ph}\|_{\bV_{p}}^2 + \|\bR_{fh}\|_{\bbX_{f}}^2 + \|\bv_{fh}\|_{\bV_{f}}^2 + \|\bphi_h\|_{\bLambda_{sh}}^2 \nonumber \,, \\[1ex]
\|\bs_h\|_{\bS_h}^2 & = \|\xi_h\|_{\Lambda_{ph}}^2 + \|\bv_{sh}\|_{\bV_p}^2 + \|\bchi_{ph}\|_{\bbQ_p}^2 \,, \nonumber
\end{align*}
with $\ds \|\bphi_h\|_{\bLambda_{sh}} = \|\bphi_h\|_{\bL^2(\Gamma_{fp})}$ and $\ds \|\xi_h\|_{\Lambda_{ph}}=\|\xi_h\|_{\L^2(\Gamma_{fp})}$.
The semidiscrete continuous-in-time approximation to \eqref{eq:continuous-weak-formulation-3} is: Find $\ds (\bp_h, \br_h):[0,T]\rightarrow \bQ_h \times \bS_h$ such that for a.e. $t \in (0,T)$,
\begin{equation}\label{eq: NS-Biot-semiformulation-5}
\begin{array}{rll}
\ds \frac{\partial}{\partial t}\,\cE(\bp_h(t)) + (\cA + \cK_{\bu_{fh}(t)})(\bp_h(t)) + \cB'(\br_h(t)) & = & \bF(t) \qin \bQ_h'\,, \\ [2ex]
\ds - \cB(\bp_h(t)) & = & \bG \qin \bS_h'\,.
\end{array}
\end{equation}

\begin{rem}
Lemma~\ref{lem:cont} holds for the non-conforming Lagrange multipliers spaces (cf. \eqref{defn-Lambda-h}), even though the trace inequalities \eqref{eq:trace-inequality-1} and \eqref{trace-sigma} no longer hold. In particular, for the continuity of the bilinear forms
$b_{\bn_p}(\btau_{ph}, \bphi_h)$ and $b_{\Gamma}(\bv_{fh},\bv_{ph},\bphi_h;\xi_h)$, using the discrete trace-inverse inequality for piecewise polynomial
functions, $\|\varphi\|_{\L^2(\Gamma_{fp})} \le C h^{-1/2}\|\varphi\|_{\L^2(\Omega_p)}$, we have
\begin{equation*}
\begin{array}{c}
\ds b_{\bn_p}(\btau_{ph}, \bphi_h)
\,\le\, C h^{-1/2}\|\btau_{ph}\|_{\bbL^2(\Omega_p)}\|\bphi_h\|_{\bL^2(\Gamma_{fp})}, \\[2ex]
\ds b_{\Gamma}(\bv_{fh},\bv_{ph},\bphi_h;\xi_h)
\,\le\, C\,\left( \|\bv_{fh}\|_{\bV_f} + h^{-1/2}\|\bv_{ph}\|_{\bL^2(\Omega_p)}
+ \|\bphi_h\|_{\bL^2(\Gamma_{fp})} \right)\|\xi_h\|_{\L^2(\Gamma_{fp})} \,.
\end{array}
\end{equation*}
Therefore these bilinear forms are continuous for any given mesh and so are the operators $\cA$ and $\cB$; hence Lemma~\ref{lem:cont} holds.
\end{rem}

We next state the discrete inf-sup conditions that are satisfied by the finite element spaces. 
\begin{lem}\label{lem: discrete inf-sup}
  There exists constants $\wt{\beta}_1, \wt{\beta}_2 > 0$ such that for all $(\bv_{sh}, \bchi_{ph}, \bphi_h)\in \bV_{sh}\times \bbQ_{ph}\times \bLambda_{sh}$,
\begin{equation}\label{eq:discrete-inf-sup-1}
\wt{\beta}_1 \left( \|\bv_{sh}\|_{\bV_{s}} + \|\bchi_{ph}\|_{\bbQ_{p}} + \|\bphi_h\|_{\bLambda_{sh}} \right)
\,\leq\, \underset{\0 \neq \btau_{ph}\in \bbX_{ph}}{\sup}
\frac{b_s(\btau_{ph},\bv_{sh}) + b_{\sk}(\btau_{ph},\bchi_{ph}) + b_{\bn_p}(\btau_{ph},\bphi_p)}{\|\btau_{ph}\|_{\bbX_{p}}},
\end{equation}
and for all $(w_{ph},\xi_h) \in \W_{ph}\times \Lambda_{ph}$,
\begin{equation}\label{eq:discrete-inf-sup-2}
\wt{\beta}_2 \left( \|w_{ph}\|_{\W_{p}} + \|\xi_h\|_{\Lambda_{ph}} \right)  
\,\leq\, \underset{\0 \neq \bv_{ph} \in \bV_{ph}}{\sup}
\frac{b_p(\bv_{ph},w_{ph})+ b_{\Gamma}(\bv_{ph},\0,\0;\xi_h)}{\|\bv_{ph}\|_{\bV_{p}}}.
\end{equation}
\end{lem}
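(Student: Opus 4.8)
The two bounds are the discrete counterparts of \eqref{eq:continuous-inf-sup-1}--\eqref{eq:continuous-inf-sup-2} in Lemma~\ref{lem:inf-sup}, and the plan is to reproduce the same two--term structure, the only genuinely new point being that the trace bounds \eqref{eq:trace-inequality-1} and \eqref{trace-sigma} are no longer available for the non--conforming spaces \eqref{defn-Lambda-h}. For \eqref{eq:discrete-inf-sup-1} I would split the target into its interface part $\bphi_h$ and its volumetric part $(\bv_{sh},\bchi_{ph})$. For the interface part I would use that, by the very definition $\bLambda_{sh}=\bbX_{ph}\bn_p|_{\Gamma_{fp}}$, there is $\btau_{ph}^1\in\bbX_{ph}$ with $\btau_{ph}^1\bn_p=\bphi_h$ on $\Gamma_{fp}$, $\btau_{ph}^1\bn_p=\0$ on $\tilde\Gamma^\rN_p$, and $\|\btau_{ph}^1\|_{\bbX_p}\le C\|\bphi_h\|_{\bLambda_{sh}}$ with $C$ independent of $h$; this $h$--uniform lifting is obtained by first solving the row--wise auxiliary problem $-\bdiv(\nabla\bvarphi)=\0$ in $\Omega_p$ with $\nabla\bvarphi\cdot\bn_p=\bphi_h$ on $\Gamma_{fp}$ and homogeneous conditions on $\Gamma_p\setminus\Gamma_{fp}$ -- whose solution has $\bH(\div;\Omega_p)$--norm bounded by $\|\bphi_h\|_{\H^{-1/2}(\Gamma_{fp})}\le C\|\bphi_h\|_{\L^2(\Gamma_{fp})}$, using $\dist(\Gamma^\rD_p,\Gamma_{fp})\ge s>0$ and elliptic regularity -- and then applying the canonical $\bH(\div)$--interpolant of the mixed family underlying $\bbX_{ph}$, which preserves the normal trace on $\Gamma_{fp}$ (precisely because $\bphi_h$ already lies in $\bbX_{ph}\bn_p|_{\Gamma_{fp}}$) and commutes with $\bdiv$. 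For the volumetric part I would invoke the assumed discrete stability of the weakly--symmetric mixed elasticity triple $(\bbX_{ph},\bV_{sh},\bbQ_{ph})$ together with $\bdiv(\bbX_{ph})=\bV_{sh}$ (cf.\ \eqref{eq: div-prop}): exactly as in the continuous case this gives $\btau_{ph}^2$, which may be taken with $\btau_{ph}^2\bn_p=\0$ on $\Gamma_{fp}\cup\tilde\Gamma^\rN_p$, such that $b_s(\btau_{ph}^2,\bv_{sh})+b_\sk(\btau_{ph}^2,\bchi_{ph})\ge \hat\beta\big(\|\bv_{sh}\|_{\bV_s}^2+\|\bchi_{ph}\|_{\bbQ_p}^2\big)$ and $\|\btau_{ph}^2\|_{\bbX_p}\le C\big(\|\bv_{sh}\|_{\bV_s}+\|\bchi_{ph}\|_{\bbQ_p}\big)$. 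Testing with $\btau_{ph}=\btau_{ph}^2+\delta\,\btau_{ph}^1$, using that $b_{\bn_p}(\btau_{ph}^2,\bphi_h)$ vanishes by construction and absorbing the cross terms $\delta\big(b_s(\btau_{ph}^1,\bv_{sh})+b_\sk(\btau_{ph}^1,\bchi_{ph})\big)$ with Young's inequality for $\delta>0$ fixed small enough, yields \eqref{eq:discrete-inf-sup-1}.

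The proof of \eqref{eq:discrete-inf-sup-2} is entirely analogous for the pair $(w_{ph},\xi_h)$: the interface piece $\xi_h$ is handled by an $h$--uniform lifting $\bv_{ph}^1\in\bV_{ph}$ with $\bv_{ph}^1\cdot\bn_p=\xi_h$ on $\Gamma_{fp}$, $\bv_{ph}^1\cdot\bn_p=0$ on $\Gamma^\rN_p$, $\|\bv_{ph}^1\|_{\bV_p}\le C\|\xi_h\|_{\Lambda_{ph}}$, again produced by solving $-\div(\nabla\psi)=0$ with $\nabla\psi\cdot\bn_p=\xi_h$ on $\Gamma_{fp}$ and then applying the commuting $\bH(\div)$--interpolant of $\bV_{ph}$ (using $\Lambda_{ph}=\bV_{ph}\cdot\bn_p|_{\Gamma_{fp}}$ and $\dist(\Gamma^\rD_p,\Gamma_{fp})\ge s>0$); the volumetric piece $w_{ph}$ is handled by the classical discrete mixed Darcy inf-sup together with $\div(\bV_{ph})=\W_{ph}$, which -- since $|\Gamma^\rD_p|>0$ removes the mean--value constraint -- provides $\bv_{ph}^2\in\bV_{ph}$ with $\bv_{ph}^2\cdot\bn_p=0$ on $\Gamma^\rN_p\cup\Gamma_{fp}$, $-(w_{ph},\div(\bv_{ph}^2))_{\Omega_p}\ge\hat\beta\|w_{ph}\|_{\W_p}^2$ and $\|\bv_{ph}^2\|_{\bV_p}\le C\|w_{ph}\|_{\W_p}$. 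One then tests with $\bv_{ph}=\bv_{ph}^2+\delta\,\bv_{ph}^1$ and closes as before. Equivalently, both conditions can be read off from the $h$--uniform stability of the hybridized weakly--symmetric mixed elasticity and mixed Darcy formulations on $\Omega_p$ in which $\Gamma_{fp}$ is a Dirichlet boundary and $\bphi_h$, $\xi_h$ are the associated boundary multipliers, following \cite[Section~2.4.3.2]{Gatica} and \cite[Lemmas~3.1--3.2]{ervin2009} and their discrete counterparts in \cite{akyz2018,fpsi-mixed-elast}.

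I expect the only real obstacle to be the $h$--independence of the interface liftings: a naive local extension of a boundary polynomial into $\bbX_{ph}$ or $\bV_{ph}$ is $\bH(\div)$--bounded only with a factor $h^{-1/2}$ -- this is exactly the source of the $h^{-1/2}$ in the continuity estimates recorded in the remark preceding the lemma -- so one must instead compose a continuous lifting, whose $\bH(\div)$--norm is controlled by the $\H^{-1/2}$, hence by the $\L^2$, norm of the boundary datum thanks to $\dist(\Gamma^\rD_p,\Gamma_{fp})\ge s>0$ and $\dist(\tilde\Gamma^\rD_p,\Gamma_{fp})\ge s>0$, with a Fortin/canonical interpolant that simultaneously commutes with the divergence and is $\bH(\div)$--stable on the resulting regularity class (so the elliptic regularity of the auxiliary problems must be arranged to be just high enough to make the interpolant well defined). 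Verifying that this interpolant reproduces the normal trace $\bphi_h$, resp.\ $\xi_h$, exactly on $\Gamma_{fp}$ -- which hinges on $\bphi_h\in\bbX_{ph}\bn_p|_{\Gamma_{fp}}$, resp.\ $\xi_h\in\bV_{ph}\cdot\bn_p|_{\Gamma_{fp}}$, being fixed by the face degrees of freedom -- is the remaining point requiring care; the absorption of the cross terms and the choice of $\delta$ are then routine.
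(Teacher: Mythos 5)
The paper gives no argument of its own here: it defers \eqref{eq:discrete-inf-sup-1} to \cite[Theorem~4.1]{akny2018-a} and \eqref{eq:discrete-inf-sup-2} to \cite[eq.~(5.7), Lemma~5.1]{aeny2019}, and the machinery in those references is exactly what you reconstruct -- a continuous auxiliary problem for the multiplier, the canonical $\bH(\div)$ (Fortin) interpolant whose face moments are preserved so that the pairings $\langle \btau_{ph}\bn_p,\bphi_h\rangle_{\Gamma_{fp}}$, $\langle \bv_{ph}\cdot\bn_p,\xi_h\rangle_{\Gamma_{fp}}$ are unchanged (note exact reproduction of the normal trace is not actually needed, only this moment preservation, precisely because $\bLambda_{sh}=\bbX_{ph}\bn_p|_{\Gamma_{fp}}$ and $\Lambda_{ph}=\bV_{ph}\cdot\bn_p|_{\Gamma_{fp}}$), combined with the volumetric elasticity/Darcy inf-sup on the subspace with vanishing normal trace on $\Gamma_{fp}$, which, as you say, is what avoids the $h^{-1/2}$ in the cross terms and is available since $|\Gamma_p^{\rD}|>0$ and $\tilde\Gamma_p^{\rD}$ is nonempty. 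So your route is essentially the paper's, modulo the one point you flag but leave open, which is indeed the crux: with only $\L^2$ Neumann data $\bphi_h$ (resp.\ $\xi_h$), the flux $\nabla\bvarphi$ of your auxiliary problem lies merely in $\bH^{1/2}(\Omega_p)$, for which the canonical BDM/RT interpolant is not defined, so $h$-uniform boundedness of the lifting does not follow as literally stated; the cited works close this by arranging $\bH^{1/2+\epsilon}$-type regularity for the auxiliary solution (this is where the assumptions $\dist(\Gamma_p^{\rD},\Gamma_{fp})\ge s>0$ and $\dist(\tilde\Gamma_p^{\rD},\Gamma_{fp})\ge s>0$ and the smoothness of $\Gamma_{fp}$ enter, cf.\ \cite{galvis2007,akyz2018}) or by using a quasi-interpolant that is stable under this weaker regularity while retaining the interface orthogonality. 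With that detail supplied, your argument is complete and matches the intended proof.
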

\begin{proof}
Inequality \eqref{eq:discrete-inf-sup-1} can be shown using the arguments developed in \cite[Theorem 4.1]{akny2018-a}, whereas \eqref{eq:discrete-inf-sup-2} can be proved similarly to \cite[eq. (5.7) and Lemma 5.1]{aeny2019}. 
\end{proof}

\subsection{Existence and uniqueness of a solution}

The existence of a solution to \eqref{eq: NS-Biot-semiformulation-5} will be established following the proof of solvability of the continuous formulation \eqref{eq:continuous-weak-formulation-2} developed in Section~\ref{sec:well-posedness-model}. To this end, we define a discrete version of the domain $\cD$:

\medskip

$\cD_h :=\, \ds \Big\{  (\bsi_{ph}, p_{ph})\in \bbX_{ph}\times \W_{ph} :$ \ \  for given  $(\f_f, \f_p)\in \bL^2(\Omega_f)\times \bL^2(\Omega_p)$, there exist \\
\indent 

$((\bu_{ph}, \bT_{fh}, \bu_{fh}, \btheta_h),(\lambda_h,\bu_{sh}, \bgamma_{ph}))\in (\bV_{ph} \times \bbX_{fh} \times \bV_{fh} \times \bLambda_{sh}) \times \bS_h$ with $\bu_{fh}\in \bW_r$, such that
\begin{equation}\label{discrete-domain}
\arraycolsep=1.7pt
\begin{array}{rcll}    
(\cE+\cA+\cK_{\bu_{fh}})(\bp_{h}) + \cB'(\br_{h}) & = & \wh{\bF}_{h} & \mbox{in }\,  \bQ_h', \\[2ex]
- \cB(\bp_{h}) & = & \bG & \mbox{in }\, \bS_h',
\end{array}
\end{equation}
\indent where
$$
\wh{\bF}_{h}(\bq_h) = (\wh{\f}_{ph}, \btau_{ph})_{\Omega_p} 
+ (\wh{q}_{ph}, w_{ph})_{\Omega_p}
+ (\f_f, \bv_{fh} - \kappa_1\,\bdiv(\bR_{fh}))_{\Omega_f} \quad \forall\,\bq_h\in \bQ_h,
$$
\indent for some $(\wh{\f}_{ph},\wh{q}_{ph})\in E_b'$
satisfying 
\begin{equation}\label{eq:domain-Dh-hat}
\ds \|\wh{\f}_{ph}\|_{\bbL^2(\Omega_p)} + \|\wh{q}_{ph}\|_{\L^2(\Omega_p)} 
\,\leq\, \wh{C}_{ep,h} \left( \|\f_f\|_{\bL^2(\Omega_f)} + \|\f_p\|_{\bL^2(\Omega_p)}\right)
\end{equation}
\indent with $\wh{C}_{ep,h}$ a fixed positive constant $\Big\}$.

\noindent The constant $\wh{C}_{ep,h}$ is determined from the construction of compatible discrete initial data $(\bp_{h,0}, \br_{h,0})$, which is discussed next.

\begin{lem}\label{lem: discrete initial condition}
  Let $(\f_f, \f_p)\in \bL^2(\Omega_f)\times \bL^2(\Omega_p)$.
  Assume that the conditions of Lemma~\ref{lem:monotone} and
Lemma~\ref{lem:initial-condition} are satisfied.
Assume in addition that the data satisfy 
\begin{equation}\label{eq:initial-small-data-discrete}
C_{\wt{\cJ}_0}\,\left(\|\f_f\|_{\bL^2(\Omega_f)}
+ \|p_{p,0}\|_{\H^1(\Omega_p)}
+ \|\bK\nabla p_{p,0}\|_{\bH^1(\Omega_p)} \right) \,\leq\, r,
\end{equation}
for $r\in (0,r_0)$, where $r_0$ is defined in \eqref{eq:r0-definition} 
and $C_{\wt{\cJ}_0}$ is defined in \eqref{eq:NS-discrete-initial-data} below. 
Then, there exist discrete initial data $(\bsi_{ph,0},p_{ph,0}) \in \cD_h$. In particular, there exist $\bp_{h,0} := (\bsi_{ph,0},p_{ph,0}, \bu_{ph,0},\bT_{fh,0},\bu_{fh,0},\btheta_{h,0})\in \bQ_h$ and $\br_{h,0} := (\lambda_{h,0},\bu_{sh,0},\bgamma_{ph,0})\in \bS_h$ with $\bu_{fh,0} \in \bW_r$, satisfying
\begin{equation}\label{eq: discrete initial condition}
\arraycolsep=1.7pt
\begin{array}{rcll}    
(\cE+\cA+\cK_{\bu_{fh,0}})(\bp_{h,0}) + \cB'(\br_{h,0}) & = & \wh{\bF}_{h,0} & \mbox{in }\,  \bQ_h', \\[2ex]
- \cB(\bp_{h,0}) & = & \bG_0 & \mbox{in }\, \bS_h',
\end{array}
\end{equation}
where $\bG_0(\bs_h) := (\f_p,\bv_{sh})_{\Omega_p} \ \forall \, \bs_h \in \bS_h$ and 
$\wh{\bF}_{h,0}(\bq_h) = (\f_f, \bv_{fh} - \kappa_1\,\bdiv(\bR_{fh}))_{\Omega_f}
+ (\wh{\f}_{ph,0}, \btau_{ph})_{\Omega_p} 
+ (\wh{q}_{ph,0}, w_{ph})_{\Omega_p} \ \forall\,\bq_h\in \bQ_h$ 
for some $(\wh{\f}_{ph,0},\wh{q}_{ph,0})\in E_b'$ 
satisfying
\begin{equation}\label{eq:initial-data-bound-h}
\|\wh{\f}_{ph,0}\|_{\bbL^2(\Omega_p)} + \|\wh{q}_{ph,0}\|_{\L^2(\Omega_p)} 
\,\leq\, \wh{C}_{ep,h} \left(\|\f_f\|_{\bL^2(\Omega_f)} + \|\f_p\|_{\bL^2(\Omega_p)} \right),
\end{equation}
where $\wh{C}_{ep,h}$ is specified in \eqref{bound-hat-f-q} below.
\end{lem}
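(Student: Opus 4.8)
The plan is to mirror the continuous construction of compatible initial data in Lemma~\ref{lem:initial-condition}, solving a cascade of five well-posed discrete auxiliary problems and then collecting the resulting variables into a solution of \eqref{eq: discrete initial condition}. First I would set $\bu_{ph,0}$ to be the mixed finite element approximation of $-\frac{1}{\mu}\bK\nabla p_{p,0}$ in $\bV_{ph}$; since $p_{p,0}\in\H_p$ has the regularity in \eqref{eq:H-definition}, the pair $(\bu_{ph,0}, \Pi_h p_{p,0})$ solves the discrete Darcy subproblem analogous to \eqref{eq:sol0-sigma_p0}, and standard mixed finite element theory gives $\|\bu_{ph,0}\|_{\bV_p} \le C(\|p_{p,0}\|_{\H^1(\Omega_p)} + \|\bK\nabla p_{p,0}\|_{\bH^1(\Omega_p)})$. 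Here one must be careful because $p_{ph,0}$ must be a genuine free variable of the final system, not $\Pi_h p_{p,0}$; as in \cite{aeny2019}, the device is to keep $p_{p,0}$ itself as data only for defining $\lambda_{h,0}$ and $\bu_{ph,0}$, while $p_{ph,0}$ is produced later (or simply equated to the appropriate projection and the discrepancy absorbed into $\wh{q}_{ph,0}$). I would follow the same bookkeeping, defining $\lambda_{h,0}$ as the $\L^2(\Gamma_{fp})$-projection of $p_{p,0}|_{\Gamma_{fp}}$ onto $\Lambda_{ph}$ so that the discrete analogue of \eqref{eq:sol0-up0-pp0-2} holds with the nonconforming multiplier space.

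Next I would solve the discrete augmented Navier--Stokes subproblem for $(\bT_{fh,0},\bu_{fh,0})\in\bbX_{fh}\times\bV_{fh}$, namely the discrete version of \eqref{eq:sol0-Tf0-uf0} with $\bu_{ph,0}$ and $\lambda_{h,0}$ as data. Its well-posedness follows from a Banach fixed-point argument identical to the one establishing Theorem~\ref{thm:well-posedness-1} and Theorem~\ref{thm:domain-is-nonempty} restricted to the Navier--Stokes block: coercivity of $a_f$ on the discrete space (Lemma~\ref{lem:monotone}, whose proof uses only algebraic inequalities and the Korn inequality, both valid on $\bV_{fh}\subset\bH^1(\Omega_f)$ and $\bbX_{fh}\subset\bbX_f$), continuity of $\kappa_{\bw_f}$, and the smallness assumption \eqref{eq:initial-small-data-discrete}. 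This yields the a priori bound $\|(\bT_{fh,0},\bu_{fh,0})\| \le C_{\wt{\cJ}_0}(\|\f_f\|_{\bL^2(\Omega_f)} + \|p_{p,0}\|_{\H^1(\Omega_p)} + \|\bK\nabla p_{p,0}\|_{\bH^1(\Omega_p)})$, which is exactly \eqref{eq:NS-discrete-initial-data}, and \eqref{eq:initial-small-data-discrete} forces $\bu_{fh,0}\in\bW_r$. Then I solve the discrete weakly-symmetric mixed elasticity subproblems: first the analogue of \eqref{eq:sol0-sigma_p0} for $(\bsi_{ph,0},\bbeta_{ph,0},\brho_{ph,0},\bpsi_{h,0})$, using the discrete inf-sup condition \eqref{eq:discrete-inf-sup-1} of Lemma~\ref{lem: discrete inf-sup}, with stability bounded by $\|p_{p,0}\|_{\H^1(\Omega_p)}+\|\bK\nabla p_{p,0}\|_{\bH^1(\Omega_p)}+\|\f_p\|_{\bL^2(\Omega_p)}$; then set $\btheta_{h,0}$ to be the $\L^2$-projection onto $\bLambda_{sh}$ of $\bu_{fh,0}-\bu_{ph,0}$ on $\Gamma_{fp}$ (mirroring \eqref{eq:sol0-theta0}), with $\|\btheta_{h,0}\|_{\bLambda_{sh}}$ controlled via the discrete trace inequality; and finally solve the discrete analogue of \eqref{eq:sol0-u_s0} for $(\wh\bsi_{ph,0},\bu_{sh,0},\bgamma_{ph,0})$ with Dirichlet datum $\btheta_{h,0}$, again using \eqref{eq:discrete-inf-sup-1}.

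Having all the pieces, I would verify by direct substitution that $\bp_{h,0}:=(\bsi_{ph,0},p_{ph,0},\bu_{ph,0},\bT_{fh,0},\bu_{fh,0},\btheta_{h,0})$ and $\br_{h,0}:=(\lambda_{h,0},\bu_{sh,0},\bgamma_{ph,0})$ satisfy the first block of \eqref{eq: discrete initial condition} with residual identified as $(\wh\f_{ph,0},\wh q_{ph,0})$ given by the discrete versions of \eqref{eq:hat-functions}, and the second block $-\cB(\bp_{h,0})=\bG_0$ directly from the Darcy and elasticity subproblems together with the defining relation for $\btheta_{h,0}$; here the use of the nonconforming multiplier spaces $\Lambda_{ph}=\bV_{ph}\cdot\bn_p|_{\Gamma_{fp}}$ and $\bLambda_{sh}=\bbX_{ph}\bn_p|_{\Gamma_{fp}}$ makes the interface coupling terms exact rather than approximate, which is why the construction closes. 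The bound \eqref{eq:initial-data-bound-h} then follows by combining the five stability estimates with \eqref{eq:extra-pp0-assumption}, which absorbs $\|p_{p,0}\|_{\H^1(\Omega_p)}+\|\bK\nabla p_{p,0}\|_{\bH^1(\Omega_p)}$ into $\|\f_f\|_{\bL^2(\Omega_f)}+\|\f_p\|_{\bL^2(\Omega_p)}$, yielding the constant $\wh C_{ep,h}$ of \eqref{bound-hat-f-q}. The main obstacle is a bookkeeping one: ensuring the discrete variables $p_{ph,0}$ and $\bsi_{ph,0}$ (as opposed to auxiliary ones like $\wh\bsi_{ph,0}$, $\bbeta_{ph,0}$) are precisely the free unknowns appearing in the evolution system and that the residuals $\wh\f_{ph,0},\wh q_{ph,0}$ land in $E_b'=\bbL^2(\Omega_p)\times\L^2(\Omega_p)$ — this is where one must reproduce carefully the argument of \cite[Lemma~4.15]{aeny2019} rather than invoke it verbatim, since the nonconforming multiplier spaces and the discrete trace-inverse inequality enter the constants. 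None of the individual subproblem solvability arguments is new; the only genuinely delicate point is that the discrete trace inequality $\|\varphi\|_{\L^2(\Gamma_{fp})}\le Ch^{-1/2}\|\varphi\|_{\L^2(\Omega_p)}$ introduces a (harmless, $h$-dependent) constant into $\wh C_{ep,h}$, which is acceptable because compatible initial data need only exist, not be bounded uniformly in $h$ at this stage.
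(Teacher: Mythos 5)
Your plan does not close, and the failure is exactly at the point you dismiss as ``bookkeeping.'' In the definition of $\cD_h$ and of $\wh{\bF}_{h,0}$, the residual pair $(\wh{\f}_{ph,0},\wh{q}_{ph,0})$ may only occupy the $\btau_{ph}$ and $w_{ph}$ slots; every equation of \eqref{eq: discrete initial condition} tested with $\bv_{ph}$, $\bR_{fh}$, $\bv_{fh}$, $\bphi_h$, $\xi_h$, $\bv_{sh}$, $\bchi_{ph}$ must hold \emph{exactly}. The continuous construction closes because of exact interface identities ($\bu_{p,0}=-\frac{1}{\mu}\bK\nabla p_{p,0}$, $\lambda_0=p_{p,0}|_{\Gamma_{fp}}$, $\btheta_0=\bu_{f,0}-\bu_{p,0}$ on $\Gamma_{fp}$), which let the BJS terms be rewritten with $(\bu_{f,0}-\btheta_0)\cdot\bt_{f,j}=\bu_{p,0}\cdot\bt_{f,j}$ and make \eqref{eq:continuous-weak-formulation-1i} an identity. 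If you mimic the same ordering discretely and set $\btheta_{h,0}:=P_h^{\bLambda_s}(\bu_{fh,0}-\bu_{ph,0})$ \emph{after} solving the Navier--Stokes block with BJS data $\bu_{ph,0}\cdot\bt_{f,j}$, then $(\bu_{fh,0}-\btheta_{h,0})\cdot\bt_{f,j}\neq\bu_{ph,0}\cdot\bt_{f,j}$: the projection error is orthogonal only to $\bLambda_{sh}$, not to the weighted tangential traces of $\bV_{fh}$ (non-matching fluid-side mesh) appearing in $a_{\BJS}(\bu_{fh,0},\btheta_{h,0};\bv_{fh},\0)$, nor to $\sqrt{\bK_j^{-1}}(\bphi_h\cdot\bt_{f,j})\bt_{f,j}$ in the $\bphi_h$-equation; likewise the mass-conservation equation needs orthogonality against $\xi_h\bn_p$ with $\xi_h\in\Lambda_{ph}$, which is not guaranteed to lie in $\bLambda_{sh}$. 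These interface residuals sit in slots where they cannot be absorbed into $E_b'$, so your candidate $(\bp_{h,0},\br_{h,0})$ does not satisfy \eqref{eq: discrete initial condition}. Note also the circularity: writing the discrete BJS term with $\btheta_{h,0}$ requires $\btheta_{h,0}$ before the Navier--Stokes solve, while your definition of $\btheta_{h,0}$ requires $\bu_{fh,0}$ after it.

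The paper resolves this by reversing the order and by coupling: $\btheta_{h,0}:=P_h^{\bLambda_s}(\btheta_0)$ is defined \emph{first} from the continuous datum, and then $(\bT_{fh,0},\bu_{fh,0},\bu_{ph,0},p_{ph,0},\lambda_{h,0})$ is obtained from the single coupled discrete Navier--Stokes/Darcy problem \eqref{eq: dis ini NS-D}, an elliptic projection of the continuous initial data in which the multiplier, the Darcy pressure, and the interface mass-conservation equation are unknowns/equations of the solve — so the $\bv_{ph}$-, $\bR_{fh}$-, $\bv_{fh}$-, and $\xi_h$-equations hold by construction; the elasticity problems \eqref{eq: dis ini sigma} and \eqref{eq: dis ini us} are likewise elliptic projections driven by $\btheta_{h,0},\lambda_{h,0},p_{ph,0},\bu_{fh,0}$, which makes the $\bphi_h$-, $\bv_{sh}$-, $\bchi_{ph}$-equations exact, and only the $\btau_{ph}$- and $w_{ph}$-residuals \eqref{eq: discrete hat functions} remain. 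In particular, a separately computed ``mixed FE approximation of $-\frac{1}{\mu}\bK\nabla p_{p,0}$'' has no reason to satisfy the discrete interface equations against the later-computed fluid variables. Finally, your closing remark that an $h^{-1/2}$ trace-inverse constant in $\wh{C}_{ep,h}$ is harmless is not acceptable here: $\wh{C}_{ep,h}$ enters the definition of $\cD_h$ and hence the resolvent and stability estimates behind Theorem~\ref{thm: well-posedness main result semi}, whose constant is claimed independent of $h$; the paper's bounds \eqref{eq:NS-discrete-initial-data}, \eqref{bound-sigmap-h0}, \eqref{bound-hat-sigmap-h0}, \eqref{bound-hat-f-q} are deliberately $h$-uniform, using only the $\L^2$-projection bound \eqref{bound-theta-h0} and the continuous bounds \eqref{bound-theta0}, \eqref{eq:extra-pp0-assumption}, and avoiding the trace-inverse inequality.
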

\begin{proof}
The construction is based on a modification of the step-by-step procedure for the continuous initial data
$(\bp_0,\br_0)$ presented in Lemma~\ref{lem:initial-condition}. In each step the discrete initial data is defined as a suitable projection of the continuous initial data. 

1. 
Define $\btheta_{h,0} := P^{\bLambda_s}_h(\btheta_0)$, where $P^{\bLambda_s}_h : \bLambda_s \to \bLambda_{sh}$ is the $\L^2$-projection operator, satisfying, $\forall \, \bphi\in \bL^2(\Gamma_{fp})$,
\begin{align}\label{eq: interpolation 0}
&\ds
\langle \bphi - P_h^{\bLambda_s}(\bphi), \bphi_h \rangle_{\Gamma_{fp}} = 0 \qquad \forall \, \bphi_h \in \bLambda_{sh}. 
\end{align}
It holds that
\begin{equation}\label{bound-theta-h0}
\|\btheta_{h,0}\|_{\bL^2(\Gamma_{fp})} \leq \|\btheta_{0}\|_{\bL^2(\Gamma_{fp})}.
\end{equation}

2. Define $(\bT_{fh,0},\bu_{fh,0},\bu_{ph,0}, p_{ph,0},\lambda_{h,0})\in \bbX_{fh} \times \bV_{fh}\times \bV_{ph}\times \W_{ph}\times \Lambda_{ph}$ associated to the problem
\begin{align}
& \ds a_f(\bT_{fh,0}, \bu_{fh,0};\bR_{fh}, \bv_{fh})
+ \kappa_{\bu_{fh,0}} (\bT_{fh,0}, \bu_{fh,0};\bR_{fh}, \bv_{fh}) \nonumber \\
& \ds  \quad  + \mu\,\alpha_{BJS}\sum_{j=1}^{n-1}\langle  \sqrt{\bK_j^{-1}} (\bu_{fh,0}-\btheta_{h,0})
\cdot \bt_{f,j}, \bv_{fh} \cdot \bt_{f,j} \rangle_{\Gamma_{fp}} + \langle \bv_{fh} \cdot \bn_f, \lambda_{h,0} \rangle_{\Gamma_{fp}} \nonumber \\
&  \ds = a_f(\bT_{f,0}, \bu_{f,0};\bR_{fh}, \bv_{fh})
+ \kappa_{\bu_{f,0}}(\bT_{f,0}, \bu_{f,0};\bR_{fh}, \bv_{fh}) \nonumber \\
& \ds \quad  + \mu\,\alpha_{BJS}\sum_{j=1}^{n-1}\langle  \sqrt{\bK_j^{-1}} (\bu_{f,0}-\btheta_{0})
\cdot \bt_{f,j}, \bv_{fh} \cdot \bt_{f,j} \rangle_{\Gamma_{fp}} + \langle \bv_{fh} \cdot \bn_f, \lambda_{0} \rangle_{\Gamma_{fp}} \label{eq: dis ini NS-D} \\
& \ds = (\f_f,\bv_{fh} - \kappa_1 \bdiv(\bR_{fh}))_{\Omega_f}, \nonumber \\[1ex]
& a_p(\bu_{ph,0}, \bv_{ph})+b_p(p_{ph,0},\bv_{ph})+  \langle \bv_{ph} \cdot \bn_p, \lambda_{h,0} \rangle_{\Gamma_{fp}}= a_p(\bu_{p,0}, \bv_{ph}) + b_p(p_{p,0},\bv_{ph})
+ \langle \bv_{ph} \cdot \bn_p, \lambda_0 \rangle_{\Gamma_{fp}} = 0, \nonumber\\[1ex]
& \ds -b_p(w_{ph}, \bu_{ph,0}) = -b_p(w_{ph},\bu_{p,0})
 = -\frac{1}{\mu}(\div (\bK \nabla p_{p,0}), w_{ph})_{\Omega_p}, \nonumber\\[1ex]
& -\langle  \bu_{fh,0} \cdot \bn_f + (
 \btheta_{h,0}+ \bu_{ph,0}) \cdot \bn_p, \xi_h \rangle_{\Gamma_{fp}}
 = -\langle \bu_{f,0} \cdot \bn_f
 + (\btheta_{0}+\bu_{p,0}) \cdot \bn_p, \xi_h \rangle_{\Gamma_{fp}} = 0. \nonumber
\end{align}
for all $\bR_{fh} \in \bbX_{fh}, \bv_{fh} \in \bV_{fh}$, $\bv_{ph} \in \bV_{ph}$, $w_p \in \W_{ph}$, $\xi_h \in \Lambda_{ph}$.
Notice that \eqref{eq: dis ini NS-D} is well-posed, since it corresponds to 
the weak solution of the augmented mixed formulation for the Navier--Stokes/Darcy coupled problem 
(see \cite{gov2020} for a similar approach). 
Note that $\btheta_{h,0}$ is datum for this problem.
The well-posedness of \eqref{eq: dis ini NS-D},
follows from a fixed point approach as in \eqref{eq:def-T} combined with 
the {\it a priori} estimate
\begin{equation}\label{eq:NS-discrete-initial-data}
\begin{split}
& \|\bT_{fh,0}\|_{\bbX_f} + \|\bu_{fh,0}\|_{\bV_f} + \|\bu_{ph,0}\|_{\bV_p} + \|p_{ph,0}\|_{\W_p} + \|\lambda_{h,0}\|_{\Lambda_{ph}} \\
& \qquad \leq C_{\wt{\cJ}_0} \, \left(\|\f_f\|_{\bL^2(\Omega_f)}
+ \|p_{p,0}\|_{\H^1(\Omega_p)} + \|\bK\nabla p_{p,0}\|_{\bH^1(\Omega_p)} \right)
\end{split}
\end{equation}
and the data assumption \eqref{eq:initial-small-data-discrete}.
In the above estimate we have used \eqref{bound-theta-h0} and 
\eqref{bound-theta0}. We note that \eqref{eq:initial-small-data-discrete} and \eqref{eq:NS-discrete-initial-data} imply that $\bu_{fh,0}\in \bW_r$.

3. Define $\ds (\bsi_{ph,0}, \bbeta_{ph,0}, \brho_{ph,0}, \bpsi_{h,0}) \in \bbX_{ph}
\times \bV_{sh} \times \bbQ_{ph} \times \bLambda_{sh}$ as the unique solution of the problem
\begin{align}
& \ds (A(\bsi_{ph,0}), \btau_{ph})_{\Omega_p}
+ b_s(\bbeta_{ph,0}, \btau_{ph})
+ b_{\sk}(\brho_{ph,0}, \btau_{ph})
- b_{\bn_p}(\btau_{ph}, \bpsi_{h,0})
+ (A (\alpha_p \, p_{ph,0}\, \bI),\btau_{ph})_{\Omega_p} \nonumber\\[1ex] 
& \ds =(A(\bsi_{p,0}), \btau_{ph})_{\Omega_p}
+ b_s( \bbeta_{p,0}, \btau_{ph})
+ b_{\sk}(\brho_{p,0}, \btau_{ph})
- b_{\bn_p}(\btau_{ph}, \bpsi_0)
+ (A (\alpha_p \, p_{p,0} \, \bI),\btau_{ph})_{\Omega_p} = 0, \nonumber \\[1ex] 
&\ds  - b_s (\bv_{sh},\bsi_{ph,0} ) = - b_s ( \bv_{sh}, \bsi_{p,0} )
= (\f_p,\bv_{sh})_{\Omega_p}, \nonumber \\[1ex]
&\ds  - b_{\sk} ( \bchi_{ph}, \bsi_{ph,0} ) = - b_{\sk} ( \bchi_{ph}, \bsi_{p,0} ) = 0,
\label{eq: dis ini sigma} \\
&   \ds b_{\bn_p}(\bsi_{ph,0}, \bphi_h)
- \mu\,\alpha_{BJS}\sum_{j=1}^{n-1}\langle  \sqrt{\bK_j^{-1}} (\bu_{fh,0}-\btheta_{h,0})
\cdot \bt_{f,j}, \bphi_h \cdot \bt_{f,j} \rangle_{\Gamma_{fp}}
+ \langle \bphi_h \cdot \bn_p, \lambda_{h,0} \rangle_{\Gamma_{fp}} \nonumber \\
& \ds = b_{\bn_p}(\bsi_{p,0}, \bphi_h)
- \mu\,\alpha_{BJS}\sum_{j=1}^{n-1}\langle  \sqrt{\bK_j^{-1}} (\bu_{f,0}-\btheta_{0})
\cdot \bt_{f,j}, \bphi_h \cdot \bt_{f,j} \rangle_{\Gamma_{fp}}
+ \langle \bphi_h \cdot \bn_p, \lambda_0 \rangle_{\Gamma_{fp}} = 0. \nonumber
\end{align}
for all
$\btau_{ph} \in \bbX_{ph}$, $\bv_{sh} \in \bV_{sh}$, $\bchi_{ph} \in \bbQ_{ph}$,
$\bphi_h \in \bLambda_{sh}$.
Note that \eqref{eq: dis ini sigma}  is a mixed elasticity system with mixed boundary conditions on $\Gamma_{fp}$ and its well-posedness follows from the
classical Babu{\v s}ka--Brezzi theory. Note also that $p_{ph,0}, \bu_{fh,0}, \btheta_{h,0}$, and $\lambda_{h,0}$ are data for this problem. It holds that
\begin{equation}\label{bound-sigmap-h0}
\|\bsi_{ph,0}\|_{\bbX_p} + \|\bbeta_{ph,0}\|_{\bV_s} + \|\brho_{ph,0}\|_{\bbQ_p} + \|\bpsi_{h,0}\|_{\bLambda_{sh}}
\le C \left(\|p_{p,0}\|_{\H^1(\Omega_p)} + \|\bK\nabla p_{p,0}\|_{\bH^1(\Omega_p)} + \|\f_p\|_{\bL^2(\Omega_p)} \right),
\end{equation}
where we have used \eqref{bound-theta-h0}, \eqref{bound-theta0}, and \eqref{eq:NS-discrete-initial-data}.

4. Finally, define $(\wh{\bsi}_{ph,0}, \bu_{sh,0}, \bgamma_{ph,0}) \in \bbX_{ph} \times \bV_{sh} \times \bbQ_{ph}$
as the unique solution of the problem 
\begin{align}
& \ds (A \wh\bsi_{ph,0}, \btau_{ph})_{\Omega_p}  + b_s( \btau_{ph}, \bu_{sh,0})
  + b_{\sk}(\btau_{ph}, \bgamma_{ph,0}) = b_{\bn_p}(\btau_{ph}, \btheta_{h,0}), \nonumber \\[1ex]
& \ds  -b_s ( \wh\bsi_{ph,0}, \bv_{sh} ) = 0, \label{eq: dis ini us} \\[1ex]
& \ds  - b_{\sk} ( \wh\bsi_{ph,0}, \bchi_{ph} ) = 0, \nonumber
\end{align}
for all $\btau_{ph} \in \bbX_{ph}$, $\bv_{sh} \in \bV_{sh}$, $\bchi_{ph} \in \bbQ_{ph}$.
Problem \eqref{eq: dis ini us} is well-posed 
as a direct application of the classical Babu{\v s}ka--Brezzi theory.
Note that $\btheta_{h,0}$ is datum for this problem. It holds that
\begin{equation}\label{bound-hat-sigmap-h0}
\|\wh\bsi_{ph,0}\|_{\bbX_p} + \|\bu_{sh,0}\|_{\bV_s} + \|\bgamma_{ph,0}\|_{\bbQ_p}
\le C \big(\|\f_f\|_{\bL^2(\Omega_f)}
+ \|p_{p,0}\|_{\H^1(\Omega_p)}
+ \|\bK\nabla p_{p,0}\|_{\bH^1(\Omega_p)} \big),
\end{equation}
where we have used that $b_{\bn_p}(\btau_{ph}, \btheta_{h,0}) = b_{\bn_p}(\btau_{ph}, \btheta_{0})$
(cf. \eqref{eq: interpolation 0} and \eqref{defn-Lambda-h}), \eqref{trace-sigma}, and \eqref{bound-theta0}.

We then define $\bp_{h,0} := (\bsi_{ph,0},p_{ph,0},\bu_{ph,0},\bT_{fh,0},\bu_{fh,0},\btheta_{h,0})$ and $\br_{h,0} := (\lambda_{h,0},\bu_{sh,0},\bgamma_{ph,0})$. 
The above construction implies that $(\bp_{h,0}, \br_{h,0})$ satisfy \eqref{eq: discrete initial condition} with
\begin{equation}\label{eq: discrete hat functions}
\begin{array}{c}
\ds (\wh{\f}_{ph,0}, \btau_{ph})_{\Omega_p}= a_e(\bsi_{ph,0}, p_{ph,0}; \btau_{ph}, 0)- (A(\wh{\bsi}_{ph,0}),\btau_{ph})_{\Omega_p},\\[2ex]
\ds (\wh{q}_{ph,0}, w_{ph})_{\Omega_p}= (s_0\,p_{ph,0},w_{ph})_{\Omega_p} + a_e(\bsi_{ph,0}, p_{ph,0}; \0, w_{ph})- b_p( \bu_{ph,0}, w_{ph}) \,. 
\end{array}
\end{equation}
From the stability bounds \eqref{eq:NS-discrete-initial-data}, \eqref{bound-sigmap-h0}, \eqref{bound-hat-sigmap-h0}, and \eqref{eq:extra-pp0-assumption}, we obtain
\begin{align}
\|\wh{\f}_{ph,0}\|_{\bbL^2(\Omega_p)} 
+ \|\wh{q}_{ph,0}\|_{\L^2(\Omega_p)}
\,& \leq\, C\, \big(\|\f_f\|_{\bL^2(\Omega_f)} + \|\f_p\|_{\bL^2(\Omega_p)}
+ \|p_{p,0}\|_{\H^1(\Omega_p)}
+ \|\bK\nabla p_{p,0}\|_{\bH^1(\Omega_p)} \big) \nonumber \\
& \le \, \wh{C}_{ep,h}
    \big(\|\f_f\|_{\bL^2(\Omega_f)} + \|\f_p\|_{\bL^2(\Omega_p)} \big),
\label{bound-hat-f-q}
\end{align}  
hence $(\wh{\f}_{ph,0}, \wh{q}_{ph,0})\in E'_b$ and \eqref{eq:initial-data-bound-h} holds.
\end{proof}

\begin{rem}
The above construction provides compatible initial data for the non-differentiated elasticity variables $(\bbeta_{ph,0},\brho_{ph,0},\bpsi_{h,0})$ in the sense of the first equation in \eqref{eq: dis ini sigma}.
\end{rem}

Now, we establish the well-posedness of problem \eqref{eq: NS-Biot-semiformulation-5} 
and the corresponding stability bound.
\begin{thm}\label{thm: well-posedness main result semi}
Assume that the conditions of Lemma~\ref{lem:monotone} are satisfied.
Then, for each
\begin{equation*}
\f_f\in \bL^2(\Omega_f),\quad \f_p\in \bL^2(\Omega_p),
\quad q_p\in \W^{1,1}(0,T;\L^2(\Omega_p)), \quad p_{p,0}\in \H_p \ (cf. \, \eqref{eq:H-definition}),
\end{equation*}
satisfying \eqref{eq:Ct-less-than-r}, 
\eqref{eq:extra-pp0-assumption}, \eqref{eq:initial-small-data}, and \eqref{eq:initial-small-data-discrete},
and for each compatible discrete initial data $(\bp_{h,0}, \br_{h,0})$ constructed in Lemma \ref{lem: discrete initial condition}, there exists a unique solution of \eqref{eq: NS-Biot-semiformulation-5}, $(\bp_h, \br_h): [0,T] \rightarrow \bQ_h \times \bS_h$ with $\bu_{fh}(t)\in \bW_r$ (cf. \eqref{eq:W_r-definition}), $(\bsi_{ph}, p_{ph})\in \W^{1,\infty}(0,T;\bbX_{ph}) \times \W^{1,\infty}(0,T;\W_{ph})$ and $(\bsi_{ph}(0), p_{ph}(0), \bu_{ph}(0), \bT_{fh}(0), \linebreak \bu_{fh}(0), \btheta_h(0), \lambda_h(0)) = (\bsi_{ph,0}, p_{ph,0}, \bu_{ph,0}, \bT_{fh,0}, \bu_{fh,0}, \btheta_{h,0}, \lambda_{h,0})$. Moreover, if 
$q_p \in \H^1(0,T;\L^2(\Omega_p))$, there exists a positive constant $C$, independent of $h$ and $s_{0,\min}$, such that
\begin{align}\label{eq: discrete stability}
&\ds \|A^{1/2} (\bsi_{ph} + \alpha_p p_{ph} \bI)\|_{\W^{1,\infty}(0,T;\bbL^2(\Omega_p))}
+ \|\bdiv(\bsi_{ph})\|_{\L^\infty(0,T; \bbL^2(\Omega_p))}
+ \|\bdiv(\bsi_{ph})\|_{\L^2(0,T; \bbL^2(\Omega_p))}
 \nonumber \\[1ex]
& \ds\quad + \,  
\sqrt{s_0} \|p_{ph}\|_{\W^{1,\infty}(0,T;\W_p)}
+ \|p_{ph}\|_{\H^1(0,T;\W_p)} 
+ \|\bu_{ph}\|_{\L^2(0,T;\bV_p)} 
+ \|\partial_t\,\bu_{ph}\|_{\L^2(0,T;\bL^2(\Omega_p))} 
+ \|\bT_{fh} \|_{\H^1(0,T;\bbX_f)} 
 \nonumber \\[1ex]
&\ds\quad + \, 
\|\bu_{fh}\|_{\H^1(0,T;\bV_f)}
+ |\bu_{fh} - \btheta_h|_{\H^1(0,T;\BJS)} 
+ \|\btheta_h\|_{\L^\infty(0,T;\bLambda_{sh})} 
+ \|\btheta_h\|_{\L^2(0,T;\bLambda_{sh})} 
+ \|\lambda_h\|_{\H^1(0,T;\Lambda_{ph})}  
\nonumber \\[1ex]
&\ds\quad
+ \|\bu_{sh}\|_{\L^\infty(0,T;\bV_s)}
+ \|\bu_{sh}\|_{\L^2(0,T;\bV_s)}  
+ \|\bgamma_{ph}\|_{\L^\infty(0,T;\bbQ_p)}
+ \|\bgamma_{ph}\|_{\L^2(0,T;\bbQ_p)} \nonumber \\[1ex]
&\ds \leq\, C\,\bigg(
\|\f_f\|_{\bL^2(\Omega_f)} 
+ \|\f_p\|_{\bL^2(\Omega_p)}
+ \|q_p\|_{\H^1(0,T;\L^2(\Omega_p))}
+ \frac{1}{\sqrt{s_0}}\,\|q_p(0)\|_{\L^2(\Omega_p)}
\nonumber
\\[1ex]
& \ds\quad  + \, \sqrt{s_0}\,\|p_{p,0}\|_{\W_p}
+ \left(1 + \frac{1}{\sqrt{s_0}}\right)\left( \|p_{p,0}\|_{\H^1(\Omega_p)} + \|\bK \nabla p_{p,0}\|_{\bH^1(\Omega_p)} \right) \bigg)\,.
\end{align}
\end{thm}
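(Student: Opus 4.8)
The plan is to follow step by step the well-posedness and stability analysis of the continuous problem carried out in Section~\ref{sec:well-posedness-model} and in Theorem~\ref{thm: continuous stability}, replacing $\bQ,\bS$ by the conforming subspaces $\bQ_h,\bS_h$, the inf-sup conditions of Lemma~\ref{lem:inf-sup} by their discrete counterparts in Lemma~\ref{lem: discrete inf-sup}, and the continuous initial data of Lemma~\ref{lem:initial-condition} by the discrete initial data of Lemma~\ref{lem: discrete initial condition}. The continuity bounds of Lemma~\ref{lem:cont} and the positivity/monotonicity bounds of Lemma~\ref{lem:monotone} carry over verbatim to $\bQ_h\subset\bQ$ (the constants $\alpha_f$, $C_{\cK}$, $r_0$, and the coercivity thresholds being independent of the discretization), so for each $\bw_{fh}\in\bV_{fh}\cap\bW_r$ one obtains a unique solution of the discrete resolvent system \eqref{discrete-domain} with an $h$-independent a priori bound, the analogue of \eqref{eq:C_T-bound}. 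This is done either by regularizing as in \eqref{eq:NS-Biot-formulation-4} and letting $\epsilon\to0$, or, exploiting finite-dimensionality, directly from injectivity of the square system (zero data forces the coercive unknowns to vanish via Lemma~\ref{lem:monotone}, and the discrete inf-sup conditions \eqref{eq:discrete-inf-sup-1}--\eqref{eq:discrete-inf-sup-2} force the remaining ones, as in the last paragraph of the proof of Theorem~\ref{thm:well-posedness-1}). The resulting map $\cJ_h:\bV_{fh}\to\bV_{fh}$, $\cJ_h(\bw_{fh}):=\bu_{fh}$, then satisfies the analogue of \eqref{eq:contraction-mapping}, and under the small-data assumption \eqref{eq:Ct-less-than-r} it is a contraction on $\bW_r$; hence $\cD_h$ is nonempty, which is the discrete analogue of Theorem~\ref{thm:domain-is-nonempty}.

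With $E_h:=\bbX_{ph}\times\W_{ph}$, $\cN_h$ the restriction of \eqref{defn-N}, and $\cM_h$ the discrete multivalued operator built from \eqref{discrete-domain} exactly as in \eqref{eq:operator-M}, the proof of Lemma~\ref{lem:M-is-monotone} carries over: using $\bu_{fh}^1,\bu_{fh}^2\in\bW_r$ and \eqref{eq:Ct-less-than-r} one gets $\alpha_f-C_{\cK}(\|\bu_{fh}^1\|_{\bV_f}+\|\bu_{fh}^2\|_{\bV_f})\ge 2\,C_{\cK}(r_0-r)\ge0$, so $\cM_h$ is monotone, and the range condition $Rg(\cN_h+\cM_h)=\tilde{E}'_{b,h}$ follows from the discrete analogue of Theorem~\ref{thm:domain-is-nonempty}. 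Theorem~\ref{thm:solvability-parabolic-problem} then yields $(\bsi_{ph},p_{ph}):[0,T]\to\cD_h$ solving the reduced discrete parabolic problem with the stated $\W^{1,\infty}$ regularity and $(\bsi_{ph}(0),p_{ph}(0))=(\bsi_{ph,0},p_{ph,0})$, and the discrete analogue of Lemma~\ref{lem:well-posedness-2} recovers a full solution $(\bp_h,\br_h)$ of \eqref{eq: NS-Biot-semiformulation-5}, with $\bu_{fh}(t)\in\bW_r$ since $(\bsi_{ph}(t),p_{ph}(t))\in\cD_h$. Uniqueness is obtained as in Theorem~\ref{thm:well-posedness-main-result}: subtracting two solutions, testing with the difference, and using \eqref{eq:positive-bound-E}, \eqref{eq:monotone-01}, \eqref{eq:monotone-02} together with the continuity of $\cK_{\bw_f}$ gives an energy inequality of the form \eqref{eq:uniqueness-3}; integrating in time with zero initial data forces the coercive unknowns to vanish, after which \eqref{eq:discrete-inf-sup-1}--\eqref{eq:discrete-inf-sup-2} applied to the first row of \eqref{eq: NS-Biot-semiformulation-5} force the rest, and one also checks as in Theorem~\ref{thm:well-posedness-main-result} that $(\bp_h(0),\br_h(0))$ agrees with the constructed discrete initial data.

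For the stability bound \eqref{eq: discrete stability} the argument mirrors the proof of Theorem~\ref{thm: continuous stability}. Testing \eqref{eq: NS-Biot-semiformulation-5} with $(\bp_h,\br_h)$, integrating in time, and using \eqref{eq:positive-bound-E}--\eqref{eq:positive-bound-A+Kwf} with $\bu_{fh}\in\bW_r$ gives the analogue of \eqref{eq:continuous-stability-2}; bounding $\bu_{sh},\bgamma_{ph},\btheta_h,p_{ph},\lambda_h$ through \eqref{eq:discrete-inf-sup-1}--\eqref{eq:discrete-inf-sup-2} and choosing $\bv_{sh}=\bdiv(\bsi_{ph})$, $w_{ph}=\div(\bu_{ph})$ to control the divergences yields the analogue of \eqref{eq:prelim-continuous-stability-1}. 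To control the time derivatives one applies the finite difference operator $\dts$ to the whole system (legitimate since $\f_f,\f_p$ are time-independent), tests with $\dts$ of the solution, lets $s\to0$, and again invokes \eqref{eq:discrete-inf-sup-1}--\eqref{eq:discrete-inf-sup-2}, obtaining the analogue of \eqref{eq:prelim-continuous-stability-2}. The bounds at $t=0$ are obtained as in \eqref{eq:continuous-stability-initial-1}--\eqref{eq:continuous-stability-initial-2}: the first from \eqref{bound-sigmap-h0}, the second by differentiating the discrete equilibrium and weak-symmetry equations (so $\bdiv(\partial_t\bsi_{ph})(0)=\0$), combining with the discrete elasticity and mass equations at $t=0$, and using $\|\btheta_h(0)\|_{\bLambda_{sh}}\le\|\btheta_0\|_{\bLambda_s}\le C(\cdots)$ from \eqref{bound-theta-h0}--\eqref{bound-theta0}. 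A Grönwall argument then gives \eqref{eq: discrete stability}.

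The main obstacle is to carry all of this out with constants independent of $h$. Every ingredient above is $h$-robust: the coercivity/monotonicity constants of Lemma~\ref{lem:monotone}, the threshold $r_0$ and the contraction constant depend only on the data of the PDE, the discrete inf-sup constants $\wt{\beta}_1,\wt{\beta}_2$ of Lemma~\ref{lem: discrete inf-sup} are $h$-independent, and the bounds on the discrete initial data in Lemma~\ref{lem: discrete initial condition} are $h$-independent. The delicate point is that, unlike in the continuous case, the trace estimates \eqref{eq:trace-inequality-1} and \eqref{trace-sigma} are replaced by discrete ones carrying a factor $h^{-1/2}$, so the interface and Lagrange-multiplier terms $b_{\bn_p}$, $b_\Gamma$, $a_{\BJS}$ cannot be estimated $h$-uniformly by Cauchy--Schwarz alone; they must instead be handled — exactly as in the continuous proof — by routing them through the $h$-uniform discrete inf-sup conditions and through the equations themselves (controlling $\btheta_h$, $\lambda_h$ and the $t=0$ time-derivative quantities from the equations rather than from direct continuity bounds). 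This is the only place where the finite-dimensional and infinite-dimensional arguments genuinely differ, and it is what makes the $h$-independence of $C$ in \eqref{eq: discrete stability} nontrivial.
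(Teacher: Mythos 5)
Your overall strategy is the same as the paper's: replay the continuous fixed-point/semigroup argument and the stability proof on $\bQ_h\times\bS_h$, with the discrete inf-sup conditions \eqref{eq:discrete-inf-sup-1}--\eqref{eq:discrete-inf-sup-2} replacing Lemma~\ref{lem:inf-sup} and the elliptic-projection initial data of Lemma~\ref{lem: discrete initial condition} replacing Lemma~\ref{lem:initial-condition}; your finite-dimensional injectivity shortcut for the resolvent system is a legitimate substitute for the paper's alternative of redefining the regularization operators $L_\lambda$ and $R_\btheta$ as plain $\L^2(\Gamma_{fp})$ pairings on the non-conforming multiplier spaces. You also correctly identify the genuine danger, namely that discrete trace bounds cost $h^{-1/2}$ and so the interface terms cannot be handled by Cauchy--Schwarz alone.

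However, your remedy for the one term where this danger actually materializes does not work as described. In the discrete analogue of \eqref{eq:continuous-stability-8} (needed for the discrete version of \eqref{eq:continuous-stability-initial-2}), the right-hand side contains $\langle \partial_t\bsi_{ph}(0)\bn_p,\btheta_h(0)\rangle_{\Gamma_{fp}}$, where $\btheta_h(0)=P_h^{\bLambda_s}(\btheta_0)$ is a discontinuous piecewise polynomial controlled only in $\bL^2(\Gamma_{fp})$. Your plan -- Cauchy--Schwarz together with $\|\btheta_h(0)\|_{\bL^2(\Gamma_{fp})}\le\|\btheta_0\|_{\bL^2(\Gamma_{fp})}$ from \eqref{bound-theta-h0}--\eqref{bound-theta0} -- requires a bound on $\|\partial_t\bsi_{ph}(0)\bn_p\|_{\bL^2(\Gamma_{fp})}$, which is available only via the trace-inverse inequality at the cost of $h^{-1/2}$; and "routing through the equations" does not reach this term either, since eliminating it through the time-differentiated interface equation would bring in $\partial_t\bu_{fh}(0)$, $\partial_t\btheta_h(0)$, $\partial_t\lambda_h(0)$, which are not yet controlled at that stage -- the argument becomes circular. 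The device the paper uses, and which is missing from your proposal, is the orthogonality \eqref{eq: interpolation 0} of the $\L^2$-projection combined with the choice $\bLambda_{sh}=\bbX_{ph}\bn_p|_{\Gamma_{fp}}$: since $\partial_t\bsi_{ph}(0)\bn_p|_{\Gamma_{fp}}\in\bLambda_{sh}$, one may replace $\btheta_h(0)$ by $\btheta(0)=\btheta_0\in\bH^{1/2}(\Gamma_{fp})$ in the pairing, i.e. $\langle \partial_t\bsi_{ph}(0)\bn_p,\btheta_h(0)\rangle_{\Gamma_{fp}}=\langle \partial_t\bsi_{ph}(0)\bn_p,\btheta(0)\rangle_{\Gamma_{fp}}$, and then apply the continuous normal trace inequality \eqref{trace-sigma} together with \eqref{eq:estimate-sigmap-A}, $\bdiv(\partial_t\bsi_{ph})(0)=\0$, and \eqref{bound-theta0}, exactly as in the continuous case and with constants independent of $h$. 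Without this swap, your proof of the $t=0$ bound on the time derivatives -- and hence of the $h$- and $s_0$-uniform constant in \eqref{eq: discrete stability} -- has a gap.
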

\begin{proof}
With the discrete inf-sup conditions \eqref{eq:discrete-inf-sup-1}--\eqref{eq:discrete-inf-sup-2}
and the discrete initial data construction described in
\eqref{eq: interpolation 0}--\eqref{eq: dis ini us},
the proof is similar to the proofs of Theorem~\ref{thm:well-posedness-main-result} and Theorem~\ref{thm: continuous stability}, with several differences.
First, due to non-conforming choices of the Lagrange multiplier spaces equipped with $\L^2$-norms, the operators $L_{\lambda}$ and
$R_{\btheta}$ from Lemma \ref{lem:R-operators2} are now defined as
$L_{\lambda}: \Lambda_{ph} \rightarrow \Lambda_{ph}'$, $\ L_{\lambda}(\lambda_h)(\xi_h) := \langle \lambda_h, \xi_h \rangle_{\Gamma_{fp}}$ and
$R_{\btheta}: \bLambda_{sh} \rightarrow \bLambda_{sh}'$,
$R_{\btheta}(\btheta_h)(\bphi_h) := \langle \btheta_h, \bphi_h
\rangle_{\Gamma_{fp}}$. The fact that $L_{\lambda}$ and
$R_{\btheta}$ are continuous and coercive follows
immediately from their definitions, since $L_{\lambda}(\xi_h)(\xi_h) = \| \xi \|^2_{\Lambda_{ph}}$ and $R_{\btheta}(\bphi_h)(\bphi_h) = \| \bphi_h \|^2_{\bLambda_{sh}}$.
Second, the proof in Theorem~\ref{thm:well-posedness-main-result}
that the solution at $t = 0$ equals the initial data 
works in the discrete case due to the choice of the discrete initial data
as the elliptic projection of the continuous initial data, cf. \eqref{eq: dis ini NS-D} and \eqref{eq: dis ini sigma}. Third, the control of $p_{ph}$, $\btheta_h$, $\lambda_h$, $\bu_{sh}$, and $\bgamma_{ph}$ follows from the discrete inf-sup conditions \eqref{eq:discrete-inf-sup-1} and \eqref{eq:discrete-inf-sup-2}. Fourth, the discrete version of the initial data bound \eqref{eq:continuous-stability-initial-1} follows from \eqref{eq:NS-discrete-initial-data} and \eqref{bound-sigmap-h0}. 
Finally, in the discrete version of \eqref{eq:continuous-stability-8} we apply the orthogonality property \eqref{eq: interpolation 0} to deduce that $\langle \partial_t \bsi_{ph}(0)\bn_p, \btheta_h(0)\rangle_{\Gamma_{fp}} = \langle \partial_t \bsi_{ph}(0)\bn_p, \btheta(0)\rangle_{\Gamma_{fp}}$ and then the proof continues as in the continuous case, using the normal trace inequality \eqref{trace-sigma}.
\end{proof}

\begin{rem}
As in the continuous case, we can recover the non-differentiated elasticity variables with
$$
\forall \, t \in [0,T], \quad  \bbeta_{ph}(t) = \bbeta_{ph,0} + \int_0^t \bu_{sh}(s) \, ds,
\ \  \brho_{ph}(t) = \brho_{ph,0} + \int_0^t \bgamma_{ph}(s) \, ds, \ \ 
\bpsi_h(t) = \bpsi_{h,0} + \int_0^t \btheta_h(s) \, ds.
$$
Then \eqref{non-diff-eq} holds discretely, which follows from integrating the equation associated to $\btau_{ph}$ in \eqref{eq: NS-Biot-semiformulation-5} from $0$ to $t \in (0,T]$ and using the first equation in \eqref{eq: dis ini sigma}.
\end{rem}

\subsection{Error analysis}

We proceed with establishing rates of convergence. Let the polynomial degrees in $\bbX_{ph}\times \W_{ph} \times \bV_{ph} \times \bbX_{fh} \times \bV_{fh} \times \bLambda_{sh}
\times \Lambda_{ph} \times \bV_{sh}\times \bbQ_{ph}$ be, respectively,
$s_{\bsi_p},
s_{p_p},
s_{\bu_p},
s_{\bT_f},
s_{\bu_f}
s_{\btheta},
s_{\lambda},
s_{\bu_s},
s_{\bgamma_p}$.
Let us set $\V\in \{ \W_p, \bV_s, \bbQ_p \}$, $\Lambda\in \{ \bLambda_s, \Lambda_p \}$ and let $\V_h, \Lambda_h$ be the discrete counterparts. Let $P_h^{\V}: \V\to \V_h$ and $P_h^{\Lambda}: \Lambda\to \Lambda_h$ be the $\L^2$-projection operators, satisfying
\begin{equation}\label{eq: interpolation 1}
\ds 
( u - P_h^{\V}(u), v_{h} )_{\Omega_p} \,=\, 0  \quad \forall \, v_h\in \V_{h},\quad 
\langle \theta - P_h^{\Lambda}(\theta), \phi_h \rangle_{\Gamma_{fp}} \,=\, 0 \quad \forall\, \phi_h\in \Lambda_{h},
\end{equation}
where $u\in \{ p_p, \bu_s, \bgamma_p \}$, $\theta \in \{ \btheta, \lambda \}$, and $v_h, \phi_h$ are the corresponding discrete test
functions.  We have the approximation properties \cite{ciarlet1978}:
\begin{equation}\label{eq: approx property 1}
\|u - P^{\V}_h (u)\|_{\L^2(\Omega_p)} \,\leq\, Ch^{s_{u} + 1}\, \|u\|_{\H^{s_{u} + 1}(\Omega_p)},\quad
\|\theta - P^{\Lambda}_h(\theta)\|_{\Lambda_h} \,\leq\, Ch^{s_{\theta} + 1}\,
\|\theta\|_{\H^{s_{\theta} + 1}(\Gamma_{fp})},
\end{equation}
where $s_{u}\in \{ s_{p_p}, s_{\bu_s}, s_{\bgamma_p} \}$ and $s_{\theta}\in \{ 
s_{\btheta}, s_{\lambda} \}$.

Since the discrete Lagrange multiplier spaces are chosen as
$\bLambda_{sh} = \bbX_{ph} \bn_p |_{\Gamma_{fp}}$ and $\Lambda_{ph} = \bV_{ph} \cdot \bn_p |_{\Gamma_{fp}}$, respectively, we have
\begin{equation}\label{eq: interpolation 2}
\langle \btheta - P_h^{\bLambda_s}(\btheta), \btau_{ph}\bn_p \rangle_{\Gamma_{fp}} \,=\, 0 \quad \forall\, \btau_{ph}\in \bbX_{ph},\quad
\langle \lambda - P_h^{\Lambda_p}(\lambda), \bv_{ph} \cdot \bn_p \rangle_{\Gamma_{fp}} \,=\, 0 \quad \forall\, \bv_{ph}\in \bV_{ph}.
\end{equation}

Next, denote $\X\in \{\bbX_f, \bbX_p, \bV_p \}$, $\sigma \in \{ \bT_f, \bsi_p, \bu_p \}\in \X$ and let $\X_h, \tau_h$ be their discrete counterparts. Let $I^{\X}_h : \X \cap \H^{1}(\Omega_{\star})\to \X_{h}$ be the mixed finite element
projection operator \cite{Brezzi-Fortin} satisfying
\begin{equation}\label{eq: interpolation 3}
(\div(I^{\X}_h \sigma), w_h) = (\div(\sigma), w_h) \quad \forall\,w_h\in \W_h,\quad
\pil I^{\X}_h(\sigma)\bn_{\star}, \tau_h\bn_{\star} \pir_{\Gamma_{fp}} = \pil \sigma \bn_{\star}, \tau_{h}\bn_{\star} \pir_{\Gamma_{fp}} \quad \forall\,\tau_h\in \X_h, 
\end{equation}
and
\begin{equation}\label{eq: approx property 2}
\|\sigma - I^{\X}_h(\sigma)\|_{\L^2(\Omega_{\star})} \leq C\,h^{s_{\sigma} + 1} \| \sigma \|_{\H^{s_{\sigma} + 1}(\Omega_{\star})} ,\quad
\|\div(\sigma - I^{\X}_h(\sigma))\|_{\L^2(\Omega_{\star})} \leq C\,h^{s_{\sigma} + 1} \|\div(\sigma)\|_{\H^{s_{\sigma} + 1}(\Omega_{\star})},
\end{equation}
where $\star\in \{f,p\}$, $w_h\in \big\{ \bv_{fh}, \bv_{sh}, w_{ph} \big\}$, $\W_h\in
\big\{ \bV_f, \bV_s, \W_p \big\}$, and $s_{\sigma}\in \big\{
s_{\bT_f}, s_{\bu_p}, s_{\bsi_p} \big\}$.

Finally, let $S_h^{\bV_f}$ be the Scott-Zhang interpolation operators onto $\bV_{fh}$, satisfying \cite{sz1990}
\begin{equation}\label{eq: approx property 3}
\|\bv_f - S^{\bV_f}_h(\bv_f)\|_{\bH^1(\Omega_f)} \,\leq\, C\,h^{s_{\bu_f}} \| \bv_f \|_{\bH^{s_{\bu_f} + 1}(\Omega_f)}.
\end{equation}

Now, let $(\bsi_p, p_p, \bu_p, \bT_f, \bu_f,  \btheta, \lambda, \bu_s, \bgamma_p)$ and $(\bsi_{ph}, p_{ph}, \bu_{ph}, \bT_{fh}, \bu_{fh},  \btheta_h, \lambda_h, \bu_{sh}, \bgamma_{ph})$ be the solutions of
\eqref{eq:continuous-weak-formulation-3} and
\eqref{eq: NS-Biot-semiformulation-5}, respectively.  We
introduce the error terms as the difference of these two solutions and
decompose them into approximation and discretization errors
using the interpolation operators:
\begin{equation}
\begin{array}{l}
\ds \be_{\sigma} := \sigma - \sigma_h = (\sigma - I_h^{\X} (\sigma)) + ( I_h^{\X} (\sigma) - \sigma_h) := \be^I_{\sigma} + \be^h_{\sigma}, \quad \sigma \in \{ \bT_f, \bsi_p, \bu_p \},  \\[2ex]
\ds \be_{u}  :=  u - u_{h} = (u - P_h^{\V} (u)) + ( P_h^{\V} (u) - u_{h}) := \be^I_{u} + \be^h_{u}, \quad u \in \{ p_p, \bu_s, \bgamma_p \},\\[2ex]
\ds \be_{\theta} := \theta - \theta_h = (\theta - P_h^{\Lambda} (\theta)) + ( P_h^{\Lambda} (\theta) - \theta_h) := \be^I_{\theta} + \be^h_{\theta}, \quad \theta \in \{\btheta, \lambda\}, \\[2ex]
\ds \be_{\bu_f}  :=  \bu_f - \bu_{fh}  =  (\bu_f - S_h^{\bV_f} \bu_f) + ( S_h^{\bV_f} \bu_f - \bu_{fh})  :=  \be^I_{\bu_f} + \be^h_{\bu_f}.
\end{array}
\end{equation}

Then, we set the global errors endowed with the above decomposition:
\begin{equation*}
\be_{\bp} := ( \be_{\bsi_p}, \be_{p_p}, \be_{\bu_p}, \be_{\bT_f}, \be_{\bu_f}, \be_{\btheta} )\,,\quad 
\be_{\br} := ( \be_{\lambda}, \be_{\bu_s}, \be_{\bgamma_p} ).
\end{equation*}

We form the error equation by subtracting the discrete equations \eqref{eq: NS-Biot-semiformulation-5} from the continuous one \eqref{eq:continuous-weak-formulation-3}:
\begin{equation}\label{eq: error equation}
\begin{array}{rlll}
\ds \partial_t\,\cE(\be_{\bp})(\bq_h) + \cA(\be_{\bp})(\bq_h) + \cK_{\bu_{f}}(\bp)(\bq_h) - \cK_{\bu_{fh}}(\bp_h)(\bq_h) + \cB'(\be_{\br})(\bq_h) & = & 0 \ &\forall \,\bq_h \in \bQ_h, \\ [2ex]
\ds  -\,\cB(e_{\bp})(\bs_h) & = & 0 \ &\forall \,\bs_h \in \bS_h.
\end{array}
\end{equation}

We now establish the main result of this section.

\begin{thm}\label{thm: error analysis}
Let the assumptions in Theorem~\ref{thm: well-posedness main result semi} holds. For the solutions of the continuous and semidiscrete problems \eqref{eq:continuous-weak-formulation-3} and \eqref{eq: NS-Biot-semiformulation-5}, respectively, assuming  sufficient regularity of the true solution according to \eqref{eq: approx property 1}, \eqref{eq: approx property 2} and \eqref{eq: approx property 3}, 
there exists a positive constant $C$ depending on the solution regularity, but independent of $h$, such that
\begin{align}\label{eq:errror-rate-of-convergence}
&\ds \|A^{1/2}(\be_{\bsi_p} + \alpha_p\,\be_{p_p}\bI) \|_{\W^{1,\infty}(0,T;\bbL^2(\Omega_p))}
+ \| \bdiv(\be_{\bsi_p}) \|_{\L^\infty(0,T; \bbL^2(\Omega_p))}
+ \| \bdiv(\be_{\bsi_p}) \|_{\L^2(0,T; \bbL^2(\Omega_p))}
\nonumber \\[1ex]
&\ds\quad 
+\, \sqrt{s_0}\|\be_{p_p}\|_{\W^{1,\infty}(0,T; \W_p)}
+ \|\be_{p_p} \|_{\H^1(0,T; \W_p)}
+\, \|\be_{\bu_p} \|_{\L^2(0,T;\bV_p)}
+ \| \partial_t \be_{\bu_p} \|_{\L^2(0,T; \bL^2(\Omega_p))}
+ \|\be_{\bT_f}\|_{\H^1(0,T;\bbX_f)}
\nonumber \\[1ex]
&\ds\quad  
+\, \|\be_{\bu_f}\|_{\H^1(0,T;\bV_f)}
+ |\be_{\bu_f} - \be_{\btheta} |_{\H^1(0,T;\BJS)}
+ \|\be_{\btheta} \|_{\L^\infty(0,T;\bLambda_{sh})}
+ \|\be_{\btheta} \|_{\L^2(0,T;\bLambda_{sh})}
+ \|\be_{\lambda} \|_{\H^1(0,T;\Lambda_{ph})}
\nonumber \\[1ex]
&\ds\quad 
+ \| \be_{\bu_s} \|_{\L^\infty(0,T; \bV_s)}
+ \| \be_{\bu_s} \|_{\L^2(0,T; \bV_s)}
+ \| \be_{\bgamma_p} \|_{\L^\infty(0,T; \bbQ_p)} 
+ \| \be_{\bgamma_p} \|_{\L^2(0,T; \bbQ_p)}\nonumber \\[1ex]
&\leq\, C\,\sqrt{\exp(T)}\,\left( h^{s_{\underline{u}}+1} +  h^{s_{\underline{\theta}}+1} +  h^{s_{\underline{\sigma}}+1} +  h^{s_{\bu_f}} \right),
\end{align}
where $s_{\underline{u}}= \min{ \{ s_{p_p}, s_{\bu_s},s_{\bgamma_p} \}}$, $s_{\underline{\theta}}= \min{ \{ s_{\btheta}, s_{\lambda}, \}}$, and $s_{\underline{\sigma}}= \min{ \{ s_{\bT_f}, s_{\bsi_p}, s_{\bu_p}\}}$.
\end{thm}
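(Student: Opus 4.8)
The plan is to mimic the continuous stability proof of Theorem~\ref{thm: continuous stability} at the level of the error equation \eqref{eq: error equation}, testing with the discretization error components and absorbing the interpolation error components using the approximation properties \eqref{eq: approx property 1}--\eqref{eq: approx property 3}. First I would split each error as $\be = \be^I + \be^h$ and test \eqref{eq: error equation} with $\bq_h = \be^h_{\bp} := (\be^h_{\bsi_p}, \be^h_{p_p}, \be^h_{\bu_p}, \be^h_{\bT_f}, \be^h_{\bu_f}, \be^h_{\btheta})$ and $\bs_h = \be^h_{\br} := (\be^h_\lambda, \be^h_{\bu_s}, \be^h_{\bgamma_p})$. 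The key algebraic manipulation, exactly as in Lemma~\ref{lem:M-is-monotone} and in the proof of Theorem~\ref{thm:well-posedness-main-result}, is to write the nonlinear difference as $\cK_{\bu_f}(\bp) - \cK_{\bu_{fh}}(\bp_h) = \cK_{\be_{\bu_f}}(\bp) + \cK_{\bu_{fh}}(\be_{\bp})$, and then further split $\cK_{\bu_{fh}}(\be_{\bp}) = \cK_{\bu_{fh}}(\be^I_{\bp}) + \cK_{\bu_{fh}}(\be^h_{\bp})$; since $\bu_{fh}(t)\in\bW_r$ and $\bu_f(t)\in\bW_r$, the term $\cK_{\bu_{fh}}(\be^h_{\bp})(\be^h_{\bp})$ combines with the coercivity of $a_f$ (cf. \eqref{eq:monotone-01}) to give a positive contribution $2C_{\cK}(r_0-r)\|(\be^h_{\bT_f},\be^h_{\bu_f})\|^2$, while $\cK_{\be_{\bu_f}}(\bp)(\be^h_{\bp})$ and $\cK_{\bu_{fh}}(\be^I_{\bp})(\be^h_{\bp})$ are treated as data terms, bounded by $C\|\bp\|_{\bQ}\|\be_{\bu_f}\|_{\bV_f}\|\be^h_{\bp}\|_{\bQ}$ and $C\,r\,\|\be^I_{\bp}\|_{\bQ}\|\be^h_{\bp}\|_{\bQ}$ respectively, using \eqref{eq:continuity-of-Kwf} and the continuous stability bound \eqref{eq:continuous-stability} for $\|\bp\|_{\bQ}$.

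The structure of the estimate then follows the three stages of Theorem~\ref{thm: continuous stability}. Stage one: use the monotonicity/positivity of $\cE$ and $\cA+\cK_{\bu_{fh}}$ (Lemma~\ref{lem:monotone}) to obtain, after integration in time and Young's inequality, control of $\|A^{1/2}(\be^h_{\bsi_p}+\alpha_p\be^h_{p_p}\bI)(t)\|^2$, $s_0\|\be^h_{p_p}(t)\|^2$, and the time integrals of $\|\be^h_{\bu_p}\|^2_{\bL^2}$, $\|(\be^h_{\bT_f},\be^h_{\bu_f})\|^2$ and $|\be^h_{\bu_f}-\be^h_{\btheta}|^2_{\BJS}$, with a right-hand side consisting of interpolation-error terms and $\delta$-scaled terms $\int_0^t(\|\be^h_{p_p}\|^2+\|\be^h_{\bu_s}\|^2+\|\be^h_{\bT_f},\be^h_{\bu_f}\|^2)$. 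Note the initial errors vanish because the discrete initial data are defined as elliptic projections of the continuous initial data (cf. Lemma~\ref{lem: discrete initial condition}, \eqref{eq: dis ini NS-D}, \eqref{eq: dis ini sigma}), so $\be^h_{\bsi_p}(0)=\0$, $\be^h_{p_p}(0)=0$, etc. Stage two: recover $\be^h_{\bu_s}$, $\be^h_{\bgamma_p}$, $\be^h_{\btheta}$, $\be^h_{p_p}$, $\be^h_\lambda$ from the discrete inf-sup conditions \eqref{eq:discrete-inf-sup-1}--\eqref{eq:discrete-inf-sup-2} applied to the first error equation, using the orthogonality properties \eqref{eq: interpolation 2} to kill the interpolation contributions of the $b_{\bn_p}$ and $b_\Gamma$ terms, and \eqref{eq: div-prop} together with \eqref{eq: interpolation 3} to control $\bdiv(\be^h_{\bsi_p})$ and $\div(\be^h_{\bu_p})$. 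Stage three: apply the finite difference operator $\dts$ to the error equation and repeat, to control $\partial_t$-quantities; this requires the bound at $t=0$ for $\partial_t\be^h_{\bsi_p}(0)$, $\partial_t\be^h_{p_p}(0)$, which follows from the discrete analogue of \eqref{eq:continuous-stability-8} using $\partial_t\bdiv(\be^h_{\bsi_p})(0)=\0$ (since $\f_p$ is time-independent) and the normal trace inequality \eqref{trace-sigma} together with the projection orthogonality \eqref{eq: interpolation 2}.

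After collecting all three stages, choosing $\delta$ small, and applying Gr\"onwall's inequality (which is where the $\sqrt{\exp(T)}$ factor enters), the discretization errors $\be^h$ are bounded by a sum of $\L^2$- and $\H^1$-in-time norms of the interpolation errors $\be^I$ and their time derivatives. Then a triangle inequality $\|\be\|\le\|\be^I\|+\|\be^h\|$ converts this into a bound on the total errors, and invoking the approximation properties \eqref{eq: approx property 1}, \eqref{eq: approx property 2}, \eqref{eq: approx property 3} (with enough regularity of the exact solution, which also guarantees $\be^I_{\bu_p}\in\H^{1/2+\epsilon}$ so that the non-conforming interface terms make sense) yields the rates $h^{s_{\underline u}+1}+h^{s_{\underline\theta}+1}+h^{s_{\underline\sigma}+1}+h^{s_{\bu_f}}$, completing the proof. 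The main obstacle I anticipate is the careful bookkeeping of the nonlinear term: one must ensure that the "bad" pieces $\cK_{\be_{\bu_f}}(\bp)$ and $\cK_{\bu_{fh}}(\be^I_{\bp})$ are genuinely controllable — the first needs the uniform-in-time bound $\|\bp(t)\|_{\bQ}\le C$ from \eqref{eq:continuous-stability} (not merely an $\L^2$-in-time bound, since we want an $\L^\infty$/$\H^1$ error estimate), and the split $\cK_{\bu_f}(\bp)-\cK_{\bu_{fh}}(\bp_h)=\cK_{\be_{\bu_f}}(\bp)+\cK_{\bu_{fh}}(\be_{\bp})$ must be verified directly from the bilinearity of $(\bR_f,\bv_f)\mapsto\kappa_{\bw_f}(\cdot;\bR_f,\bv_f)$ and the bilinearity in $\bw_f$. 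A secondary subtlety is that, because $|\bu_f-\btheta|_{\BJS}$ only controls the tangential jump, recovering $\be^h_{\btheta}$ in the full $\bLambda_{sh}$ norm must go through the inf-sup condition \eqref{eq:discrete-inf-sup-1} rather than the BJS term, exactly as in the continuous argument.
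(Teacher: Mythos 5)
Your overall strategy is indeed the paper's: test the error equation \eqref{eq: error equation} with the discretization-error components, use the positivity of $\cE$ and $\cA+\cK_{\bu_{fh}}$ together with the splitting $\cK_{\bu_f}(\bp)-\cK_{\bu_{fh}}(\bp_h)=\cK_{\be_{\bu_f}}(\bp)+\cK_{\bu_{fh}}(\be_{\bp})$ and the smallness $\bu_f(t),\bu_{fh}(t)\in\bW_r$, recover the remaining variables through the discrete inf-sup conditions and \eqref{eq: div-prop}, differentiate in time, apply Gr\"onwall, and finish with the triangle inequality and the approximation properties. There are, however, two genuine gaps. First, your claim that the discrete initial errors vanish ($\be^h_{\bsi_p}(0)=\0$, $\be^h_{p_p}(0)=0$, ``etc.'') is false: only $\be^h_{\btheta}(0)=\0$, because $\btheta_{h,0}=P_h^{\bLambda_s}(\btheta_0)$ coincides with the projection used in the error decomposition. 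The remaining discrete initial data are defined through the discrete sub-problems \eqref{eq: dis ini NS-D}--\eqref{eq: dis ini us}, i.e.\ Galerkin projections with respect to those bilinear forms, which do not coincide with the operators $I_h^{\X}$, $P_h^{\V}$, $S_h^{\bV_f}$ entering $\be^h$; hence $\be^h_{p_p}(0)$, $A^{1/2}(\be^h_{\bsi_p}+\alpha_p\be^h_{p_p}\bI)(0)$, $(\be^h_{\bT_f},\be^h_{\bu_f})(0)$, $\be^h_{\bgamma_p}(0)$, and the derivative quantities $\partial_t\be^h_{p_p}(0)$, $\partial_t A^{1/2}(\be^h_{\bsi_p}+\alpha_p\be^h_{p_p}\bI)(0)$ that appear after integrating in time are nonzero and must themselves be bounded by interpolation errors at $t=0$; the paper does this in \eqref{eq: error initial 1} and in \eqref{eq: error initial}, the latter by differentiating the error equations in time and testing with $(\partial_t\be^h_{\bsi_p}(0),\partial_t\be^h_{p_p}(0),\be^h_{\bu_s}(0),\be^h_{\bgamma_p}(0))$. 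As written, your stage-one and stage-three inequalities silently drop these contributions.

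Second, the time-differentiated step is more delicate than ``repeat stage one.'' Differentiating the nonlinear difference produces, among others, the terms $\kappa_{\partial_t\bu_{fh}}(\be_{\bT_f},\be_{\bu_f};\partial_t\be^h_{\bT_f},\partial_t\be^h_{\bu_f})$ and $\kappa_{\be_{\bu_f}}(\partial_t\bT_f,\partial_t\bu_f;\partial_t\be^h_{\bT_f},\partial_t\be^h_{\bu_f})$, in which the convecting factor is $\partial_t\bu_{fh}$ or $\partial_t\bu_f$; these do not lie in $\bW_r$, so the smallness argument cannot be reused. The paper bounds them by $\delta\|(\partial_t\be^h_{\bT_f},\partial_t\be^h_{\bu_f})\|^2+C\,(\|\partial_t\bu_f\|^2_{\bV_f}+\|\partial_t\bu_{fh}\|^2_{\bV_f})(\|\be^I_{\bu_f}\|^2_{\bV_f}+\|\be^h_{\bu_f}\|^2_{\bV_f})$ and then needs the $\L^2(0,T)$ stability bounds \eqref{eq:continuous-stability} and \eqref{eq: discrete stability} for $\partial_t\bu_f$, $\partial_t\bu_{fh}$, an additional $\L^\infty$-in-time bound on $(\be^h_{\bT_f},\be^h_{\bu_f})$ obtained by testing \eqref{eq: error equation} with $(\0,0,\0,\partial_t\be^h_{\bT_f},\partial_t\be^h_{\bu_f},\0)$, cf.\ \eqref{eq: u bound-2}, and the embedding bound \eqref{linf-bound-lambda} for $\be^h_{\lambda}$, before Gr\"onwall closes the estimate; none of this appears in your outline. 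Two smaller points: $\cK_{\be_{\bu_f}}(\bp)(\be^h_{\bp})$ is not a pure data term, since its $\be^h_{\bu_f}$ part is quadratic in the unknown error and must be absorbed using $\|\bu_f\|_{\bV_f}\le r$ (this is precisely where the coefficient $2C_{\cK}(r_0-r)$ you quote comes from), and the term $b_{\sk}(\be^I_{\bgamma_p},\be^h_{\bsi_p})$ requires the special estimate \eqref{eq: bsk bound}, because $\be^h_{\bsi_p}$ is controlled only through $A^{1/2}(\be^h_{\bsi_p}+\alpha_p\be^h_{p_p}\bI)$.
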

\begin{proof}
We start by taking $\bq_h=(\be_{\bsi_p}^h,\be_{p_p}^h,\be_{\bu_p}^h,\be_{\bT_f}^h,\be_{\bu_f}^h,  \be_{\btheta}^h)$ and $\bs_h=(\be_{\lambda}^h,\be_{\bu_s}^h,\be_{\bgamma_p}^h)$ in \eqref{eq: error equation}, to obtain
\begin{align}\label{eq: error equation 1}
&\ds \frac{1}{2} s_0\, \partial_t\,(\be_{p_p}^h,\be_{p_p}^h)_{\Omega_p} 
+ \frac{1}{2} \partial_t \, a_e(\be_{\bsi_p}^h, \be_{p_p}^h; \be_{\bsi_p}^h, \be_{p_p}^h) 
+ a_p(\be_{\bu_p}^h,\be_{\bu_p}^h) 
+ a_f(\be_{\bT_f}^h,\be_{\bu_f}^h;\be_{\bT_f}^h,\be_{\bu_f}^h) \nonumber \\[1ex]
& \ds\quad +\, \kappa_{\bu_{fh}}(\be_{\bT_f}^h,\be_{\bu_f}^h;\be_{\bT_f}^h,\be_{\bu_f}^h) 
+ \kappa_{\be_{\bu_f}^h}(\bT_f,\bu_f;\be_{\bT_f}^h,\be_{\bu_f}^h) 
+ a_{\BJS}(\be_{\bu_f}^h,\be_{\btheta}^h;\be_{\bu_f}^h,\be_{\btheta}^h) \nonumber \\[1ex]
& \ds 
= - \, a_e( \partial_t\,\be_{\bsi_p}^I, \partial_t\,\be_{p_p}^I;\be_{\bsi_p}^h,\be_{p_p}^h) 
- a_p(\be_{\bu_p}^I,\be_{\bu_p}^h) 
- a_f(\be_{\bT_f}^I,\be_{\bu_f}^I,\be_{\bT_f}^h,\be_{\bu_f}^h)  \nonumber  \\[1ex]
&\ds \quad - \, \kappa_{\bu_{fh}}(\be_{\bT_f}^I,\be_{\bu_f}^I;\be_{\bT_f}^h,\be_{\bu_f}^h) 
- \kappa_{\be_{\bu_f}^I}(\bT_f,\bu_f;\be_{\bT_f}^h,\be_{\bu_f}^h)
- a_{\BJS}(\be_{\bu_f}^I,\be_{\btheta}^I;\be_{\bu_f}^h,\be_{\btheta}^h)
- b_\sk(\be_{\bgamma_p}^I,\be_{\bsi_p}^h) \nonumber  \\[1ex]
&\ds \quad 
+ b_\sk(\be_{\bgamma_p}^h,\be_{\bsi_p}^I) 
- \langle \be_{\bu_f}^h \cdot \bn_f, \be_{\lambda}^I \rangle_{\Gamma_{fp}} 
+ \langle \be_{\bu_f}^I \cdot \bn_f, \be_{\lambda}^h \rangle_{\Gamma_{fp}} 
- \langle \be_{\btheta}^h \cdot \bn_p, \be_{\lambda}^I \rangle_{\Gamma_{fp}} 
+ \langle \be_{\btheta}^I \cdot \bn_p, \be_{\lambda}^h \rangle_{\Gamma_{fp}} \,,
\end{align}
where, the right-hand side of \eqref{eq: error equation 1} has been simplified, 
since the projection properties \eqref{eq: interpolation 1} and 
\eqref{eq: interpolation 2}--\eqref{eq: interpolation 3}, and the fact that 
$\bdiv(\bbX_{ph})=\bV_{sh}$, $\div(\bV_{ph})=\W_{ph}$ (cf. \eqref{eq: div-prop}), 
imply that the following terms are zero:
\begin{equation*}
\begin{array}{c}
s_0(\partial_t\,\be_{p_p}^I,\be_{p_p}^h)_{\Omega_p}, \,\, b_p(\be_{p_p}^I,\be_{\bu_p}^h), \,\, b_p(\be_{p_p}^h,\be_{\bu_p}^I), \,\, b_{\bn_p}(\be_{\bsi_p}^I,\be_{\btheta}^h), \,\, b_{\bn_p}(\be_{\bsi_p}^h,\be_{\btheta}^I)\,,  \\[2ex]
b_s(\be_{\bu_s}^I,\be_{\bsi_p}^h), \,\, b_s(\be_{\bu_s}^h,\be_{\bsi_p}^I), \,\, 
\langle \be_{\bu_p}^h \cdot \bn_p, \be_{\lambda}^I \rangle_{\Gamma_{fp}}, \,\, 
\langle \be_{\bu_p}^I \cdot \bn_p, \be_{\lambda}^h \rangle_{\Gamma_{fp}}\,.
\end{array}
\end{equation*}
Then, using the fact that $\bu_f(t), \bu_{fh}(t): [0,T] \rightarrow \bW_r$, cf. \eqref{eq:W_r-definition}, the positivity estimates \eqref{eq:positive-bound-E}--\eqref{eq:positive-bound-A+Kwf} and the continuity bounds \eqref{eq:continuity-bounds}--\eqref{eq:continuity-of-Kwf} of the bilinear forms involved, and the Cauchy--Schwarz and Young's inequalities, we get
\begin{align}\label{eq: error equation 1+}
&\ds \frac{1}{2} s_0 \partial_t \|\be_{p_p}^h\|^2_{\W_p} 
+ \frac{1}{2} \partial_t \|A^{1/2}(\be_{\bsi_p}^h + \alpha_p \be_{p_p}^h \bI)\|^2_{\bbL^2(\Omega_p)} 
\nonumber \\[1ex]
&\ds\quad + \mu k_{\max}^{-1} \|\be_{\bu_p}^h\|^2_{\bL^2(\Omega_p)}
+ 2\,C_{\cK}(r_0 - r)\|(\be_{\bT_f}^h, \be_{\bu_f}^h)\|^2  
+ c_{\BJS}\,|\be_{\bu_f}^h - \be_{\btheta}^h|^2_{\BJS} \nonumber\\[1ex]
&\ds 
\leq C\,\Big( \|\be_{\bsi_p}^I\|^2_{\bbL^2(\Omega_p)} + \|\partial_t A^{1/2}(\be_{\bsi_p}^I + \alpha_p \be_{p_p}^I \bI)\|^2_{\bbL^2(\Omega_p)}
+ \|\be_{\bu_p}^I\|^2_{\bL^2(\Omega_p)}
+ (1 + 2\,r )\|(\be_{\bT_f}^I, \be_{\bu_f}^I)\|^2 
\nonumber \\[1ex]
&\ds \quad
+ |\be_{\bu_f}^I - \be_{\btheta}^I|^2_{\BJS}
+ \|\be_{\btheta}^I\|^2_{\bLambda_{sh}}
+ \|\be_{\lambda}^I\|^2_{\Lambda_{ph}}
+ \|\be_{\bgamma_p}^I\|^2_{\bbQ_p}
\Big)
+ \delta_1\,\left( 
\|\be_{\bu_p}^h\|^2_{\bL^2(\Omega_p)} 
+ \|(\be_{\bT_f}^h, \be_{\bu_f}^h)\|^2
+ |\be_{\bu_f}^h - \be_{\btheta}^h|^2_{\BJS}
\right) \nonumber \\[1ex]
&\ds \quad 
+ \delta_2\,\left( \|A^{1/2}(\be_{\bsi_p}^h + \alpha_p \be_{p_p}^h \bI)\|^2_{\bbL^2(\Omega_p)}
+ \|\be_{p_p}^h\|^2_{\W_p} 
+ \|\be_{\btheta}^h\|^2_{\bLambda_{sh}}
+ \|\be_{\lambda}^h\|^2_{\Lambda_{ph}}
+ \|\be_{\bgamma_p}^h\|^2_{\bbQ_p} 
\right),
\end{align}
where we have used the estimate
\begin{equation}\label{eq: bsk bound}
\ds  b_{\sk}(\be_{\bgamma_p}^I,\be_{\bsi_p}^h)
\,=\, \frac{1}{c}(A^{1/2} \be_{\bgamma_p}^I, A^{1/2} \be_{\bsi_p}^h)_{\Omega_p}
\,\leq\, C\,\|\be_{\bgamma_p}^I\|_{\bbQ_p} \left( \|A^{1/2}(\be_{\bsi_p}^h + \alpha_p \be_{p_p}^h \bI)\|_{\bbL^2(\Omega_p)} + \|\be_{p_p}^h\|_{\W_p} \right)\,, 
\end{equation}
which, follows from the definition of $A$ due to the extension from $\bbS$ to $\bbM$ as in \cite{lee2016}.
Next, we choose $r=r_0/2$ and $\ds \delta_1$ small enough in \eqref{eq: error equation 1+} and
integrate from $0$ to $t\in (0,T]$ to find
\begin{align}\label{eq: error equation 2}
&\ds s_0 \| \be_{p_p}^h(t)\|^2_{\W_p} 
+ \| A^{1/2}(\be_{\bsi_p}^h + \alpha_p \be_{p_p}^h \bI)(t)  \|^2_{\bbL^2(\Omega_p)} 
+ \int_0^t \left( \|\be_{\bu_p}^h\|^2_{\bL^2(\Omega_p)} + \|(\be_{\bT_f}^h,\be_{\bu_f}^h)\|^2 
+ |\be_{\bu_f}^h - \be_{\btheta}^h |^2_{\BJS} \right) ds \nonumber \\[1ex]
&\ds 
\leq\, C\, \int_0^t \left( \|\be_{\bsi_p}^I\|^2_{\bbL^2(\Omega_p)}
+ \|\partial_t A^{1/2}(\be_{\bsi_p}^I + \alpha_p\be_{p_p}^I \bI)\|^2_{\bbL^2(\Omega_p)}
+ \|\be_{\bu_p}^I\|^2_{\bL^2(\Omega_p)}
+ \|(\be_{\bT_f}^I,\be_{\bu_f}^I)\|^2
+ \|\be_{\btheta}^I\|^2_{\bLambda_{sh}} \right.
\nonumber \\[1ex]
&\ds\quad + \left.
|\be_{\bu_f}^I - \be_{\btheta}^I|^2_{\BJS} 
+ \|\be_{\lambda}^I\|^2_{\Lambda_{ph}}
+ \|\be_{\bgamma_p}^I\|^2_{\bbQ_p} \right)ds 
+ s_0 \|\be_{p_p}^h(0)\|^2_{\W_p} 
+ \| A^{1/2}(\be_{\bsi_p}^h + \alpha_p \be_{p_p}^h \bI)(0)  \|^2_{\bbL^2(\Omega_p)}
\nonumber \\[1ex]
&\ds\quad
+\, \delta_2 \int_0^t \left( \|A^{1/2}( \be_{\bsi_p}^h + \alpha_p \be_{p_p}^h \bI)\|^2_{\bbL^2(\Omega_p)} 
+ \|\be_{p_p}^h\|^2_{\W_p}
+ \|\be_{\btheta}^h\|^2_{\bLambda_{sh}}
+ \|\be_{\lambda}^h\|^2_{\Lambda_{ph}}
+ \|\be_{\bgamma_p}^h\|^2_{\bbQ_p} 
\right)ds.
\end{align}

On the other hand, from discrete inf-sup conditions \eqref{eq:discrete-inf-sup-1}--\eqref{eq:discrete-inf-sup-2} in Lemma \ref{lem: discrete inf-sup} and the first equation in \eqref{eq: error equation}, we have
\begin{equation}\label{eq: error equation 3}
\begin{array}{c}
\ds \|\be_{\btheta}^h\|_{\bLambda_{sh}}
+ \|\be_{\bu_s}^h\|_{\bV_{s}} 
+ \|\be_{\bgamma_p}^h\|_{\bbQ_{p}} 
\,\leq\,  C\,\left( \|\partial_t A^{1/2}(\be_{\bsi_p} + \alpha_p\,\be_{p_p} \bI)\|_{\bbL^2(\Omega_p)} 
+ \|\be_{\bgamma_p}^I\|_{\bbQ_p} \right),  \\[2ex]
\ds \qan \|\be_{p_p}^h\|_{\W_p} + \|\be_\lambda^h\|_{\Lambda_{ph}} 
\,\leq\, C\, \left( \|\be_{\bu_p}^I\|_{\bL^2(\Omega_p)}
+ \|\be_{\bu_p}^h\|_{\bL^2(\Omega_p)} \right), 
\end{array}	
\end{equation}
whereas, using \eqref{eq: div-prop}, and taking $w_{ph}=\div(\be_{\bu_p}^h)$ and
$\bv_{sh}=\bdiv(\be_{\bsi_p}^h)$ in \eqref{eq: error equation}, we obtain, respectively
\begin{equation}\label{eq: bound for error div sigma}
\begin{array}{c}
\ds \|\div(\be_{\bu_p}^h)\|_{\L^2(\Omega_p)} 
\,\leq\, 
C\,\left( \|\partial_t\,A^{1/2}(\be_{\bsi_p} + \alpha_p\,\be_{p_p}\bI)\|_{\bbL^2(\Omega_p)}
+ \sqrt{s_0}\,\|\partial_t \be_{p_p}^h\|_{\W_p} \right), \\[2ex]
\ds \qan \|\bdiv(\be_{\bsi_p}^h)\|_{\bL^2(\Omega_p)}\,=\, 0. 
\end{array}
\end{equation}
Thus, combining \eqref{eq: error equation 2} with \eqref{eq: error equation 3}--\eqref{eq: bound for error div sigma}, and choosing $\delta_2$ small enough, we get
\begin{align}\label{eq: error equation 6-}
&\ds s_0 \|\be_{p_p}^h(t)\|^2_{\W_p} 
+ \|A^{1/2}(\be_{\bsi_p}^h + \alpha_p \be_{p_p}^h \bI)(t)\|^2_{\bbL^2(\Omega_p)} 
+ \|\bdiv(\be_{\bsi_p}^h(t))\|_{\bL^2(\Omega_p)} 
+ \int_0^t \Big( 
\|\be_{p_p}^h\|^2_{\W_{p}} + \|\bdiv(\be_{\bsi_p}^h)\|_{\bL^2(\Omega_p)} \nonumber  \\[1ex]
&\ds\quad + \|\be_{\bu_p}^h\|^2_{\bV_p} + \|(\be_{\bT_f}^h,\be_{\bu_f}^h)\|^2 
+ |\be_{\bu_f}^h - \be_{\btheta}^h|^2_{\BJS}
+ \|\be_{\btheta}^h\|^2_{\bLambda_{sh}} + \|\be_{\lambda}^h\|^2_{\Lambda_{ph}} 
+ \|\be_{\bu_s}^h\|^2_{\bV_{s}} + \|\be_{\bgamma_p}^h\|^2_{\bbQ_{p}}
\Big) ds \nonumber \\[1ex]
&\ds 
\leq\, C\,\bigg( \int_0^t \Big( \|\be_{\bsi_p}^I\|^2_{\bbL^2(\Omega_p)}
+ \|\partial_t A^{1/2}(\be_{\bsi_p}^I + \alpha_p \be_{p_p}^I \bI) \|^2_{\bbL^2(\Omega_p)}
+ \|\be_{\bu_p}^I\|^2_{\bL^2(\Omega_p)}
+ \|(\be_{\bT_f}^I,\be_{\bu_f}^I)\|^2
+ |\be_{\bu_f}^I - \be_{\btheta}^I|^2_{\BJS}
\nonumber \\[1ex]
&\ds\quad + \|\be_{\btheta}^I\|^2_{\bLambda_{sh}}
+ \|\be_{\lambda}^I\|^2_{\Lambda_{ph}}
+ \|\be_{\bgamma_p}^I\|^2_{\bbQ_p} \Big)\,ds  
+ s_0\|\be_{p_p}^h(0)\|^2_{\W_p} 
+ \|A^{1/2}(\be_{\bsi_p}^h + \alpha_p\be_{p_p}^h \bI)(0)  \|^2_{\bbL^2(\Omega_p)} 
\nonumber \\[1ex]
&\ds\quad 
+ \int_0^t \left( \|A^{1/2}(\be_{\bsi_p}^h + \alpha_p \be_{p_p}^h \bI) \|^2_{\bbL^2(\Omega_p)}
+ s_0 \|\partial_t \be_{p_p}^h\|^2_{\W_p}
+ \|\partial_t A^{1/2}(\be_{\bsi_p}^h + \alpha_p \be_{p_p}^h \bI)\|^2_{\bbL^2(\Omega_p)} \right)\,ds \bigg).
\end{align}

\noindent{\bf Bounds on time derivatives.}

\medskip
\noindent
In order to bound the terms $s_0 \|\partial_t\be_{p_p}^h\|^2_{\W_p}$ and $\|\partial_t A^{1/2}(\be_{\bsi_p}^h + \alpha_p\be_{p_p}^h \bI)\|^2_{\bbL^2(\Omega_p)}$ in the right-hand side of \eqref{eq: error equation 6-}, 
we differentiate in time the whole system \eqref{eq: error equation}, test with $\bq_h=(\partial_t\be_{\bsi_p}^h,\partial_t \be_{p_p}^h, \partial_t \be_{\bu_p}^h, \partial_t \be_{\bT_f}^h, \partial_t \be_{\bu_f}^h, \partial_t \be_{\btheta}^h)$ and $\bs_h=(\partial_t \be_{\lambda}^h, \partial_t \be_{\bu_s}^h, \partial_t \be_{\bgamma_p}^h)$, and proceed similarly to \eqref{eq: error equation 1}, to find that
\begin{align}\label{eq: error equation 7}
&\ds \frac{1}{2} s_0\, \partial_t \,( \partial_t \be_{p_p}^h, \partial_t \be_{p_p}^h)_{\Omega_p} 
+ \frac{1}{2} \partial_t \, a_e( \partial_t \be_{\bsi_p}^h, \partial_t \be_{p_p}^h; \partial_t  \be_{\bsi_p}^h,  \partial_t \be_{p_p}^h) 
+ a_p(\partial_t \be_{\bu_p}^h, \partial_t \be_{\bu_p}^h)  \nonumber \\[1ex]
& \ds\quad + \, a_f(\partial_t \be_{\bT_f}^h,\partial_t \be_{\bu_f}^h;\partial_t \be_{\bT_f}^h,\partial_t \be_{\bu_f}^h) 
+ \kappa_{\partial_t \be_{\bu_f}^h}(\bT_f,\bu_f;\partial_t \be_{\bT_f}^h,\partial_t \be_{\bu_f}^h) 
\nonumber \\[1ex]
& \ds\quad 
+\, \kappa_{\bu_{fh}}(\partial_t \be_{\bT_f}^h,\partial_t \be_{\bu_f}^h;\partial_t \be_{\bT_f}^h,\partial_t \be_{\bu_f}^h) 
+ a_{\BJS}(\partial_t \be_{\bu_f}^h,\partial_t \be_{\btheta}^h;\partial_t \be_{\bu_f}^h,\partial_t \be_{\btheta}^h)  \nonumber  \\[1ex]
& \ds 
= - \, a_e( \partial_{tt}\,\be_{\bsi_p}^I, \partial_{tt}\,\be_{p_p}^I; \partial_t \be_{\bsi_p}^h, \partial_t \be_{p_p}^h) 
- a_p(\partial_t \be_{\bu_p}^I, \partial_t \be_{\bu_p}^h) 
- a_f(\partial_t \be_{\bT_f}^I, \partial_t \be_{\bu_f}^I, \partial_t \be_{\bT_f}^h,\partial_t \be_{\bu_f}^h)  \nonumber  \\[1ex]
&\ds\quad -\,\kappa_{\partial_t \bu_{fh}}(\be_{\bT_f}^h,\be_{\bu_f}^h;\partial_t \be_{\bT_f}^h,\partial_t \be_{\bu_f}^h)
- \kappa_{\be_{\bu_f}^h}(\partial_t \bT_f,\partial_t \bu_f;\partial_t \be_{\bT_f}^h,\partial_t \be_{\bu_f}^h) 
- \kappa_{\partial_t \be_{\bu_f}^I}(\bT_f,\bu_f;\partial_t \be_{\bT_f}^h,\partial_t \be_{\bu_f}^h)
\nonumber  \\[1ex]
&\ds \quad -\,\kappa_{\partial_t \bu_{fh}}(\be_{\bT_f}^I,\be_{\bu_f}^I; \partial_t \be_{\bT_f}^h,\partial_t \be_{\bu_f}^h) 
- \kappa_{\be_{\bu_f}^I}(\partial_t \bT_f,\partial_t \bu_f;\partial_t \be_{\bT_f}^h,\partial_t \be_{\bu_f}^h)  
- \kappa_{\bu_{fh}}(\partial_t \be_{\bT_f}^I,\partial_t \be_{\bu_f}^I; \partial_t \be_{\bT_f}^h,\partial_t \be_{\bu_f}^h) 
\nonumber  \\[1ex]
&\ds \quad
-\, a_{\BJS}(\partial_t \be_{\bu_f}^I,\partial_t \be_{\btheta}^I;\partial_t \be_{\bu_f}^h,\partial_t \be_{\btheta}^h)
- b_\sk(\partial_t \be_{\bgamma_p}^I,\partial_t \be_{\bsi_p}^h)
- \langle \partial_t \be_{\bu_f}^h \cdot \bn_f, \partial_t \be_{\lambda}^I \rangle_{\Gamma_{fp}} 
+ \langle \partial_t \be_{\bu_f}^I \cdot \bn_f, \partial_t \be_{\lambda}^h \rangle_{\Gamma_{fp}} 	
\nonumber  \\[1ex]
&\ds \quad 
+\, \langle \partial_t \be_{\btheta}^I \cdot \bn_p, \partial_t \be_{\lambda}^h \rangle_{\Gamma_{fp}}
+ b_\sk(\partial_t \be_{\bgamma_p}^h,\partial_t \be_{\bsi_p}^I)
+ \langle \partial_t \be_{\btheta}^h \cdot \bn_p, \partial_t \be_{\lambda}^I \rangle_{\Gamma_{fp}},
\end{align}
where, using again the projection properties \eqref{eq: interpolation 1} and
\eqref{eq: interpolation 2}--\eqref{eq: interpolation 3}, and 
the fact that $\bdiv(\bbX_{ph})=\bV_{sh}$, $\div(\bV_{ph})=\W_{ph}$, cf. \eqref{eq: div-prop}, 
we have dropped from \eqref{eq: error equation 7} the following terms:
\begin{equation*}
\begin{array}{c}
s_0(\partial_{tt}\,\be_{p_p}^I, \partial_t \be_{p_p}^h)_{\Omega_p}, \,\, b_p(\partial_t \be_{p_p}^I, \partial_t \be_{\bu_p}^h), \,\, b_p(\partial_t \be_{p_p}^h, \partial_t \be_{\bu_p}^I), \,\, 
b_{\bn_p}(\partial_t \be_{\bsi_p}^I, \partial_t \be_{\btheta}^h), \,\, b_{\bn_p}(\partial_t \be_{\bsi_p}^h, \partial_t \be_{\btheta}^I),  \\[2ex]
b_s(\partial_t \be_{\bu_s}^I, \partial_t \be_{\bsi_p}^h), \,\, b_s(\partial_t \be_{\bu_s}^h, \partial_t \be_{\bsi_p}^I), \,\, 
\langle \partial_t \be_{\bu_p}^h \cdot \bn_p, \partial_t \be_{\lambda}^I \rangle_{\Gamma_{fp}}, \,\, \langle \partial_t \be_{\bu_p}^I \cdot \bn_p, \partial_t \be_{\lambda}^h \rangle_{\Gamma_{fp}}.
\end{array}
\end{equation*}
We next comment on the control of the terms involving the functional $\kappa_{\bw_f}$. The two terms on the left are controlled by $a_f(\partial_t \be_{\bT_f}^h,\partial_t \be_{\bu_f}^h;\partial_t \be_{\bT_f}^h,\partial_t \be_{\bu_f}^h)$, using that $\bu_f(t), \bu_{fh}(t) \in \bW_r$, the continuity of $\kappa_{\bw_f}$ \eqref{eq:continuity-of-Kwf} and the coercivity bound for $a_f$ \eqref{eq:coercivity-af}. The two terms on the right involving $\bu_f$ and $\bu_{fh}$, using that $\bu_f(t), \bu_{fh}(t) \in \bW_r$, as well as the Cauchy--Schwarz and Young's inequalities, are bounded by $\delta \|(\partial_t \be^h_{\bT_f},\partial_t \be^h_{\bu_f})\|^2 + C \|(\partial_t \be_{\bT_f}^I, \partial_t \be_{\bu_f}^I)\|^2$. The other four terms are bounded by $\delta \|(\partial_t \be^h_{\bT_f},\partial_t \be^h_{\bu_f})\|^2 + C ( \|\partial_t \bu_f\|_{\bV_f}^2 + \|\partial_t \bu_{fh}\|_{\bV_f}^2 ) ( \|\be_{\bu_f}^I\|_{\bV_f}^2 + \|\be_{\bu_f}^h\|_{\bV_f}^2)$.
We also use the following identities to control the last two terms in \eqref{eq: error equation 7}:
\begin{gather*}
b_\sk(\partial_t \be_{\bgamma_p}^h,\partial_t \be_{\bsi_p}^I) \,=\, \partial_t \,b_\sk( \be_{\bgamma_p}^h,\partial_t \be_{\bsi_p}^I) - b_\sk(\be_{\bgamma_p}^h,\partial_{tt} \be_{\bsi_p}^I),
\\[1ex]
\langle \partial_t \be_{\btheta}^h \cdot \bn_p, \partial_t \be_{\lambda}^I \rangle_{\Gamma_{fp}} 
\,=\, \partial_t \langle \be_{\btheta}^h \cdot \bn_p, \partial_t \be_{\lambda}^I \rangle_{\Gamma_{fp}} 
- \langle \be_{\btheta}^h \cdot \bn_p, \partial_{tt} \be_{\lambda}^I \rangle_{\Gamma_{fp}}.
\end{gather*}
Then, integrating \eqref{eq: error equation 7} for $0$ to $t \in  (0,T]$, proceeding as in \eqref{eq: error equation 2}, using that $\bu_f(t), \bu_{fh}(t) \in \bW_r$, the bounds \eqref{eq:continuity-bounds}--\eqref{eq:continuity-of-Kwf} and \eqref{eq:positive-bound-E}--\eqref{eq:positive-bound-A+Kwf}, the Cauchy--Schwarz and Young's inequalities, and the estimate
\begin{equation*}
\ds  b_{\sk}(\partial_t \be_{\bgamma_p}^I, \partial_t \be_{\bsi_p}^h)
\,\leq\, C\,\|\partial_t \be_{\bgamma_p}^I\|_{\bbQ_p} \left( \|\partial_t A^{1/2}(\be_{\bsi_p}^h + \alpha_p \be^h_{p_p}\bI)\|_{\bbL^2(\Omega_p)} + \|\partial_t \be^h_{p_p}\|_{\W_p}  \right),
\end{equation*}
which follows analogously to \eqref{eq: bsk bound}, we get
\begin{align}\label{eq: error equation 7+}
&\ds s_0 \|\partial_t \be_{p_p}^h(t)\|^2_{\W_p} 
+ \|\partial_t A^{1/2}(\be_{\bsi_p}^h + \alpha_p \be_{p_p}^h \bI)(t)\|^2_{\bbL^2(\Omega_p)} \nonumber \\[1ex]
&\ds\quad +\, \int^t_0 \left( \|\partial_t \be_{\bu_p}^h\|^2_{\bL^2(\Omega_p)}
+ \|(\partial_t \be_{\bT_f}^h, \partial_t \be_{\bu_f}^h)\|^2  
+ |\partial_t \be_{\bu_f}^h - \partial_t \be_{\btheta}^h|^2_{\BJS} \right)\,ds 
\nonumber\\[1ex]
&\ds 
\leq\, C\,\bigg( \int^t_0 \Big( 
\|\partial_{tt} A^{1/2}(\be_{\bsi_p}^I + \alpha_p \be_{p_p}^I \bI)\|^2_{\bbL^2(\Omega_p)}
+ \|\partial_{tt} \be_{\bsi_p}^I\|^2_{\bbL^2(\Omega_p)}  
+ \|\partial_t \be_{\bu_p}^I\|^2_{\bL^2(\Omega_p)}
+ \|(\partial_t \be_{\bT_f}^I, \partial_t \be_{\bu_f}^I)\|^2
\nonumber \\[1ex]
&\ds\quad 
+\, |\partial_t \be_{\bu_f}^I - \partial_t \be_{\btheta}^I|^2_{\BJS} 
+ \|\partial_t \be_{\btheta}^I\|^2_{\bLambda_{sh}}
+ \|\partial_t \be_{\lambda}^I\|^2_{\Lambda_{ph}} 
+ \|\partial_{tt} \be_{\lambda}^I\|^2_{\Lambda_{ph}}
+ \|\partial_t \be_{\bgamma_p}^I\|^2_{\bbQ_p}
\Big)\,ds
+ \|\partial_t \be_{\bsi_p}^I(t)\|^2_{\bbL^2(\Omega_p)} 
\nonumber \\[1ex]
&\ds\quad 
+\, \|\partial_t \be_{\lambda}^I(t)\|^2_{\Lambda_{ph}} 
+ \|\partial_t \be_{\bsi_p}^I(0)\|_{\bbL^2(\Omega_p)}^2 
+ \|\partial_t \be_{\lambda}^I(0)\|^2_{\Lambda_{ph}}
\bigg)
+ \delta_3\, \bigg( \int^t_0 \Big( \|\partial_t \be^h_{p_p}\|^2_{\W_p}
+ \|(\partial_t \be^h_{\bT_f},\partial_t \be^h_{\bu_f})\|^2
\nonumber\\[1ex]
&\ds\quad 
+\, 
\|\be_{\btheta}^h\|^2_{\bLambda_{sh}} 
+ \|\be_{\bgamma_p}^h\|^2_{\bbQ_p}  
+ \|\partial_t \be_{\lambda}^h\|^2_{\Lambda_{ph}}
\Big)\,ds 
+ \|\be_{\btheta}^h(t)\|^2_{\bLambda_{sh}} 
+ \|\be_{\bgamma_p}^h(t)\|^2_{\bbQ_p}
\bigg)  
\nonumber \\[1ex]
&\ds\quad
+\, C\,\bigg( \int^t_0 \|\partial_t A^{1/2}(\be_{\bsi_p}^h + \alpha_p \be_{p_p}^h \bI)\|^2_{\bbL^2(\Omega_p)}\,ds + s_0 \|\partial_t \be_{p_p}^h(0)\|^2_{\W_p} 
+ \|\partial_t A^{1/2}(\be_{\bsi_p}^h + \alpha_p \be_{p_p}^h\bI)(0)\|^2_{\bbL^2(\Omega_p)} 
\nonumber \\[1ex]
&\ds\quad 
+\, \|\be_{\btheta}^h(0)\|_{\bLambda_{sh}}^2
+ \|\be_{\bgamma_p}^h(0)\|_{\bbQ_p}^2 
\,+\, \int^t_0 \left( \|\partial_t \bu_f\|_{\bV_f}^2 + \|\partial_t \bu_{fh}\|_{\bV_f}^2 \right) 
\left( \|\be_{\bu_f}^I\|_{\bV_f}^2 + \|\be_{\bu_f}^h\|_{\bV_f}^2 \right)\,ds \bigg).
\end{align}
Note that for the last term we can use the fact that both $\|\partial_t \bu_f\|_{\L^2(0,T;\bV_f)}$ and  $\|\partial_t \bu_{fh}\|_{\L^2(0,T;\bV_f)}$ are bounded by data (cf. \eqref{eq:continuous-stability}, \eqref{eq: discrete stability}), to obtain
\begin{equation}\label{eq: u bound}
\int_0^t \left( \|\partial_t \bu_f\|_{\bV_f}^2 + \|\partial_t \bu_{fh}\|_{\bV_f}^2 \right) 
\left( \|\be_{\bu_f}^I\|_{\bV_f}^2 + \|\be_{\bu_f}^h\|_{\bV_f}^2 \right) ds 
\,\leq\, C\,\left( \|\be_{\bu_f}^I\|_{\L^{\infty}(0,t;\bV_f)}^2 
+ \|\be_{\bu_f}^h\|_{\L^{\infty}(0,t;\bV_f)}^2 \right) \,.
\end{equation}
In turn, testing \eqref{eq: error equation} with $\bq_h=(\0,0,\0,\partial_t\be^h_{\bT_f},\partial_t\be^h_{\bu_f},\0)$ and using that $\bu_f(t), \bu_{fh}(t) \in \bW_r$, we deduce
\begin{equation*}
\begin{array}{l}
\ds \|(\be^h_{\bT_f},\be^h_{\bu_f})(t)\|^2 
\,\leq\, C\,(1+r) \int^t_0 \left( \|(\be^h_{\bT_f},\be^h_{\bu_f})\|^2 
+ \|(\be^I_{\bT_f},\be^I_{\bu_f})\|^2 
+ \|\be_\btheta\|^2_{\bLambda_{sh}} 
+ \|\be_\lambda\|^2_{\Lambda_{ph}} 
\right) \,ds \\[2ex]
\ds\quad +\, \|(\be^h_{\bT_f},\be^h_{\bu_f})(0)\|^2
+ \delta_4 \int^t_0 \|(\partial_t \be^h_{\bT_f},\partial_t \be^h_{\bu_f})\|^2\,ds,
\end{array}
\end{equation*}
which implies
\begin{equation}\label{eq: u bound-2}
\begin{array}{l}
  \ds \|(\be^h_{\bT_f},\be^h_{\bu_f})\|^2_{\L^\infty(0,t;\bbX_f\times\bV_f)}
\,\leq\, C \int^t_0 \left( \|(\be^h_{\bT_f},\be^h_{\bu_f})\|^2 
+ \|(\be^I_{\bT_f},\be^I_{\bu_f})\|^2 
+ \|\be_\btheta^h\|^2_{\bLambda_{sh}} 
+ \|\be_\lambda^h\|^2_{\Lambda_{ph}} 
\right.  \\[2ex]
\ds\quad \left. + \|\be_\btheta^I\|^2_{\bLambda_{sh}} 
+ \|\be_\lambda^I\|^2_{\Lambda_{ph}} \right) ds
+ \|(\be^h_{\bT_f},\be^h_{\bu_f})(0)\|^2
+ \delta_4 \int^t_0 \|(\partial_t \be^h_{\bT_f},\partial_t \be^h_{\bu_f})\|^2\,ds.
\end{array}
\end{equation}
We further utilize the inf-sup condition bound given in the first equation in \eqref{eq: error equation 3}, as well as the bound
\begin{equation}\label{eq: error equation 10}
  \int_0^t \left(\|\partial_t \be_{p_p}^h\|_{\W_p}^2 + \|\partial_t \be_\lambda^h\|_{\Lambda_{ph}}^2\right) ds \, 
\leq\, C \int_0^t \left(\|\partial_t \be_{\bu_p}^I\|_{\bL^2(\Omega_p)}^2
+ \|\partial_t \be_{\bu_p}^h\|_{\bL^2(\Omega_p)}^2\right) ds, 
\end{equation}
which follows similarly to the second equation in \eqref{eq: error equation 3}. In addition, noting that the term $\int_0^t \|\be_\lambda^h\|^2_{\Lambda_{ph}} ds$ on the right hind side of \eqref{eq: u bound-2} needs to be controlled by Gr\"onwall's inequality, we utilize Sobolev embedding to obtain
\begin{equation}\label{linf-bound-lambda}
  \|\be_\lambda^h(t)\|^2_{\Lambda_{ph}} \le C \int_0^t \left( \|\be_\lambda^h\|^2_{\Lambda_{ph}} + \|\partial_t\be_\lambda^h\|^2_{\Lambda_{ph}} \right) ds.
\end{equation}
We now combine \eqref{eq: error equation 6-} with \eqref{eq: error equation 7+}--\eqref{linf-bound-lambda} and the first equation in \eqref{eq: error equation 3}. We use Gr\"onwall's inequality to control the terms
\begin{equation*}
\int_0^t \left( \|A^{1/2}(\be_{\bsi_p}^h + \alpha_p \be_{p_p}^h\bI)\|^2_{\bbL^2(\Omega_p)} 
+ s_0 \|\partial_t \be_{p_p}^h\|^2_{\W_p}
+ \|\partial_t A^{1/2}(\be_{\bsi_p}^h + \alpha_p \be_{p_p}^h\bI)\|^2_{\bbL^2(\Omega_p)}
\right)\,ds,
\end{equation*}
which appear in \eqref{eq: error equation 6-}, and the terms
$$
\int^t_0 \left( \|(\be^h_{\bT_f},\be^h_{\bu_f})\|^2 
+ \|\be_\btheta^h\|^2_{\bLambda_{sh}} 
+ \|\be_\lambda^h\|^2_{\Lambda_{ph}} 
\right) \,ds,
$$
which appear in \eqref{eq: u bound-2},
and choose $\delta_3, \delta_4$ small enough, to obtain
\begin{align}\label{eq: error equation 11}
&\ds s_0 \|\be_{p_p}^h(t)\|^2_{\W_p} 
+ s_0 \|\partial_t \be_{p_p}^h(t)\|^2_{\W_p}
+ \|A^{1/2}(\be_{\bsi_p}^h + \alpha_p \be_{p_p}^h \bI)(t)\|^2_{\bbL^2(\Omega_p)} 
+ \|\bdiv(\be_{\bsi_p}^h(t))\|_{\bL^2(\Omega_p)} 
\nonumber  \\
&\ds \quad 
+\, \|\partial_t A^{1/2}(\be_{\bsi_p}^h + \alpha_p\be_{p_p}^h \bI)(t)\|^2_{\bbL^2(\Omega_p)}
+ \|\be_{\bu_s}^h(t)\|_{\bV_{s}}^2
+ \|\be_{\bgamma_p}^h(t)\|_{\bbQ_{p}}^2 
+ \|\be_{\btheta}^h(t)\|_{\bLambda_{sh}}^2
+ \|\be_\lambda^h(t)\|^2_{\Lambda_{ph}}
\nonumber  \\
&\ds \quad 
+\, \int_0^t \Big( 
\|\be_{p_p}^h\|^2_{\W_{p}} 
+ \|\partial_t \be_{p_p}^h\|^2_{\W_{p}} 
+ \|\bdiv(\be_{\bsi_p}^h)\|_{\bL^2(\Omega_p)} 
+ \|\be_{\bu_p}^h\|^2_{\bV_p} 
+ \|\partial_t \be_{\bu_p}^h\|^2_{\bL^2(\Omega_p)}
\nonumber  \\[1ex]
&\ds\quad 
+ \|(\be_{\bT_f}^h,\be_{\bu_f}^h)\|^2 
+ \|(\partial_t e_{\bT_f}^h, \partial_t \be_{\bu_f}^h)\|^2 
+ |\be_{\bu_f}^h - \be_{\btheta}^h|^2_{\BJS}
+ |\partial_t \be_{\bu_f}^h - \partial_t \be_{\btheta}^h|^2_{\BJS} 
\nonumber  \\[1ex]
&\ds \quad
+ \|\be_{\btheta}^h\|^2_{\bLambda_{sh}} 
+ \|\be_{\lambda}^h\|^2_{\Lambda_{ph}}
+ \|\partial_t \be_{\lambda}^h\|^2_{\Lambda_{ph}} 
+ \|\be_{\bu_s}^h\|^2_{\bV_{s}} 
+ \|\be_{\bgamma_p}^h\|^2_{\bbQ_{p}}
\Big) ds \nonumber \\[1ex]
&\ds 
\leq\, C\,\exp(T)\,\bigg( \int_0^t \Big( 
\|\partial_t A^{1/2}(\be_{\bsi_p}^I + \alpha_p \be_{p_p}^I\bI)\|^2_{\bbL^2(\Omega_p)} 
+ \|\partial_{tt} A^{1/2}(\be_{\bsi_p}^I + \alpha_p \be_{p_p}^I\bI)\|^2_{\bbL^2(\Omega_p)}
+ \|\partial_{tt} \be_{\bsi_p}^I\|^2_{\bbL^2(\Omega_p)} 
\nonumber \\[1ex]
&\ds\quad
+\, \|\be_{\bsi_p}^I\|^2_{\bbL^2(\Omega_p)} 
+ \|\be_{\bu_p}^I\|^2_{\bL^2(\Omega_p)}
+ \|\partial_t \be_{\bu_p}^I\|^2_{\bL^2(\Omega_p)}
+ \|(\be_{\bT_f}^I,\be^I_{\bu_f})\|^2
+ \|(\partial_t \be_{\bT_f}^I,\partial_t \be_{\bu_f})\|^2
\nonumber \\[1ex]
&\ds\quad
+\, |\be_{\bu_f}^I - \be_{\btheta}^I|^2_{\BJS}
+ |\partial_t \be_{\bu_f}^I - \partial_t \be_{\btheta}^I|^2_{\BJS}
+ \|\be_{\btheta}^I\|^2_{\bLambda_{sh}}
+ \|\partial_t \be_{\btheta}^I\|^2_{\bLambda_{sh}}
+ \|\be_{\lambda}^I\|^2_{\Lambda_{ph}}
+ \|\partial_t \be_{\lambda}^I\|^2_{\Lambda_{ph}}
+ \|\partial_{tt} \be_{\lambda}^I\|^2_{\Lambda_{ph}}
\nonumber \\[1ex]
&\ds\quad  
+\, \|\be_{\bgamma_p}^I\|^2_{\bbQ_p}
+ \|\partial_t \be_{\bgamma_p}^I\|^2_{\bbQ_p} \Big)\,ds
+ \|\be_{\bu_f}^I\|_{\L^{\infty}(0,t;\bV_f)}^2 
+ \|\partial_t \be_{\bsi_p}^I(t)\|_{\bbL^2(\Omega_p)}^2 
+ \|\partial_t \be_{\lambda}^I(t)\|^2_{\Lambda_{ph}} 
+ \|\partial_t \be_{\bsi_p}^I (0)\|_{\bbL^2(\Omega_p)}^2 
\nonumber \\[1ex]
&\ds\quad 
+\, \|\partial_t \be_{\lambda}^I (0)\|^2_{\Lambda_{ph}}
+ s_0 \|\be_{p_p}^h(0)\|^2_{\W_p} 
+ \|A^{1/2}(\be_{\bsi_p}^h + \alpha_p \be_{p_p}^h \bI)(0)\|^2_{\bbL^2(\Omega_p)} 
+ \|(\be^h_{\bT_f},\be^h_{\bu_f})(0)\|^2
+ \|\be_{\btheta}^h(0)\|_{\bLambda_{sh}}^2
\nonumber \\[1ex]
&\ds\quad 
+\, \|\be_{\bgamma_p}^h(0)\|_{\bbQ_p}^2
+ s_0 \|\partial_t \be_{p_p}^h(0)\|^2_{\W_p} 
+ \|\partial_t A^{1/2}(\be_{\bsi_p}^h + \alpha_p \be_{p_p}^h\bI)(0)\|^2_{\bbL^2(\Omega_p)}
\bigg) \,.
\end{align}

\noindent{\bf Bounds on initial data.}

\medskip
\noindent
Finally, to bound the initial data terms in \eqref{eq: error equation 11}, we recall from Theorems~\ref{thm:well-posedness-main-result} and
\ref{thm: well-posedness main result semi} that $(\bsi_{p}(0), p_{p}(0),\linebreak \bu_{p}(0),\bT_{f}(0), \bu_{f}(0), \btheta(0), \lambda(0)) = (\bsi_{p,0}, p_{p,0}, \bu_{p,0}, \bT_{f,0}, \bu_{f,0},  \btheta_{0}, \lambda_{0})$ and $(\bsi_{ph}(0), p_{ph}(0), \bu_{ph}(0), \bT_{fh}(0), \linebreak \bu_{fh}(0),\btheta_h(0), \lambda_h(0)) = (\bsi_{ph,0}, p_{ph,0}, \bu_{ph,0}, \bT_{fh,0}, \bu_{fh,0}, \btheta_{h,0}, \lambda_{h,0})$, respectively. 
Note that $\be_{\btheta}^h(0)=0$ by definition of $\btheta_{h,0}$ (c.f. \eqref{eq: interpolation 0}). Recall also that the discrete initial data satisfy \eqref{eq: dis ini NS-D} and \eqref{eq: dis ini sigma}. Then, in a way similar to \eqref{eq:NS-discrete-initial-data} and \eqref{bound-sigmap-h0}, we obtain
\begin{align}\label{eq: error initial 1}
&\ds  \|\be_{p_p}^h(0)\|_{\W_p} 
+ \|A^{1/2}(\be_{\bsi_p}^h + \alpha_p \be_{p_p}^h \bI)(0)\|_{\bbL^2(\Omega_p)} 
+ \|(\be^h_{\bT_f},\be^h_{\bu_f})(0)\| \nonumber \\[1ex]
&\ds
\qquad \leq C \big(\|\be_{\bp}^I(0)\|_{\bQ} + \|\be_{\lambda}^I(0)\|_{\Lambda_{ph}} + \|\be_{\brho_p}^I(0)\|_{\bbQ_p} \big).
\end{align}
Next, we differentiate in time the equations in \eqref{eq: error equation} with test functions
$\bv_{sh}$ and $\bchi_{ph}$ and combine them with the equations with test functions $\btau_{ph}$ and $w_{ph}$ at $t = 0$. Choosing $(\btau_{ph}, w_{ph}, \bv_{sh}, \bchi_{ph}) =
(\partial_t \be_{\bsi_p}^h(0), \partial_t \be_{p_p}^h(0), \be_{\bu_s}^h(0), \be_{\bgamma_p}^h(0))$, we obtain
\begin{equation*}
  \begin{split}
  s_0 \|\partial_t & \be_{p_p}^h(0)\|_{\W_p}^2
  + \|\partial_t A^{1/2}(\be_{\bsi_p}^h + \alpha_p \be_{p_p}^h \bI)(0)\|_{\bbL^2(\Omega_p)}^2 \\
  = & - (\dt A \be_{\bsi_p}^I(0),\dt(\be_{\bsi_p}^h + \alpha_p\be_{p_p}^h\bI)(0))_{\Omega_p}
  - (\alpha_p \be_{p_p}^I\bI(0),\dt \be_{\bsi_p}^h(0))_{\Omega_p} \\
  & - (\be_{\bgamma_p}^I(0),\dt\be_{\bsi_p}^h(0))_{\Omega_p}
  - (\dt\be_{\bsi_p}^I(0),\be_{\bgamma_p}^h(0))_{\Omega_p},
  \end{split}
  \end{equation*}
where we have used the orthogonality properties \eqref{eq: interpolation 1}, \eqref{eq: interpolation 2}, and \eqref{eq: interpolation 3}, as well as $b_p(w_{ph}, \bu_{p,0} - \bu_{ph,0}) = 0 \ \forall \, w_{ph} \in W_{ph}$ (cf. \eqref{eq: dis ini NS-D}). Using the Cauchy--Schwarz and Young's inequalities for the terms on the right hand side, as well as \eqref{eq: error equation 10} at $t = 0$ to control $\|\be_{\bgamma_p}^h(0)\|_{\bbQ_p}$, we deduce
\begin{align}\label{eq: error initial}
  &\ds
s_0 \|\partial_t \be_{p_p}^h(0)\|_{\W_p}^2 
+ \|\partial_t A^{1/2}(\be_{\bsi_p}^h + \alpha_p \be_{p_p}^h \bI)(0)\|_{\bbL^2(\Omega_p)}^2
+ \|\be_{\bgamma_p}^h(0)\|_{\bbQ_p}^2
\nonumber \\[1ex]
&\ds\quad 
\leq C\,\Big(\Big(1 + \frac{1}{s_0} \Big)\|\partial_t \be_{\bsi_p}^I(0)\|_{\bbX_p}^2 
+ \|\partial_t \be_{p_p}^I(0)\|_{\W_p}^2 + \|\be_{\bgamma_p}^I(0)\|_{\bbQ_p}^2  \Big).
\end{align}
Thus, combining \eqref{eq: error equation 11} with \eqref{eq: error initial 1} and  \eqref{eq: error initial}, making use of triangle inequality and the approximation properties \eqref{eq: approx property 1}, \eqref{eq: approx property 2}, and \eqref{eq: approx property 3}, we obtain \eqref{eq:errror-rate-of-convergence}.
\end{proof}

\begin{rem} The only dependence on $\ds \frac{1}{s_0}$ of the constant $C$ in the error estimate \eqref{eq:errror-rate-of-convergence} comes from the term
  $\ds \frac{1}{\sqrt{s_0}}\|\partial_t \be_{\bsi_p}^I(0)\|_{\bbX_p}$ in
  \eqref{eq: error initial}.
\end{rem}

\subsection{Fully discrete scheme}\label{sec:fully-discrete}

For the fully discrete scheme utilized in the numerical tests we employ the backward Euler method for the time discretization. 
Let $\Delta t$ be the time step, $T=N \Delta t$, $t_m = m\,\Delta t$, $m=0,\dots, N$. 
Let $d_t\, u^m := (\Delta t)^{-1}(u^m - u^{m-1})$ be the first order (backward) discrete 
time derivative, where $u^m := u(t_m)$. Then the fully discrete model reads: 
Given $(\bp_h^0, \br_h^0)=(\bp_{h,0}, \br_{h,0})$ satisfying \eqref{eq: discrete initial condition}, find $(\bp_h^m, \br_h^m)\in \bQ_h \times \bS_h$, $m=1, \dots, N$, such that 
\begin{equation}\label{eq: discrete formulation}
\begin{array}{rll}
\ds d_t\,\cE\,(\bp_h^m)(\bq_h)
+ (\cA+ \cK_{\bu^m_{fh}})\,(\bp_h^m)(\bq_h) + \cB'\,(\br_h^m)(\bq_h) & = & \bF^m(\bq_h) \quad \forall\,\bq_h\in \bQ_h\,, \\[2ex]
\ds - \cB\,(\bp_h^m)(\bs_h) & = & \bG^m(\bs_h) \quad \forall\,\bs_h\in \bS_h \,.
\end{array}	
\end{equation}
The fully discrete method results in the solution of a nonlinear algebraic system at each time step. The system is similar to the discrete resolvent system \eqref{discrete-domain}, which was analyzed in Theorem~\ref{thm:domain-is-nonempty}. The well posedness and error analysis of the fully discrete scheme is beyond the scope of the paper.

\section{Numerical results}\label{sec:numerical}

In this section we present numerical results that illustrate the behavior of the fully discrete method \eqref{eq: discrete formulation}. We use the Newton--Rhapson method to solve this nonlinear algebraic system at each time step. Our implementation is based on a {\tt FreeFem++} code \cite{freefem} on triangular grids, in conjunction with the direct linear solver {\tt UMFPACK} \cite{umfpack}. For spatial discretization we use the following finite element spaces: $\bbBDM_1-\bP_1$ for stress--velocity in Navier--Stokes, 
$\bbBDM_1-\bP_0-\bbP_1$ for stress--displacement--rotation in elasticity,
$\bBDM_1-\rP_0$ for Darcy velocity--pressure, and $\bP^\dc_1 - \rP^\dc_1$ for the the traces of structure velocity and Darcy pressure, where $\rP^\dc_1$ denotes discontinuous piecewise linear polynomials. 

The examples considered in this section are described next.
Example~1 is used to corroborate the rates of convergence. 
In Example 2 we present a simulation of blood flow in an arterial bifurcation. Air flow through a filter is simulated in Example~3. In all examples we set $\ds \kappa_1 = \frac{1}{2\mu}$ and $\kappa_2 = 2\mu$, cf. Remark~\ref{rem:kappa}.

\subsection{Example 1: convergence test}

In this test we study the convergence for the space discretization using an analytical solution.
The domain is $\Omega = \Omega_f\cup\Gamma_{fp}\cup\Omega_p$, with $\Omega_f = (0,1)\times (0,1), \Gamma_{fp} = (0,1)\times\{0\}$, and $\Omega_p = (0,1)\times (-1,0)$; i.e.,
the upper half is associated with the Navier--Stokes flow, while the lower half represents the poroelastic medium governed by the Biot system, see Figure~\ref{fig:example1} (left). The analytical solution is given in Figure~\ref{fig:example1} (right). It satisfies the appropriate interface conditions along the interface $\Gamma_{fp}$.
\begin{figure}[ht]
\begin{minipage}{.49\textwidth}
\begin{center}
\includegraphics[width=.5\textwidth]{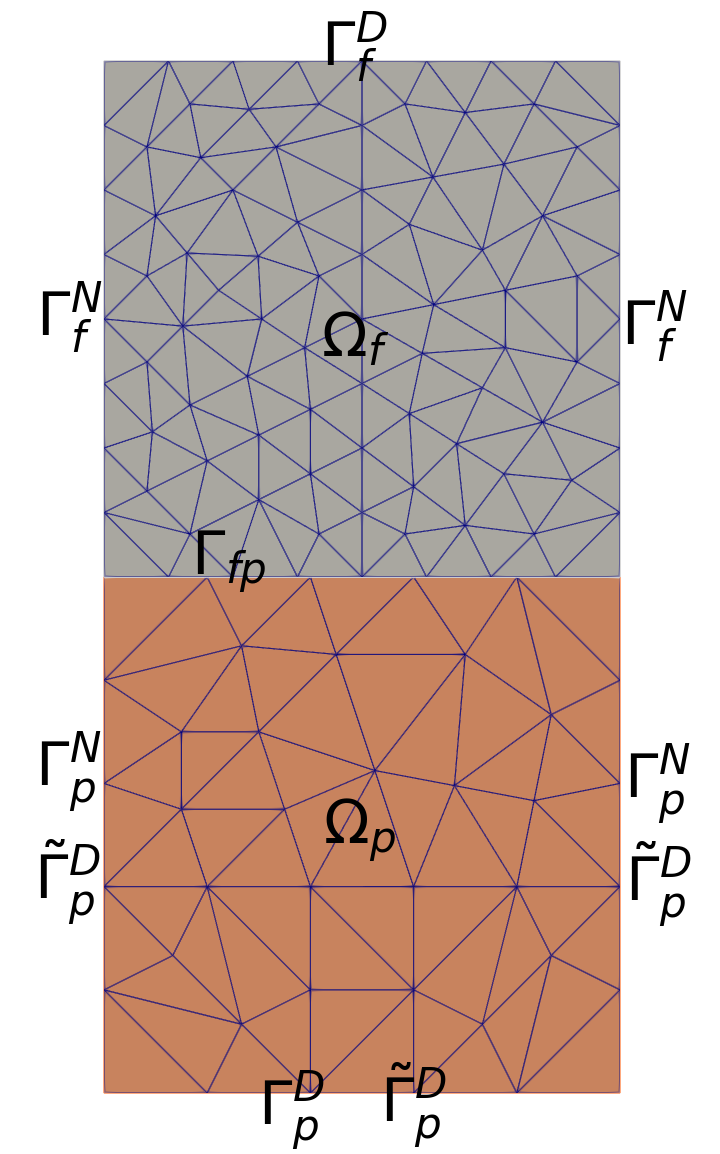}
\end{center}
\end{minipage}
\hfill
\begin{minipage}{.5\textwidth}

  Solution in the Navier--Stokes region:

  \bigskip
$ \ds \bu_f = \exp(t)
\begin{pmatrix}
\ds \sin(\pi \, x)\cos(\pi \, y) \\[1ex] \ds -\sin(\pi \, y)\cos(\pi \, x)
\end{pmatrix}$

\smallskip
$ \ds p_f = \exp(t)\,\sin(\pi x)\cos\Big(\frac{\pi y}{2}\Big) + 2\pi \cos(\pi t)$

\bigskip
\bigskip
Solution in the Biot region:

\bigskip
$\ds p_p = \exp(t)\,\sin(\pi x)\cos\Big(\frac{\pi y}{2}\Big)$

$ \ds \bbeta_p = \sin(\pi t) \begin{pmatrix} \ds -3x+\cos(y) \\[1ex] \ds y+1 \end{pmatrix}$
\end{minipage}
\caption{Example 1, Left: computational domain and boundaries. Right: analytical solution.}
\label{fig:example1}
\end{figure}
The model parameters are
$$
\mu = 1, \quad \rho = 1, \quad \lambda_p = 1, \quad \mu_p = 1, \quad s_0 = 1, \quad \bK = \bI, \quad \alpha_p = 1, \quad \alpha_{\BJS} = 1.
$$
The right hand side functions $\f_f, \f_p$ and $q_p$ are computed from \eqref{eq:Navier-Stokes-1}--\eqref{eq:Biot-model} using the analytical solution.
The model problem is then complemented with the appropriate boundary conditions, as shown in Figure~\ref{fig:example1} (left), and initial data.
Notice that the boundary conditions are not homogeneous and therefore the right-hand side of the resulting system must be modified accordingly.
The total simulation time for this test case is $T=0.01$ and the time step is $\Delta t=10^{-3}$. The time step is sufficiently small, so that the time discretization error does not affect the spatial convergence rates.
Table~\ref{table1-example1} shows the convergence history for a sequence of quasi-uniform mesh refinements, where $h_f$ and $h_p$ denote the mesh sizes in $\Omega_f$ and $\Omega_p$, respectively, The grids are non-matching on the interface $\Gamma_{fp}$, see Figure~\ref{fig:example1} (left) for the coarsest level, with the mesh sizes for their traces, denoted by $h_{tf}$ and $h_{tp}$, satisfying $h_{tf}=\frac{5}{8} h_{tp}$.
We note that the Navier--Stokes pressure and displacement at $t_m$ are recovered by the post-processed formulae $\ds p_{fh}^m = -\frac{1}{n} \left( \tr(\bT_{fh}^m)  + \rho\,\tr(\bu_{fh}^m\otimes\bu_{fh}^m) \right)$ (cf. \eqref{eq:pressure-pf})
and $\ds \bbeta^m_{ph} = \Delta t\,\bu^m_{sh} + \bbeta^{m-1}_{ph}$, respectively.
The results illustrate that at least the optimal spatial rate of convergence $\cO(h)$ established in Theorem~\ref{thm: error analysis} is attained for all subdomain variables.
The Lagrange multiplier variables, which are approximated in $\bP^\dc_1-\rP^\dc_1$, exhibit a rate of convergence $\cO(h^2)$ in the $\L^2$-norm on $\Gamma_{fp}$, which is consistent with the order of approximation.

\begin{table}[ht]
\begin{center}
\begin{tabular}{c||cc|cc|cc}
\hline
& \multicolumn{2}{|c|}{$\|\be_{\bT_f}\|_{\ell^2(0,T;\bbX_f)}$}  
& \multicolumn{2}{|c|}{$\|\be_{\bu_f}\|_{\ell^2(0,T;\bV_f)}$} 
& \multicolumn{2}{|c}{$\|\be_{p_f}\|_{\ell^2(0,T;\L^2(\Omega_f))}$} \\ 
$h_f$ & error & rate & error & rate & error & rate \\  \hline
0.1964 & 1.79E-01 &   --   & 4.57E-02 &   --   & 3.42E-03 &   --   \\ 
0.0997 & 9.12E-02 & 0.9957 & 2.33E-02 & 0.9920 & 1.29E-03 & 1.4332 \\
0.0487 & 4.43E-02 & 1.0057 & 1.19E-02 & 0.9451 & 5.79E-04 & 1.1208 \\
0.0250 & 2.23E-02 & 1.0294 & 5.91E-03 & 1.0411 & 2.32E-04 & 1.3702 \\
0.0136 & 1.11E-02 & 1.1423 & 2.94E-03 & 1.1455 & 1.11E-04 & 1.2136 \\
0.0072 & 5.51E-03 & 1.1066 & 1.46E-03 & 1.0997 & 4.68E-05 & 1.3551 \\
\hline 
\end{tabular}
		
\medskip

\begin{tabular}{c||cc|cc|cc|cc}
\hline
& \multicolumn{2}{|c|}{$\|\be_{\bsi_p}\|_{\ell^\infty(0,T;\bbX_p)}$}  
& \multicolumn{2}{|c|}{$\|\be_{p_p}\|_{\ell^\infty(0,T;\W_p)}$} 
& \multicolumn{2}{|c|}{$\|\be_{\bu_p}\|_{\ell^2(0,T;\bV_p)}$} 
& \multicolumn{2}{|c}{$\|\be_{\bu_s}\|_{\ell^2(0,T;\bV_s)}$} 
\\ 
$h_p$ & error & rate & error & rate & error & rate & error & rate  \\  \hline
0.2828 & 2.73E-01 &   --   & 7.54E-02 &   --   & 1.04E-01 &   --   & 4.31E-02 &   --   \\
0.1646 & 1.37E-01 & 1.2731 & 3.84E-02 & 1.2480 & 5.01E-02 & 1.3513 & 2.22E-02 & 1.2249 \\
0.0779 & 6.67E-02 & 0.9650 & 1.91E-02 & 0.9328 & 2.39E-02 & 0.9888 & 1.08E-02 & 0.9616 \\
0.0434 & 3.37E-02 & 1.1690 & 9.39E-03 & 1.2150 & 1.16E-02 & 1.2359 & 5.41E-03 & 1.1865 \\
0.0227 & 1.69E-02 & 1.0634 & 4.70E-03 & 1.0658 & 5.79E-03 & 1.0738 & 2.71E-03 & 1.0668 \\
0.0124 & 8.43E-03 & 1.1462 & 2.35E-03 & 1.1429 & 2.89E-03 & 1.1452 & 1.35E-03 & 1.1456 \\
\hline 
\end{tabular}

\medskip
		
\begin{tabular}{cc|cc||c||cc|cc || c}
\hline
\multicolumn{2}{c|}{$\|\be_{\bgamma_p}\|_{\ell^2(0,T;\bbQ_p)}$}
&\multicolumn{2}{c||}{$\|\be_{\bbeta_p}\|_{\ell^2(0,T;\bL^2(\Omega_p))}$} &
& \multicolumn{2}{|c|}{$\|\be_{\btheta}\|_{\ell^2(0,T;\bL^2(\Gamma_{fp}))}$} 
& \multicolumn{2}{|c||}{$\|\be_{\lambda}\|_{\ell^2(0,T;\L^2(\Gamma_{fp}))}$} & \\ 
error & rate & error & rate & $h_{tp}$ & error & rate & error & rate & iter\\  \hline
5.02E-02 &   --   & 2.67E-04 &   --   & 0.2000 & 6.80E-03 &   --   & 1.07E-03 &   --   & 2.2 \\
1.41E-02 & 2.3489 & 1.38E-04 & 1.2234 & 0.1000 & 2.42E-03 & 1.4894 & 2.69E-04 & 2.0007 & 2.2 \\
3.01E-03 & 2.0649 & 6.72E-05 & 0.9613 & 0.0500 & 5.82E-04 & 2.0571 & 6.71E-05 & 2.0005 & 2.2 \\
7.27E-04 & 2.4280 & 3.36E-05 & 1.1864 & 0.0250 & 1.46E-04 & 1.9928 & 1.69E-05 & 1.9935 & 2.2 \\
1.80E-04 & 2.1517 & 1.68E-05 & 1.0667 & 0.0125 & 3.65E-05 & 2.0037 & 4.26E-06 & 1.9833 & 2.2 \\
4.80E-05 & 2.1819 & 8.40E-06 & 1.1456 & 0.0063 & 9.25E-06 & 1.9799 & 1.09E-06 & 1.9632 & 2.2 \\
\hline 
\end{tabular}
\caption{Example 1, Mesh sizes, errors, rates of convergences and average Newton iterations for the fully discrete system $(\bbBDM_1-\bP_1)-(\bbBDM_1-\bP_0-\bbP_1)-(\bBDM_1-\rP_0) - (\bP^\dc_1 - \rP^\dc_1)$ approximation for the Navier--Stokes/Biot model in no-matching grids.}\label{table1-example1}
\end{center}
\end{table}

\subsection{Example 2: blood flow in an arterial bifurcation}

In this example we present a simulation of blood flow in an arterial bifurcation. The Navier--Stokes equations model the flow in the lumen of the artery, whereas the Biot system models the flow in the arterial wall. We use the fully dynamic Navier--Stokes--Biot model, which is better suitable for this application. In particular, the Navier--Stokes momentum equation in the fluid region is
\begin{equation*}
\rho \, \partial_t \bu_f - \rho (\nabla \bu_f) \bu_f - \bdiv (\bsi_f)=\f_f \qin \Omega_f\times (0,T],    
\end{equation*}
and the elasticity equation in the Biot system is
\begin{equation*}
\rho_p \, \partial_{tt} \bbeta_p - \beta \, \bbeta_p - \bdiv(\bsi_p) = \f_p \qin \Omega_p\times (0,T],
\end{equation*}
where $\rho_p$ is the fluid density in the poroelastic region. 
The additional term $\beta \, \bbeta_p$ comes from the axially symmetric two dimensional formulation, accounting for the recoil due to the circumferential strain \cite{bukavc2015}. The physical parameters are chosen based on \cite{bukavc2015} and fall within the range of physiological values for blood flow:
\begin{gather*}
\mu = 0.035 \text{ g/cm-s},\quad
\rho = 1 \text{ g/cm}^3, \quad
s_0 = 5 \times 10^{-6} \text{ cm}^{2}\text{/dyn},\quad
\bK = 10^{-9}\times \bI \text{ cm}^2, \quad
\rho_p = 1.1 \text{ g/cm}^3,
\\[1ex]
\lambda_p = 4.28 \times 10^6 \text{ dyn/cm}^2,\quad
\mu_p = 1.07 \times 10^6 \text{ dyn/cm}^2,\quad
\beta = 5 \times 10^{7} \text{ dyn/cm}^4,\quad
\alpha_p = 1, \quad
\alpha_{\BJS} = 1.
\end{gather*}
The body force terms $\f_f$ and $\f_p$ and external source $q_p$ are set to zero, as well as the initial conditions. The computational domain and boundary conditions are shown in Figure~\ref{fig:mesh}. We note that the flow is driven by the time-dependent pressure data on the inflow boundary $\Gamma_f^{in}$:
\begin{equation}\label{pressurefunc}
p_{in}(t) = \left\{ \begin{array}{ll}
\ds \frac{P_{\max}}{2}\left( 1-\cos \big(\frac{2\pi t}{T_{\max}} \big) \right) \,, & \text{if} \,\ t\leq T_{\max}; \\[2ex]
0 \,, & \text{if}\,\  t>T_{\max},
\end{array} \right.
\end{equation}
where $P_{\max}=13,334$ dyn/cm$^2$ and $T_{\max}=0.003$ s. The total simulation time is $T=0.006$ s with a time step of size $\Delta t=10^{-4}$ s. The final time $T$ is chosen so that the pressure wave barely reaches the outflow boundary.

\begin{figure}[ht]
  \begin{minipage}{.49\textwidth}
    \begin{center}
      \includegraphics[width=\textwidth]{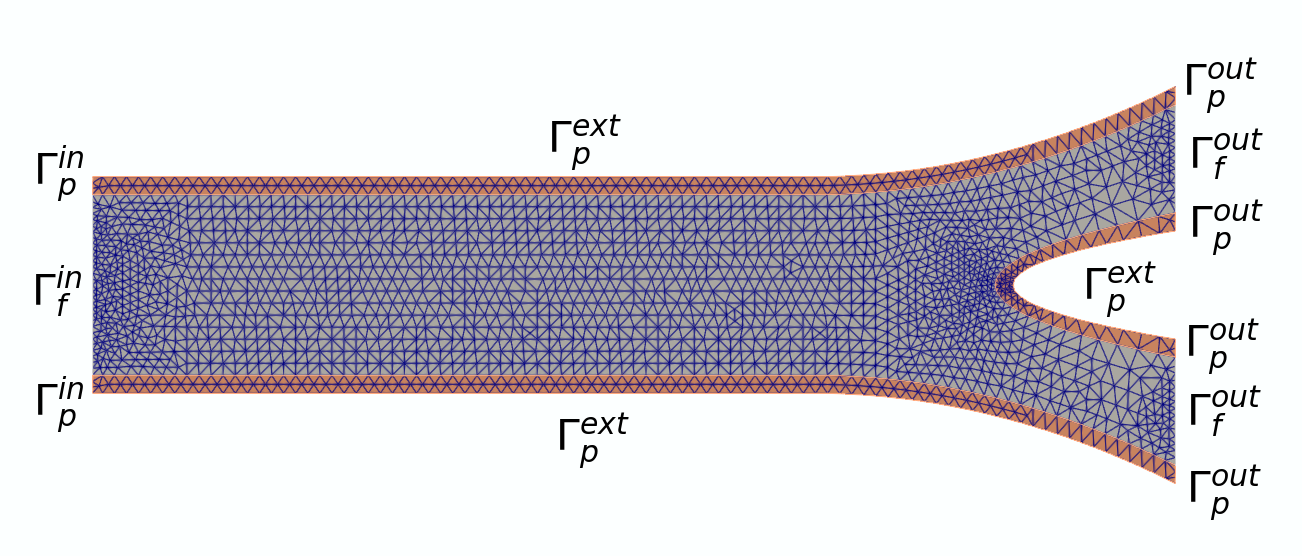}
      \end{center}
 \end{minipage}
\begin{minipage}{.5\textwidth}
\begin{align*}
& \bsi_f \, \bn_f = -p_{in} \, \bn_f \ \hbox{ on } \ \Gamma_f^{in}, 
&& \bsi_f \, \bn_f = \0  \ \hbox{ on } \ \Gamma_f^{out} \\[1ex]
  & \bu_s = \0 \ \hbox{ on } \  \Gamma_p^{in} \cup \Gamma_p^{out},
  && \bsi_p \bn_p=\0 \ \hbox{ on } \  \Gamma_p^{ext}\\[1ex]
  & \bu_p \cdot \bn_p = 0 \ \hbox{ on } \  \Gamma_p^{in} \cup \Gamma_p^{out},
  && p_p=0 \ \hbox{ on } \  \Gamma_p^{ext}
\end{align*}
\end{minipage}
\caption{Example 2, Left: computational domain and boundaries; arterial lumen $\Omega_{f}$ in gray, arterial wall $\Omega_{p}$ in brown. Right: boundary conditions.}
    \label{fig:mesh}
\end{figure}

We present the results of a simulation on a grid with a characteristic parameter $h$ four times smaller than that of the grid shown in Figure~\ref{fig:mesh} (left). We start by emphasizing that the values $s_0 = 5 \times 10^{-6}$ and $\bK = 10^{-9}\times \bI$ are in the typical locking regime for the Biot system of poroelasticity \cite{Yi-Biot-locking}. Our mixed finite element scheme provides a solution free of numerical oscillations, illustrating its locking-free behavior. We display the computed velocity and pressure at times $t=1.8, 3.6, 5.4$ ms in Figure~\ref{bifurcation}. On the top row, the arrows represent the velocity vectors $\bu_{fh}$ and $\bu_{ph}$ in the fluid and poroelastic regions, while the color shows the vector magnitude. The bottom plots present the fluid pressure $p_{fh}$ and Darcy pressure $p_{ph}$ in their corresponding regions. One can clearly see a wave propagating from left to right. The blood infiltrates from the lumen into the wall ahead of and near the highest pressure, while it flows back into the lumen after the pressure wave has passed.
We also observe singularity of $\bu_{fh}$ near the bifurcation of the fluid region at $t=5.4$ ms, which is typical for such geometry.

In Figure~\ref{bifurcation-stress}, the first and second rows of the stress tensors $\bsi_{fh}$ and $\bsi_{ph}$ are shown. The fluid stress $\bsi_{fh}$ has been recovered from the formula $\bsi_{fh}^m = \bT_{fh}^m + \rho\,(\bu_{fh}^m\otimes\bu_{fh}^m)$  (cf. \eqref{eq:nonlinear-stress-Tf}). We observe large stresses in the region of high pressure as the wave propagates. The continuity of stress across the artery-wall interfaces is also evident. To further illustrate the interface continuity conditions, we present several plots of various components of the solution along the top artery-wall interface in Figure~\ref{fig:top-interface}. The top row displays the normal components of the fluid velocity $\bu_{fh}\cdot\bn_f$, the total poroelastic velocity $-(\bu_{ph} + \bu_{sh})\cdot\bn_p$, and the Darcy velocity $-\bu_{ph}\cdot\bn_p$. We observe a good match between $\bu_{fh}\cdot\bn_f$ and $-(\bu_{ph} + \bu_{sh})\cdot\bn_p$, as expected from the conservation of mass interface condition, whereas the Darcy velocity $-\bu_{ph}\cdot\bn_p$ differs. We note that $-\bu_{ph}\cdot\bn_p$ is significantly smaller that the total velocity $-(\bu_{ph} + \bu_{sh})\cdot\bn_p$, due to relatively small permeability of the arterial wall. The bottom row shows the fluid wall shear stress $\bsi_{fh} \bn_f\cdot\bt_f$, the poroelastic shear stress $\bsi_{ph} \bn_p\cdot\bt_p$, and the normal displacement $-\bbeta_{ph}\cdot\bn_p$. We emphasize that our mixed formulation allows for direct and accurate computation of the wall shear stress, which is an important clinical marker. The profiles of $\bsi_{fh} \bn_f\cdot\bt_f$ and $\bsi_{ph} \bn_p\cdot\bt_p$ match, which is consistent with the continuity of normal stress condition. The profiles of the normal displacement at the three different times illustrate the propagating pressure wave, with large displacement in the regions of large fluid pressure. Moreover, we observe a correlation between the normal Darcy velocity $-\bu_{ph}\cdot\bn_p$ and the normal displacement $-\bbeta_{ph}\cdot\bn_p$. 

\begin{figure}[ht!]
\begin{center}
\includegraphics[width=0.325\textwidth]{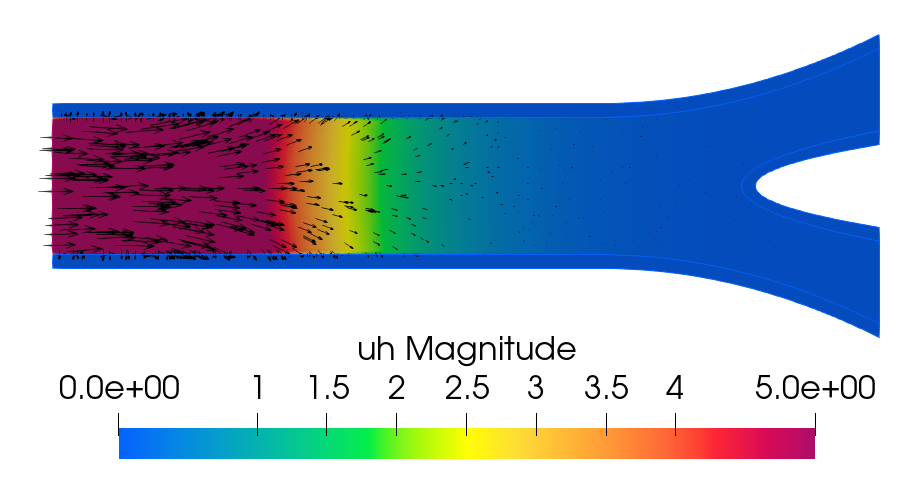}
\includegraphics[width=0.325\textwidth]{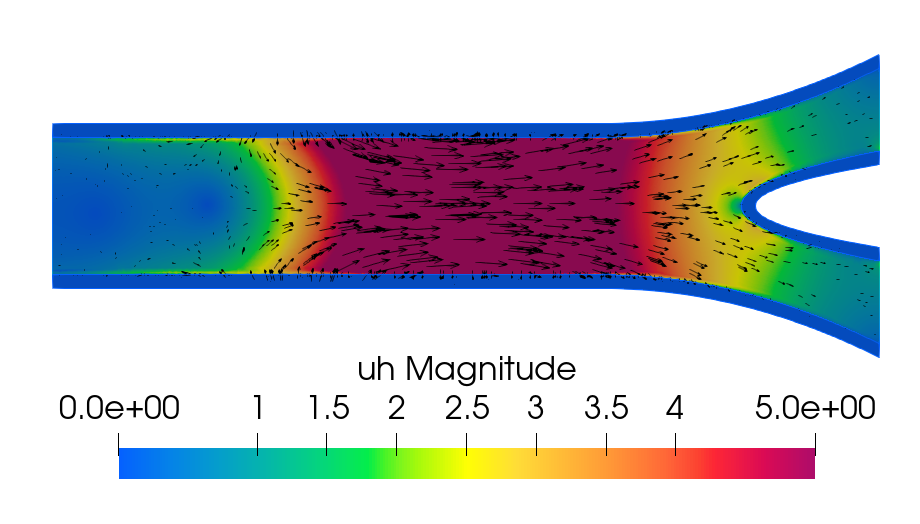}
\includegraphics[width=0.325\textwidth]{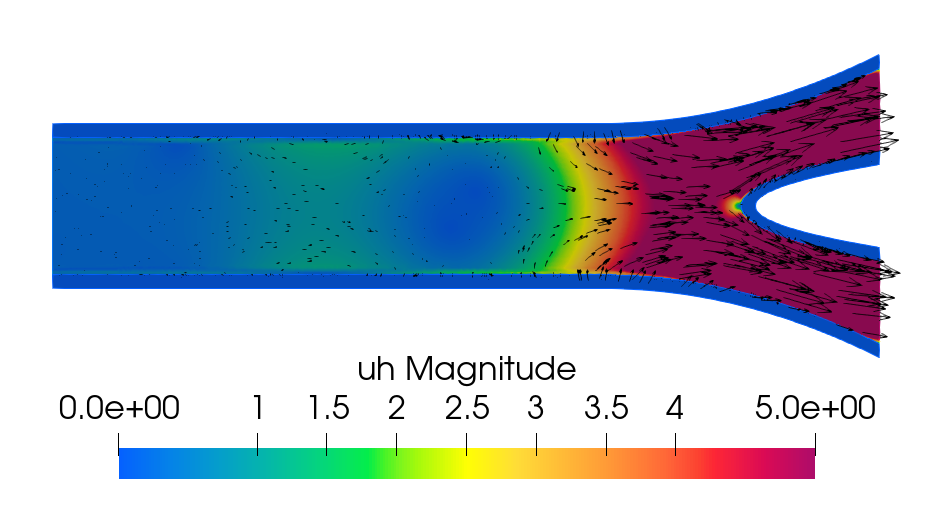}\\
\includegraphics[width=0.325\textwidth]{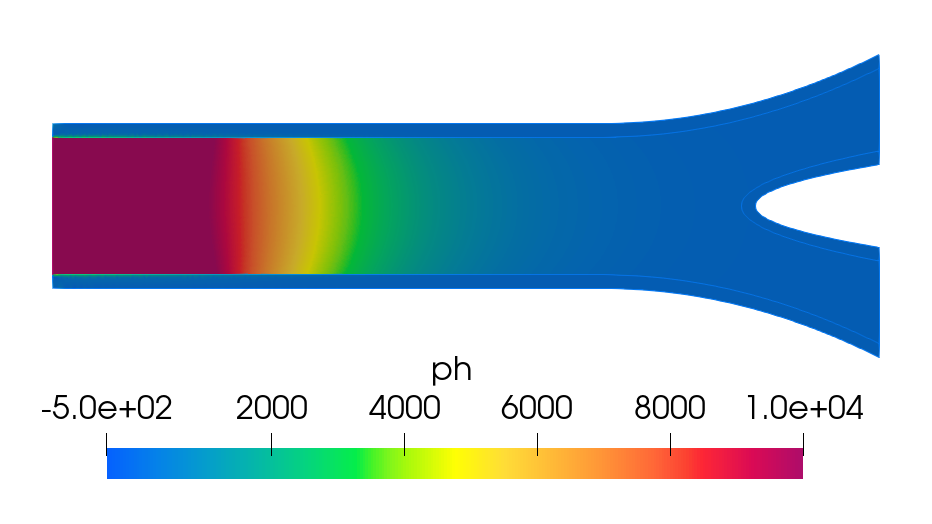}
\includegraphics[width=0.325\textwidth]{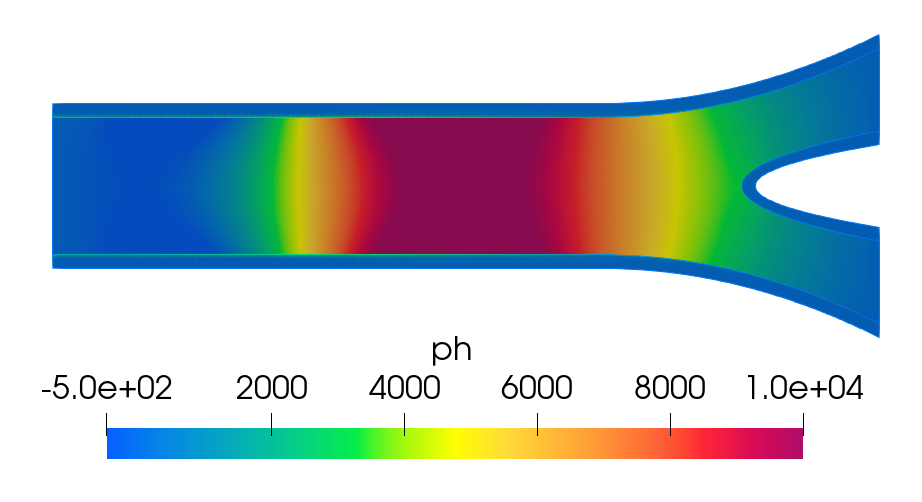}
\includegraphics[width=0.325\textwidth]{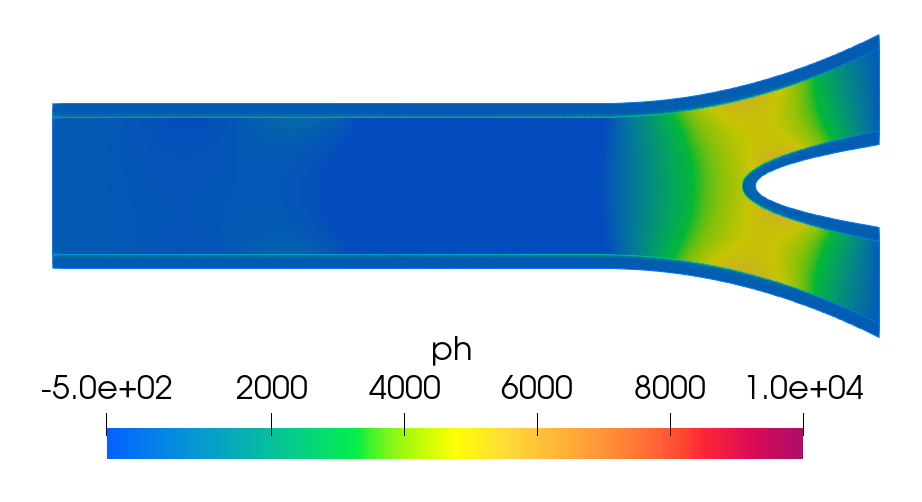}
\end{center}
\caption{Example 2, Computed solution at time t=1.8 ms, t=3.6 ms and t=5.4 ms. Top: velocities $\bu_{fh}$ and $\bu_{ph}$ (arrows), $|\bu_{fh}|$ and $|\bu_{ph}|$ (color); bottom: pressures $p_{fh}$ and $p_{ph}$ (color).}
\label{bifurcation}
\end{figure}

\begin{figure}[ht!]
\begin{center}
\includegraphics[width=0.325\textwidth]{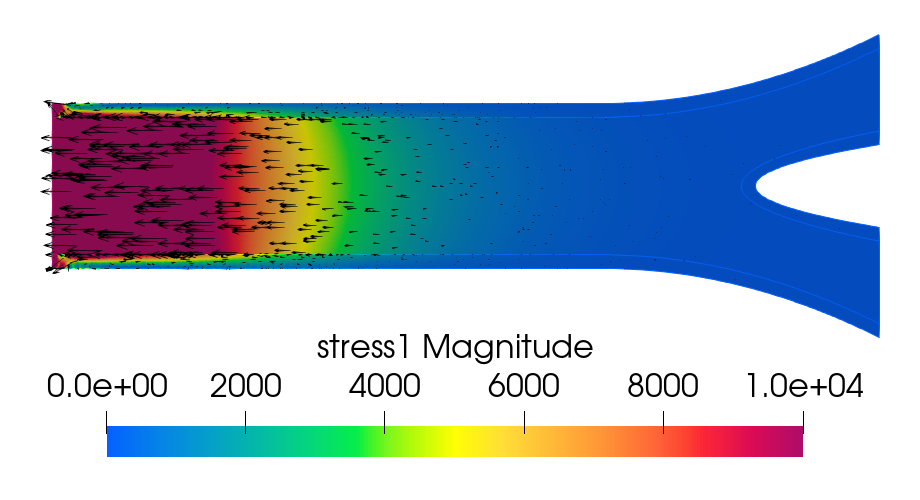}
\includegraphics[width=0.325\textwidth]{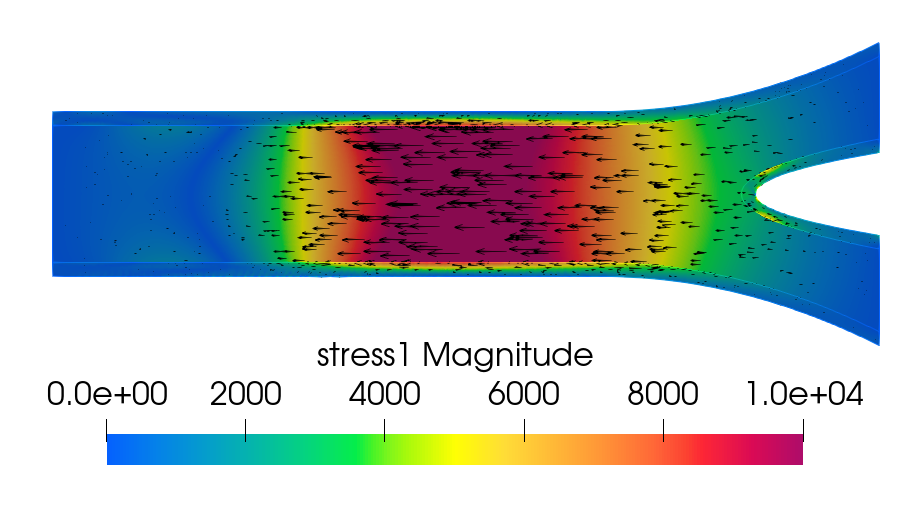}
\includegraphics[width=0.325\textwidth]{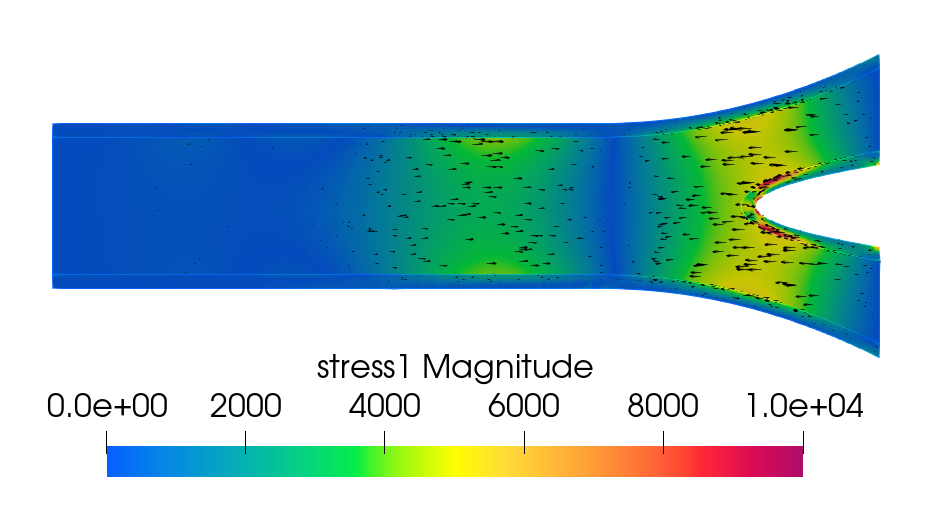}\\
\includegraphics[width=0.325\textwidth]{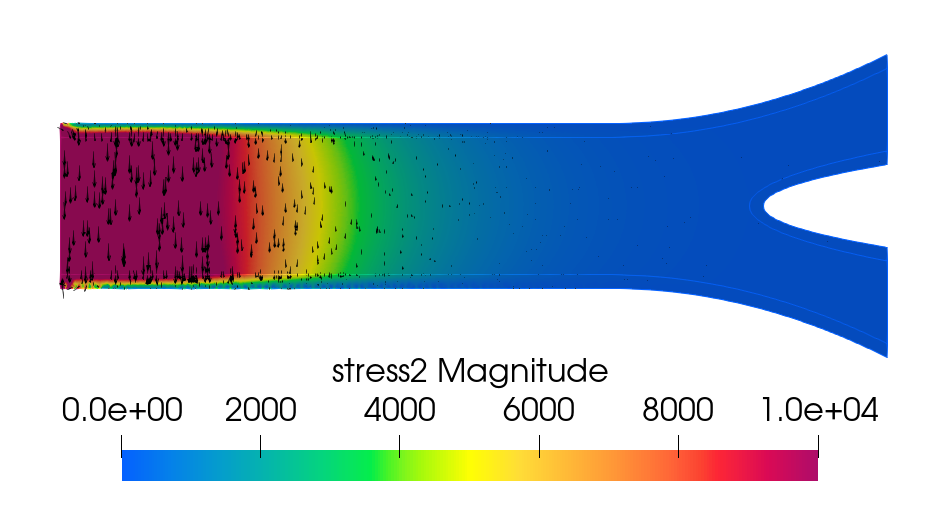}
\includegraphics[width=0.325\textwidth]{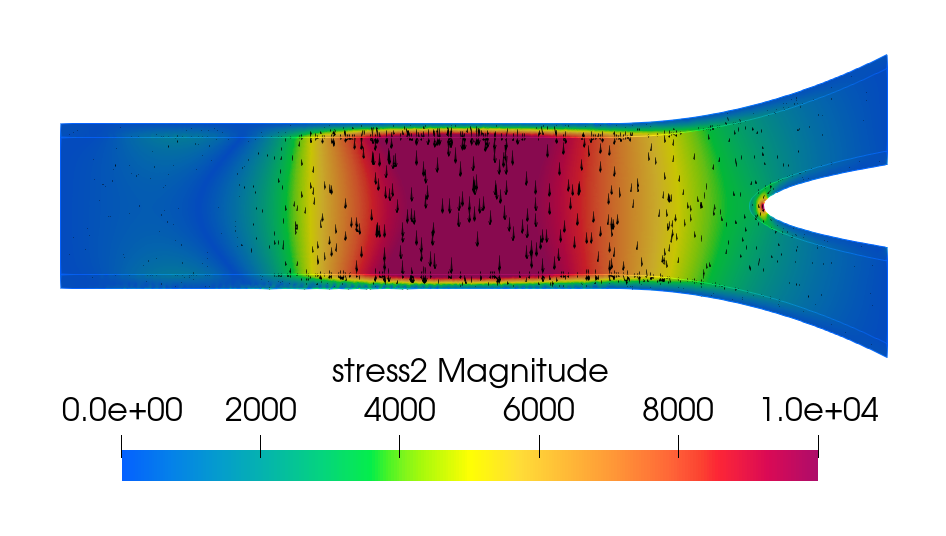}
\includegraphics[width=0.325\textwidth]{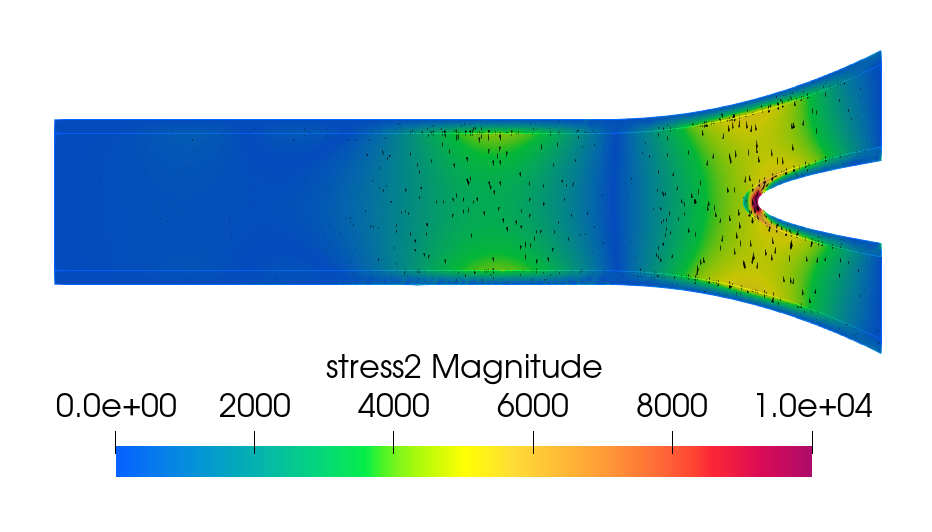}
\end{center}
\caption{Example 2, Computed solution at time t=1.8 ms, t=3.6 ms and t=5.4 ms. Top: first row of stresses $(\bsi_{fh,11},\bsi_{fh,12})^\rt$ and $(\bsi_{ph,11},\bsi_{ph,12})^\rt$ (arrows) and their magnitudes
  (color); bottom: second row of stresses $(\bsi_{fh,21},\bsi_{fh,22})^\rt$ and $(\bsi_{ph,21},\bsi_{ph,22})^\rt$ (arrows) and their magnitudes
  (color).}
\label{bifurcation-stress}
\end{figure}

\begin{figure}[ht!]
\begin{center}
\includegraphics[width=0.325\textwidth]{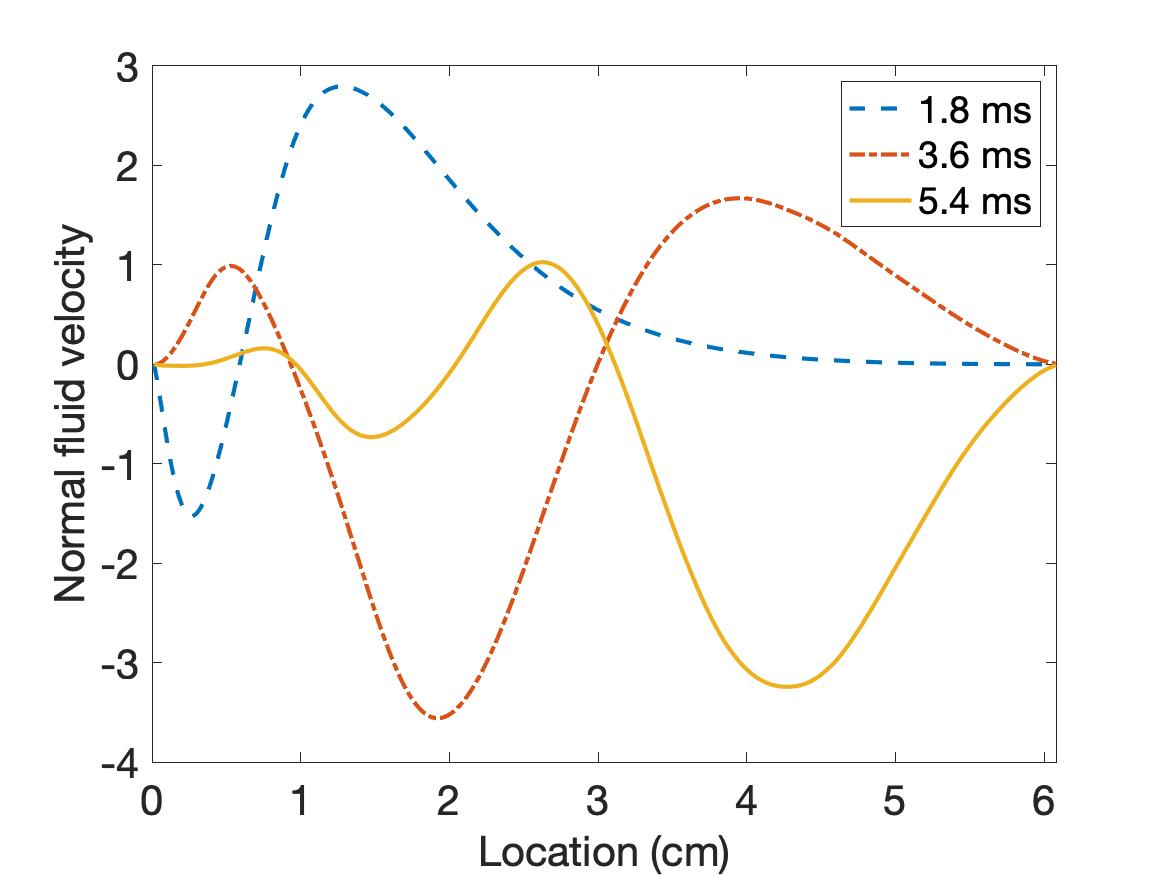}
\includegraphics[width=0.325\textwidth]{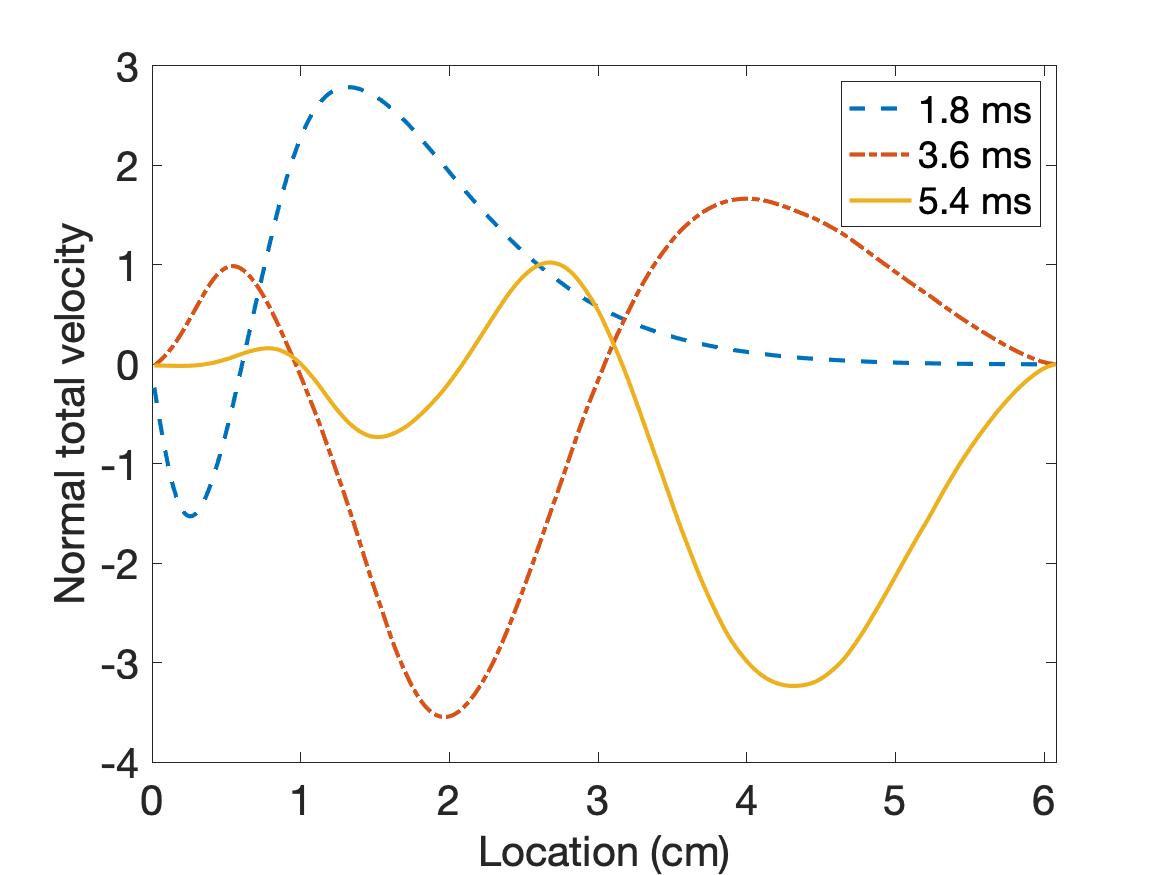}
\includegraphics[width=0.325\textwidth]{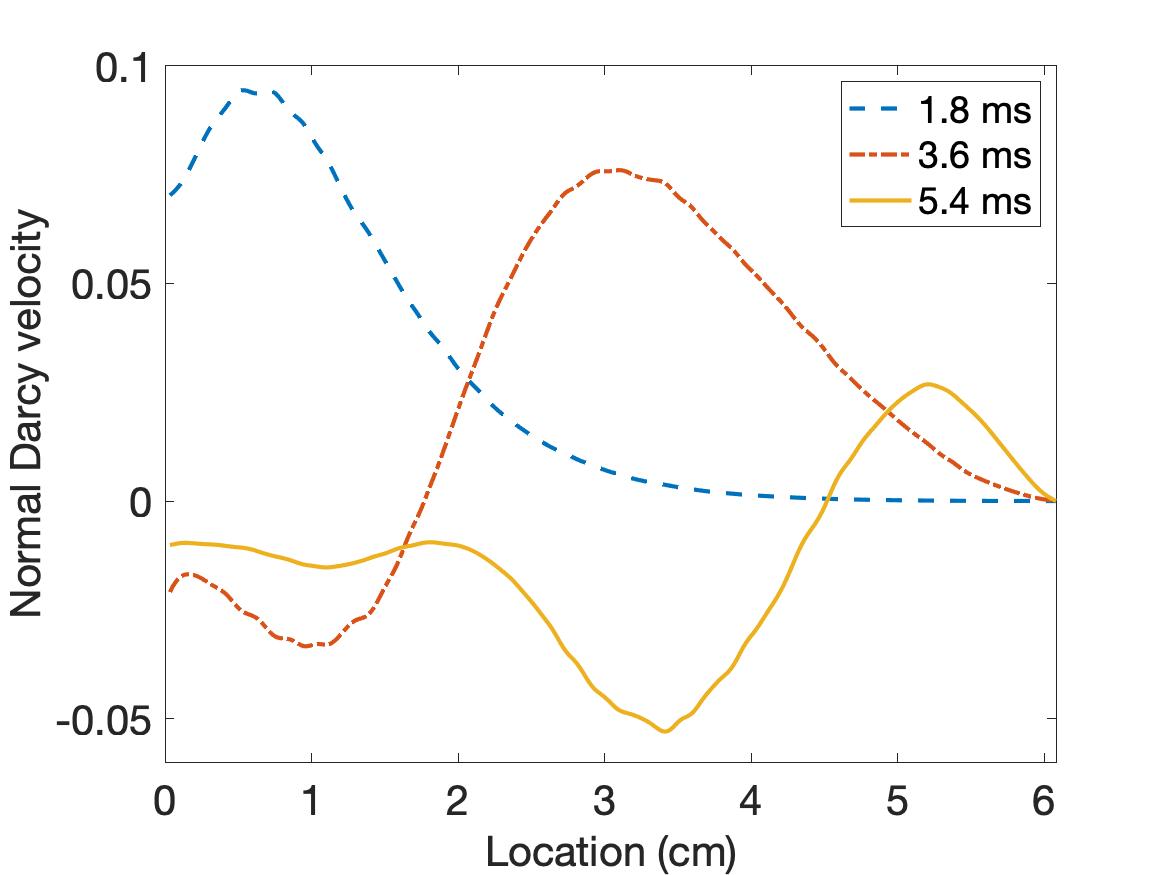}\\
\includegraphics[width=0.325\textwidth]{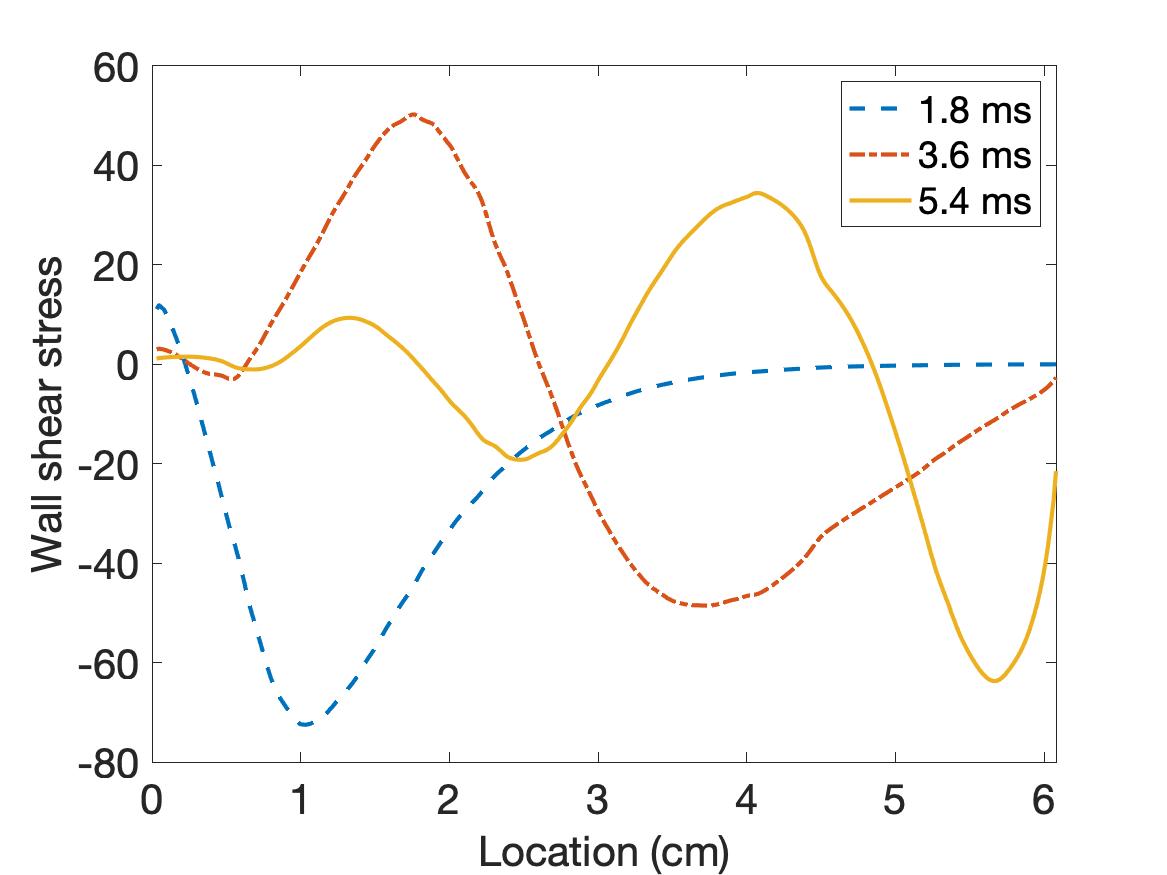}
\includegraphics[width=0.325\textwidth]{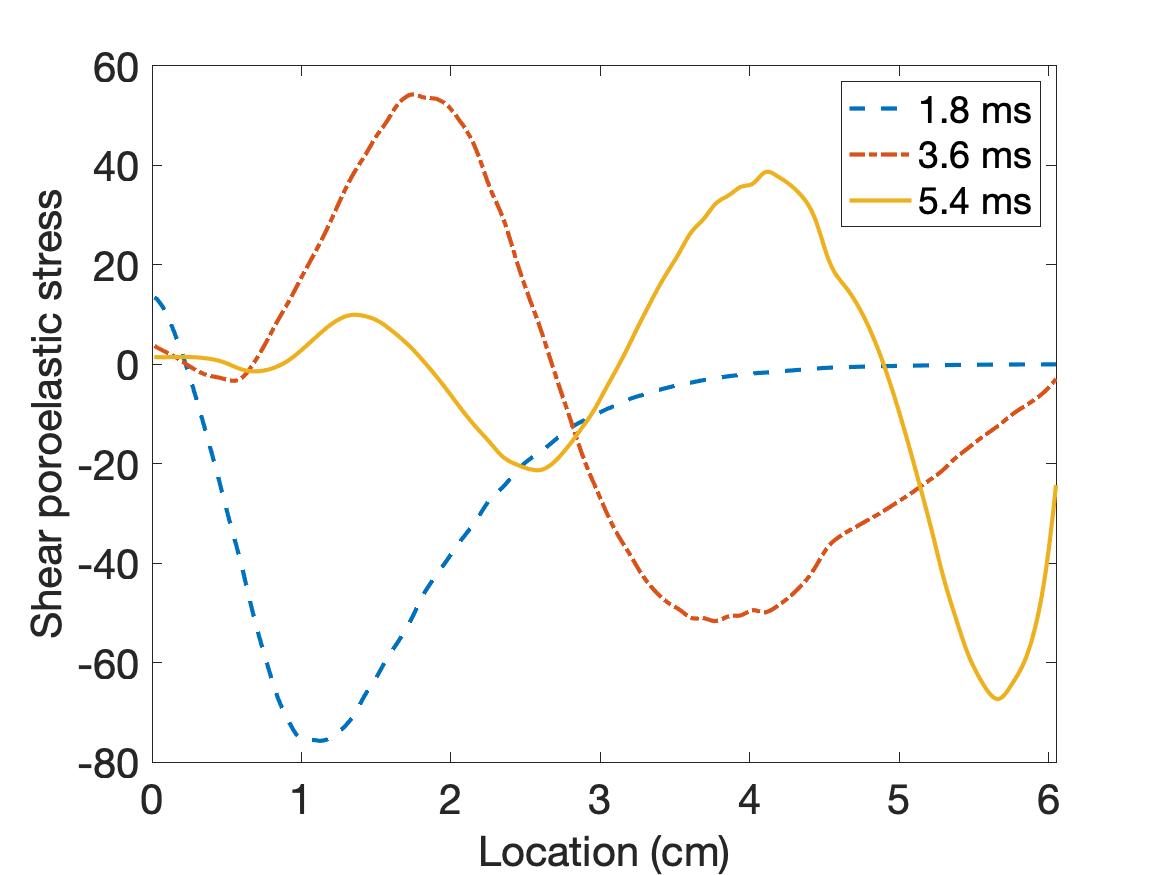}
\includegraphics[width=0.325\textwidth]{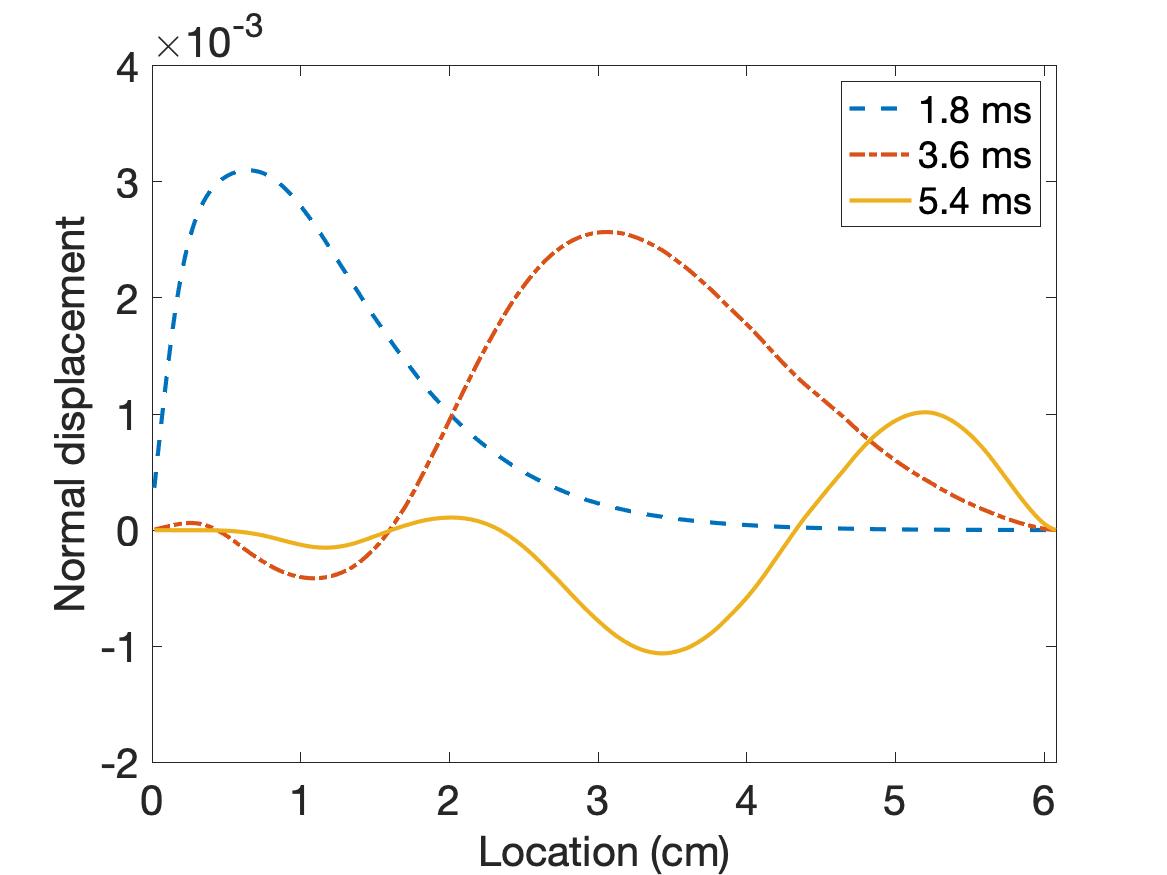}
\end{center}
\caption{Example 2, Computed solution on the top interface. Top: $\bu_{fh}\cdot\bn_f$, $-(\bu_{ph} + \bu_{sh})\cdot\bn_p$, $-\bu_{ph}\cdot\bn_p$; bottom: $\bsi_{fh} \bn_f\cdot\bt_f$, $\bsi_{ph} \bn_p\cdot\bt_p$, $-\bbeta_{ph}\cdot\bn_p$.}
\label{fig:top-interface}
\end{figure}

\subsection{Example 3: air flow through a filter}

In this example we simulate air flow through a filter. The setting is similar to the 
one presented in \cite{swgbh2020}. We consider a two-dimensional rectangular channel with length $0.75$ m and width $0.25$ m, which on the bottom center is partially blocked by a rectangular poroelastic filter of length $0.25$ m and width $0.2$ m, see Figure~\ref{fig:filter} (left).
\begin{figure}[ht]
  \begin{minipage}{.49\textwidth}
    \begin{center}
      \includegraphics[width=\textwidth]{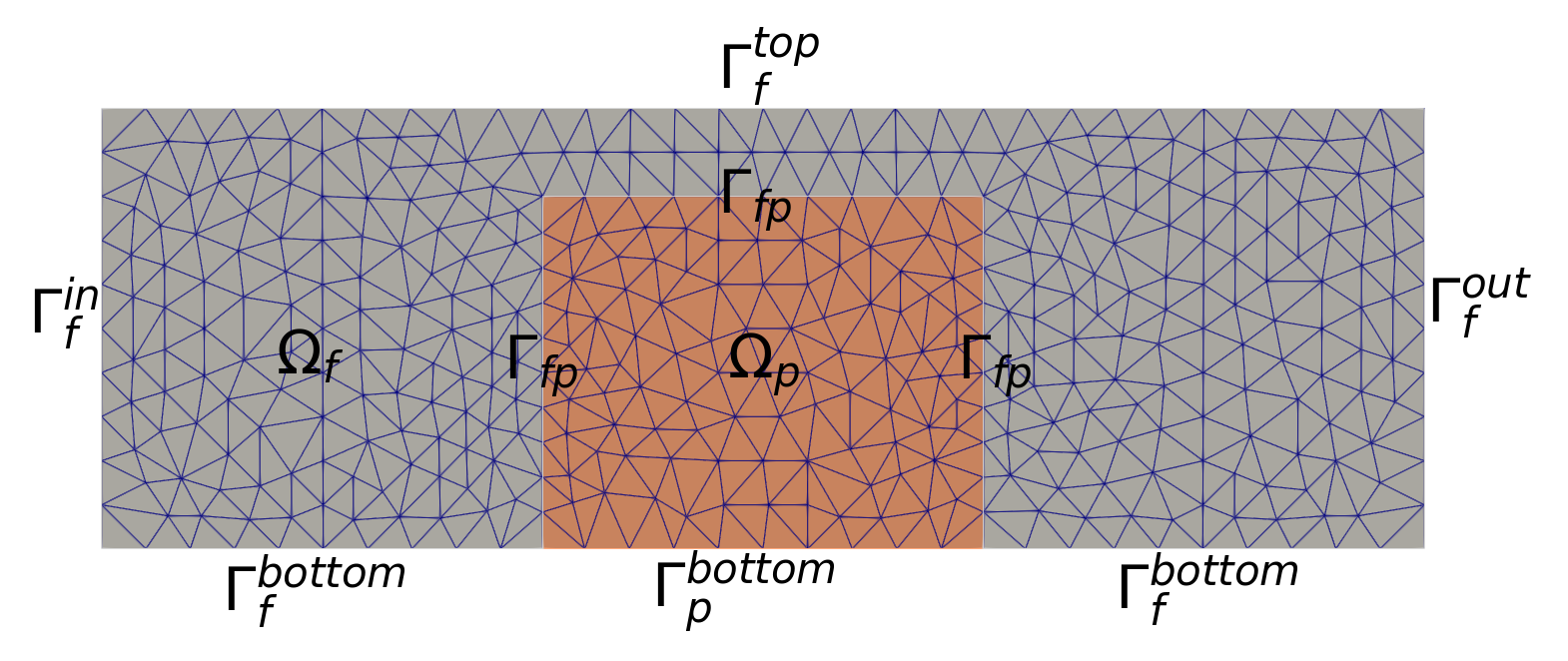}
      \end{center}
  \end{minipage}
  \begin{minipage}{.5\textwidth}

    $\ds \bsi_f \, \bn_f = -p_{in} \, \bn_f \qon \Gamma_f^{in}$
    
 \medskip   $\bsi_f \, \bn_f = -p_{out} \, \bn_f \qon \Gamma_f^{out}$
    
\medskip$ p_{in}= p_{ref} +10^{-9} \ \text{kPa}, \quad p_{out}= p_{ref} = 100 \ \text{kPa}$
    
\medskip    $ \ds \bu_f = 0 \qon \Gamma_f^{top} \cup \Gamma_f^{bottom}$
    
\medskip $ \ds \bu_s = \0 \qan \bu_p \cdot \bn_p = 0 \qon \Gamma_p^{bottom}$

\end{minipage}
\caption{Example 3, Left: computational domain and boundaries; channel $\Omega_{f}$ in gray, filter $\Omega_{p}$ in brown. Right: boundary conditions.}
\label{fig:filter}
\end{figure}
The model parameters are set as
\begin{gather*}
\mu=1.81 \times 10^{-8} \text{ kPa s}, \quad \rho =1.225 \times 10^{-3} \text{ Mg}/\text{m}^3, \quad s_0=7 \times 10^{-2} \text{ kPa}^{-1}, \\
\bK= [ 0.505, 0.495; 0.495,0.505] \times 10^{-6} \text{ m}^2, \quad \alpha_{\BJS}=1.0, \quad \alpha = 1.0.
\end{gather*}
Note that $\mu$ and $\rho$ are parameters for air. The permeability tensor $\bK$ is obtained by rotating the identity tensor by a $-45^\circ$ rotation angle in order to consider the effect of material anisotropy on the flow. We further consider two different kinds of material in the poroelastic region: ``hard'' material with parameters
$$\lambda_p=1 \times 10^5 \text{ kPa}, \quad \mu_p = 1 \times 10^4 \text{ kPa}, $$ 
and ``soft'' material with parameters
$$\lambda_p=1 \times 10^3 \text{ kPa}, \quad \mu_p = 1 \times 10^2 \text{ kPa}.$$
The top and bottom of the domain are rigid, impermeable walls. The flow is driven by a pressure difference $\Delta p = 10^{-9} \text{ kPa}$ between the left and right boundary, see Figure~\ref{fig:filter} (right) for the boundary conditions. The body force terms $\f_f$ and $\f_p$ and external source $q_p$ are set to zero. For the initial conditions, we consider
$$ p_{p,0}=100  \ \text{kPa}, \quad \bsi_{p,0} = -\alpha_p \, p_{p,0} \, \bI, \quad \bu_{f,0} = \0  \ \text{m/s}. $$
The computational grid has a characteristic parameter $h$ four times smaller than that of the grid shown in Figure~\ref{fig:filter} (left). The total simulation time is $T=80$ s with $\Delta t=1$ s.

In Figures~\ref{filter-velocity}--\ref{filter-displacement} we present various components of the computed solution at the final time. The plots on the left are for the hard material, whereas the plots on the right are for the soft material. Since the pressure variation is small relative to its value, for visualization purpose we plot its difference from the reference pressure, $p_{fh}-p_{ref}$ and $p_{ph}-p_{ref}$ in the corresponding regions. We do the same for the stress tensors, showing $\bsi_{fh}+\alpha \,p_{ref} \bI$ and $\bsi_{ph} + \alpha \, p_{ref} \bI$ respectively. In addition, the arrows representing the velocity and stress vectors are not scaled with their magnitudes.

The velocity plots in Figure~\ref{filter-velocity} show that most of the air passes through the constricted section above the filter with higher velocity in this region, due to the flow resistance in the porous medium. The effect of anisotropy is clearly visible in both the pressure and velocity profiles, with the pressure gradient and streamlines following the inclined principal direction of the permeability tensor. We also observe continuous normal velocity across all three interfaces and discontinuous tangential velocity, especially at the interfaces where the Navier--Stokes velocity is higher. This is consistent with the continuity of flux and BJS interface conditions.

Figure~\ref{filter-stress} shows the first and second rows of the fluid stress tensor $\bsi_{fh}$ and the poroelastic stress tensor $\bsi_{ph}$. The stress is larger in the poroelastic region, especially along the bottom boundary, where the displacement is set to zero. We observe continuity of the first row of the stress on the vertical interfaces and of the second row on the horizontal interface. Thus, the scheme exhibits continuity of the normal stress vector $\bsi_{fh}\bn_f + \bsi_{ph}\bn_p = 0$, since $\bn_f = \pm(1,0)^\rt$ on the vertical interfaces and $\bn_f = -(0,1)^\rt$ on the horizontal interface. 

Furthermore, the elasticity material parameters have a significant effect not only on the displacement field, but also on the velocity field outside of the poroelastic region. In particular, we observe a large vortex behind the obstacle for the soft material, as well as a smaller vortex in front of it, cf. Figure~\ref{filter-velocity} (right).  This is related to 
both the displacement and the structure velocity having larger magnitude for the soft material, as shown in Figure~\ref{filter-displacement}. We also note that the use of the inertial term in the Navier--Stokes equations plays a critical role for the accurate approximation of the recirculation zones. This example illustrates the ability of the model to capture the interplay between solid deformation and fluid flow, including the effect of material parameters and faster flows. It also shows the importance of including the poroelastic model on resolving critical flow characteristics compared with the Navier--Stokes -- Darcy model considered in \cite{swgbh2020}.

\begin{figure}[ht!]
\begin{center}
\includegraphics[width=0.45\textwidth]{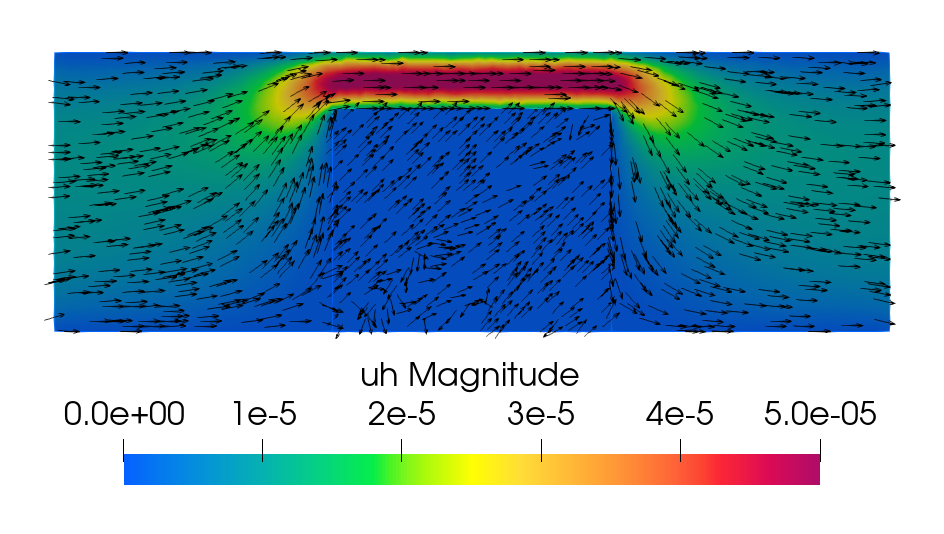}
\includegraphics[width=0.45\textwidth]{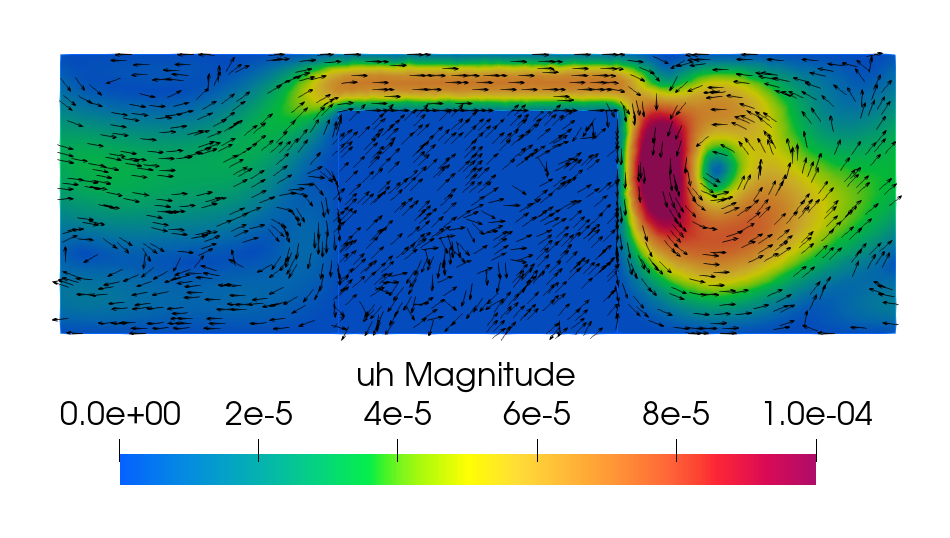}\\
\includegraphics[width=0.45\textwidth]{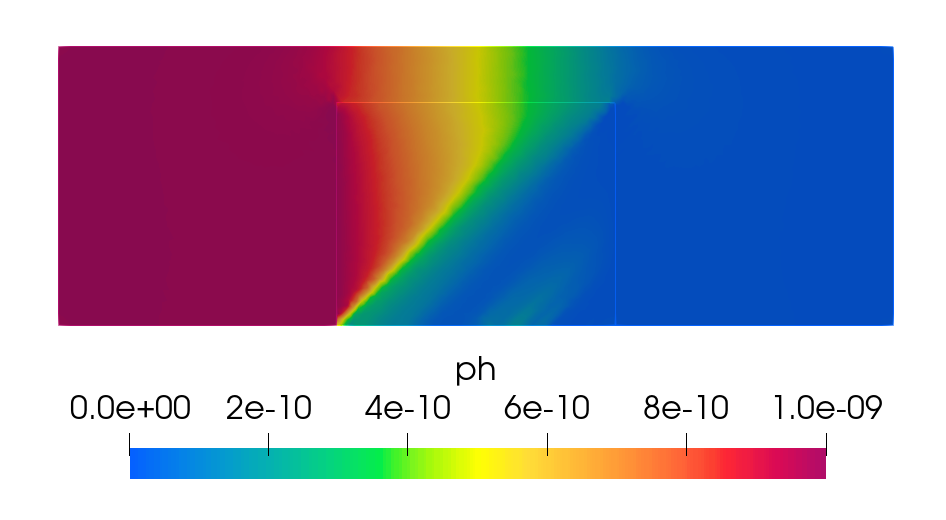}
\includegraphics[width=0.45\textwidth]{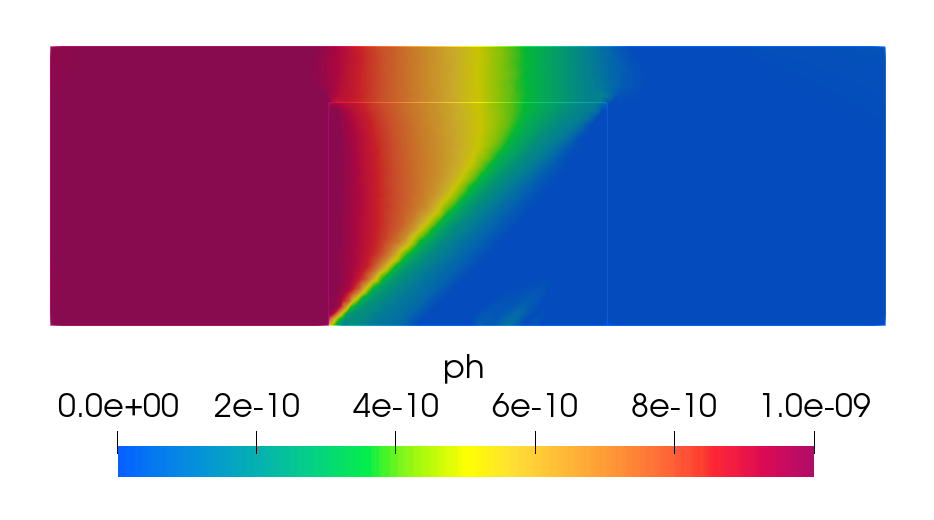}
\end{center}
\caption{Computed velocities $\bu_{fh}$ and $\bu_{ph}$ and pressures $p_{fh} - p_{ref}$ and $p_{ph} - p_{ref}$ for the hard material (left) and soft material (right) at time T=80 s. Top: velocities (arrows, not scaled) and their magnitudes (color); bottom: pressures (color).}
\label{filter-velocity}
\end{figure}

\begin{figure}[ht!]
\begin{center}
\includegraphics[width=0.45\textwidth]{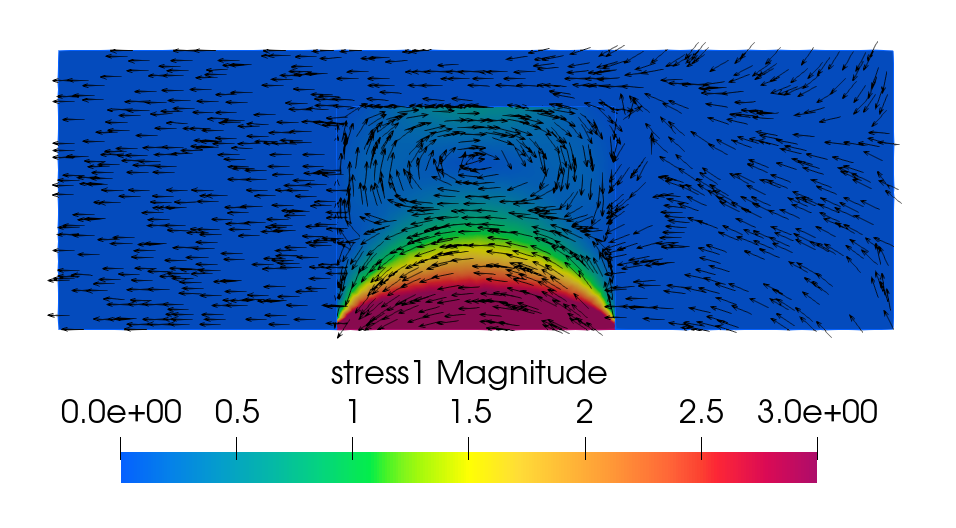}
\includegraphics[width=0.45\textwidth]{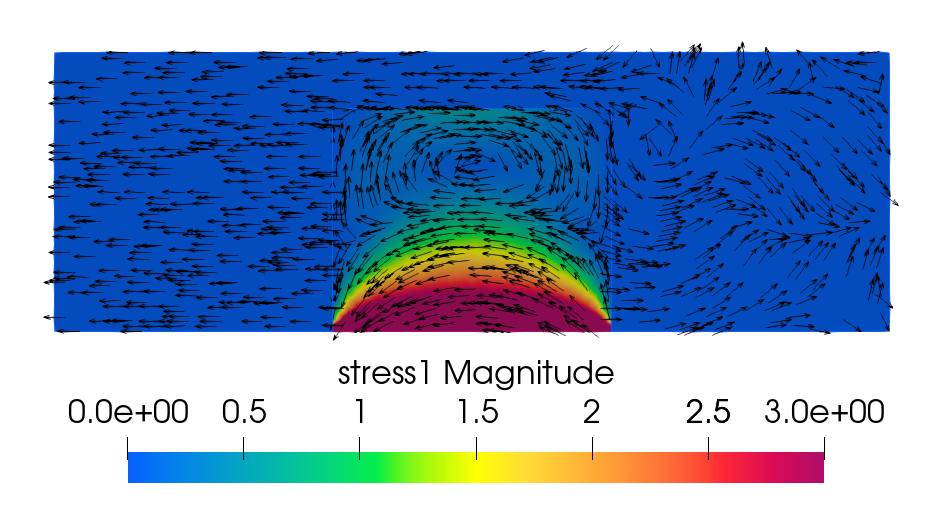}\\
\includegraphics[width=0.45\textwidth]{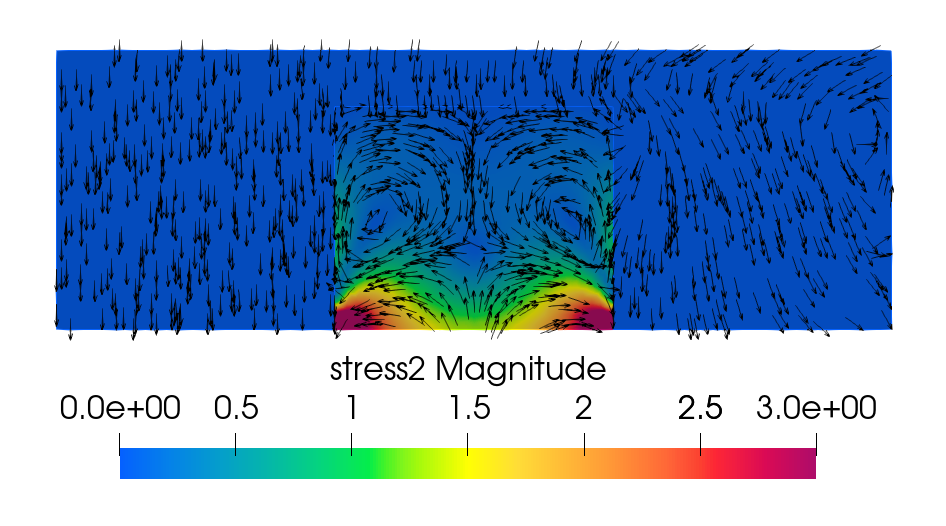}
\includegraphics[width=0.45\textwidth]{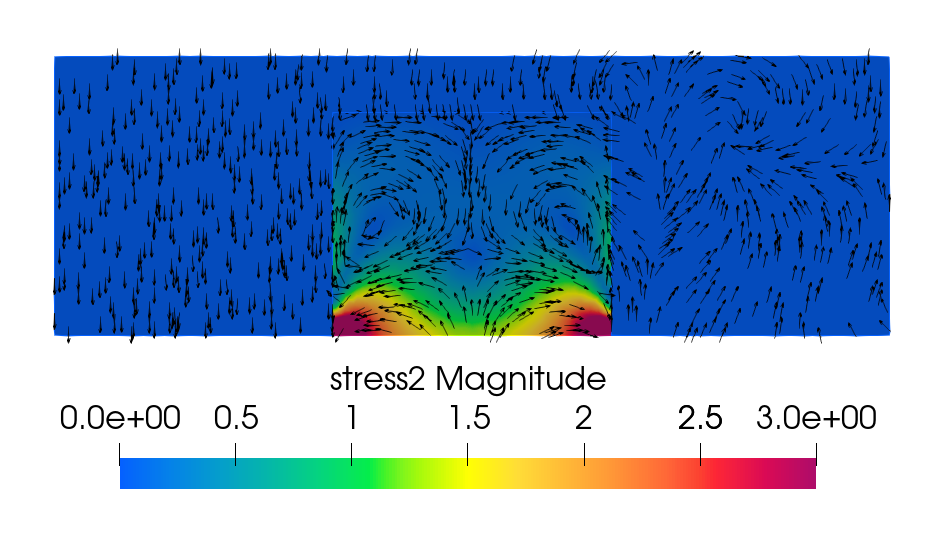}
\end{center}
\caption{Computed stress tensors $\bsi_{fh}+\alpha \,p_{ref} \bI$ and $\bsi_{ph} + \alpha \, p_{ref} \bI$ for the hard material (left) and soft material (right) at time T=80 s. Top: first rows of the stress tensors (arrows, not scaled) and their magnitudes (color); bottom: second rows of the stress tensors (arrows) and their magnitudes (color).}
\label{filter-stress}
\end{figure}

\begin{figure}[ht]
\begin{center}
\includegraphics[width=0.45\textwidth]{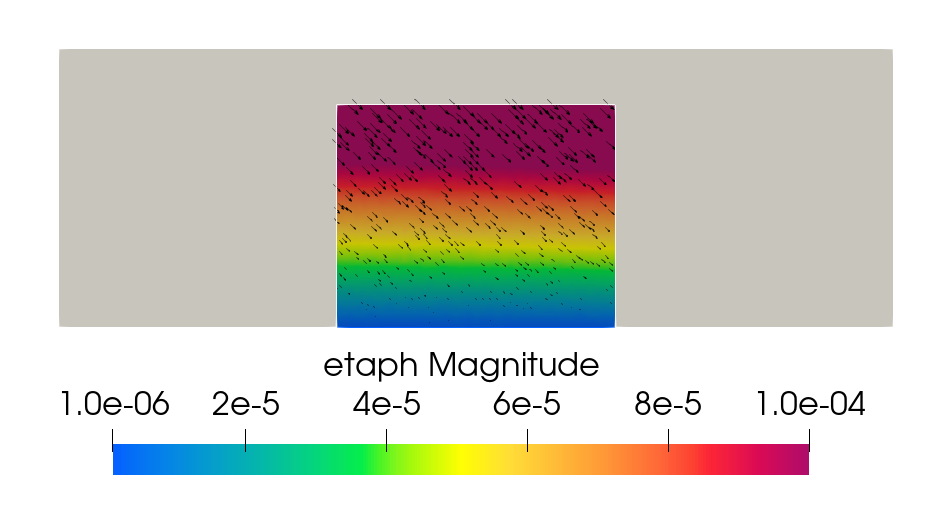}
\includegraphics[width=0.45\textwidth]{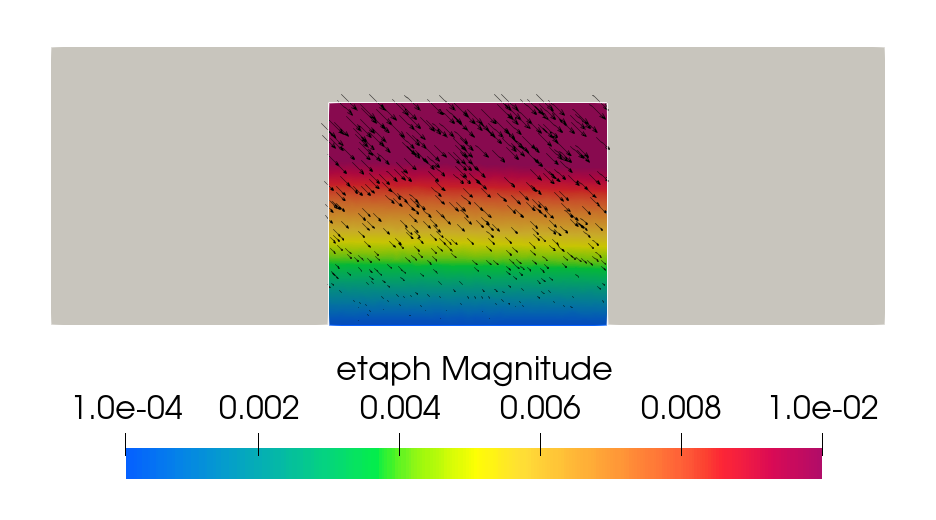}\\
\includegraphics[width=0.45\textwidth]{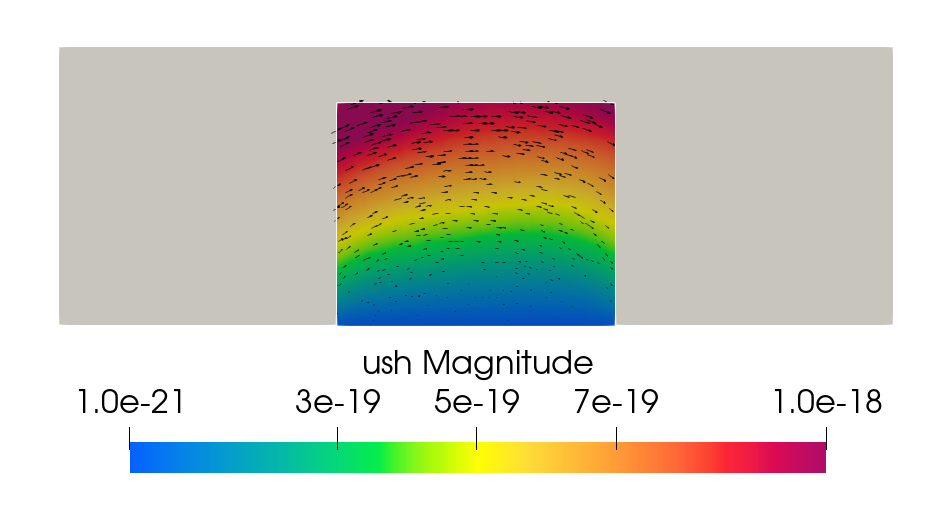}
\includegraphics[width=0.45\textwidth]{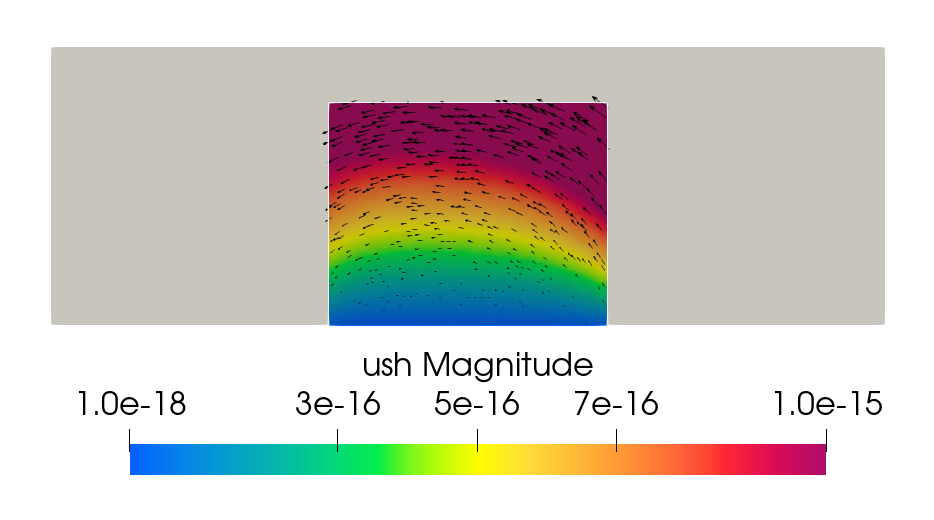}
\end{center}
\caption{Computed displacement $\bbeta_{ph}$ and structure velocity $\bu_{sh}$ for the hard material (left) and soft material (right) at time T=80 s.
Top: displacement (arrows) and its magnitude (color); bottom: structure velocity (arrows) and its magnitude (color).}
\label{filter-displacement}
\end{figure}

\section{Conclusions}

In this paper we develop an augmented fully mixed formulation for the quasistatic Navier--Stokes--Biot model and its mixed finite element approximation. The variables are pseudostress--velocity for Navier--Stokes, velocity--pressure for Darcy flow, and stress--displacement--rotation for elasticity. The traces of the structure velocity and the Darcy pressure on the interface are introduced as Lagrange multipliers to impose weakly the interface transmission conditions. In order to obtain control on the fluid pseudostress and velocity in their natural norms, the Navier--Stokes scheme is augmented with redundant Galerkin-type terms arising from the equilibrium and constitutive equations. The scheme exhibits local mass conservation for the Darcy fluid, local momentum conservation for the poroelastic stress, accurate approximations for the Darcy velocity, the poroelastic stress, and the fluid pseudostress with continuous normal components across element edges or faces, locking-free behavior, and robustness with respect to the physical parameters. We establish well-posedness of the weak formulation and its mixed finite element approximation, employing the semigroup theory for differential equations with monotone operators, combined with a fixed point argument. We further derive error estimates with rates of convergence. The presented numerical results verify the convergence rates and show the performance of the method for modeling blood flow in an arterial bifurcation and air flow through a filter using realistic parameters. The results illustrate the importance of including poroelastic and inertial effects in the model. Furthermore, we observe correct imposition of the interface conditions, accurate stress and velocity computation, and oscillation-free solution for parameters in the locking regime for poroelasticity.

\bibliographystyle{abbrv}
\bibliography{augmented-NS-Biot}

\end{document}